\documentclass[12pt,a4]{amsart}
\usepackage[a4paper, left=26mm, right=26mm, top=28mm, bottom=34mm]{geometry}

\usepackage{amsmath, amscd}
\usepackage{amssymb}
\usepackage[alphabetic]{amsrefs}
\usepackage[T1]{fontenc}
\usepackage[all]{xy}
\usepackage{mathtools}
\usepackage{footmisc}
\usepackage{stmaryrd}
\usepackage{mathrsfs}

\usepackage{tikz}
\usetikzlibrary{calc}
\usetikzlibrary{matrix,arrows,decorations.pathmorphing}
\usepackage{tikz-cd}

\usepackage{relsize} 
\usepackage[bbgreekl]{mathbbol} 
\usepackage{amsfonts} 
\usepackage[colorlinks=true, hyperindex, linkcolor=cyan, citecolor=, pagebackref=false, urlcolor=cyan, linktocpage]{hyperref}


\DeclareSymbolFontAlphabet{\mathbb}{AMSb} 
\DeclareSymbolFontAlphabet{\mathbbl}{bbold} 
\newcommand{\Prism}{{\mathlarger{\mathbbl{\Delta}}}}

\setcounter{tocdepth}{1}
\theoremstyle{definition}
\newtheorem{thm}{Theorem}[section]
\newtheorem{mainthm}{Theorem}

\newtheorem{lem}[thm]{Lemma}
\newtheorem{cor}[thm]{Corollary}
\newtheorem{prop}[thm]{Proposition}

\theoremstyle{definition}

\newtheorem{rem}[thm]{Remark}

\newtheorem{dfn}[thm]{Definition}

\newtheorem{construction}[thm]{Construction}
\newtheorem{setting}[thm]{Setting}

\newcommand{\bA}{\mathbb{A}}

\newcommand{\bD}{\mathbb{D}}
\newcommand{\bG}{\mathbb{G}}

\newcommand{\bL}{\mathbb{L}}
\newcommand{\bN}{\mathbb{N}}
\newcommand{\bZ}{\mathbb{Z}}
\newcommand{\bF}{\mathbb{F}}

\newcommand{\bQ}{\mathbb{Q}}
\newcommand{\bT}{\mathbb{T}}

\newcommand{\cA}{\mathcal{A}}

\newcommand{\cE}{\mathcal{E}}

\newcommand{\cG}{\mathcal{G}}

\newcommand{\cI}{\mathcal{I}}
\newcommand{\cJ}{\mathcal{J}}

\newcommand{\cM}{\mathcal{M}}
\newcommand{\cN}{\mathcal{N}}
\newcommand{\cO}{\mathcal{O}}
\newcommand{\cP}{\mathcal{P}}
\newcommand{\sA}{\mathscr{A}}
\newcommand{\sC}{\mathscr{C}}

\newcommand{\sS}{\mathscr{S}}

\newcommand{\fS}{\mathfrak{S}}
\newcommand{\fU}{\mathfrak{U}}

\newcommand{\fW}{\mathfrak{W}}
\newcommand{\fX}{\mathfrak{X}}
\newcommand{\fY}{\mathfrak{Y}}
\newcommand{\fZ}{\mathfrak{Z}}

\newcommand{\isom}{\stackrel{\sim}{\to}}

\newcommand{\Coker}{\mathrm{Coker}}

\newcommand{\et}{\text{\'{e}t}}

\newcommand{\Spec}{\mathrm{Spec}}
\newcommand{\Spf}{\mathrm{Spf}}

\newcommand{\sHom}{\mathcal{H}om}

\title{Log prismatic Dieudonn\'{e} theory and its application to Shimura varieties}

\setcounter{tocdepth}{2}

\author{Kentaro Inoue}

\begin{document}

\begin{abstract}
    We study the log version of the prismatic Dieudonn\'{e} theory established by Ansch\"{u}tz-Le Bras. By applying this result to the integral toroidal compactification of a Shimura variety of Hodge type, we extend the prismatic realization, originally constructed by Imai-Kato-Youcis, to the compactification. This extension enables us to prove Lovering's conjecture on $p$-adic comparison isomorphisms for Shimura varieties.
\end{abstract}

\maketitle

\tableofcontents

\section{Introduction}

Shimura varieties form a class of algebraic varieties that play an important role in number theory and algebraic geometry. A guiding principle for Shimura varieties is that a Shimura variety associated with a Shimura datum $(G,X)$ should be the moduli space parameterizing ``motives with $G$-structure''. In particular, there ought to be ``a universal motive with $G$-structure'' (which is just a conjectural object). Several cohomological realizations of ``a universal motive with $G$-structure'' are constructed so far. As a recent development in this direction, a realization in the framework of prismatic cohomology has been constructed by \cites{iky23,iky24}. 

On the other hand, we have a good theory of compactifications of Shimura varieties. For canonical models of Shimura varieties, see \cite{pin90}. For integral models of Shimura varieties with hyperspecial levels, a theory of compactifications in Siegel cases, PEL-type cases, and Hodge type cases is established in \cite{fc90}, \cite{lan13}, and \cite{mad19}, respectively.
In some cases, ``the universal motive with $G$-structure'' on an interior Shimura variety is known to extend to ``the universal degenerating motive with $G$-structure'' on a toroidal compactification (for example, the realization as log Hodge structures in \cite{ku09} or $G$-zip realization in \cite{gk19}). In this paper, we construct a log prismatic realization of this degenerating object within the framework of log prismatic cohomology established by \cites{kos22,ky23} over toroidal compactifications of the integral canonical models of Shimura varieties of Hodge type with a hyperspecial level constructed in \cite{mad19}. This log prismatic realization restricts to the prismatic realization on the interior constructed in \cites{iky23,iky24}.

The log version of prismatic cohomology theory is established in \cites{kos22,ky23}. Log prismatic cohomology theory has coefficient objects called log prismatic $F$-crystals. We let $\mathrm{Vect}^{\varphi}((\fX,\cM_{\fX})_{\Prism})$ denote the category of log prismatic $F$-crystals on a bounded $p$-adic log formal scheme. Log prismatic cohomology theory enables us to treat $p$-adic Hodge theory on a smooth variety over a $p$-adic field $K$ admitting an open immersion to a proper semi-stable scheme over $\cO_{K}$ while (non-log) prismatic cohomology theory behaves well only for a proper smooth variety over $K$ with good reduction.

We return to Shimura varieties. To state precisely, let $(G,X)$ be a Shimura datum of Hodge type with reflex field $E$. Let $p>2$ be a prime number and fix a place $v$ of $E$ above $p$. Let $E_{v}$ be the completion of $E$ at $v$ and $\cO_{E_{v}}$ be a ring of integers of $E_{v}$. Let $K_{p}\subset G(\bQ_{p})$ be an open compact subgroup and $K^{p}\subset G(\bA^{\infty,p})$ be a neat open compact subgroup. Set $K\coloneqq K_{p}K^{p}$. Then the Shimura variety $\mathrm{Sh}_{K}(G,X)$ is an algebraic variety over $E$. Suppose that $K_{p}$ is hyperspecial and let $\cG$ be a reductive model of $G_{\bQ_{p}}$ over $\bZ_{p}$ with $\cG(\bZ_{p})=K_{p}$. In this situation, for a sufficiently small $K^{p}$, an integral canonical model $\sS_{K}(G,X)$ of $\mathrm{Sh}_{K}(G,X)_{E_{v}}$ over $\cO_{E_{v}}$ is constructed in \cite{kis10}, and the toroidal compactification $\sS_{K}^{\Sigma}(G,X)$ of $\sS_{K}(G,X)$ for a suitable (smooth projective) cone decomposition $\Sigma$ is constructed in \cite{mad19}. Let $\mathrm{Sh}_{K}^{\Sigma}(G,X)\coloneqq \sS_{K}^{\Sigma}(G,X)\otimes_{\cO_{E_{v}}} E_{v}$. Let $\widehat{\sS}_{K}(G,X)$ and $\widehat{\sS}_{K}^{\Sigma}(G,X)$ denote the $p$-adic completion of $\sS_{K}(G,X)$ and $\sS_{K}^{\Sigma}(G,X)$ respectively. Let  $(\sS_{K}^{\Sigma}(G,X),\cM_{\sS_{K}^{\Sigma}(G,X)})$ denote the log scheme equipped with the log structure defined by the boundary divisor. Similarly, we define $(\mathrm{Sh}_{K}^{\Sigma}(G,X),\cM_{\mathrm{Sh}_{K}^{\Sigma}(G,X)})$ and $(\widehat{\sS}_{K}^{\Sigma}(G,X),\cM_{\widehat{\sS}_{K}^{\Sigma}(G,X)})$. Note that $(\widehat{\sS}_{K}^{\Sigma}(G,X),\cM_{\widehat{\sS}_{K}^{\Sigma}(G,X)})$ is a horizontally semi-stable log formal scheme over $\cO_{E_{v}}$ in the sense of \cite[Definition 2.4]{ino25}.

The tower
\[
\varprojlim_{K'_{p}\subset K_{p}} \mathrm{Sh}_{K'_{p}K^{p}}(G,X)\to \mathrm{Sh}_{K}(G,X)
\]
gives a $\bZ_{p}$-linear exact tensor functor
\[
\omega_{\et}\colon \mathrm{Rep}_{\bZ_{p}}(\cG)\to \mathrm{Loc}_{\bZ_{p}}(\mathrm{Sh}_{K}(G,X)),
\]
which is regarded as the \'{e}tale realization of ``the universal motive with $G$-structure'' on $\mathrm{Sh}_{K}(G,X)$. Here, $\mathrm{Rep}_{\bZ_{p}}(\cG)$ denotes the category of representations of $\cG$ over $\bZ_{p}$. By purity, $\omega_{\et}$ induces a $\bZ_{p}$-linear exact tensor functor
\[
\omega_{\mathrm{k\et}}\colon \mathrm{Rep}_{\bZ_{p}}(\cG)\to \mathrm{Loc}_{\bZ_{p}}(\mathrm{Sh}_{K}^{\Sigma}(G,X),\cM_{\mathrm{Sh}_{K}^{\Sigma}(G,X)}),
\]
where the target category is the category of Kummer \'{e}tale $\bZ_{p}$-local systems. The object $\omega_{\mathrm{k\et}}$ can be regarded as the Kummer \'{e}tale realization of ``the universal degenerating motive with $G$-structure'' on $\mathrm{Sh}_{K}^{\Sigma}(G,X)$. In \cite{iky23}, the prismatic realization of ``the universal motive with $G$-structure'' is constructed as a $\bZ_{p}$-linear exact tensor functor
\[
\omega_{\Prism}\colon \mathrm{Rep}_{\bZ_{p}}(\cG)\to \mathrm{Vect}^{\varphi}(\widehat{\sS}_{K}(G,X)_{\Prism})
\]
with $T_{\et}\circ \omega_{\Prism}\cong \omega_{\et}$, where $T_{\et}$ is the \'{e}tale realization functor 
\[
T_{\et}\colon \mathrm{Vect}^{\varphi}(\widehat{\sS}_{K}(G,X)_{\Prism})\to \mathrm{Loc}_{\bZ_{p}}(\widehat{\sS}_{K}(G,X)_{\eta}).
\]
Then we can state our main theorem as follows.

\begin{mainthm}[see Theorem \ref{log pris realization}]\label{mainthm log pris real}
    There exists a unique $\bZ_{p}$-linear exact tensor functor
    \[
    \omega_{\Prism,\mathrm{log}}\colon \mathrm{Rep}_{\bZ_{p}}(\cG)\to \mathrm{Vect}^{\varphi}((\widehat{\sS}^{\Sigma}_{K}(G,X),\cM_{\widehat{\sS}^{\Sigma}_{K}(G,X)})_{\Prism})
    \]
    with $T_{\et}\circ \omega_{\Prism,\mathrm{log}}\cong \omega_{\mathrm{k\et}}$ and $\omega_{\Prism,\mathrm{log}}|_{\widehat{\sS}_{K}(G,X)}\cong \omega_{\Prism}$, where $T_{\et}$ is the \'{e}tale realization functor
    \[
    T_{\et}\colon \mathrm{Vect}^{\varphi}((\widehat{\sS}^{\Sigma}_{K}(G,X),\cM_{\widehat{\sS}^{\Sigma}_{K}(G,X)})_{\Prism})\to \mathrm{Loc}_{\bZ_{p}}((\widehat{\sS}^{\Sigma}_{K}(G,X),\cM_{\widehat{\sS}^{\Sigma}_{K}(G,X)})_{\eta}).
    \]
\end{mainthm}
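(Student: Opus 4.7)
The plan is to reduce to the Siegel case via a Hodge embedding and construct the log prismatic realization from the universal degenerating abelian scheme on the Siegel side, using the log prismatic Dieudonn\'{e} theory developed in the earlier sections of this paper. Fix a symplectic embedding $(G,X)\hookrightarrow (GSp(V),S^{\pm})$ and compatible cone decompositions; by \cite{mad19} this extends to a morphism of integral toroidal compactifications $\sS_{K}^{\Sigma}(G,X)\to \sS_{K'}^{\Sigma'}(GSp(V),S^{\pm})$ that is a morphism of log formal schemes for the boundary log structures, and preserves horizontal semi-stability in the sense of \cite{ino25}. It therefore suffices to produce a canonical log prismatic F-crystal on the Siegel side with the expected \'{e}tale realization, pull it back, and then cut out the $\cG$-equivariant pieces by Hodge tensors.

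\textbf{Siegel construction.} Over $\widehat{\sS}_{K'}^{\Sigma'}(GSp(V),S^{\pm})$ the Mumford-style universal semi-abelian scheme has a $p$-divisible part $\cH$ that, together with the toroidal boundary log structure, carries a canonical log $p$-divisible group structure on the horizontally semi-stable log formal scheme. Applying the log prismatic Dieudonn\'{e} functor from the main body of the paper yields a log prismatic F-crystal $\bD(\cH)$ whose Kummer \'{e}tale realization, by the compatibility of this functor with $T_{\et}$, is the Kummer \'{e}tale Tate module of $\cH$; this agrees with the standard representation piece of $\omega_{\mathrm{k\et}}$ on the Siegel side, and its restriction to the interior recovers the Siegel version of $\omega_{\Prism}$ constructed in \cite{iky23}. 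Pulling back along the Hodge embedding gives a log prismatic F-crystal $\bD_{G}$ on $\widehat{\sS}_{K}^{\Sigma}(G,X)$ with the analogous compatibilities.

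\textbf{Hodge tensors and uniqueness.} For a general $V\in \mathrm{Rep}_{\bZ_{p}}(\cG)$, the object $\omega_{\Prism}(V)$ on the interior is cut out from tensor constructions on $\bD_{G}|_{\widehat{\sS}_{K}(G,X)}$ by morphisms of prismatic F-crystals realising Hodge tensors $s_{\alpha}\in V_{\mathrm{std}}^{\otimes}$. The corresponding Kummer \'{e}tale morphisms exist on all of $\sS_{K}^{\Sigma}(G,X)$ because $\omega_{\mathrm{k\et}}$ is itself defined on the whole compactification by purity. Combining this with the fully faithfulness of $T_{\et}$ on log prismatic F-crystals---a log, integral analogue of the fully faithfulness used in \cite{iky23}, which is accessible from the log prismatic Dieudonn\'{e} theory of the paper together with the horizontally semi-stable framework of \cite{ino25}---one sees that each Hodge tensor extends uniquely from an interior morphism of prismatic F-crystals to a morphism of log prismatic F-crystals on the compactification. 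Functoriality, exactness and tensor compatibility of the resulting $\omega_{\Prism,\mathrm{log}}$ then follow formally from the interior statement, and uniqueness is automatic: an extension is determined by its restriction to the dense interior and its image under $T_{\et}$. The principal obstacle is precisely this fully faithfulness / tensor-extension input for log prismatic F-crystals, i.e.\ controlling the possible boundary behaviour (monodromy of the log structure along the toroidal boundary); this is exactly where the new log prismatic Dieudonn\'{e} theory and the horizontally semi-stable setup are indispensable, since the ordinary (non-log) prismatic theory does not see the boundary divisor.
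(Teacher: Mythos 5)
Your overall route is the paper's: pass to the Siegel compactification, use the universal degenerating abelian scheme to get a log $p$-divisible group, apply the log prismatic Dieudonn\'{e} functor, and match its \'{e}tale realization with $\omega_{\mathrm{k\et}}(\Lambda)$ (the paper does this by pulling back $\sA^{\mathrm{log}}_{\mathrm{univ}}[p^{\infty}]$ and invoking Proposition \ref{et comp for log pris dieudonne}); uniqueness indeed comes from full faithfulness of $T_{\et}$. Two corrections of detail: the correct object is the $p$-power torsion of the Kajiwara--Kato--Nakayama universal \emph{log abelian variety}, not the ``$p$-divisible part of the semi-abelian scheme'' (whose $p^{n}$-torsion is only quasi-finite, not finite flat, at the boundary), and the reduction from all of $\mathrm{Rep}_{\bZ_{p}}(\cG)$ to the single representation $\Lambda$ uses \cite[Proposition 12]{ds09}.

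The genuine gap is your claim that, once the Hodge tensors are extended, ``functoriality, exactness and tensor compatibility of the resulting $\omega_{\Prism,\mathrm{log}}$ then follow formally.'' They do not. Full faithfulness of $T_{\et}$ does let you extend each tensor to a morphism of log prismatic $F$-crystals on the compactification, but to define $\omega_{\Prism,\mathrm{log}}(\xi)$ for an arbitrary $\xi\in\mathrm{Rep}_{\bZ_{p}}(\cG)$ you must know that the pair $(\cE,\bT_{\Prism})$ is \emph{locally trivial}, i.e.\ that $\underline{\mathrm{Isom}}((\Lambda_{0}\otimes_{\bZ_{p}}\cO_{\Prism},\bT_{0}\otimes 1),(\cE,\bT_{\Prism}))$ is an actual $\cG_{\Prism}$-torsor; only then does pushing out along $\cG\to\mathrm{GL}(\xi)$ produce vector bundles and an exact tensor functor (Proposition \ref{fund thm of tann framework}). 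This is the content of Theorem \ref{G-pris crys eq to G-gr loc sys}, and its proof is not formal: one first gets the torsor property on $U(A,I)$ from the analytic/\'{e}tale side, then extends over $\Spec(\fS_{R})$ using normality of $\fS_{R}$, the fact that $U(\fS_{R},(E))$ contains all codimension-one points, and \cite[Proposition A.26]{iky24} --- this is exactly where reductivity of $\cG$ is used --- and finally spreads the torsor to all log prisms via \cite[Lemma 5.12]{ino25}. In particular this step is \emph{not} supplied by the log prismatic Dieudonn\'{e} theory, contrary to your closing sentence: the Dieudonn\'{e} theory only produces the one crystal $\cM_{\Prism}(\sA^{\mathrm{log}}_{\mathrm{univ}}[p^{\infty}])$ realizing $\omega_{\mathrm{k\et}}(\Lambda)$, while the passage to a $\cG$-structured functor rests on the Tannakian/torsor-purity argument (together with full faithfulness and bi-exactness of $T_{\et}$ from \cite{ino25}), which your proposal leaves unaddressed.
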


As an application of our construction, we give the $p$-adic comparison isomorphism on (not necessarily compact) Shimura varieties, which is conjectured in \cite[3.6.2]{lov17}.

\begin{mainthm}(see Theorem \ref{comparison isom on shimura var})\label{mainthm for comp isom}
    Let $(\overline{\sS}_{K}^{\Sigma}(G,X),\cM_{\overline{\sS}_{K}^{\Sigma}(G,X)})$ be the special fiber of $(\sS_{K}^{\Sigma}(G,X),\cM_{\sS_{K}^{\Sigma}(G,X)})$. Consider a $\bZ_{p}$-linear exact tensor functor
    \[
    \omega_{\mathrm{fisoc}}\colon \mathrm{Rep}_{\bZ_{p}}(\cG)\to \mathrm{FilIsoc}^{\varphi}(\widehat{\sS}^{\Sigma}_{K}(G,X),\cM_{\widehat{\sS}^{\Sigma}_{K}(G,X)})
    \]
    defined by $\omega_{\mathrm{fisoc}}\coloneqq T_{\mathrm{fisoc}}\circ \omega_{\Prism,\mathrm{log}}$, where $T_{\mathrm{fisoc}}$ is the realization functor as filtered $F$-isocrystals (see \cite[Theorem 6.18]{ino25}). Then the $\mathrm{Gal}(\overline{E_{v}}/E_{v})$-representation
    \[
    H^{i}_{\et}(\mathrm{Sh}_{K}(G,X)_{\overline{E}_{v}},\omega_{\et}(\xi)[1/p])
    \]
    is crystalline, and we have a comparison isomorphism
    \begin{align*}
    &H^{i}_{\et}(\mathrm{Sh}_{K}(G,X)_{\overline{E}_{v}},\omega_{\et}(\xi)[1/p])\otimes_{\bQ_{p}} B_{\mathrm{crys}} \\
    &\cong H^{i}_{\mathrm{logcrys}}((\overline{\sS}_{K}^{\Sigma}(G,X),\cM_{\overline{\sS}_{K}^{\Sigma}(G,X)}),\omega_{\mathrm{fisoc}}(\xi))\otimes_{W} B_{\mathrm{crys}}    
    \end{align*}
    that is compatible with Galois actions and Frobenius isomorphisms and a filtered isomorphism after taking the base change along $B_{\mathrm{crys}}\to B_{\mathrm{dR}}$ for any $\xi\in \mathrm{Rep}_{\bZ_{p}}(\cG)$.
\end{mainthm}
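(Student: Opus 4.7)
The plan is to deduce Theorem \ref{mainthm for comp isom} from Theorem \ref{mainthm log pris real} by taking Kummer \'{e}tale and log crystalline cohomology of the log prismatic $F$-crystal $\cE\coloneqq\omega_{\Prism,\mathrm{log}}(\xi)$, then applying a $p$-adic comparison theorem for log prismatic $F$-crystals on proper horizontally semi-stable log formal schemes. By Theorem \ref{mainthm log pris real} we have $T_{\et}(\cE)[1/p]\cong \omega_{\mathrm{k\et}}(\xi)[1/p]$, and by definition $T_{\mathrm{fisoc}}(\cE)=\omega_{\mathrm{fisoc}}(\xi)$; these are the two realizations whose cohomologies the desired comparison relates.

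I would first identify the \'{e}tale cohomology on the left-hand side of Theorem \ref{mainthm for comp isom} with Kummer \'{e}tale cohomology of the compactification. Since $\mathrm{Sh}_{K}(G,X)_{E_{v}}\hookrightarrow \mathrm{Sh}_{K}^{\Sigma}(G,X)_{E_{v}}$ is an open immersion with simple normal crossings boundary and $\omega_{\mathrm{k\et}}(\xi)$ restricts to $\omega_{\et}(\xi)$ on the interior, purity for Kummer \'{e}tale cohomology yields a canonical isomorphism
\[
H^{i}_{\et}(\mathrm{Sh}_{K}(G,X)_{\overline{E}_{v}},\omega_{\et}(\xi)[1/p])\cong H^{i}_{\mathrm{k\et}}((\mathrm{Sh}_{K}^{\Sigma}(G,X),\cM_{\mathrm{Sh}_{K}^{\Sigma}(G,X)})_{\overline{E}_{v}},\omega_{\mathrm{k\et}}(\xi)[1/p]).
\]

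Since $\sS_{K}^{\Sigma}(G,X)$ is proper over $\cO_{E_{v}}$ by \cite{mad19}, the $p$-adic completion $(\widehat{\sS}_{K}^{\Sigma}(G,X),\cM_{\widehat{\sS}_{K}^{\Sigma}(G,X)})$ is a proper horizontally semi-stable log formal scheme over $\cO_{E_{v}}$. The central technical input is a $B_{\crys}$-comparison theorem for log prismatic $F$-crystals in this setting: for any log prismatic $F$-crystal $\cE'$ on a proper horizontally semi-stable $(\fX,\cM_{\fX})$ over $\cO_{E_{v}}$, there is a natural isomorphism
\begin{align*}
&H^{i}_{\mathrm{k\et}}((\fX,\cM_{\fX})_{\eta},T_{\et}(\cE')[1/p])\otimes_{\bQ_{p}} B_{\crys}\\
&\cong H^{i}_{\mathrm{logcrys}}((\overline{\fX},\cM_{\overline{\fX}}),T_{\mathrm{fisoc}}(\cE'))\otimes_{W} B_{\crys}
\end{align*}
compatible with Galois actions and Frobenius, and with filtrations after $\otimes_{B_{\crys}}B_{\mathrm{dR}}$. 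The horizontal semi-stability is precisely what forces the monodromy operator to vanish, so that $B_{\crys}$ (rather than $B_{\mathrm{st}}$) naturally appears; the existence of such an isomorphism immediately implies that the left-hand side is a crystalline $\mathrm{Gal}(\overline{E_{v}}/E_{v})$-representation. Applied to $\cE'=\cE$ and combined with the identification of the previous paragraph, this yields the theorem.

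The main obstacle is to establish the coefficient-enriched $p$-adic comparison theorem above: it extends the trivial-coefficient horizontally semi-stable comparison, presumably handled in the author's \cite{ino25}, to arbitrary log prismatic $F$-crystals, and requires combining the log prismatic cohomology machinery of \cites{kos22,ky23} with the realization functors $T_{\et}$ and $T_{\mathrm{fisoc}}$ in a cohomology-compatible way. The remaining steps --- purity in the second paragraph and properness from \cite{mad19} --- are essentially formal, and both crystallinity and the filtered compatibility are built into the comparison isomorphism itself.
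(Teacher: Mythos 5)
Your overall shape is right (both sides are realizations of $\cE=\omega_{\Prism,\mathrm{log}}(\xi)$, and one wants a comparison relating the cohomology of $T_{\et}(\cE)$ and of $T_{\mathrm{fisoc}}(\cE)$), but as written the proposal has a genuine gap: the statement you call ``the central technical input'' --- a $B_{\crys}$-comparison for arbitrary log prismatic $F$-crystals on proper horizontally semi-stable log formal schemes --- is exactly the content of the theorem, and you leave it unproven, proposing to build it from the log prismatic cohomology machinery of Koshikawa--Yao together with the realization functors. That is not how the paper closes the argument, and nothing in the paper develops such a prismatic-cohomological comparison with coefficients; so your reduction does not yet constitute a proof.

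The missing idea is that no new comparison theorem is needed. By \cite[Theorem 6.18]{ino25} (the same result that defines $T_{\mathrm{fisoc}}$ in the statement), for any log prismatic $F$-crystal the Kummer \'etale realization $T_{\et}(\cE)[1/p]$ and the filtered $F$-isocrystal $T_{\mathrm{fisoc}}(\cE)$ are \emph{associated} in the sense of \cite[Definition 4.18(2)]{ino25}; applied to $\cE=\omega_{\Prism,\mathrm{log}}(\xi)$ this says $\omega_{\mathrm{k\et}}(\xi)[1/p]$ is associated with $\omega_{\mathrm{fisoc}}(\xi)$. Once associatedness is in hand, Faltings' relative $p$-adic Hodge theory \cite[Theorem 6.3]{fal89} directly yields both the crystallinity of $H^{i}_{\et}(\mathrm{Sh}_{K}(G,X)_{\overline{E}_{v}},\omega_{\et}(\xi)[1/p])$ and the $B_{\crys}$-comparison with the log crystalline cohomology of $\omega_{\mathrm{fisoc}}(\xi)$ on the special fiber of the toroidal compactification, with the stated compatibilities; in particular Faltings' theorem already takes as input the \'etale cohomology of the open variety, so the purity reduction to Kummer \'etale cohomology in your second paragraph, while harmless, is not needed. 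In short: replace your unproven coefficient comparison by the associatedness statement from \cite{ino25} plus \cite{fal89}, and the rest of your outline goes through.
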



Let us explain our construction. One of the key steps is the construction of a log prismatic $F$-crystal on $\widehat{\sS}_{K}^{\Sigma}(G,X)$ which is the extension of the prismatic $F$-crystal $R^{1}f_{*}\cO_{\Prism}$ on $\sS_{K}(G,X)$, where $f\colon \sA_{\mathrm{univ}}\to \sS_{K}(G,X)$ is the universal abelian scheme (i.e. the pullback of the universal abelian scheme on the Siegel modular variety along the Hodge embedding).

For simplicity, we assume that $(G,X)=(\mathrm{GSp},S^{\pm})$. The universal abelian variety $\sA_{\mathrm{univ}}$ on $\sS_{K}(G,X)$ degenerates to a semi-abelian scheme $\sA_{\mathrm{univ}}^{\mathrm{sab}}$ on $\sS_{K}^{\Sigma}(G,X)$. However, the relative prismatic cohomology of $\sA_{\mathrm{univ}}^{\mathrm{sab}}\to \sS_{K}^{\Sigma}(G,X)$ does not work well because this morphism is not proper. As a substitute, we have two candidates: the log abelian scheme on $\sS_{K}^{\Sigma}(G,X)$ extending $\sA_{\mathrm{univ}}$ and a compactification of $\sA_{\mathrm{univ}}^{\mathrm{sab}}$ constructed in \cite[Theorem 2.15]{lan12}. In this paper, we adopt the former one.

The notion of log abelian schemes is introduced by Kajiwara-Kato-Nakayama, and fundamental theory of them is developed in a series of papers \cites{kkn08,kkn15,kkn18,kkn19,kkn21,kkn22}. As an application of their works, $\sS_{K}^{\Sigma}(G,X)$ is reinterpreted as the moduli space of principally polarized log abelian schemes with level $K$-structure and local monodromies in $\Sigma$ (see \cite{kkn21} or \cite[Theorem 2.2.2, Proposition 4.3.4]{kkn22}). In particular, $\sA_{\mathrm{univ}}$ on $\sS_{K}(G,X)$ extends to the universal log abelian scheme $\sA^{\mathrm{log}}_{\mathrm{univ}}$ on $\sS_{K}^{\Sigma}(G,X)$. Then ``the relative log prismatic cohomology of $\sA^{\mathrm{log}}_{\mathrm{univ}}$ over $\widehat{\sS}_{K}^{\Sigma}(G,X)$'' should be the desired object. However, it seems technically difficult to consider ``the log prismatic cohomology of log abelian schemes'' because log abelian varieties are just sheaves (not even log algebraic spaces). To avoid this difficulty, we pass to the associated log $p$-divisible groups which are easier to treat.

The notion of log $p$-divisible groups is introduced by Kato in \cite{kat23}, and we can associate with log abelian schemes log $p$-divisible groups by taking $p$-power torsions as in non-log cases. The prismatic Dieudonn\'{e} theory established in \cite{alb23} is generalized to log $p$-divisible groups.

\begin{mainthm}[see Theorem
\ref{log pris dieudonne equiv}]\label{mainthm log pris dieudonne}
    Let $(\fX,\cM_{\fX})$ be a semi-stable log formal scheme over $\cO_{K}$ in the sense of \cite[Definition 2.4]{ino25}. Then there exists an exact anti-equivalence 
    \[
    \cM_{\Prism}\colon \mathrm{BT}(\fX,\cM_{\fX})\to \mathrm{DM}((\fX,\cM_{\fX})_{\Prism})
    \]
    (which is called a \emph{log prismatic Dieudonn\'{e} functor}), where $\mathrm{BT}(\fX,\cM_{\fX})$ is the category of log $p$-divisible groups over $(\fX,\cM_{\fX})$ and $\mathrm{DM}((\fX,\cM_{\fX})_{\Prism})$ is the category of log prismatic Dieudonn\'{e} crystals on $(\fX,\cM_{\fX})$ (for the definition, see Definition \ref{def of log pris crys}).
\end{mainthm}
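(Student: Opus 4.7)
The plan is to follow the strategy of Ansch\"{u}tz-Le Bras and adapt it to the logarithmic setting. First, one defines $\cM_{\Prism}$ by sending a log $p$-divisible group $H$ to the $\varphi$-module $\Ext^{1}_{(\fX,\cM_{\fX})_{\Prism}}(H,\cO_{\Prism})$ computed on the log prismatic site, with exactness following from the long exact sequence once the vanishing of $\Hom(H,\cO_{\Prism})$ is verified (reducing to the analogous vanishing in \cite{alb23}). The verification that this object actually lies in $\mathrm{DM}((\fX,\cM_{\fX})_{\Prism})$ is local, so one may reduce to the case where $(\fX,\cM_{\fX})$ admits a standard semi-stable chart of the form $\Spf(\cO_{K}\{t_{0},\dots,t_{n}\}/(t_{0}\cdots t_{r}-\pi))$ with the associated NCD log structure.

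For full faithfulness, let $j\colon \fU \hookrightarrow \fX$ denote the open formal subscheme where $\cM_{\fX}$ is trivial. By a purity-type statement for log prismatic $F$-crystals on horizontally semi-stable formal schemes -- which should be available from the foundational log prismatic theory of \cites{kos22,ky23} and the comparison material of \cite{ino25} -- the restriction functor $j^{*}\colon \mathrm{DM}((\fX,\cM_{\fX})_{\Prism})\to \mathrm{DM}(\fU_{\Prism})$ is fully faithful. On the geometric side, $j^{*}\colon \mathrm{BT}(\fX,\cM_{\fX})\to \mathrm{BT}(\fU)$ is fully faithful by Kato's theory \cite{kat23}, since a log $p$-divisible group over a semi-stable base is determined by its restriction to the trivial-log locus together with the Tate/Mumford monodromy data, the latter being recovered by descent from $\fU$. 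Combining these two with the non-log Ansch\"{u}tz-Le Bras equivalence on $\fU$ gives full faithfulness of $\cM_{\Prism}$.

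For essential surjectivity, given $M \in \mathrm{DM}((\fX,\cM_{\fX})_{\Prism})$, restrict to $\fU$ and invoke \cite{alb23} to obtain a $p$-divisible group $H_{0}$ on $\fU$. The task is to extend $H_{0}$ to a log $p$-divisible group $H$ on $(\fX,\cM_{\fX})$ with $\cM_{\Prism}(H)\cong M$. Work \'{e}tale-locally at a boundary point on the standard chart above. By Kato's classification, a log $p$-divisible group on such a local model is described by Mumford/Tate data: an ordinary $p$-divisible group $G$ on a formal neighborhood, together with a lattice map from the character group of the local toric part of $\cM_{\fX}$ (encoding the degeneration). This is precisely the information carried by $M$ near the boundary: the restriction of $M$ to a suitable local log prism decomposes as an extension whose pure-slope quotient records the lattice/monodromy datum, and whose remaining piece corresponds to $G$ via the non-log theory. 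One then reads off $G$ and the Mumford lattice from $M$, assembles the local log $p$-divisible group, and glues using the full faithfulness established above.

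The main obstacle is the explicit local dictionary between the monodromy structure on $M$ at the boundary and the Mumford data defining $H$, in a form that is functorial and exact enough to glue. This is the log analogue of the hardest step in \cite{alb23}, and it requires controlling both the Hodge filtration (on the geometric side) and the Nygaard filtration (on the prismatic side) under the local decomposition. Once this dictionary is established, the exactness of $\cM_{\Prism}$ and the compatibility with the open-locus restriction formally force the glued log $p$-divisible group to be an essential preimage, completing the anti-equivalence.
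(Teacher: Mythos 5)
Your strategy hinges on restriction to the open locus $\fU\subset\fX$ where the log structure is trivial, but for the semi-stable (vertical) log formal schemes covered by the theorem this locus is \emph{empty}. Étale locally $\fX=\Spf(\cO_{K}\langle t_{0},\dots,t_{n}\rangle/(t_{0}\cdots t_{r}-\pi))$ with log structure generated by $t_{0},\dots,t_{r}$ together with the standard log structure of $\cO_{K}$; since $t_{0}\cdots t_{r}=\pi$ vanishes on the whole underlying space of the $p$-adic formal scheme, the stalk of $\overline{\cM}_{\fX}$ is non-trivial at every point (already for $\Spf(\cO_{K})$ with its standard log structure, the case treated by W\"urthen--Zhao, one has $\fU=\emptyset$). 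So the purity claim for $j^{*}$ on Dieudonn\'e crystals, the claimed full faithfulness of $\mathrm{BT}(\fX,\cM_{\fX})\to\mathrm{BT}(\fU)$, and the extension of $H_{0}$ from $\fU$ are all vacuous or false in exactly the situations the theorem is about; the argument cannot even get started there. Even in the horizontally semi-stable case where $\fU$ is dense, the two full-faithfulness inputs are asserted rather than proved, and you yourself flag the essential-surjectivity step (the ``local dictionary'' between monodromy data on $M$ at the boundary and Mumford data for $H$) as the main obstacle --- but that is precisely the mathematical content of the theorem, so leaving it open leaves the proof incomplete. The initial definition via $\mathcal{E}xt^{1}$ on the log prismatic site is also problematic: one would have to show this yields a crystal (a vector bundle on the log prismatic site) and that the relevant $\mathcal{H}om$-vanishing and exactness reduce to the non-log case, none of which is automatic.

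For comparison, the paper proceeds quite differently: it enlarges both sides to weak log $p$-divisible groups and kfl prismatic Dieudonn\'e crystals, proves pro-kfl descent for the former and ``Kummer quasi-syntomic'' descent for the latter along covers of the form $\fX_{\infty,\alpha}\to\fX$ obtained by adjoining all $p$-power roots of a chart, and deduces an anti-equivalence of the enlarged categories directly from the Ansch\"utz--Le Bras equivalence after such a cover (where everything becomes classical and the base is perfectoid-accessible). The theorem is then obtained by showing the equivalence restricts: classicality of $\cM_{\Prism}(G)$ for $G\in\mathrm{BT}(\fX,\cM_{\fX})$ via the connected-\'etale sequence and the extension property of classical kfl vector bundles, and, conversely, classicality of $G$ for classical $\cE$ via the crystalline realization, the Hodge filtration exact sequence, and Kato's criterion that $G$ is a log $p$-divisible group iff $\mathrm{Lie}(G)$ and $\mathrm{Lie}(G^{*})$ are classical. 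If you want to salvage your outline, this descent-and-cut-down mechanism is the missing idea that replaces both your purity step and your unproved boundary dictionary.
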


In \cite{alb23}, it is proved that the relative prismatic cohomology of an abelian scheme is isomorphic to the prismatic Dieudonn\'{e} crystal of the associated $p$-divisible group. As considered by analogy, the log prismatic Dieudonn\'{e} crystal of $\sA_{\mathrm{univ}}^{\mathrm{log}}[p^{\infty}]$ is the desired object.

The idea of the proof of Theorem \ref{mainthm log pris dieudonne} is to reduce the problem to non-log cases (\cite{alb23}) by using ``Kummer quasi-syntomic descent''. Since the category of prismatic crystals does not satisfy this type of descent properties, we define a larger category whose objects are called \emph{kfl prismatic crystals} (Definition \ref{def of kfl pris crys}) and prove ``Kummer quasi-syntomic descent'' for kfl prismatic crystals (Proposition \ref{kqsyn descent for kfl log pris crys}). In a similar way, while the category of log $p$-divisible groups does not satisfy such descent property, we can define a larger category whose objects are called \emph{weak log $p$-divisible groups} (which is introduced in \cite{kat23}) and prove ``pro-Kummer log flat descent'' for weak log $p$-divisible groups (Proposition \ref{pro-kfl descent for log BT}). These descent properties and results in non-log cases (\cite{alb23}) gives an equivalence between larger categories (Proposition \ref{log pris dieudonne weak ver}). To prove that this  restricts to the desired equivalence, we use crystalline realization functors. In this step, we need the assumption that $(\fX,\cM_{\fX})$ is semi-stable log formal scheme over $\cO_{K}$.

\begin{rem}\label{gap in wz}
    Theorem \ref{mainthm log pris dieudonne} is proved in \cite{wz23} when $(\fX,\cM_{\fX})$ is $\mathrm{Spf}(\cO_{K})$ with the standard log structure, and they use a similar approach. They define the category of kfl prismatic crystals as the category of vector bundles on the log prismatic site equipped with Kummer log flat topology (\cite[Definition 2.1]{wz23}). However, it is doubtful that the log prismatic site equipped with Kummer log flat topology is indeed a site because coverings in this site may not be stable under base change due to the fact that the underlying ring map of a Kummer log flat map is not necessarily flat.
\end{rem}

\subsection*{Acknowledgements}
The author is grateful to his advisor, Tetsushi Ito, for useful discussions and warm encouragement. A special thanks goes to Teruhisa Koshikawa for innumerable discussions and advice. Moreover, the author would like to thank Kai-Wen Lan, Shengkai Mao, Peihang Wu, and Alex Youcis for helpful comments. This work was supported by JSPS KAKENHI Grant Number 23KJ1325.

\subsection*{Notation and conventions}\noindent

\begin{itemize}
\item The symbol $p$ always denotes a prime.
\item All rings and monoids are commutative.
\item Limits of categories mean $2$-limits. 
\item Formal schemes are assumed to admit a finitely generated ideal of definition Zariski locally. For a property $P$ of morphisms of schemes, an adic morphism of formal schemes is called $\cP$ if it is adically $P$.
\item For a monoid $P$ and an integer $n\geq 1$, let $P^{1/n}$ denote the monoid $P$ with $P\to P^{1/n}$ mapping $p$ to $p^{n}$. The colimit of $P^{1/n}$ with respect to $n\geq 1$ is denoted by $P_{\bQ_{\geq 0}}$.
\item The $i$-th standard basis of $\bN^{r}$ is denote by $e_{i}$ for $1\leq i\leq r$.
\item For $I$-adically complete ring $A$ and a monoid $M$, a ring $A\langle M\rangle$ denotes the $I$-adic completion of the monoid algebra $A[M]$.
\item For a prelog ring $(A,M)$, the associated log scheme is denoted by $(\mathrm{Spec}(A),M)^{a}$. 
\item Let $\fX$ be a formal scheme and $n\geq 1$ be an integer. Assume that $p$ is topologically nilpotent on $\fX$ when $n=1$. The category of finite and locally free group schemes (resp. truncated Barsotti-Tate groups of level $n$) (resp. $p$-divisible groups) on $\fX$ is denoted by $\mathrm{Fin}(\fX)$ (resp. $\mathrm{BT}_{n}(\fX)$) (resp. $\mathrm{BT}(\fX)$). 
\end{itemize}

We refer readers to \cite{ino25} for our notation and terminology concerning the followings:
\begin{itemize}
    \item semi-stable log formal schemes, small affine log formal schemes, and framings (see Section 2 in \emph{loc. cit.});
    \item (strict) absolute log crystalline sites (see Definition 3.1 \emph{loc. cit.});
    \item (strict) absolute log prismatic sites (see Definition 5.3 in \emph{loc. cit.});
    \item Breuil-Kisin log prisms $(\fS_{R},(E),\cM_{\fS_{R}})$ (see Construction 5.10 in \emph{loc. cit.});
    \item Breuil rings $(S_{R},\cM_{S_{R}})$ (see Construction 6.7).
\end{itemize}

\section{Kfl topology on fs log formal schemes}

\subsection{Kummer morphisms of log formal schemes}

We recall fundamental properties of Kummer morphisms. A monoid map $f\colon P\to Q$ of saturated monoids is called \emph{Kummer} if
the following conditions are satisfied:
\begin{enumerate}
    \item $f$ is injective;
    \item for every $q\in Q$, there exist $p\in P$ and an integer $n\geq 1$ such that $f(p)=q^{n}$.
\end{enumerate}

\begin{dfn}[Kummer morphisms of log formal schemes]
Let $f\colon (\fX,\cM_{\fX})\to (\fY,\cM_{\fY})$ be an adic morphism of saturated log formal schemes. The morphism $f$ is called \emph{Kummer} if, for every $x\in \fX$, the monoid map $\overline{\cM_{\fY,\overline{y}}}\to \overline{\cM_{\fX,\overline{x}}}$ is Kummer. Here, we set $y\coloneqq f(x)$.
\end{dfn}

\begin{lem}[{\cite[Proposition 2.7 (2)]{kat21}}] \label{kummer map is strict after n-power ext}
    Let $f\colon (\fX,\cM_{\fX})\to (\fY,\cM_{\fY})$ be a Kummer morphism of fs log formal schemes. Suppose that $\fX$ is quasi-compact and that we are given an fs chart $P\to \cM_{\fY}$. Then there exists an fs monoid $Q$ and a Kummer map $P\to Q$ such that, if we put $(\fY',\cM_{\fY'})\coloneqq(\fY,\cM_{\fY})\times_{(\bZ[P],P)^{a}} (\bZ[Q],Q)^{a}$, the natural morphism 
    \[
    (\fX,\cM_{\fX})\times_{(\fY,\cM_{\fY})} (\fY',\cM_{\fY'})\to (\fY',\cM_{\fY'})
    \]
    is strict. Moreover, we can take as $P\to Q$ the map $P\to P^{1/n}$ for an integer $n\geq 1$ which kills $\Coker((f^{*}\cM_{\fY})^{\mathrm{gp}}\to \cM_{\fX}^{\mathrm{gp}})$.
\end{lem}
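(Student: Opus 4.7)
The strategy is to take $Q:=P^{1/n}$ for a single integer $n$ produced from the Kummer data, and verify strictness of the base-changed morphism \'etale-locally by a pushout computation on chart monoids.

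The first step is to globalize the Kummer indices. At each geometric point $\bar{x}\in \fX$ with image $\bar{y}\in \fY$, the Kummer hypothesis makes the abelian group $\overline{\cM_{\fX,\bar{x}}}^{\mathrm{gp}}/\overline{\cM_{\fY,\bar{y}}}^{\mathrm{gp}}$ appear as a finite stalk of the constructible \'etale sheaf $\Coker((f^{*}\cM_{\fY})^{\mathrm{gp}}\to \cM_{\fX}^{\mathrm{gp}})$. Because $\fX$ is quasi-compact, only finitely many finite groups arise as stalks, so there exists $n\geq 1$ annihilating all of them; fix this $n$ and set $Q:=P^{1/n}$.

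Next, working \'etale-locally on $\fX$, I would produce an fs chart $Q_{0}\to \cM_{\fX}$ compatible with $P\to \cM_{\fY}$ such that $P\to Q_{0}$ is Kummer and $Q_{0}^{\mathrm{gp}}/P^{\mathrm{gp}}$ is a subquotient of the stalkwise cokernel, and hence still killed by $n$. The essential algebraic input is that any Kummer inclusion $P\hookrightarrow Q_{0}$ of sharp fs monoids is automatically exact: if $q\in Q_{0}$ lies in $P^{\mathrm{gp}}$, then the Kummer condition gives $mq\in P$ for some $m\geq 1$, and saturation of $P$ then forces $q\in P$. Combined with the choice $nQ_{0}^{\mathrm{gp}}\subseteq P^{\mathrm{gp}}$, exactness promotes to $nQ_{0}\subseteq P$, which is equivalent to a monoid embedding $Q_{0}\hookrightarrow P^{1/n}$ inside $P^{\mathrm{gp}}_{\bQ}$ compatible with the maps from $P$.

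Finally, the fs pushout collapses. One computes that the group pushout $Q_{0}^{\mathrm{gp}}\oplus_{P^{\mathrm{gp}}}(P^{1/n})^{\mathrm{gp}}$ has torsion subgroup isomorphic to $Q_{0}^{\mathrm{gp}}/P^{\mathrm{gp}}$, whose torsion-free quotient is $(P^{1/n})^{\mathrm{gp}}$, as one sees via the map induced by the cocone $(Q_{0}\hookrightarrow P^{1/n},\,\id_{P^{1/n}})$; the resulting integral monoid is then $Q_{0}+P^{1/n}=P^{1/n}$ (using $Q_{0}\subseteq P^{1/n}$), which is already saturated, so $Q_{0}\oplus_{P}^{\mathrm{fs}}P^{1/n}=P^{1/n}$. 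This identifies the local chart of $\fX\times_{\fY}\fY'$ with the chart $P^{1/n}$ of $\fY'$, proving strictness at the chosen point and hence globally. The main obstacle is the production in the second step of the local fs chart $Q_{0}\to \cM_{\fX}$ with $P\to Q_{0}$ Kummer and cokernel controlled by the stalk; this is the standard fs chart theory for Kummer morphisms of log formal schemes and is essentially the content of the cited Kato proposition.
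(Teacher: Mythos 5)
Your argument has a genuine gap, and it sits exactly where you flag "the main obstacle": the claim that, \emph{\'etale-locally} on $\fX$, one can find an fs chart $Q_{0}\to \cM_{\fX}$ fitting into a commuting square over the given $P\to \cM_{\fY}$ with $P\to Q_{0}$ Kummer. This is false in general when the Kummer index is divisible by the residue characteristic. Concretely, take $p>2$, $\fY=\Spec(\bZ_{p})$ with chart $P=\bN\to\cO$, $1\mapsto p$, and $\fX=\Spec\bigl(\bZ_{p}[x]/(x^{p}-(1+p)p)\bigr)$ with log structure generated by $x$; the structure morphism is Kummer of index $p$. A chart of $f$ of the required shape would force, in the strict henselization, an identity $\zeta w^{p}x^{p}=p=(1+p)^{-1}x^{p}$ with $\zeta$ a root of unity, i.e.\ it would force $1+p$ to become a $p$-th power (up to prime-to-$p$ roots of unity) \'etale-locally, which fails (a different/ramification computation shows $(1+p)^{1/p}\notin \bQ_{p}^{\mathrm{ur}}\bigl(((1+p)p)^{1/p}\bigr)$). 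Extracting $p$-th roots of units is only an fppf operation in residue characteristic $p$; this is precisely why Kato's chart statements for Kummer log flat morphisms are fppf-local and why his Proposition 2.7(2) is formulated as a base-change statement rather than a chart-existence statement. Deferring this step to "standard fs chart theory\dots essentially the content of the cited Kato proposition" makes your argument circular, since that proposition is the statement being proved. (A secondary slip: the saturated pushout $Q_{0}\oplus_{P}^{\mathrm{sat}}P^{1/n}$ is not $P^{1/n}$; it contains the finite group $Q_{0}^{\mathrm{gp}}/P^{\mathrm{gp}}$ as torsion units. This does not by itself destroy strictness, because the discrepancy maps to units and dies in the characteristic monoids, but the identity as you state it is incorrect.)

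For comparison, the paper does not reprove the monoid-theoretic content at all: since strictness of the base-changed morphism and the relevant fs fiber products can be checked after reduction modulo (powers of) an ideal of definition, the statement for log \emph{formal} schemes reduces to the scheme case, which is literally Kato's Proposition 2.7(2). If you want a self-contained proof rather than a citation, the viable route is not an \'etale-local Kummer chart on $\fX$, but a direct computation of characteristic monoids of the fs fiber product at each point: after base change along $P\to P^{1/n}$ with $n$ killing $\Coker((f^{*}\cM_{\fY})^{\mathrm{gp}}\to\cM_{\fX}^{\mathrm{gp}})$, every section of $\overline{\cM}_{\fX,\overline{x}}$ lies in the saturation of the image of $P^{1/n}$, and the sharpened saturated pushout collapses onto $\overline{\cM}_{\fY',\overline{y}'}$; your first step (using quasi-compactness and constructibility to find one $n$) and the spirit of your third step would then be salvageable.
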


\begin{proof}
    Since we can reduce the problem to scheme cases, this follows from \cite[Proposition 2.7 (2)]{kat21}.
\end{proof}

The following lemma is a generalization of \cite[Lemma 2.4]{kat21} to non-fs cases.

\begin{lem}[cf. {\cite[Lemma 2.4 (2)]{kat21}}]\label{four point lemma in nonfs case}
    Consider the following Cartesian diagram in the category of saturated log formal schemes:
    \[
    \begin{tikzcd}
        (\fX',\cM_{\fX'}) \ar[r,"g'"] \ar[d,"f'"] & (\fX,\cM_{\fX}) \ar[d,"f"] \\
        (\fY',\cM_{\fY'}) \ar[r,"g"] & (\fY,\cM_{\fY}).
    \end{tikzcd}
    \]
    Suppose that $(\fY,\cM_{\fY})$ is fs and that $f$ is Kummer. Then, for $x\in \fX$ and $y'\in \fY'$ such that $f(x)=g(y')$, there exists $x'\in \fX'$ satisfying $g'(x')=x$ and $f'(x')=y'$.
\end{lem}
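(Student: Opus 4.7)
The plan is to reduce, via a Kummer base change on an fs chart of $\fY$, to the case where $f$ is strict; in this situation saturated fiber products of log formal schemes agree with the underlying scheme-theoretic fiber products, and the conclusion follows from the standard surjectivity of scheme fiber products onto set-theoretic fiber products.

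First, I would localize at $x$, $y \coloneqq f(x) = g(y')$, and $y'$, and pick an fs chart $P \to \cM_{\fY}$. By Lemma \ref{kummer map is strict after n-power ext} applied to the Kummer morphism $f$, there exists $n \geq 1$ such that, setting $(\tilde\fY, \cM_{\tilde\fY}) \coloneqq (\fY, \cM_{\fY}) \times_{(\bZ[P], P)^a} (\bZ[P^{1/n}], P^{1/n})^a$, the base change $\tilde f\colon (\tilde\fX, \cM_{\tilde\fX}) \coloneqq (\fX, \cM_{\fX}) \times_{(\fY, \cM_{\fY})} (\tilde\fY, \cM_{\tilde\fY}) \to (\tilde\fY, \cM_{\tilde\fY})$ is strict. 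I also form $(\tilde\fY', \cM_{\tilde\fY'}) \coloneqq (\fY', \cM_{\fY'}) \times^{\mathrm{sat}}_{(\fY, \cM_{\fY})} (\tilde\fY, \cM_{\tilde\fY})$, taken in saturated log formal schemes.

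The underlying ring map $A \to A \otimes_{\bZ[P]} \bZ[P^{1/n}]$ (where $\fY = \Spf A$) is a finite integral extension, and this integrality is preserved under arbitrary base change. Combined with the fact that saturation of integral monoids induces integral ring extensions, it follows that the projections $|\tilde\fY| \to |\fY|$, $|\tilde\fX| \to |\fX|$, and $|\tilde\fY'| \to |\fY'|$ are surjective on underlying points. I then pick $\tilde y \in |\tilde\fY|$ over $y$; by strictness of $\tilde f$, the underlying scheme of $(\tilde\fX, \cM_{\tilde\fX})$ is $\fX \times_{\fY} \tilde\fY$ (the pulled-back log structure is already saturated), and standard surjectivity of scheme fiber products lifts $(x, \tilde y)$ to $\tilde x \in |\tilde\fX|$; similarly I lift $(y', \tilde y)$ to $\tilde y' \in |\tilde\fY'|$. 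Strictness of $\tilde f$ again implies that $(\tilde\fX, \cM_{\tilde\fX}) \times^{\mathrm{sat}}_{(\tilde\fY, \cM_{\tilde\fY})} (\tilde\fY', \cM_{\tilde\fY'})$ has underlying scheme $\tilde\fX \times_{\tilde\fY} \tilde\fY'$, so standard surjectivity produces $\tilde z$ over both $\tilde x$ and $\tilde y'$. By cancellation of saturated fiber products, this coincides with $(\fX', \cM_{\fX'}) \times^{\mathrm{sat}}_{(\fY, \cM_{\fY})} (\tilde\fY, \cM_{\tilde\fY})$, and the image of $\tilde z$ in $|\fX'|$ is the desired $x'$ with $g'(x') = x$ and $f'(x') = y'$.

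The main obstacle, distinguishing this from the fs case in \cite[Lemma 2.4(2)]{kat21}, is controlling the underlying scheme of the saturated fiber product $\tilde\fY'$ when $(\fY', \cM_{\fY'})$ is only saturated and not fs: the integralization and saturation of the pushout monoid chart $M \oplus_{P} P^{1/n}$ (where $M$ is a saturated chart of $\fY'$) must induce a surjection on $\Spec$. This step, where the Kummer hypothesis on $P \to P^{1/n}$ enters critically to ensure the pushout behaves well with respect to integralization, is precisely the additional input needed beyond the fs setting.
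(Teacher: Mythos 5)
There is a genuine gap, and it is located at the very first step. You apply Lemma \ref{kummer map is strict after n-power ext} to $f$ to find a single $n\geq 1$ such that the base change of $f$ along $(\bZ[P^{1/n}],P^{1/n})^{a}\to(\bZ[P],P)^{a}$ becomes strict. But that lemma is stated (and is only true) for Kummer morphisms of \emph{fs} log formal schemes: it needs an integer $n$ killing $\Coker((f^{*}\cM_{\fY})^{\mathrm{gp}}\to\cM_{\fX}^{\mathrm{gp}})$. In the lemma you are proving, only $(\fY,\cM_{\fY})$ is assumed fs; $(\fX,\cM_{\fX})$ is merely saturated, and this is exactly the generality in which the lemma is later used — the main application is to $f\colon(\fX_{\infty,\alpha},\cM_{\fX_{\infty,\alpha}})\to(\fX,\cM_{\fX})$ in Proposition \ref{pro-kfl descent for kfl vect bdle}, where the stalks of $\overline{\cM_{\fX_{\infty,\alpha}}}$ are of the form $\bQ_{\geq 0}^{r}$ over $\bN^{r}$. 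There the cokernel of groupifications is $(\bQ/\bZ)^{r}$, which no single $n$ kills, and no finite Kummer extension of the chart makes $f$ strict. So the reduction "make $f$ strict after one finite chart extension, then use scheme-theoretic fiber products" cannot get off the ground; your closing remark identifies only $(\fY',\cM_{\fY'})$ as the non-fs source of trouble, but $(\fX,\cM_{\fX})$ is equally (and more seriously) non-fs. Even setting this aside, the surjectivity of $|\tilde{\fY}'|\to|\fY'|$ for a non-fs saturated $\fY'$, which you flag as "the additional input needed," is asserted rather than proved, so the argument would be incomplete even in the cases where the first step applies.

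For contrast, the paper avoids any strictification: it first reduces to $\fX=\fY=\fY'=\mathrm{Spec}(k)$ with $k$ algebraically closed (so the log structures split as $k^{\times}\times Q$, $k^{\times}\times P$, $k^{\times}\times P'$ with $P$ fs and $Q,P'$ only saturated and sharp), then writes $Q$ (and likewise $P'$) as a filtered union of fs submonoids containing the image of $P$, so that $(\fX,\cM_{\fX})$ and $(\fY',\cM_{\fY'})$ become cofiltered limits of fs log schemes, the former Kummer over $(\fY,\cM_{\fY})$. Kato's fs four-point lemma applies at each finite level, and nonemptiness of $\fX'$ follows from a limit argument (a cofiltered inverse limit of nonempty affine schemes is nonempty). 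If you want to salvage your approach, you would have to replace the single extension $P\to P^{1/n}$ by the whole pro-system $P\to P^{1/n}$, $n\geq 1$, i.e. essentially re-introduce a limit argument of this kind, at which point you are back to the paper's strategy.
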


\begin{proof}
    We may assume that $\fX=\fY=\fY'=\mathrm{Spec}(k)$ for an algebraically closed field $k$. It suffices to show that $\fX'$ is nonempty. We can write $\cM_{\fX}=k^{\times}\times Q$, $\cM_{\fY}=k^{\times}\times P$, and $\cM_{\fY'}=k^{\times}\times P'$, where $P,P',Q$ are saturated sharp monoids. The monoid map $\cM_{\fY}\to \cM_{\fX}$ induced by $f$ has the form of $(a,p)\mapsto (a\alpha(p),\beta(p))$ for a monoid map $\alpha\colon P\to k^{\times}$ and a Kummer monoid map $\beta\colon P\to Q$. Writing $Q$ as the union of fs submonoids of $Q$ containing $P$, we can write $(\fX,\cM_{\fX})$ as an inverse limit of fs log schemes which are Kummer over $(\fX,\cM_{\fX})$. In a similar way, we can write $(\fY',\cM_{\fY'})$ as an inverse limit of fs log schemes over $(\fY,\cM_{\fY})$. Then the assertion follows from the result in fs cases (\cite[Lemma 2.4 (2)]{kat21}) and the limit argument. 
\end{proof}

\subsection{Kfl sites of fs log formal schemes}

\begin{dfn}[Log flat morphisms of log schemes, {\cite[1.10]{kat21}}]
Let $f\colon (X,\cM_{X})\to (Y,\cM_{Y})$ be a morphism of fs log schemes.
    \begin{enumerate}
        \item $f$ is called \emph{log flat} if, fppf locally on $X$ and $Y$, there exist an injective chart $P\to Q$ of $f$ called a \emph{flat chart}) such that the induced morphism $(X,\cM_{X})\to (Y,\cM_{Y})\times_{(\bZ[P],P)^{a}} (\bZ[Q],Q)^{a}$ is strict flat.
        \item $f$ is called \emph{Kummer log flat} (or \emph{kfl} for short) if $f$ is Kummer and log flat.
    \end{enumerate}
\end{dfn}

\begin{lem}\label{def of log flat mor}
    Let $f:(\fX,\cM_{\fX})\to (\fY,\cM_{\fY})$ be an adic morphism of fs log formal schemes. Then the following conditions are equivalent.
    \begin{enumerate}
        \item There exists an affine open covering $\{\fU_{i}\}_{i\in I}$ of $\fY$ and finitely generated ideals of definition $\cI_{i}$ on $\fU_{i}$ for each $i\in I$ such that the morphism of schemes
        \[
        (\fX,\cM_{\fX})\times_{(\fY,\cM_{\fY})} (U_{i,n},\cM_{U_{i,n}})\to (U_{i,n},\cM_{U_{i,n}})
        \]
        is log flat, where $(U_{i,n},\cM_{U_{i,n}})$ is the strict closed subscheme of $(\fU_{i},\cM_{\fU_{i}})$ defined by $\cI_{i}^{n}$.
        \item For any open subset $\fU$ of $\fY$ and any finitely generated ideal of definition $\cJ$, the morphism of log schemes
        \[
        (\fX,\cM_{\fX})\times_{(\fY,\cM_{\fY})} (U',\cM_{U'})\to (U',\cM_{U'})
        \]
        is log flat. Here, $(U',\cM_{U'})$ is the strict closed subscheme of $(\fU,\cM_{\fU})$ defined by $\cJ$.
    \end{enumerate}
\end{lem}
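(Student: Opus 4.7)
The plan is as follows. The implication (2) $\Rightarrow$ (1) is immediate: given any affine open covering $\{\fU_{i}\}$ of $\fY$ with finitely generated ideals of definition $\cI_{i}$, applying (2) with $\cJ = \cI_{i}^{n}$ on $\fU_{i}$ produces the required log flatness on each infinitesimal level $(U_{i,n},\cM_{U_{i,n}})$. So the content is entirely in (1) $\Rightarrow$ (2).

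For that direction, I would rely on two standard features of log flatness for fs log schemes: (a) it can be verified Zariski locally on the target, since the defining existence of a flat chart is already fppf-local on the target; and (b) it is preserved under arbitrary base change in the category of fs log schemes, in particular under strict closed immersions. Given an open $\fU \subset \fY$ and a finitely generated ideal of definition $\cJ$, property (a) lets me work locally on $\fU$, so I may assume $\fU \subset \fU_{i}$ for some $i$. On $\fU$ I now have two finitely generated ideals of definition, namely $\cI_{i}|_{\fU}$ and $\cJ$. Since any two finitely generated ideals of definition on a formal scheme are mutually cofinal, there exists an integer $n\geq 1$ with $\cI_{i}^{n}|_{\fU}\subset \cJ$, yielding a strict closed immersion
\[
(U',\cM_{U'})\hookrightarrow (U_{i,n}\cap \fU, \cM_{U_{i,n}\cap \fU}).
\]

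To conclude, I would apply (1) to see that the base change of $f$ to $(U_{i,n},\cM_{U_{i,n}})$ is log flat, restrict along the open immersion $U_{i,n}\cap \fU \hookrightarrow U_{i,n}$ using (a), and then base change along the strict closed immersion above using (b). This delivers condition (2). The only point requiring care is the combination (a)+(b) in the formal-scheme setting, but once one fixes a single infinitesimal neighborhood the claim reduces to the corresponding scheme-theoretic assertions, which follow directly from the flat-chart characterization of log flatness. I do not anticipate a serious obstacle; the argument is essentially a cofinality argument combined with the local/base-change stability of log flatness.
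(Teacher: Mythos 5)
Your proposal is correct and follows essentially the same route as the paper: both directions reduce to the stability of log flatness under base change and its locality on the target, combined with the fact that the powers $\{\cI^{n}\}$ of a finitely generated ideal of definition are cofinal among ideals of definition on an affine formal scheme. The paper's proof is just a terser statement of exactly this argument.
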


\begin{proof}
    Note that log flatness is preserved under base change and local on the source and the target. The statement follows from the fact that, for an ideal of definition $\cI$ on an affine formal scheme, the set $\{\cI^{n}\}$ is cofinal among ideals of definition.
\end{proof}

\begin{dfn}[Kummer log flat morphisms of log formal schemes]
Let $f\colon (\fX,\cM_{\fX})\to (\fY,\cM_{\fY})$ be an adic morphism of fs log formal schemes.
\begin{enumerate}
\item The morphism $f$ is called \emph{log flat} if $f$ satisfies an equivalent condition in Lemma \ref{def of log flat mor}.
\item The morphism $f$ is called \emph{Kummer log flat} (or \emph{kfl} for short) if $f$ is Kummer and log flat.
\end{enumerate}
\end{dfn}

\begin{lem}[{\cites{ogu18,kat21}}]\label{fundamental properties of kfl mor}
    The following statements are true.
    \begin{enumerate}
        \item A strict and log flat morphism of fs log formal schemes is strict flat.
        \item The composition of log flat morphisms of fs log formal schemes is also log flat.
        \item Consider the following Cartesian diagram in the category of fs log formal schemes and adic morphisms.
        \[
        \begin{tikzcd}
            (\fX',\cM_{\fX'}) \ar[r] \ar[d,"f'"] & (\fX,\cM_{\fX}) \ar[d,"f"] \\
            (\fY',\cM_{\fY'}) \ar[r] & (\fY,\cM_{\fY})
        \end{tikzcd}
        \]
        Suppose $f$ is log flat. Then $f'$ is also log flat.
        \item  For a Kummer log flat morphism of fs log formal schemes $(\fX,\cM_{\fX})\to (\fY,\cM_{\fY})$ that is locally of finite presentation, the morphism of underlying formal schemes $\fX\to \fY$ is open.
    \end{enumerate}
\end{lem}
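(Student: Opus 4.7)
The plan is to reduce all four assertions to the corresponding classical results for fs log schemes established in \cite{ogu18,kat21}, by exploiting Lemma~\ref{def of log flat mor}: log flatness of an adic morphism of fs log formal schemes is detected on the strict closed subschemes cut out by powers of an ideal of definition on the target. Throughout, I will also use that flatness, strict flatness, and openness of adic morphisms of formal schemes can be tested on a cofinal system of such reductions.

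For (1), I would work locally on $\fY$ with an ideal of definition $\cI$: the reductions of a strict log flat adic morphism $f$ modulo $\cI^n$ are strict log flat morphisms of fs log schemes, hence strict flat by the scheme case, and passing to the limit in $n$ gives strict flatness of $f$. For (2), given composable log flat adic morphisms $f\colon (\fX,\cM_{\fX})\to (\fY,\cM_{\fY})$ and $g\colon (\fY,\cM_{\fY})\to (\fZ,\cM_{\fZ})$ and an ideal of definition $\cJ$ of $\fZ$, the adicness of $g$ ensures that $g^{-1}(\cJ)\cO_{\fY}$ is an ideal of definition of $\fY$; the base change of $g\circ f$ along the strict closed immersion cut out by $\cJ^n$ then factors as a composition of base changes of $f$ and $g$ to fs log schemes, each log flat by hypothesis, and composition in the scheme case concludes. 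For (3), since base change along strict closed immersions commutes with arbitrary base change, the assertion is immediate from the scheme-level statement.

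For (4), the underlying topological space of $\fY$ coincides with that of any strict closed reduction $Y_n$, so openness of $f$ is equivalent to openness of its reduction $f_n$, a Kummer log flat morphism of fs log schemes locally of finite presentation, which is open by the scheme case in \cite{kat21}. The main obstacle I anticipate is a careful bookkeeping of what ``strict flat'' and ``locally of finite presentation'' mean for adic morphisms of formal schemes admitting only Zariski-local ideals of definition, and of the fact that these properties descend from a cofinal system of reductions; once this formalism is set up, all four assertions follow directly from their counterparts for fs log schemes.
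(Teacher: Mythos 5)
Your proposal is correct and matches the paper's argument: the paper likewise reduces all four assertions to the scheme case by passing to the strict closed subschemes defined by (powers of) an ideal of definition, and then cites \cite[Chapter IV, Proposition 4.1.2]{ogu18} and \cite[Lemma 2.4, Proposition 2.5]{kat21}. Your extra bookkeeping (factoring the composition in (2), identifying underlying spaces with those of the reductions in (4)) is just an expanded version of the same reduction.
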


\begin{proof}
    For every assertion, we may assume that all formal schemes are schemes. For the proof in this case, see \cite[Chapter IV,Proposition 4.1.2]{ogu18} and \cite[Lemma 2.4, Proposition 2.5]{kat21}.
\end{proof}

\begin{dfn}
    A family of morphisms $\{f_{i}\colon (\fU_{i},\cM_{\fU_{i}})\to (\fX,\cM_{\fX})\}_{i\in I}$ is called a \emph{Kummer log flat covering} (or a \emph{kfl covering} for short) if the following conditions are satisfied.
    \begin{enumerate}
        \item Each $f_{i}$ is kfl and locally of finite presentation.
        \item The family is set-theoretically surjective i.e. $\displaystyle \fX=\bigcup_{i\in I} f_{i}(\fU_{i})$.
    \end{enumerate}
    
\end{dfn}

\begin{dfn}[Kfl sites, (cf.~{\cite[Definition 2.3]{kat21}})]
    For an fs log formal scheme $(\fX,\cM_{\fX})$, we let $(\fX,\cM_{\fX})_{\mathrm{kfl}}$ denote the category of fs log schemes that are adic over $(\fX,\cM_{\fX})$ equipped with the Grothendieck topology given by kfl coverings. This topology is called \emph{kfl topology}, and the resulting site $(\fX,\cM_{\fX})_{\mathrm{kfl}}$ is called a \emph{kfl site}.
\end{dfn}

\begin{prop}[{\cite[Theorem 3.1]{kat21}}]\label{representable presheaf is sheaf}
    Let $(\fY,\cM_{\fY})\to (\fX,\cM_{\fX})$ be an adic morphism of fs log formal schemes. Then the functor 
    \[
    (Z,\cM_{Z})\mapsto \mathrm{Mor}_{(\fX,\cM_{\fX})}((Z,\cM_{Z}),(\fY,\cM_{\fY}))
    \]
    is a sheaf on $(\fX,\cM_{\fX})_{\mathrm{kfl}}$.
\end{prop}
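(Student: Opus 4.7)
My plan is to reduce to the scheme-theoretic version, namely \cite[Theorem 3.1]{kat21}, by passing to the system of strict thickenings defined by powers of a fixed ideal of definition on $\fX$.

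Suppose we are given an adic $(\fX,\cM_{\fX})$-object $(\fZ,\cM_{\fZ})$, a kfl covering $\{(\fU_{i},\cM_{\fU_{i}})\to (\fZ,\cM_{\fZ})\}_{i\in I}$, and compatible $(\fX,\cM_{\fX})$-morphisms $g_{i}\colon (\fU_{i},\cM_{\fU_{i}})\to (\fY,\cM_{\fY})$ agreeing on pairwise fiber products over $(\fZ,\cM_{\fZ})$. I want to glue them to a unique $(\fX,\cM_{\fX})$-morphism $g\colon (\fZ,\cM_{\fZ})\to (\fY,\cM_{\fY})$. First I would work locally on $\fX$ and pick a finitely generated ideal of definition $\cI\subseteq \cO_{\fX}$. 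Since every morphism in sight is adic, the pullbacks $\cI\cO_{\fY}$, $\cI\cO_{\fZ}$, $\cI\cO_{\fU_{i}}$ are ideals of definition, so for each $n\geq 1$ I would form the strict closed sub log schemes $(Y_{n},\cM_{Y_{n}})$, $(Z_{n},\cM_{Z_{n}})$, $(U_{i,n},\cM_{U_{i,n}})$ cut out by their $n$th powers.

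The key observation at each level $n$ is twofold: the family $\{(U_{i,n},\cM_{U_{i,n}})\to (Z_{n},\cM_{Z_{n}})\}$ is a kfl covering of fs log schemes (kfl coverings are stable under strict base change by Lemma \ref{fundamental properties of kfl mor}), and each $g_{i}$ restricts to an $(X_{n},\cM_{X_{n}})$-morphism $g_{i,n}\colon (U_{i,n},\cM_{U_{i,n}})\to (Y_{n},\cM_{Y_{n}})$, because adicness forces $g_{i}^{-1}(\cI^{n}\cO_{\fY})\subseteq \cI^{n}\cO_{\fU_{i}}$. The $g_{i,n}$ remain compatible on overlaps, so applying \cite[Theorem 3.1]{kat21} level by level produces a unique $(X_{n},\cM_{X_{n}})$-morphism $g_{n}\colon (Z_{n},\cM_{Z_{n}})\to (Y_{n},\cM_{Y_{n}})$ restricting to every $g_{i,n}$. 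By uniqueness, $g_{n+1}$ restricts to $g_{n}$ under the closed immersions $Z_{n}\hookrightarrow Z_{n+1}$, so the system $\{g_{n}\}_{n\geq 1}$ is coherent and assembles to the sought morphism $g$.

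The main technical point I expect to confront is this final assembly step: a morphism of adic log formal schemes should correspond to a compatible system of morphisms between the strict thickenings, including compatible maps of log structures. Since $\fZ$ is the colimit of the $Z_{n}$ in adic formal schemes and $\cM_{\fZ}$ is recovered from the compatible system $\{\cM_{Z_{n}}\}$ along the strict closed immersions $Z_{n}\hookrightarrow Z_{n+1}$ (and likewise for $\fY$), this is a standard but slightly tedious limit argument rather than a genuine difficulty. Once that identification is in place, the sheaf condition in its full generality (including local gluing across an open cover of $\fZ$ and the uniqueness of $g$) follows directly from the level-wise statement via the same reduction, and nothing beyond kfl base change stability and Kato's scheme-case theorem is needed.
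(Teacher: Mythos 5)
Your proposal is correct and follows essentially the same route as the paper, which likewise cites \cite[Theorem 3.1]{kat21} for the scheme case and reduces the formal-scheme case to it by passing to the reductions modulo powers of an ideal of definition. Your write-up simply spells out the details (adicness giving level-wise morphisms, kfl base change, and the limit/assembly step) that the paper leaves implicit.
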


\begin{proof}
    See \cite[Theorem 3.1]{kat21} for the case that $\fX$ is a scheme. In general, we are reduced to this case by taking the reduction by ideals of definition.
\end{proof}

\begin{rem}\label{classical representable sheaf}
    Let $(\fX,\cM_{\fX})$ be an fs log formal scheme. For a formal scheme $\fY$ over $\fX$, we regard $\fY$ as a log formal scheme over $(\fX,\cM_{\fX})$ by equipping $\fY$ with the pullback log structure of $\cM_{\fX}$ by the structure morphism $\fY\to \fX$. As a result, we have fully faithful functors
    \[
    \mathrm{FSch}_{/\fX}\hookrightarrow \mathrm{LFSch}_{/(\fX,\cM_{\fX})}\hookrightarrow \mathrm{Shv}((\fX,\cM_{\fX})_{\mathrm{kfl}}).
    \]
    Here, $\mathrm{FSch}_{/\fX}$ denotes the category of formal schemes which are adic over $\fX$, and $\mathrm{LFSch}_{/(\fX,\cM_{\fX})}$ denotes the category of log formal schemes which are adic over $(\fX,\cM_{\fX})$.
\end{rem}

\begin{dfn}
    We define a presheaf $\cO_{(\fX,\cM_{\fX})}$ on $(\fX,\cM_{\fX})_{\mathrm{kfl}}$ by 
    \[ \Gamma((Y,\cM_{Y}),\cO_{(\fX,\cM_{\fX})})\coloneqq\Gamma(Y,\cO_{Y})
    \]
    for $(Y,\cM_{Y})\in (\fX,\cM_{\fX})_{\mathrm{kfl}}$. By Proposition \ref{classical representable sheaf}, this is a sheaf on $(\fX,\cM_{\fX})_{\mathrm{kfl}}$.
\end{dfn}

\subsection{Kfl vector bundles on fs log formal schemes}

In this subsection, we review some properties of vector bundles in kfl topology and generalize them to formal scheme cases. Although our primary interest is in vector bundles on kfl sites of log formal schemes, we consider all quasi-coherent sheaves on kfl sites of log formal schemes for completeness. Note that we do not consider coherent sheaves on kfl sites of log formal schemes.

For a formal scheme $\fX$, let $\mathrm{QCoh}(\fX)$ (resp. $\mathrm{Vect}(\fX)$)(resp. $\mathrm{FQCoh}(\fX)$) denote the category of quasi-coherent sheaves (resp. vector bundles)(resp. quasi-coherent sheaves of finite type) on $\fX$. For a scheme $X$, let $\mathrm{Coh}(X)$ denote the category of coherent sheaves on $X$.

\begin{dfn}
    Let $(\fX,\cM_{\fX})$ be an fs log formal scheme. Quasi-coherent sheaves (resp. vector bundles)(resp. quasi-coherent sheaves of finite type) on the ringed site $((\fX,\cM_{\fX})_{\mathrm{kfl}},\cO_{(\fX,\cM_{\fX})})$ are called \emph{kfl quasi-coherent sheaves} (resp. \emph{kfl vector bundles})(resp. \emph{kfl quasi-coherent sheaves of finite type}), and the category consisting of them is denoted by $\mathrm{QCoh}(\fX,\cM_{\fX})$ (resp. $\mathrm{Vect}_{\mathrm{kfl}}(\fX,\cM_{\fX})$)(resp. $\mathrm{FQCoh}(\fX,\cM_{\fX})$). These categories are equipped with the exact structure derived from the exactness as sheaves on the kfl site. 
    
    Similarly, when $\fX$ is a scheme, coherent sheaves on the ringed site $((\fX,\cM_{\fX})_{\mathrm{kfl}},\cO_{(\fX,\cM_{\fX})})$ are called \emph{kfl coherent sheaves} and the exact category of them is denoted by $\mathrm{Coh}_{\mathrm{kfl}}(\fX,\cM_{\fX})$. 
\end{dfn}

\begin{dfn}\label{def of iota}
    Let $(\fX,\cM_{\fX})$ be an fs log formal scheme. For a quasi-coherent sheaf $\cE$ on $\fX$, we define a presheaf $\iota(\cE)$ on $(\fX,\cM_{\fX})_{\mathrm{kfl}}$ by 
    \[ \Gamma((Y,\cM_{Y}),\iota(\cE))\coloneqq\Gamma(Y,f^{*}\cE)
    \]
    for $(Y,\cM_{Y})\in (\fX,\cM_{\fX})_{\mathrm{kfl}}$, where $f\colon (Y,\cM_{Y})\to (\fX,\cM_{\fX})$ is the structure morphism. It follows from the same argument as \cite[3.4]{kat21} that $\iota(\cE)$ is a sheaf. Furthermore, $\iota(\cE)$ is a kfl quasi-coherent sheaves on $(\fX,\cM_{\fX})$. If $\cE$ is a vector bundle (resp. a quasi-coherent sheaf of finite type), $\iota(\cE)$ is a kfl vector bundle (resp. a kfl quasi-coherent sheaf of finite type) on $(\fX,\cM_{\fX})$. When $\fX$ is a scheme and $\cE$ is a coherent sheaf, $\iota(\cE)$ is a kfl coherent sheaf.
\end{dfn}

\begin{lem}\label{fully faithfulness for kfl vect bdle}
    Let $(\fX,\cM_{\fX})$ be an fs log formal scheme. Then the functor
    \[
    \iota\colon \mathrm{QCoh}(\fX)\to \mathrm{QCoh}_{\mathrm{kfl}}(\fX,\cM_{\fX})
    \]
    is fully faithful, and the induced functor
    \[
    \iota\colon \mathrm{Vect}(\fX)\to \mathrm{Vect}_{\mathrm{kfl}}(\fX,\cM_{\fX})
    \]
    gives a bi-exact equivalence to the essential image. 
\end{lem}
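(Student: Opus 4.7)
My plan is to prove fully faithfulness and bi-exactness separately, exploiting a retraction of $\iota$ furnished by restriction to strict opens.

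First, I would consider the pushforward $u_{*}$ along the morphism of sites $(\fX,\cM_{\fX})_{\mathrm{kfl}} \to \fX_{\mathrm{Zar}}$ induced by sending a Zariski open $U \subset \fX$ to $(U,\cM_{\fX}|_{U})$, so that $u_{*}\cF(U) \coloneqq \cF(U,\cM_{\fX}|_{U})$. Unraveling the definition of $\iota$ gives $u_{*} \circ \iota = \mathrm{id}_{\mathrm{QCoh}(\fX)}$, so the map $\mathrm{Hom}(\cE,\cF) \to \mathrm{Hom}(\iota(\cE),\iota(\cF))$ admits $u_{*}$ as a retraction and is in particular injective. For surjectivity, given $\phi\colon \iota(\cE)\to \iota(\cF)$, I would set $\psi \coloneqq u_{*}\phi$ and verify $\phi = \iota(\psi)$ via the following naturality check: for any $f\colon (Y,\cM_{Y}) \to (\fX,\cM_{\fX})$ in the kfl site and any affine open $U \subset \fX$, naturality of $\phi$ along the morphism $(f^{-1}(U),\cM_{Y}|_{f^{-1}(U)}) \to (U,\cM_{\fX}|_{U})$ forces the image of each pullback section $1 \otimes m$ with $m \in \cE(U)$ under $\phi_{f^{-1}(U)}$ to equal $1 \otimes \psi(m)$; $\cO_{Y}$-linearity then determines $\phi$ on $f^{-1}(U)$, and gluing over an affine cover of $\fX$ yields $\phi = \iota(\psi)$.

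Next I would turn to bi-exactness on vector bundles. The forward direction reduces to the observation that a short exact sequence of vector bundles on $\fX$ is Zariski-locally split, so its pullback along any $f\colon (Y,\cM_{Y}) \to (\fX,\cM_{\fX})$ in the kfl site is again locally split and hence exact on $\cO_{Y}$-modules, yielding exactness of the $\iota$-image as kfl sheaves. For the reverse direction, injectivity of $\cE' \to \cE$ and the identity $\cE' = \ker(\cE \to \cE'')$ follow immediately by evaluating kfl exactness on strict affine opens of $\fX$, leaving only Zariski surjectivity of $\cE \to \cE''$ to check. Letting $\mathcal{Q}$ denote its Zariski cokernel, a direct computation shows $\iota$ is right exact on $\mathrm{QCoh}(\fX)$, since pullback of quasi-coherent sheaves is right exact and Zariski covers are already kfl covers; in particular $\iota(\mathcal{Q}) = \Coker(\iota(\cE) \to \iota(\cE''))$ in the kfl topology. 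This cokernel vanishes by hypothesis, and applying $u_{*}$ recovers $\mathcal{Q} = u_{*}\iota(\mathcal{Q}) = 0$.

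The step most likely to require care is the naturality argument underpinning fully faithfulness, where one has to trace carefully how each component $\phi_{Y}$ is determined by the structure morphism $f$ together with the strict-open restrictions. It is worth emphasizing that no faithfully flat descent along non-strict kfl covers enters the argument, which is fortunate given that the underlying ring map of a Kummer cover need not be flat.
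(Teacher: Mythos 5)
Your core argument in the scheme case is sound, and for the surjectivity step it genuinely differs from the paper: you identify the kfl-cokernel of $\iota(\cE)\to\iota(\cE'')$ with $\iota(\mathcal{Q})$ (using that pullback is right exact, that sections over affines are exact for quasi-coherent sheaves, and that Zariski covers are kfl covers) and then kill $\mathcal{Q}$ by a retraction, whereas the paper splits the surjection $\iota(\cE)\to\iota(\cE'')$ over a kfl cover (possible because the target is a kfl vector bundle), descends the section through full faithfulness, and concludes surjectivity of $\cE\to\cE''$ pointwise by Nakayama, using only that a kfl cover is set-theoretically surjective. Both routes correctly avoid any flatness of the underlying map of a Kummer cover.

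However, as written your proof has a gap when $\fX$ is honestly formal. The site $(\fX,\cM_{\fX})_{\mathrm{kfl}}$ consists of fs log \emph{schemes} adic over $(\fX,\cM_{\fX})$ (this is what makes Lemma \ref{kfl vect bdle as lim} work: every object factors locally through a thickening $X_{n}$), so a strict Zariski open $(U,\cM_{\fX}|_{U})$ of a formal $\fX$ is \emph{not} an object of the site. Consequently the retraction $u_{*}$ with $u_{*}\cF(U)\coloneqq\cF(U,\cM_{\fX}|_{U})$ is undefined, and the steps ``evaluate kfl exactness on strict affine opens of $\fX$'', the naturality check over affine opens $U\subset\fX$, and the final identity $\mathcal{Q}=u_{*}\iota(\mathcal{Q})=0$ do not literally make sense. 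The repair is exactly the reduction the paper performs: work Zariski locally so that $\fX$ has a finitely generated ideal of definition $\cI$, evaluate on the opens of the thickenings $X_{n}$ (which are in the site). But note that this weaker evaluation only gives $\mathcal{Q}/\cI^{n}\mathcal{Q}=0$ for all $n$, so you still need a completeness/Nakayama argument (finite modules over an $\cI$-adically complete ring with finitely generated $\cI$, where $\cI$ lies in the Jacobson radical) to conclude $\mathcal{Q}=0$; similarly the fully faithfulness argument needs $\mathrm{Hom}_{\fX}(\cE,\cF)\cong\varprojlim_{n}\mathrm{Hom}_{X_{n}}(\cE|_{X_{n}},\cF|_{X_{n}})$, which holds for vector bundles but should be said. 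With these modifications (essentially the paper's ``reduce to the scheme case by Nakayama'' step inserted at the start), your argument goes through.
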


\begin{proof}
    Fully faithfulness is clear from the definition. It suffices to prove that a sequence of (usual) vector bundles on $\fX$
    \[
    0\to \cE_{1}\to \cE_{2}\to \cE_{3}\to 0 
    \]
    is exact if the associated sequence
    \[
    0\to \iota(\cE_{1})\to \iota(\cE_{2})\to \iota(\cE_{3})\to 0
    \]
    is exact is $\mathrm{Vect}_{\mathrm{kfl}}(\fX,\cM_{\fX})$. By working Zariski locally on $\fX$, we may assume that $\fX$ admits a finitely generated ideal of definition. Nakayama's lemma allows us to assume that $X\coloneqq \fX$ is a scheme. Since $\iota(\cE_{3})$ is a vector bundle and $\iota(\cE_{2})\to \iota(\cE_{3})$ is surjective, there exists a kfl cover $\pi\colon (Y,\cM_{Y})\to (X,\cM_{X})$ such that $\pi^{*}\iota(\cE_{2})\to \pi^{*}\iota(\cE_{3})$ admits a section, which induces a section of $\pi^{*}\cE_{2}\to \pi^{*}\cE_{3}$ by the fully faithfulness of $\iota$. In particular, $\pi^{*}\cE_{2}\to \pi^{*}\cE_{3}$ is surjective. Since $\pi$ is surjective, $\cE_{2}\to \cE_{3}$ is also surjective by Nakayama's lemma. Consider a vector bundle $\cE'_{1}\coloneqq\mathrm{Ker}(\cE_{2}\to \cE_{3})$. Since $\iota$ is exact, we have $\iota(\cE_{1})\isom \iota(\cE'_{1})$, which induces $\cE'_{1}\isom \cE_{1}$. This proves the claim.
\end{proof}

We regard $\mathrm{QCoh}(\fX)$ as a full subcategory of $\mathrm{QCoh}_{\mathrm{kfl}}(\fX,\cM_{\fX})$ via $\iota$, and we simply write $\cE$ for $\iota(\cE)$ when no possibility of confusion occurs.

\begin{dfn}
    A kfl quasi-coherent sheaf $\cE$ on $(\fX,\cM_{\fX})$ is said \emph{classical} if $\cE$ belongs to $\mathrm{QCoh}(\fX)$. 
\end{dfn}

For an adic morphism of fs log formal schemes $f\colon (\fX,\cM_{\fX})\to (\fY,\cM_{\fY})$ and $\cE\in \mathrm{QCoh}_{\mathrm{kfl}}(\fY,\cM_{\fY})$, we can naturally define the pullback $f^{*}\cE\in \mathrm{QCoh}_{\mathrm{kfl}}(\fX,\cM_{\fX})$. If $\cE$ is a kfl vector bundle (resp. quasi-coherent sheaf of finite type), $f^{*}\cE$ is so. When $\fX$ is a scheme and $\cE$ is a kfl coherent sheaf, $f^{*}\cE$ is also a kfl coherent sheaf. Additionally, we have the following commutative diagram:
\[
\begin{tikzcd}
    \mathrm{QCoh}(\fY) \ar[r,"f^{*}"] \ar[d,"\iota"] & \mathrm{QCoh}(\fX) \ar[d,"\iota"] \\
    \mathrm{QCoh}_{\mathrm{kfl}}(\fY,\cM_{\fY}) \ar[r,"f^{*}"] & \mathrm{QCoh}_{\mathrm{kfl}}(\fX,\cM_{\fX}).
\end{tikzcd}
\]

\begin{lem}[Kato]\label{classicality is pointwise}
    Let $(X,\cM_{X})$ be an fs log scheme and $\cE$ be a kfl vector bundle on $(X,\cM_{X})$. Suppose that, for every $x\in X$, the pullback of $\cE$ to $(\mathrm{Spec}(k(x)),\cM_{k(x)})$ is classical, where $\cM_{k(x)}$ is the pullback log structure of $\cM_{X}$. Then $\cE$ is classical.
\end{lem}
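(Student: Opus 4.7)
The plan is to reduce to a local situation where $\cE$ is trivialized by a single Kummer cover of the form $X_{n} \to X$, analyze the resulting descent datum as a grading by a finite abelian group, and then apply a Nakayama argument.

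First, since strict \'{e}tale covers are kfl covers and classicality (lying in the essential image of $\iota$) satisfies strict \'{e}tale descent via classical fppf descent together with Lemma \ref{fully faithfulness for kfl vect bdle}, I would work strict \'{e}tale-locally at a point $x_{0} \in X$. After shrinking, choose an fs chart $P \to \cM_{X}$ with $P = \overline{\cM_{X, \bar{x_{0}}}}$ sharp, so that $P^{gp}$ is free of some rank $r$. Pick a kfl cover $(Y, \cM_{Y}) \to (X, \cM_{X})$ of locally finite presentation trivializing $\cE$ (as in the definition of kfl vector bundle). By Lemma \ref{kummer map is strict after n-power ext} applied to this cover, there exists $n \geq 1$ such that, setting $X_{n} := X \times_{(\bZ[P], P)^{a}} (\bZ[P^{1/n}], P^{1/n})^{a}$ (fiber product in fs log schemes), the base change $Y \times_{X} X_{n} \to X_{n}$ is strict, hence strict flat, hence fppf. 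Classical fppf descent then yields $\cE|_{X_{n}} \cong \iota(V_{n})$ for some vector bundle $V_{n}$ on $X_{n}$.

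Next I would analyze the kfl descent datum for $V_{n}$ along $X_{n} \to X$. An explicit computation using the fs pushout of monoids $P^{1/n} \oplus^{\mathrm{fs}}_{P} P^{1/n}$ yields an isomorphism $X_{n} \times_{X} X_{n} \cong X_{n} \times_{\Spec \bZ} G^{\vee}$ of fs log schemes over $X_{n}$, where $G := (P^{1/n})^{gp}/P^{gp} \cong (\bZ/n)^{r}$ and $G^{\vee} := \Spec \bZ[G]$ is its Cartier dual (a finite flat commutative group scheme, possibly non-\'{e}tale). In other words, $X_{n} \to X$ is a $G^{\vee}$-torsor in the kfl topology. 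The descent datum on $V_{n}$ thus corresponds to a $G^{\vee}$-action on $V_{n}$ compatible with the canonical action on $X_{n}$, equivalently to a $G$-grading $V_{n} = \bigoplus_{\alpha \in G} V_{n}^{\alpha}$ as an $\cO_{X}$-module, compatible with the tautological $G$-grading $\cO_{X_{n}} = \bigoplus_{\alpha} \cO_{X_{n}}^{\alpha}$ (each $\cO_{X_{n}}^{\alpha}$ being $\cO_{X}$-invertible since $\cO_{X_{n}}$ is $\cO_{X}$-free of rank $n^{r}$). A direct check shows that $\cE$ is classical, with $\cE \cong \iota(V_{n}^{0})$, if and only if the multiplication map $\mu^{\alpha} \colon V_{n}^{0} \otimes_{\cO_{X}} \cO_{X_{n}}^{\alpha} \to V_{n}^{\alpha}$ is an isomorphism for every $\alpha \in G$; the ``only if'' direction is tautological from $V_{n} = W \otimes_{\cO_{X}} \cO_{X_{n}}$ when $\cE = \iota(W)$.

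Finally, each $V_{n}^{\alpha}$ is a direct $\cO_{X}$-summand of the locally free $\cO_{X}$-module $V_{n}$, and a fiberwise computation shows $V_{n}^{\alpha}$ is locally free over $\cO_{X}$ of rank $d := \mathrm{rk}_{\cO_{X_{n}}}(V_{n})$. Hence $\mu^{\alpha}$ is a map between locally free $\cO_{X}$-modules of the same rank and, by Nakayama's lemma, is an isomorphism iff it is an isomorphism on every fiber over $X$. Running the same analysis with $X$ replaced by $\Spec k(x)$ at each point $x$, the pointwise classicality hypothesis is exactly the statement that $\mu^{\alpha}$ is an isomorphism on the fiber over $x$. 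Therefore all $\mu^{\alpha}$ are globally isomorphisms and $\cE \cong \iota(V_{n}^{0})$ is classical. The main technical subtlety is the identification $X_{n} \times_{X} X_{n} \cong X_{n} \times G^{\vee}$ in the fs log category together with the resulting $G$-grading interpretation of the kfl descent datum, especially when residue characteristics of $X$ divide $n$ so that $G^{\vee}$ is a non-\'{e}tale finite flat group scheme; this reduces to Kato's theory of kfl torsors for finite flat commutative group schemes. Once this structural statement is in hand, the remainder is a standard linear-algebra check.
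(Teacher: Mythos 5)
Your reduction to the cover $X_{n}\to X$ and the identification $X_{n}\times_{X}X_{n}\cong X_{n}\times\Spec\bZ[G]$ (fs fiber product), hence the dictionary between kfl descent data and $G$-gradings, are fine. The genuine gap is in the linear-algebra endgame: it silently assumes the chart $P$ is \emph{free}, whereas your chart is only the sharp fs monoid $P=\overline{\cM_{X,\overline{x_{0}}}}$. For a general fs sharp $P$ the claims in parentheses fail: $\cO_{X_{n}}=\cO_{X}\otimes_{\bZ[P]}\bZ[P^{1/n}]$ is \emph{not} free (not even flat) over $\cO_{X}$ --- this is exactly the phenomenon the paper highlights in Remark \ref{gap in wz}, that the underlying ring map of a Kummer log flat map need not be flat --- and the graded pieces $\cO_{X_{n}}^{\alpha}$ need not be invertible. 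Concretely, for $P=\langle a,b,c\mid a+b=2c\rangle$ (so $\bZ[P]=\bZ[x,y,z]/(xy-z^{2})$) and $n=2$, the coset module $\bZ[S_{\alpha}]$ for $\alpha=\tfrac12 a\bmod P^{\mathrm{gp}}$ needs two generators ($\tfrac12 a$ and $\tfrac12 b$) and corresponds to a non-Cartier divisor class, so it is not projective. Consequently $V_{n}$ is not $\cO_{X}$-locally free, $V_{n}^{\alpha}$ is not a direct summand of a locally free $\cO_{X}$-module of known rank, and the ``$\mu^{\alpha}$ is an isomorphism iff it is so on all fibers'' Nakayama step has no footing: both the finite generation/flatness hypotheses and the rank count $d$ break down. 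Even formulating your criterion (``classical iff all $\mu^{\alpha}$ are isomorphisms'') requires knowing $V_{n}^{0}$ is a vector bundle on $X$, which you cannot get by descending along the non-flat map $\cO_{X}\to\cO_{X_{n}}$. Nor can you force a free chart by precomposing with a surjection $\bN^{s}\twoheadrightarrow P$ (that destroys the chart property), so the argument as written only proves the lemma when $\overline{\cM}_{X,\overline{x_{0}}}$ is free (e.g.\ the horizontally semi-stable case), not for an arbitrary fs log scheme as the statement requires.

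For comparison, the paper does not reprove this statement at all: it uses a limit argument to reduce to $X$ the spectrum of a noetherian strict local ring and then quotes Kato's Theorem 6.2 of \cite{kat21} (the lemma is attributed to Kato for this reason). What you are attempting is essentially a direct proof of Kato's theorem via the $G$-grading description of descent along the standard Kummer cover; in the free-chart case your outline can likely be completed (the ``direct check'' and ``fiberwise computation'' steps, e.g.\ graded-freeness of the fibers, do need arguments but are manageable there), but to cover general fs charts you would need the additional structural input that Kato's proof supplies, or else first reduce to the strict local case and cite him as the paper does.
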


\begin{proof}
    Using a limit argument (see \cite[Appendix]{ino23}), we may assume that $X$ is the spectrum of a noetherian strict local ring. Then the statement follows from \cite[Theorem 6.2]{kat21}.
\end{proof}

\begin{lem}\label{kfl vect bdle as lim}
    Let $(\fX,\cM_{\fX})$ be a log formal scheme admitting a finitely generated ideal of definition $\cI$. Let $(X_{n},\cM_{X_{n}})$ be the strict closed subscheme defined by $\cI^{n+1}$ for $n\geq 0$.
    Then natural functors
    \begin{align*}
        \mathrm{QCoh}_{\mathrm{kfl}}(\fX,\cM_{\fX})&\to  \varprojlim_{n\geq 0} \mathrm{QCoh}_{\mathrm{kfl}}(X_{n},\cM_{X_{n}}) \\
        \mathrm{Vect}_{\mathrm{kfl}}(\fX,\cM_{\fX})&\to  \varprojlim_{n\geq 0} \mathrm{Vect}_{\mathrm{kfl}}(X_{n},\cM_{X_{n}}) \\
        \mathrm{FQCoh}_{\mathrm{kfl}}(\fX,\cM_{\fX})&\to  \varprojlim_{n\geq 0} \mathrm{FQCoh}_{\mathrm{kfl}}(X_{n},\cM_{X_{n}})
    \end{align*}
    gives a bi-exact equivalence.
\end{lem}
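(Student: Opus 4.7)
The plan is to construct an explicit quasi-inverse and then transfer the categorical structure levelwise. For each $n$, the strict closed immersion $i_n\colon (X_n,\cM_{X_n})\hookrightarrow (\fX,\cM_{\fX})$ is adic (both sides share the same underlying topological space), so pullback along $i_n$ gives the forward restriction functor. Given a compatible system $(\cE_n)_{n\geq 0}$, I would define the candidate inverse by
\[
\cE(Y,\cM_Y):=\varprojlim_n \cE_n(Y_n,\cM_{Y_n}),\qquad (Y_n,\cM_{Y_n}):=(Y,\cM_Y)\times_{(\fX,\cM_{\fX})}(X_n,\cM_{X_n}),
\]
for adic $(Y,\cM_Y)$ over $(\fX,\cM_{\fX})$. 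The sheaf property for $\cE$ follows from the sheaf property of each $\cE_n$ combined with left exactness of $\varprojlim$, since any kfl covering of $(Y,\cM_Y)$ restricts to a kfl covering of each $(Y_n,\cM_{Y_n})$ by base-change stability (Lemma \ref{fundamental properties of kfl mor}(3)) and preservation of local finite presentation.

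To verify that $\cE$ is a kfl vector bundle when all $\cE_n$ are, I would work kfl-locally on $(X_0,\cM_{X_0})$: choose a kfl cover $\pi_0\colon (U_0,\cM_{U_0})\to (X_0,\cM_{X_0})$ trivializing $\cE_0$, and factor $\pi_0$ through a Kummer chart $P\to P^{1/m}$ followed by a strict flat lfp piece via Lemma \ref{kummer map is strict after n-power ext}. Both pieces lift to a kfl adic cover $\pi\colon (\fU,\cM_{\fU})\to (\fX,\cM_{\fX})$: the Kummer part via $\cO_{\fX}\langle P^{1/m}\rangle$, and the strict flat lfp part by the standard scheme-theoretic lifting along nilpotent thickenings. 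On each level, $\pi_n^{*}\cE_n$ restricts to the free sheaf $\pi_0^{*}\cE_0$ along $U_0\hookrightarrow U_n$; since residue fields agree on $U_n$ and $U_0$, Lemma \ref{classicality is pointwise} makes $\pi_n^{*}\cE_n$ classical, and classical Nakayama then makes it free of rank $r$. The family $(\pi_n^{*}\cE_n)_n$ glues via the non-log equivalence $\mathrm{Vect}(\fU)\isom \varprojlim_n \mathrm{Vect}(U_n)$ to a classical vector bundle $\cF$ on $\fU$, and $\iota(\cF)\cong \pi^{*}\cE$ follows from the identity $\Gamma(Y,\cO_Y)=\varprojlim_n \Gamma(Y_n,\cO_{Y_n})$, built into the definition of adic formal schemes. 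The QCoh and FQCoh cases are handled analogously by lifting presentations. The same identity on free sheaves, propagated by kfl descent, shows that the two composites of the functor and its inverse are canonically the identities. Bi-exactness in both directions is then direct: forward because kfl-local exactness on $\fX$ pulls back, backward because the transition maps of kernels are surjective on free local models, forcing exactness of $\varprojlim$ by Mittag-Leffler.

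The main obstacle is the lifting argument: promoting a trivializing kfl cover of $(X_0,\cM_{X_0})$ to a kfl adic cover of $(\fX,\cM_{\fX})$ compatible with the given descent datum on $(\cE_n)_n$. The decomposition of Lemma \ref{kummer map is strict after n-power ext} is essential here, as it reduces the formal lift to Kummer monoid extensions (which lift tautologically through $\cO_{\fX}\langle-\rangle$) plus strict flat lfp covers (which lift by classical techniques along nilpotent thickenings of formal affines).
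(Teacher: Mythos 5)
Your construction of the quasi-inverse (glue a compatible system $(\cE_n)_n$ to a sheaf on $(\fX,\cM_{\fX})_{\mathrm{kfl}}$) is the right idea, but you have missed the single observation that makes the lemma essentially formal, and in compensating for it you introduce a step that does not work. By definition, the objects of $(\fX,\cM_{\fX})_{\mathrm{kfl}}$ are fs log \emph{schemes} $(Y,\cM_Y)$ adic over $(\fX,\cM_{\fX})$; adicness forces $\cI\cO_Y$ to be an ideal of definition of the discrete (formal) scheme $Y$, so $\cI^{n+1}\cO_Y=0$ locally and every object of the site locally factors through some $(X_n,\cM_{X_n})$. In other words, $(\fX,\cM_{\fX})_{\mathrm{kfl}}$ has a basis consisting of objects of $\bigcup_{n\geq 0}(X_n,\cM_{X_n})_{\mathrm{kfl}}$, and a kfl covering of such an object again lives over the same $X_n$. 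This is the paper's (one-line) proof: gluing, local freeness, finite-type/quasi-coherence, and exactness in both directions are all checked on objects lying over some $X_n$, where they are exactly the hypotheses on $\cE_n$. In particular your $\varprojlim_n\cE_n(Y_n)$ is locally an eventually constant limit, and no Mittag--Leffler argument is needed.

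The genuine gap is in your verification that the glued sheaf is a kfl vector bundle. You propose to lift a trivializing kfl cover $\pi_0\colon (U_0,\cM_{U_0})\to (X_0,\cM_{X_0})$ to a kfl adic cover $(\fU,\cM_{\fU})\to(\fX,\cM_{\fX})$ by factoring it, via Lemma \ref{kummer map is strict after n-power ext}, into a Kummer part (which indeed lifts through $\cO_{\fX}\langle P^{1/m}\rangle$) and a strict flat lfp part, which you claim lifts ``by the standard scheme-theoretic lifting along nilpotent thickenings.'' That claim is false in general: only smooth (or \'etale) morphisms lift canonically along nilpotent thickenings; a flat, locally finitely presented cover of $X_0$ need not deform to $X_n$, and nothing in your setup lets you choose the cover to be smooth. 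Fortunately, as explained above, no such lift is required: local triviality of a kfl vector bundle is a condition covering-by-covering on the objects of the site, and since every object lies (locally) in some $(X_n,\cM_{X_n})_{\mathrm{kfl}}$, the trivializing coverings are already provided by the hypothesis that each $\cE_n$ is a kfl vector bundle. The same remark disposes of your separate argument via Lemma \ref{classicality is pointwise} and Nakayama, which is not needed here (it is the content of the subsequent Corollary \ref{classicality of vect bdle can be checked after taking reduction}, not of this lemma).
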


\begin{proof}
    This follows from the fact that $(\fX,\cM_{\fX})_{\mathrm{kfl}}$ has a basis consisting of objects in $\bigcup_{n\geq 0} (X_{n},\cM_{X_{n}})_{\mathrm{kfl}}$.
\end{proof}

\begin{cor}\label{classicality of vect bdle can be checked after taking reduction}
     Let $(\fX,\cM_{\fX})$ be a log formal scheme admitting a finitely generated ideal of definition $\cI$. Let $(X_{0},\cM_{X_{0}})$ be the strict closed subscheme defined by $\cI$. Then for $\cE\in \mathrm{Vect}_{\mathrm{kfl}}(\fX,\cM_{\fX})$, the kfl vector bundle $\cE$ is classical if and only if the pullback of $\cE$ to $(X_{0},\cM_{X_{0}})$ is classical.
\end{cor}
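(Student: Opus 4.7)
The plan is to deduce the corollary from Lemmas \ref{classicality is pointwise} and \ref{kfl vect bdle as lim} combined with the classical formal gluing of vector bundles. One direction (classical $\Rightarrow$ classical on $X_0$) is immediate from the commutativity of pullback with $\iota$ recorded after Definition \ref{def of iota}. All the content is in the converse.

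Suppose $\cE|_{(X_0,\cM_{X_0})}$ is classical. By Lemma \ref{kfl vect bdle as lim} applied to the ideal $\cI$, the object $\cE$ corresponds to a compatible system $(\cE_n)_{n\geq 0}$ with $\cE_n\in \mathrm{Vect}_{\mathrm{kfl}}(X_n,\cM_{X_n})$. The first step will be to show that each $\cE_n$ is classical. For this I would invoke Lemma \ref{classicality is pointwise}: since the closed immersion $(X_0,\cM_{X_0})\hookrightarrow (X_n,\cM_{X_n})$ is a nilpotent strict thickening, the underlying spaces coincide and the residue fields at each point $x$ agree with the pullback log structures agreeing as well. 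Consequently, for every $x\in X_n$ the pullback of $\cE_n$ to $(\mathrm{Spec}(k(x)),\cM_{k(x)})$ factors through $(X_0,\cM_{X_0})$ and therefore equals the pullback of $\cE_0$ at $x$. By hypothesis $\cE_0$ is classical, so Lemma \ref{classicality is pointwise} applied to $\cE_0$ gives that these pointwise pullbacks are classical; applying Lemma \ref{classicality is pointwise} once more, now to $\cE_n$, yields that $\cE_n$ itself is classical.

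Next, I would assemble these classical pieces into a vector bundle on $\fX$. Write $\cE_n\cong \iota(\cE'_n)$ with $\cE'_n\in \mathrm{Vect}(X_n)$. The fully faithfulness part of Lemma \ref{fully faithfulness for kfl vect bdle} promotes the transition isomorphisms between the $\cE_n$'s to isomorphisms between the $\cE'_n$'s, so $(\cE'_n)_{n\geq 0}$ defines an object of $\varprojlim_{n\geq 0}\mathrm{Vect}(X_n)$. Working Zariski locally we may assume $\fX=\mathrm{Spf}(R)$ with $R$ being $\cI$-adically complete for a finitely generated ideal $I$ of definition, in which case the standard equivalence $\mathrm{Vect}(\fX)\cong \varprojlim_{n\geq 0}\mathrm{Vect}(X_n)$ produces a vector bundle $\cE'\in \mathrm{Vect}(\fX)$ with $\cE'|_{X_n}\cong \cE'_n$. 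Then $\iota(\cE')$ and $\cE$ agree in $\varprojlim_{n\geq 0}\mathrm{Vect}_{\mathrm{kfl}}(X_n,\cM_{X_n})$, so by Lemma \ref{kfl vect bdle as lim} they are isomorphic, exhibiting $\cE$ as classical.

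The only delicate point is the pointwise reduction in the first step, where one must be sure that the classicality of $\cE_0$ at a point transfers to classicality of $\cE_n$ at the same point; this is precisely where the nilpotence of the thickening $X_0\hookrightarrow X_n$ (equal underlying spaces, equal residue fields, equal pullback log structures at geometric points) is used. Everything else is bookkeeping in the limit equivalences and the fully faithful embedding $\iota$.
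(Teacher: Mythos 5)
Your proof is correct and follows essentially the same route as the paper: you first deduce classicality of each $\cE_{n}$ from Lemma \ref{classicality is pointwise}, using that the log points of $X_{n}$ factor through the strict nilpotent thickening $X_{0}\hookrightarrow X_{n}$, and then reassemble via Lemma \ref{kfl vect bdle as lim} together with the standard equivalence $\mathrm{Vect}(\fX)\simeq\varprojlim_{n}\mathrm{Vect}(X_{n})$, which is exactly the paper's argument. The only cosmetic slip is that the classicality of the pointwise pullbacks of $\cE_{0}$ is not an application of Lemma \ref{classicality is pointwise} but simply the fact that $\iota$ commutes with pullback; this does not affect the argument.
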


\begin{proof}
    Suppose that $\cE_{0}$ is classical. Let $\cE_{n}$ denote the pullback of $\cE$ to $(X_{n},\cM_{X_{n}})$ for $n\geq 0$. The kfl vector bundle $\cE_{n}$ is also classical for every $n\geq 0$ by Lemma \ref{classicality is pointwise}, and the system $\{\cE_{n}\}_{n\geq 0}$ corresponds to a (classical) vector bundle $\cE'$ on $\fX$, which is isomorphic to $\cE$ by Lemma \ref{kfl vect bdle as lim}. In particular, $\cE$ is classical.
\end{proof}

\begin{lem}\label{kfl vect bdle is classical after n-power ext}
    Let $(\fX,\cM_{\fX})$ be a quasi-compact fs log formal scheme and $\cE$ be a kfl vector bundle on $(\fX,\cM_{\fX})$. Suppose that we are given an fs chart $P\to \cM_{\fX}$. Then the pullback of $\cE$ by a kfl covering
    \[
    (\fX,\cM_{\fX})\times_{(\bZ[P],P)^{a}} (\bZ[P^{1/n}],P^{1/n})^{a}\to (\fX,\cM_{\fX})
    \]
    is classical for some $n\geq 1$. When $\fX$ is a scheme, the analogous statement holds for kfl coherent sheaves as well.
\end{lem}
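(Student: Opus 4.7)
The plan is to reduce to the case $\fX = X$ is a scheme via Corollary \ref{classicality of vect bdle can be checked after taking reduction} and then produce the integer $n$ by applying Lemma \ref{kummer map is strict after n-power ext} to a finite trivializing kfl cover of $\cE$, concluding by classical fppf descent.

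First, Corollary \ref{classicality of vect bdle can be checked after taking reduction} lets us test classicality of the pullback of $\cE$ along the $P \to P^{1/n}$ base change on the strict closed subscheme $(X_{0},\cM_{X_{0}})$ of $(\fX,\cM_{\fX})$ cut out by a finitely generated ideal of definition. We may therefore assume $\fX = X$ is an fs log scheme. By definition of kfl vector bundle, $\cE$ becomes free of finite rank on some kfl covering of $(X,\cM_{X})$; passing to affine open subsets of the members and using the openness of kfl morphisms (Lemma \ref{fundamental properties of kfl mor} (4)) together with the quasi-compactness of $X$, we obtain a refinement consisting of finitely many maps $\{f_{i}\colon (Y_{i},\cM_{Y_{i}})\to (X,\cM_{X})\}_{i=1}^{N}$, each with $Y_{i}$ quasi-compact and with $f_{i}^{*}\cE$ free.

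For each $i$, Lemma \ref{kummer map is strict after n-power ext} applied to $f_{i}$ produces an integer $n_{i}\geq 1$ (killing $\Coker((f_{i}^{*}\cM_{X})^{\mathrm{gp}}\to \cM_{Y_{i}}^{\mathrm{gp}})$) such that the base change of $f_{i}$ along $(X,\cM_{X})\times_{(\bZ[P],P)^{a}}(\bZ[P^{1/n_{i}}],P^{1/n_{i}})^{a}\to (X,\cM_{X})$ is strict. Set $n:=\mathrm{lcm}(n_{1},\dots,n_{N})$ and $(X_{n},\cM_{X_{n}}):=(X,\cM_{X})\times_{(\bZ[P],P)^{a}}(\bZ[P^{1/n}],P^{1/n})^{a}$. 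Since strictness is preserved under further base change, each $f_{i}\times_{X}X_{n}$ is strict, kfl, and locally of finite presentation, hence strict flat and locally of finite presentation by Lemma \ref{fundamental properties of kfl mor} (1). Thus $Y:=\bigsqcup_{i} Y_{i}\times_{X}X_{n}\to X_{n}$ is a classical fppf covering that is also strict as a morphism of log schemes.

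The pullback $\cE|_{Y}$ is free of finite rank, so in particular classical, and the kfl descent data that $\cE|_{X_{n}}$ induces on $\cE|_{Y}$ with respect to $Y\to X_{n}$ amount to classical fppf descent data since this cover is strict. By classical fppf descent for vector bundles, $\cE|_{X_{n}}$ is classical, as desired. The kfl coherent sheaf assertion (when $\fX$ is a scheme) follows from the identical argument using classical fppf descent for coherent sheaves. The main technical step is the initial refinement of the trivializing kfl cover to a finite family with quasi-compact sources: this rests on the openness of finite-presentation kfl morphisms combined with the quasi-compactness of $X$, and this is precisely what allows the least common multiple of the $n_{i}$'s to serve as a single uniform integer $n$.
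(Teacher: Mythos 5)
Your proof is correct and follows essentially the same route as the paper: reduce to the reduction $(X_{0},\cM_{X_{0}})$ via Corollary \ref{classicality of vect bdle can be checked after taking reduction}, use Lemma \ref{kummer map is strict after n-power ext} to make a trivializing kfl cover strict after base change along $P\to P^{1/n}$, and conclude by classical flat descent. The only difference is that you spell out the details the paper leaves implicit (refining to a finite cover with quasi-compact sources and taking the least common multiple of the $n_{i}$), which is a faithful elaboration rather than a different argument.
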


\begin{proof}
    We prove just the assertion for kfl vector bundles. By Lemma \ref{kummer map is strict after n-power ext} and flat descent for (classical) vector bundles, $\cE|_{(X_{0},\cM_{X_{0}})\times_{(\bZ[P],P)^{a}} (\bZ[P^{1/n}],P^{1/n})^{a}}$ is classical for some $n\geq 1$, which implies that $\cE|_{(\fX,\cM_{\fX})\times_{(\bZ[P],P)^{a}} (\bZ[P^{1/n}],P^{1/n})^{a}}$ is also classical by Lemma \ref{classicality of vect bdle can be checked after taking reduction}.
\end{proof}

\begin{prop}[cf.~{\cite[Proposition 3.29]{niz08}}]\label{ext of classical vector bundle}
Let $(\fX,\cM_{\fX})$ be an fs log formal scheme. Let
   \[
   0\to \cE_{1}\to \cE_{2}\to \cE_{3}\to 0
   \]
be an exact sequence of kfl vector bundles on $(\fX,\cM_{\fX})$. Then $\cE_{2}$ is classical if and only if $\cE_{1}$ and $\cE_{3}$ are classical.
\end{prop}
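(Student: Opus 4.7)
The plan is to verify both directions pointwise and then reduce to a character-decomposition argument over a log geometric point. By Corollary \ref{classicality of vect bdle can be checked after taking reduction} I may assume $\fX = X$ is an fs log scheme. Then by Lemma \ref{classicality is pointwise} the classicality of each of $\cE_1, \cE_2, \cE_3$ can be tested after pullback along $(\Spec k(x), \cM_{k(x)}) \to (X, \cM_X)$ for every $x \in X$. Since pullback of kfl vector bundles preserves short exact sequences and commutes with the embedding $\iota$ of classical vector bundles, the problem reduces to the case $X = \Spec k$ for a field $k$ with log structure $\cM_X = k^\times \times P$ for a sharp fs monoid $P$.

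Over such a log point I would establish a functorial exact decomposition
\[
\cE = \bigoplus_{\gamma \in P^{\mathrm{gp}}_{\bQ}/P^{\mathrm{gp}}} \cE_\gamma
\]
for every kfl vector bundle $\cE$, characterized by the property that $\cE$ is classical if and only if $\cE_\gamma = 0$ for all $\gamma \neq 0$. To construct it, Lemma \ref{kfl vect bdle is classical after n-power ext} provides $n \geq 1$ such that the pullback of $\cE$ along the Kummer cover
\[
(\Spec k', \cM_{k'}) := (X, \cM_X) \times_{(\bZ[P], P)^a} (\bZ[P^{1/n}], P^{1/n})^a
\]
is classical. The cover carries a natural action of the diagonalizable group scheme $G_n := D(P^{\mathrm{gp}}/nP^{\mathrm{gp}})$, and kfl descent identifies kfl vector bundles on $(X, \cM_X)$ that become classical after pullback to the cover with $G_n$-equivariant free $k'$-modules. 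Representations of a diagonalizable group scheme over any base decompose into character eigenspaces, so the $G_n$-equivariant structure amounts to a $(P^{\mathrm{gp}}/nP^{\mathrm{gp}})$-grading on the underlying $k$-vector space; passing to the colimit as $n$ grows gives the $(P^{\mathrm{gp}}_{\bQ}/P^{\mathrm{gp}})$-grading. A classical kfl vector bundle pulled back from $X$ has trivial grading, so classicality corresponds to concentration in the $\gamma = 0$ component.

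Applying this decomposition to the given short exact sequence yields, for each $\gamma$, an exact sequence
\[
0 \to (\cE_1)_\gamma \to (\cE_2)_\gamma \to (\cE_3)_\gamma \to 0,
\]
from which $(\cE_2)_\gamma = 0$ for all $\gamma \neq 0$ is equivalent to $(\cE_1)_\gamma = (\cE_3)_\gamma = 0$ for all $\gamma \neq 0$. The main obstacle is a clean justification of the character decomposition in positive characteristic, where the Kummer cover $\Spec k'$ is a fat non-reduced point and $G_n$ may not be \'etale. This is sidestepped by working throughout with diagonalizable group schemes, which are of multiplicative type and whose representations are completely reducible over any base, rather than with finite constant groups.
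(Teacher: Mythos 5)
Your overall route (reduce to a log point via Corollary \ref{classicality of vect bdle can be checked after taking reduction} and Lemma \ref{classicality is pointwise}, then analyze kfl bundles on the point through $G_n$-equivariant modules on the standard Kummer cover) is genuinely different from the paper, which after the same first reduction simply invokes Nizio{\l}'s Proposition 3.29. But the pivotal claim of your log-point analysis is wrong as stated. Over the log point $(\mathrm{Spec}(k),k^{\times}\oplus P)$ the cover is the fat point $k'=k[P^{1/n}]/(P^{+})$, which is itself nontrivially graded by $\tfrac1n P^{\mathrm{gp}}/P^{\mathrm{gp}}$; hence the pullback $V\otimes_k k'$ of a classical bundle does \emph{not} have ``trivial grading'': it has nonzero components in every degree, being merely \emph{generated} in degree $0$. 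So the characterization ``classical $\iff$ $\cE_\gamma=0$ for $\gamma\neq 0$'' fails for the character-eigenspace decomposition of the $k'$-module, and the alternative reading of $\cE_\gamma$ as the isotypic part of a decomposition into rank-one pieces is not functorial either: for $P=\bN$ and $k'=k[s]/(s^n)$ multiplication by $s$ is a nonzero degree-zero map $k'\to k'(1/n)$ between rank-one objects of different twists, so there is no functorial exact decomposition with the property you postulate, and the per-$\gamma$ exact sequences you want to extract do not exist in that form.

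The argument is repairable with a nearby but different invariant. A kfl vector bundle trivialized (made classical) on the cover corresponds to a graded finite free $k'$-module $M$, and classicality is equivalent to $M$ being generated in degree $0$, i.e.\ to the graded $k$-vector space $M\otimes_{k'}k$ (graded Nakayama: this records the multiset of degrees of a homogeneous basis) being concentrated in degree $0$; one checks this using full faithfulness of descent, since $k'(\gamma)\cong k'$ as graded modules only for $\gamma=0$. Given your exact sequence, choose one $n$ making all three bundles classical on the cover (apply Lemma \ref{kfl vect bdle is classical after n-power ext} to each and take a common multiple; in each direction of the equivalence the relevant bundles that are assumed classical pull back to classical automatically). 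The resulting sequence of graded free $k'$-modules splits in the graded category because a homogeneous basis of the free quotient lifts, so the degree multiset of $M_2$ is the disjoint union of those of $M_1$ and $M_3$, which gives both implications at once. You should also note that the splitting $\cM_{k(x)}\cong k(x)^{\times}\oplus P$ used at the start only holds after a finite separable extension of $k(x)$, which is harmless since classicality may be checked strict-\'etale locally (as in the proof of Lemma \ref{classicality is pointwise}). With these corrections your proof is a self-contained alternative to the paper's citation of \cite{niz08}.
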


\begin{proof}
    By Corollary \ref{classicality of vect bdle can be checked after taking reduction}, we are reduced to the case that $\fX$ is a scheme. In this case, the assertion is proved in \cite[Proposition 3.29]{niz08}. 
\end{proof}

To formulate ``pro-kfl'' descent for kfl vector bundles (Proposition \ref{pro-kfl descent for kfl vect bdle}), we generalize the definition of the pullback functor of kfl vector bundles to non-fs cases. Let $f\colon (\fY,\cM_{\fY})\to (\fX,\cM_{\fX})$ be an adic morphism from a (not necessarily fs) saturated log formal scheme $(\fY,\cM_{\fY})$ to an fs log formal scheme $(\fX,\cM_{\fX})$.

\begin{dfn}
    Let $\cE$ be a kfl vector bundle on $(\fX,\cM_{\fX})$. We say that $\cE$ is $f^{*}$-\emph{admissible} if, strict flat locally on $(\fY,\cM_{\fY})$, the map $f$ factors as
    \[
    (\fY,\cM_{\fY})\stackrel{g}{\to} (\fZ,\cM_{\fZ})\stackrel{h}{\to} (\fX,\cM_{\fX}),
    \]
    where $(\fZ,\cM_{\fZ})$ is fs and $h^{*}\cE$ is classical. 
    
    Let $\mathrm{Vect}^{f^{*}\text{-adm}}_{\mathrm{kfl}}(\fX,\cM_{\fX})$ denote the full subcategory of $\mathrm{Vect}_{\mathrm{kfl}}(\fX,\cM_{\fX})$ consisting of $f^{*}$-admissible objects. We define an exact functor
    \[
    f^{*}\colon \mathrm{Vect}^{f^{*}\text{-adm}}_{\mathrm{kfl}}(\fX,\cM_{\fX})\to \mathrm{Vect}(\fY)
    \]
    as follows. Let $\cE\in \mathrm{Vect}^{f^{*}\text{-adm}}_{\mathrm{kfl}}(\fX,\cM_{\fX})$. By working flat locally on $\fY$, we may assume that $f$ admits a factorization as above. In this situation, we set
    \[
    f^{*}\cE\coloneqq g^{*}h^{*}\cE,
    \]
    where $g^{*}$ means the pullback functor for (usual) vector bundles along $g\colon \fY\to \fZ$. This definition is independent of the choice of a factorization $f=hg$ because two such factorizations are dominated by another factorization (for example, by taking the fiber product). Note that, if $(\fX,\cM_{\fX})$ is fs, $f^{*}\cE$ coincides with the usual pullback.

    When $\fX$ and $\fY$ are schemes, the $f^{*}$-admissibility for kfl coherent sheaves can be defined in the same way, and, if we let $\mathrm{Coh}^{f^{*}\text{-adm}}_{\mathrm{kfl}}(\fX,\cM_{\fX})$ denote the full subcategory of $\mathrm{Coh}_{\mathrm{kfl}}(\fX,\cM_{\fX})$ consisting of $f^{*}$-admissible objects, we can define a functor
    \[
    f^{*}\colon \mathrm{Coh}^{f^{*}\text{-adm}}_{\mathrm{kfl}}(\fX,\cM_{\fX})\to \mathrm{Coh}(\fY)
    \]
    in the same way.
\end{dfn}

\begin{lem}\label{functoriality of pro-kfl pullback of kfl vect bdles}

    \begin{enumerate}
        \item Let $(\fZ,\cM_{\fZ})$ be a saturated log formal scheme, and $g\colon (\fZ,\cM_{\fZ})\to (\fY,\cM_{\fY})$ be an adic morphism. Let $\cE$ be a kfl vector bundle on $(\fX,\cM_{\fX})$. If $\cE$ is $f^{*}$-admissible, then $\cE$ is also $(fg)^{*}$-admissible and $(fg)^{*}\cE\cong g^{*}f^{*}\cE$.
        \item Let $(\fW,\cM_{\fW})$ be an fs log formal scheme and $h\colon (\fX,\cM_{\fX})\to (\fW,\cM_{\fW})$ be an adic morphism. Let $\cE$ be a kfl vector bundle on $(\fW,\cM_{\fW})$. Then $\cE$ is $(hf)^{*}$-admissible if and only if $h^{*}\cE$ is $f^{*}$-admissible. Moreover, in this case, we have $(hf)^{*}\cE\cong f^{*}(h^{*}\cE)$.
    \end{enumerate}

    Moreover, when $\fX,\fY,\fZ,\fW$ are schemes, all assertions hold for kfl coherent sheaves. 
\end{lem}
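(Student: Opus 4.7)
My plan for both parts is to pull back the strict flat cover that witnesses the admissibility hypothesis and then unwind the definition of the pro-kfl pullback functor.

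For (1), I start with an $f^{*}$-admissible $\cE$ and choose a strict flat cover $\pi\colon(\fY',\cM_{\fY'})\to(\fY,\cM_{\fY})$ with a factorization $\fY'\xrightarrow{g'}(\mathfrak{T},\cM_{\mathfrak{T}})\xrightarrow{h'}(\fX,\cM_{\fX})$ through an fs log formal scheme $\mathfrak{T}$ with $h'^{*}\cE$ classical. Pulling $\pi$ back along $g$ produces the strict flat cover $\tilde\pi\colon(\fZ',\cM_{\fZ'}):=(\fZ,\cM_{\fZ})\times_{(\fY,\cM_{\fY})}(\fY',\cM_{\fY'})\to(\fZ,\cM_{\fZ})$, and on $\fZ'$ the morphism $fg$ factors through the \emph{same} fs log formal scheme $\mathfrak{T}$ with the same classical pullback of $\cE$. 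This immediately gives $(fg)^{*}$-admissibility. Unwinding the definition, both $(fg)^{*}\cE$ and $g^{*}f^{*}\cE$ restrict on $\fZ'$ to the ordinary pullback of $h'^{*}\cE$ along $\fZ'\to\mathfrak{T}$, and these identifications are compatible with the descent data of $\tilde\pi$; faithfully flat descent of vector bundles then upgrades the identification to an isomorphism on $\fZ$.

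For (2), the forward direction is immediate: if $h^{*}\cE$ is $f^{*}$-admissible via $\fY'\xrightarrow{g'}\mathfrak{T}\xrightarrow{k}\fX$ with $k^{*}h^{*}\cE$ classical, then $hf$ factors on $\fY'$ as $\fY'\xrightarrow{g'}\mathfrak{T}\xrightarrow{hk}\fW$ with $(hk)^{*}\cE=k^{*}h^{*}\cE$ classical. For the converse, let $\cE$ be $(hf)^{*}$-admissible with local factorization $\fY'\xrightarrow{g'}\mathfrak{T}\xrightarrow{k}\fW$; I form the fs fiber product $\mathfrak{T}':=(\mathfrak{T},\cM_{\mathfrak{T}})\times^{\mathrm{fs}}_{(\fW,\cM_{\fW})}(\fX,\cM_{\fX})$, which is fs and carries a projection $k'\colon\mathfrak{T}'\to\fX$. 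The compatible pair $(g',f\circ\pi)$ induces a canonical morphism $\fY'\to\mathfrak{T}'$, and $(k')^{*}h^{*}\cE$ equals the pullback of the classical sheaf $k^{*}\cE$ along $\mathfrak{T}'\to\mathfrak{T}$, hence is classical. This shows $h^{*}\cE$ is $f^{*}$-admissible, and the isomorphism $(hf)^{*}\cE\cong f^{*}(h^{*}\cE)$ is verified on $\fY'$ by unwinding the definitions and descended to $\fY$.

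The main technical point I foresee is justifying the map $\fY'\to\mathfrak{T}'$ in the converse of (2), because $\fY'$ is only saturated and not necessarily fs, whereas the fs fiber product a priori has its universal property only against fs targets. I expect to resolve this by reducing to a statement about monoid pushouts: the pushout of two fs monoids over an fs one is integral and its saturation is again fs, and any compatible pair of maps from a saturated monoid factors uniquely through that saturation; translated back, $\mathfrak{T}'$ absorbs maps from any saturated log formal scheme equipped with a compatible pair. Finally, the same argument applies verbatim to kfl coherent sheaves on schemes, since classicality is preserved under ordinary pullbacks of coherent sheaves.
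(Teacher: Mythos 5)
Your argument is correct and is essentially the paper's: the paper simply records this lemma as following directly from the definition of $f^{*}$-admissibility and of the pro-kfl pullback, and your write-up just makes explicit the pullback of the witnessing strict flat cover in (1), the composition/fiber-product factorizations in (2), and the descent of the local identifications (which is exactly the independence-of-factorization argument built into the definition). One small correction to your flagged technical point: the pushout of two fs monoids over an fs monoid need \emph{not} be integral, so your intermediate claim is false as stated; what you actually need, and what is true, is that the fiber product in the category of saturated log formal schemes (the saturation of the integralization of the pushout, which is again fs by Gordan's lemma since all charts are fs) has its universal property against \emph{all} saturated log formal schemes, which is precisely what produces the map $\fY'\to\mathfrak{T}'$ from the compatible pair $(g',f\circ\pi)$; this is also the "saturated fiber product" the paper uses throughout.
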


\begin{proof}
    This follows directly from the definition. 
\end{proof}

The following lemma gives non-trivial examples of $f^{*}$-admissible kfl vector bundles that we treat in this paper.

\begin{lem}\label{pro-kfl pullback of kfl vect bdles is well-def}
     Suppose that we are given an fs chart $P\to \cM_{\fX}$ and a chart $P_{\bQ_{\geq 0}}\to \cM_{\fY}$ such that the following diagram is commutative:
    \[
    \begin{tikzcd}
        P \ar[r] \ar[d] & f^{-1}\cM_{\fX} \ar[d] \\
        P_{\bQ_{\geq 0}} \ar[r] & \cM_{\fY}.
    \end{tikzcd}
    \]
    Then every kfl vector bundle $\cE$ on $(\fX,\cM_{\fX})$ is $f^{*}$-admissible. When $\fX$ and $\fY$ are schemes, the analogous statement holds for kfl coherent sheaves.
\end{lem}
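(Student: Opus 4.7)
The plan is to use Lemma \ref{kfl vect bdle is classical after n-power ext} to show that a single $n$-th root extension of the chart already trivializes the problem, so that $f$ factors through a fixed fs log formal scheme over which $\cE$ becomes classical; no further strict flat localization on $\fY$ is needed.

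First, since $f^{*}$-admissibility is allowed to be verified strict flat locally on $(\fY,\cM_{\fY})$ (and Zariski localization is a special case of this), I would reduce to the situation in which both $\fX$ and $\fY$ are quasi-compact by replacing $\fY$ by an affine open and $\fX$ by a quasi-compact open containing its image; the given charts restrict to charts of the localized log formal schemes.

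Next, for each integer $n\geq 1$, put
\[
(\fZ_{n},\cM_{\fZ_{n}})\coloneqq (\fX,\cM_{\fX})\times_{(\bZ[P],P)^{a}}(\bZ[P^{1/n}],P^{1/n})^{a},
\]
which is an fs log formal scheme and is adic over $(\fX,\cM_{\fX})$ via the projection $h_{n}$. Since the chart $P_{\bQ_{\geq 0}}\to \cM_{\fY}$ factors as $P_{\bQ_{\geq 0}}=\varinjlim_{n} P^{1/n}$, each inclusion $P^{1/n}\hookrightarrow P_{\bQ_{\geq 0}}$ gives a prelog structure $P^{1/n}\to \cM_{\fY}$ on $\fY$. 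The commutativity of the square in the hypothesis implies that the two compositions $P\to P^{1/n}\to \cM_{\fY}$ and $P\to f^{-1}\cM_{\fX}\to \cM_{\fY}$ agree, so the universal property of the fiber product furnishes an adic morphism of saturated log formal schemes $g_{n}\colon (\fY,\cM_{\fY})\to (\fZ_{n},\cM_{\fZ_{n}})$ with $f=h_{n}\circ g_{n}$.

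Finally, I would apply Lemma \ref{kfl vect bdle is classical after n-power ext} to $\cE$ and the fs chart $P\to \cM_{\fX}$ of the quasi-compact fs log formal scheme $(\fX,\cM_{\fX})$: it produces an integer $n\geq 1$ such that $h_{n}^{*}\cE$ is classical on $(\fZ_{n},\cM_{\fZ_{n}})$. The factorization $f=h_{n}\circ g_{n}$ then witnesses the $f^{*}$-admissibility of $\cE$. The assertion for kfl coherent sheaves in the scheme case follows by the very same argument, using the coherent part of Lemma \ref{kfl vect bdle is classical after n-power ext}. The only point that requires attention, but is essentially bookkeeping, is the verification that the prelog structure $P^{1/n}\to \cM_{\fY}$ is compatible with the fiber product data; this is immediate from the fact that $P\to P_{\bQ_{\geq 0}}$ factors through $P^{1/n}$ and from the commutativity of the given square.
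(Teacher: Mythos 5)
Your proof is correct and follows essentially the same route as the paper: reduce to $\fX$ quasi-compact, use the chart $P_{\bQ_{\geq 0}}\to \cM_{\fY}$ (restricted to $P^{1/n}$) to factor $f$ through $(\fX,\cM_{\fX})\times_{(\bZ[P],P)^{a}}(\bZ[P^{1/n}],P^{1/n})^{a}$, and invoke Lemma \ref{kfl vect bdle is classical after n-power ext}. You merely spell out the factorization via the universal property of the fiber product, which the paper leaves implicit.
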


\begin{proof}
    By working Zariski locally on $\fX$, we may assume that $\fX$ is quasi-compact. Since $f$ factors as
    \[
    (\fY,\cM_{\fY})\to (\fX,\cM_{\fX})\times_{(\bZ[P],P)^{a}} (\bZ[P^{1/n}],P^{1/n})^{a}\to (\fX,\cM_{\fX})
    \]
    for each $n\geq 1$, the assertion follows from Lemma \ref{kfl vect bdle is classical after n-power ext}.
\end{proof}

\begin{setting}\label{setting for pro-kfl descent}

Let $(\fX,\cM_{\fX})$ be an fs log formal scheme and $\alpha\colon P\to \cM_{\fX}$ be an fs chart. We define $(\fX_{\infty,\alpha},\cM_{\fX_{\infty,\alpha}})$ by
\[
(\fX_{\infty,\alpha},\cM_{\fX_{\infty,\alpha}})\coloneqq (\fX,\cM_{\fX})\times_{(\bZ[P],P)^{a}} (\bZ[P_{\bQ_{\geq 0}}],P_{\bQ_{\geq 0}})^{a},
\]
and define $(\fX^{(\bullet)}_{\infty,\alpha},\cM_{\fX^{(\bullet)}_{\infty,\alpha}})$ as the \v{C}ech nerve of $(\fX_{\infty,\alpha},\cM_{\fX_{\infty,\alpha}})\to (\fX,\cM_{\fX})$ in the category of saturated log formal schemes. In the same way, we define $(\fX_{m,\alpha},\cM_{\fX_{m,\alpha}})$ by
\[
(\fX_{m,\alpha},\cM_{\fX_{m,\alpha}})\coloneqq (\fX,\cM_{\fX})\times_{(\bZ[P],P)^{a}} (\bZ[P^{1/m}],P^{1/m})^{a},
\]
and define $(\fX^{(\bullet)}_{m,\alpha},\cM_{\fX^{(\bullet)}_{m,\alpha}})$ as the \v{C}ech nerve of $(\fX_{m,\alpha},\cM_{\fX_{m,\alpha}})\to (\fX,\cM_{\fX})$ in the category of saturated log formal schemes for $m\geq 1$.
    
\end{setting}

\begin{prop}[``Pro-kfl'' descent for kfl vector bundles]\label{pro-kfl descent for kfl vect bdle} \noindent

Let $f\colon (\fY,\cM_{\fY})\to  (\fX,\cM_{\fX})$ be an adic surjection of saturated log formal schemes. Suppose that there exists an fs chart $\alpha\colon P\to \cM_{\fX}$ such that $f$ admits a factorization
\[
(\fY,\cM_{\fY})\to (\fX_{\infty,\alpha},\cM_{\fX_{\infty,\alpha}})\to (\fX,\cM_{\fX}),
\]
where $(\fY,\cM_{\fY})\to (\fX_{\infty,\alpha},\cM_{\fX_{\infty,\alpha}})$ is strict quasi-compact flat under the notation in Setting \ref{setting for pro-kfl descent}. Let $(\fY^{(\bullet)},\cM_{\fY^{(\bullet)}})$ be the \v{C}ech nerve of $f$ in the category of saturated log formal schemes. Then there exists a natural bi-exact equivalence
    \[
    \mathrm{Vect}_{\mathrm{kfl}}(\fX,\cM_{\fX})\isom \varprojlim_{\bullet\in \Delta}\mathrm{Vect}(\fY^{(\bullet)}).
    \]
Moreover, when $\fX$ is a scheme, there exists a natural equivalence
    \[
    \mathrm{Coh}_{\mathrm{kfl}}(\fX,\cM_{\fX})\isom \varprojlim_{\bullet\in \Delta}\mathrm{Coh}(\fY^{(\bullet)}).
    \]
\end{prop}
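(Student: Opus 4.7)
The plan is to factor $f$ through $(\fX_{\infty,\alpha},\cM_{\fX_{\infty,\alpha}})$ and treat the two resulting pieces separately: strict fpqc descent for $\fY\to \fX_{\infty,\alpha}$, and a pro-Kummer descent for $\fX_{\infty,\alpha}\to \fX$ which I reduce to ordinary kfl descent at finite Kummer level. First I check that the pullback functor $\mathrm{Vect}_\mathrm{kfl}(\fX,\cM_\fX)\to \varprojlim_\bullet \mathrm{Vect}(\fY^{(\bullet)})$ is well-defined: each $\fY^{(n)}$ inherits a chart factoring through $P_{\bQ_{\geq 0}}$, so Lemma \ref{pro-kfl pullback of kfl vect bdles is well-def} guarantees every kfl vector bundle on $(\fX,\cM_\fX)$ has a classical pullback to $\fY^{(n)}$.

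For the strict flat descent stage, the map $\fY\to \fX_{\infty,\alpha}$ is strict quasi-compact flat by hypothesis and surjective (using Lemma \ref{four point lemma in nonfs case} together with the surjectivity of $f$), hence strict faithfully flat quasi-compact on underlying formal schemes; this is preserved under iterated base change to the respective \v{C}ech nerves. Classical fpqc descent for vector bundles on formal schemes (reducing modulo ideals of definition via Lemma \ref{kfl vect bdle as lim}) then yields a bi-exact equivalence
\[
\varprojlim_\bullet \mathrm{Vect}(\fX_{\infty,\alpha}^{(\bullet)}) \isom \varprojlim_\bullet \mathrm{Vect}(\fY^{(\bullet)}),
\]
so it suffices to establish $\mathrm{Vect}_\mathrm{kfl}(\fX,\cM_\fX)\isom \varprojlim_\bullet \mathrm{Vect}(\fX_{\infty,\alpha}^{(\bullet)})$.

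For this remaining equivalence, the key is the identification $\fX_{\infty,\alpha}=\varprojlim_n \fX_{n,\alpha}$ as formal schemes with affine transition maps, and likewise at every simplicial level, combined with the fact that, because one factor in each $\fX_{n,\alpha}^{(k)}$ is fs and Kummer over $\fX$, the saturated fiber products coincide with the fs ones. This exhibits $\fX_{n,\alpha}^{(\bullet)}$ as the \v{C}ech nerve of the kfl cover $g_n\colon (\fX_{n,\alpha},\cM_{\fX_{n,\alpha}})\to (\fX,\cM_\fX)$ in the fs category. For full faithfulness, Lemma \ref{kfl vect bdle is classical after n-power ext} provides, for any $\cE_1,\cE_2$, some $n$ such that $g_n^*\sHom(\cE_1,\cE_2)$ is classical; applying the kfl sheaf property for $g_n$ and passing to the limit in $n$ yields the desired Hom identity. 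For essential surjectivity, I take $(\cF^{(\bullet)})$ in the limit and apply standard approximation for vector bundles on cofiltered inverse limits of qcqs formal schemes with affine transition maps to descend both $\cF^{(0)}$ and its descent data to a vector bundle on $\fX_{n,\alpha}$ with descent data on $\fX_{n,\alpha}^{(\bullet)}$ for $n$ sufficiently large; kfl descent for $g_n$ then produces the required kfl vector bundle on $(\fX,\cM_\fX)$.

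The main obstacle is the approximation step, which must be made precise for vector bundles and their descent data in the log-formal setting, together with the careful identification of saturated and fs fiber products along kfl maps. Once these are in place, bi-exactness is automatic because exactness is detected locally in the kfl (resp.\ flat) topology, and the coherent sheaf analogue in the scheme case follows by the same argument with coherent sheaves in place of vector bundles.
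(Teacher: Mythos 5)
Your overall plan follows the paper's, but the first reduction has a genuine gap: you assert that $\fY\to\fX_{\infty,\alpha}$ is surjective, ``using Lemma \ref{four point lemma in nonfs case} together with the surjectivity of $f$''. That does not follow, and it is false in general. The hypotheses only make $f\colon\fY\to\fX$ surjective; since $\fX_{\infty,\alpha}\to\fX$ can have disconnected fibres (already for $\fX=\mathrm{Spec}(\bQ)$ with trivial log structure and chart $\bN\to\cO_{\fX}$, $1\mapsto 1$, where $\fX_{\infty,\alpha}=\mathrm{Spec}(\varinjlim_{n}\bQ[s]/(s^{n}-1))$ has many clopen pieces), one can take $\fY$ to be a proper clopen subscheme of $\fX_{\infty,\alpha}$ which still surjects onto $\fX$: then $\fY\to\fX_{\infty,\alpha}$ is strict, quasi-compact and flat but not surjective, so fpqc descent along it is not available. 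The correct use of Lemma \ref{four point lemma in nonfs case} is for a different map: set $(\fZ,\cM_{\fZ})\coloneqq(\fY,\cM_{\fY})\times^{\mathrm{sat}}_{(\fX,\cM_{\fX})}(\fX_{\infty,\alpha},\cM_{\fX_{\infty,\alpha}})$. The four point lemma (applied to the Kummer map $\fX_{\infty,\alpha}\to\fX$) together with surjectivity of $f$ gives surjectivity of $\fZ\to\fX_{\infty,\alpha}$, while Lemma \ref{sat prod of two pro-kfl cover} makes $\fZ\to\fY$ a strict fpqc cover as well; comparing both \v{C}ech limits with descent data along $\fZ$ (refinement invariance of fpqc descent for $\mathrm{Vect}$) then yields $\varprojlim_{\bullet}\mathrm{Vect}(\fY^{(\bullet)})\simeq\varprojlim_{\bullet}\mathrm{Vect}(\fX_{\infty,\alpha}^{(\bullet)})$. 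This is exactly the paper's reduction, and your version needs this repair before the rest can proceed.

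Your treatment of the remaining pro-Kummer stage is essentially the paper's: classicality of kfl bundles (and of internal Homs) after a finite Kummer extension (Lemma \ref{kfl vect bdle is classical after n-power ext}), kfl descent along the finite-level cover $\fX_{m,\alpha}\to\fX$, and a limit argument to descend objects and descent data from level $\infty$ to a finite level; you package this as full faithfulness plus essential surjectivity, whereas the paper writes it as the chain $\mathrm{Vect}_{\mathrm{kfl}}(\fX,\cM_{\fX})\cong\varinjlim_{m}\varprojlim_{\bullet}\mathrm{Vect}(\fX_{m,\alpha}^{(\bullet)})\cong\varprojlim_{\bullet}\varinjlim_{m}\mathrm{Vect}(\fX_{m,\alpha}^{(\bullet)})\cong\varprojlim_{\bullet}\mathrm{Vect}(\fX_{\infty,\alpha}^{(\bullet)})$, which amounts to the same thing. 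One caution on the step you yourself flag as the main obstacle: the approximation argument for vector bundles on cofiltered limits should be run only after reducing to the scheme case via Lemma \ref{kfl vect bdle as lim} (as the paper does), since in the formal setting $\fX_{\infty,\alpha}$ is defined via completed tensor products and is not literally a filtered colimit of the rings of the $\fX_{m,\alpha}$, so the ``vector bundles on the limit equal the colimit of vector bundles'' principle is not available off the shelf. With these corrections your argument coincides with the paper's proof.
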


\begin{proof}
    We just treat $\mathrm{Vect}_{\mathrm{kfl}}$ because the assertion for $\mathrm{Cof}_{\mathrm{kfl}}$ follows from the same argument. By Lemma \ref{functoriality of pro-kfl pullback of kfl vect bdles} (1) and Lemma \ref{pro-kfl pullback of kfl vect bdles is well-def}, the functor $f^{*}$ induces an exact functor 
    \[
    \mathrm{Vect}_{\mathrm{kfl}}(\fX,\cM_{\fX})\to \varprojlim_{\bullet\in \Delta}\mathrm{Vect}(\fY^{(\bullet)}).
    \]
    We shall prove that this functor is a bi-exact equivalence. 
    
    Consider the following diagram in which every square is a saturated fiber product:
    \[
    \begin{tikzcd}
        (\fZ,\cM_{\fZ}) \ar[r] \ar[d]  & (\fX^{(1)}_{\infty,\alpha},\cM_{\fX^{(1)}_{\infty,\alpha}}) \ar[r] \ar[d] &(\fX_{\infty,\alpha},\cM_{\fX_{\infty,\alpha}}) \ar[d] \\
        (\fY,\cM_{\fY}) \ar[rr, bend right, "f"] \ar[r] & (\fX_{\infty,\alpha},\cM_{\fX_{\infty,\alpha}}) \ar[r] & (\fX,\cM_{\fX}).
    \end{tikzcd}
    \]
    By the assumption, the upper left horizontal map is strict quasi-compact flat. Lemma \ref{sat prod of two pro-kfl cover} implies that the upper right horizontal map and the middle vertical map are strict fpqc coverings. Hence, the left vertical map and the upper horizontal map $(\fZ,\cM_{\fZ})\to (\fX_{\infty,\alpha},\cM_{\fX_{\infty,\alpha}})$ are strict fpqc coverings by Lemma \ref{four point lemma in nonfs case}. Therefore, due to flat descent for (usual) vector bundles, it is enough to show that the functor
    \[
    \mathrm{Vect}_{\mathrm{kfl}}(\fX,\cM_{\fX})\to \varprojlim_{\bullet\in \Delta}\mathrm{Vect}(\fX_{\infty,\alpha}^{(\bullet)})
    \]
    is a bi-exact equivalence. Lemma \ref{kfl vect bdle as lim} reduces the problem to the case that $\fX$ is a scheme. Then we get bi-exact equivalences
    \begin{align*}
        \mathrm{Vect}_{\mathrm{kfl}}(\fX,\cM_{\fX})&\cong \varinjlim_{m\geq 1} \varprojlim_{\bullet\in \Delta} \mathrm{Vect}(\fX^{(\bullet)}_{m,\alpha}) \\
        &\cong \varprojlim_{\bullet\in \Delta} \varinjlim_{m\geq 1} \mathrm{Vect}(\fX^{(\bullet)}_{m,\alpha}) \\
        &\cong \varprojlim_{\bullet\in \Delta} \mathrm{Vect}(\fX^{(\bullet)}_{\infty,\alpha}),
    \end{align*}
    where the first equivalence follows from Lemma \ref{kfl vect bdle is classical after n-power ext}, and the third one follows from the limit argument (cf. \cite[Appendix]{ino23}).
\end{proof}

\section{Log $p$-divisible groups on fs formal log schemes}
\subsection{Log finite group schemes}

In this subsection, we review the notion of log finite group schemes and generalize some known results to the case of log formal schemes.

\begin{dfn}[cf.~{\cite[Definition 1.3 and \S 1.6]{kat23}}]
    Let $(\fX,\cM_{\fX})$ be an fs log formal scheme and $n\geq 1$ be an integer. Assume that $p$ is topologically nilpotent on $\fX$ when $n=1$. 
    \begin{enumerate}
        \item Let $G$ be a sheaf of abelian groups on $(\fX,\cM_{\fX})_{\mathrm{kfl}}$. We call $G$ a \emph{weak log finite group scheme} (resp.~a \emph{weak log truncated Barsotti-Tate group scheme of level $n$}) if there exists a kfl covering $\{(\fU_{i},\cM_{\fU_{i}})\to (\fX,\cM_{\fX})\}_{i\in I}$ such that the restriction of $G$ to $(\fU_{i},\cM_{\fU_{i}})_{\mathrm{kfl}}$ belongs to $\mathrm{Fin}(\fU_{i})$ (resp.~$\mathrm{BT}_{n}(\fU_{i})$) via the inclusion functors in Remark \ref{classical representable sheaf} for each $i\in I$. We let $\mathrm{wFin}(\fX,\cM_{\fX})$ (resp.~$\mathrm{wBT}_{n}(\fX,\cM_{\fX})$) denote the category of weak log finite group schemes (resp.~weak log truncated Barsotti-Tate group schemes of level $n$) over $(\fX,\cM_{\fX})$. The category $\mathrm{Fin}(\fX)$ (resp.~$\mathrm{BT}_{n}(\fX)$) is regarded as the full subcategory of $\mathrm{wFin}(\fX,\cM_{\fX})$ (resp.~$\mathrm{wBT}_{n}(\fX,\cM_{\fX})$), and an object $G\in \mathrm{wFin}(\fX,\cM_{\fX})$ is called \emph{classical} if $G$ belongs to $\mathrm{Fin}(\fX)$.
        \item For a weak log finite group scheme $G$ over $(\fX,\cM_{\fX})$, we define
        \[ G^{*}\coloneqq\sHom_{(\fX,\cM_{\fX})_{\mathrm{kfl}}}(G,\bG_{m})
        \]
        (which we call the \emph{Cartier dual} of $G$), where $\bG_{m}$ is regarded as a sheaf on $(\fX,\cM_{\fX})_{\mathrm{kfl}}$ via the inclusion functors in Remark \ref{classical representable sheaf}. We say that $G$ is a \emph{log finite group scheme} if $G$ and $G^{*}$ are representable by finite and kfl log formal schemes over $(\fX,\cM_{\fX})$. We let $\mathrm{Fin}(\fX,\cM_{\fX})$ denote the category of log finite group schemes over $(\fX,\cM_{\fX})$. 
        \item An object $G\in \mathrm{wFin}(\fX,\cM_{\fX})$ is called a \emph{log truncated Barsotti-Tate group schemes of level $n$} if $G$ belongs to $\mathrm{Fin}(\fX,\cM_{\fX})$ and $\mathrm{wBT}_{n}(\fX,\cM_{\fX})$. We let $\mathrm{BT}_{n}(\fX,\cM_{\fX})$ denote the category of log truncated Barsotti-Tate group schemes of level $n$ over $(\fX,\cM_{\fX})$. 
    \end{enumerate}
\end{dfn}

For an adic morphism of fs log formal schemes $f\colon (\fX,\cM_{\fX})\to (\fY,\cM_{\fY})$ and a weak log finite group scheme $G$ over $(\fY,\cM_{\fY})$, we define the pullback
\[
f^{*}G\in \mathrm{wFin}(\fX,\cM_{\fX})
\]
by taking the pullback as a sheaf. We write $G\times_{(\fY,\cM_{\fY})} (\fX,\cM_{\fX})$ or $G_{(\fX,\cM_{\fX})}$ for $f^{*}G$. Then we have the following commutative diagram:
\[
\begin{tikzcd}
    \mathrm{Fin}(\fY) \ar[r,"f^{*}"] \ar[d,hook] & \mathrm{Fin}(\fX) \ar[d,hook] \\
    \mathrm{wFin}(\fY,\cM_{\fY}) \ar[r,"f^{*}"] & \mathrm{wFin}(\fX,\cM_{\fX}).
\end{tikzcd}
\]

\begin{lem}\label{log fin grp is classical after n-power ext}
    Let $(\fX,\cM_{\fX})$ be a quasi-compact fs log formal scheme and $G$ be a weak log finite group scheme over $(\fX,\cM_{\fX})$. Suppose that we are given an fs chart $P\to \cM_{\fX}$. Then the pullback of $G$ along a kfl covering
    \[
    (\fX,\cM_{\fX})\times_{(\bZ[P],P)^{a}} (\bZ[P^{1/n}],P^{1/n})^{a}\to (\fX,\cM_{\fX})
    \]
    is classical for some $n\geq 1$.
\end{lem}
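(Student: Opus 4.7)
The statement is the finite-group-scheme analogue of Lemma~\ref{kfl vect bdle is classical after n-power ext}, and I would proceed in an analogous fashion. Since $\fX$ is quasi-compact, the definition of a weak log finite group scheme lets me choose a finite kfl covering $\{f_{i}\colon (\fU_{i},\cM_{\fU_{i}})\to (\fX,\cM_{\fX})\}_{i=1}^{k}$ on which $G$ becomes classical. Replacing the $\fU_{i}$ by their disjoint union gives a single kfl covering $\pi\colon (\fU,\cM_{\fU})\to (\fX,\cM_{\fX})$ that is locally of finite presentation and surjective, with $\pi^{*}G$ a classical finite and locally free group scheme on $\fU$.

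Next, I apply Lemma~\ref{kummer map is strict after n-power ext} to $\pi$ with respect to the chart $P\to \cM_{\fX}$: there exists an integer $n\geq 1$ such that, after base change along $(\bZ[P],P)^{a}\to (\bZ[P^{1/n}],P^{1/n})^{a}$, the resulting morphism
\[
\pi_{n}\colon (\fU,\cM_{\fU})\times_{(\fX,\cM_{\fX})}(\fX_{n},\cM_{\fX_{n}})\to (\fX_{n},\cM_{\fX_{n}})
\]
is strict, where $(\fX_{n},\cM_{\fX_{n}})\coloneqq (\fX,\cM_{\fX})\times_{(\bZ[P],P)^{a}}(\bZ[P^{1/n}],P^{1/n})^{a}$. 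By Lemma~\ref{fundamental properties of kfl mor}(1) the morphism $\pi_{n}$ is then strict flat, and together with surjectivity it induces an fpqc covering on the underlying formal schemes.

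Write $G_{n}\coloneqq G_{(\fX_{n},\cM_{\fX_{n}})}$. The pullback $\pi_{n}^{*}G_{n}$ coincides with the base change of $\pi^{*}G$, hence is a classical finite and locally free group scheme on the source of $\pi_{n}$, and it carries a canonical descent datum relative to $\pi_{n}$ induced by the sheaf structure of $G_{n}$ on the kfl site. By fpqc descent for finite and locally free group schemes, applied level by level modulo a finitely generated ideal of definition of $\fX_{n}$ and then assembled back, this descent datum is effective and produces a classical finite and locally free group scheme $H$ on $\fX_{n}$; using Proposition~\ref{representable presheaf is sheaf} one identifies $H$ with $G_{n}$ as a sheaf on the kfl site, so $G_{n}$ is classical.

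The step I expect to be the main obstacle is the last one, namely fpqc descent for classical finite and locally free group schemes in the formal-scheme setting. This should be handled by establishing an analogue of Lemma~\ref{kfl vect bdle as lim} for weak log finite group schemes, thereby reducing to the scheme case, and then invoking standard Grothendieck descent. The remaining steps are routine consequences of the chart-straightening Lemma~\ref{kummer map is strict after n-power ext} and Lemma~\ref{fundamental properties of kfl mor}(1), exactly as in the vector bundle case.
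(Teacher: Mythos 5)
Your proposal follows essentially the same route as the paper: the paper's proof is exactly "Lemma \ref{kummer map is strict after n-power ext} plus flat descent for usual finite and locally free group schemes", and your chart-straightening of a finite kfl cover trivializing $G$, followed by strict flat descent of the classical pullback, is that argument with the details written out. One caution: do not literally appeal to the weak-log analogue of Lemma \ref{kfl vect bdle as lim} (that is Lemma \ref{log fin grp as lim}, whose proof in the paper uses the present lemma, so this would be circular); what you actually need is only the classical fact that finite locally free group schemes over a formal scheme are equivalent to compatible systems over the thickenings by an ideal of definition, so the fppf descent can be performed modulo $\cI^{m}$ and reassembled, exactly as you indicate.
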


\begin{proof}
    This follows from Lemma \ref{kummer map is strict after n-power ext} and flat descent for usual finite and locally free group schemes.
\end{proof}

\begin{lem}[cf.~{\cite[Proposition 2.16]{kat23}}]\label{classicality of log fin grp is pointwise}
    Let $(X,\cM_{X})$ be an fs log scheme and $G$ be a weak log finite group scheme. Assume that, for every $x\in X$, the pullback of $G$ to $(\mathrm{Spec}(k(x)),\cM_{k(x)})$ is classical, where $\cM_{k(x)}$ is the pullback log structure of $\cM_{X}$. Then $G$ is classical.
\end{lem}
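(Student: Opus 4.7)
The plan is to mimic the proof of the analogous statement for kfl vector bundles (Lemma \ref{classicality is pointwise}). First, I would apply the standard limit argument (see \cite[Appendix]{ino23}) to reduce to the case where $X = \Spec(R)$ for $R$ a noetherian strict local ring with closed point $x$. Both the hypothesis and conclusion are preserved under filtered inverse limits of fs log schemes with affine flat transition maps, because weak log finite group schemes, their Cartier duals, and the classicality condition all pull back along such maps, so the reduction is routine.

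Once localized, I would invoke Lemma \ref{log fin grp is classical after n-power ext}: taking an fs chart $P \to \cM_X$, there exists an integer $n \geq 1$ such that, setting
\[
(X',\cM_{X'}) \coloneqq (X,\cM_X) \times_{(\bZ[P],P)^{a}} (\bZ[P^{1/n}],P^{1/n})^{a},
\]
the pullback $G'$ of $G$ to $(X',\cM_{X'})$ is classical, i.e.\ belongs to $\mathrm{Fin}(X')$. The kfl cover $(X',\cM_{X'}) \to (X,\cM_X)$ is a torsor under the Cartier dual $D \coloneqq \Hom(P^{\mathrm{gp}}/nP^{\mathrm{gp}},\bG_m)$ (interpreted in the kfl topology), so classicality of $G$ on $X$ amounts to showing that the tautological $D$-equivariant structure on $G'$ comes from ordinary fpqc descent data on the underlying scheme, rather than merely from kfl descent data.

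The main obstacle is verifying this descent condition. The pointwise hypothesis provides the starting point: over the closed point $x$, the group $G_x$ is classical by assumption, so the $D$-equivariant structure on the fiber of $G'$ at the closed point $x' \in X'$ already arises from classical descent. The task is to propagate this from $x$ to all of $X$. I would do this by using the strict Henselian structure of $R$ together with rigidity properties of sections into finite and locally free group schemes: any section of the descent isomorphism on the closed fiber lifts uniquely to $X'$, and one checks that these lifts assemble into a classical (i.e.\ non-logarithmic) fpqc descent datum on $G'$. This lifting step is exactly the content of \cite[Proposition 2.16]{kat23}, and in practice the proof I would write would reduce to quoting that proposition after carrying out the limit reduction above.
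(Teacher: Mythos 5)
Your proposal is correct and matches the paper's proof: the paper likewise handles the general case by a limit argument (citing the appendix of \cite{ino23}) to reduce to the locally noetherian case, and then simply invokes \cite[Proposition 2.16]{kat23}. Your middle paragraph sketching the kfl-descent mechanism is not needed (and is somewhat loose), but since you ultimately quote Kato's proposition for exactly that step, the argument is the same as the paper's.
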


\begin{proof}
    This assertion is proved in \cite[Proposition 2.16]{kat23} under the assumption that $X$ is locally noetherian. In general cases, we can be reduced to this case by using a limit argument (cf.~\cite[Appendix]{ino23}).
\end{proof}

\begin{lem}\label{log fin grp as lim}
    Let $(\fX,\cM_{\fX})$ be an fs log formal scheme admitting a finitely generated ideal of definition $\cI$. Let $(X_{n},\cM_{X_{n}})$ be the strict closed subscheme defined by $\cI^{n+1}$ for $n\geq 0$. Let $m\geq 1$ be an integer, and assume that $p$ is topologically nilpotent on $\fX$ when $m=1$.
    Then the natural functors
    \begin{align*}
    \mathrm{wFin}(\fX,\cM_{\fX})&\to \varprojlim_{n} \mathrm{wFin}(X_{n},\cM_{X_{n}}) \\
    \mathrm{Fin}(\fX,\cM_{\fX})&\to \varprojlim_{n} \mathrm{Fin}(X_{n},\cM_{X_{n}}) \\
    \mathrm{wBT}_{m}(\fX,\cM_{\fX})&\to \varprojlim_{n} \mathrm{wBT}_{m}(X_{n},\cM_{X_{n}}) \\
    \mathrm{BT}_{m}(\fX,\cM_{\fX})&\to \varprojlim_{n} \mathrm{BT}_{m}(X_{n},\cM_{X_{n}}) 
    \end{align*}
    give equivalences.
\end{lem}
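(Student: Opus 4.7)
The plan is to imitate the very short argument for Lemma~\ref{kfl vect bdle as lim}. The key site-theoretic observation is that $(\fX,\cM_{\fX})_{\mathrm{kfl}}$ has a basis consisting of the objects of $\bigcup_{n\geq 0}(X_{n},\cM_{X_{n}})_{\mathrm{kfl}}$: any object $(Y,\cM_{Y})$ of $(\fX,\cM_{\fX})_{\mathrm{kfl}}$ is an fs log scheme whose underlying scheme $Y$ is adic over the formal scheme $\fX$, hence factors through $(X_{n},\cM_{X_{n}})$ for some $n$, and its kfl coverings coincide in the two sites. Consequently, the category of sheaves of sets (resp.~abelian groups) on $(\fX,\cM_{\fX})_{\mathrm{kfl}}$ is equivalent to the $2$-limit of the corresponding categories of sheaves on $(X_{n},\cM_{X_{n}})_{\mathrm{kfl}}$ along the restriction functors. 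The task is then to verify that each of the four conditions (weak log finite, weak truncated BT, log finite, log truncated BT) is compatible with this limit description.

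For $\mathrm{wFin}$ and $\mathrm{wBT}_{m}$ the compatibility is essentially immediate in both directions. Given a compatible system $\{G_{n}\}_{n\geq 0}$ and a kfl covering $\{\fU_{i,0}\to (X_{0},\cM_{X_{0}})\}$ on which $G_{0}$ restricts to objects of $\mathrm{Fin}(\fU_{i,0})$ (resp.~$\mathrm{BT}_{m}(\fU_{i,0})$), one views the same $\fU_{i,0}$ as a kfl covering of $(\fX,\cM_{\fX})$ through the composition with $X_{0}\to \fX$, and the restriction of the glued sheaf to $\fU_{i,0}$ equals $G_{0}|_{\fU_{i,0}}$, which is classical by hypothesis. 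Conversely, classicality of a weak object on $(\fX,\cM_{\fX})$ restricts to classicality on every $(X_{n},\cM_{X_{n}})$ tautologically. (The hypothesis that $p$ is topologically nilpotent on $\fX$ when $m=1$ is needed precisely so that $\mathrm{wBT}_{1}$ is defined on each $X_{n}$.)

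For $\mathrm{Fin}$ and $\mathrm{BT}_{m}$ one must additionally match the representability of $G$ and $G^{\ast}$ by finite kfl log formal schemes. In one direction, a finite kfl $(\fY,\cM_{\fY})\to (\fX,\cM_{\fX})$ restricts to finite kfl $(Y_{n},\cM_{Y_{n}}) \coloneqq (\fY,\cM_{\fY})\times_{(\fX,\cM_{\fX})}(X_{n},\cM_{X_{n}})$ for each $n$, and representability passes to the system. In the other direction, given a compatible system of finite kfl representing objects $\{(Y_{n},\cM_{Y_{n}})\to (X_{n},\cM_{X_{n}})\}$, one forms $(\fY,\cM_{\fY})\to (\fX,\cM_{\fX})$ with $\cO_{\fY}\coloneqq \varprojlim_{n}\cO_{Y_{n}}$ and log structure $\varprojlim_{n}\cM_{Y_{n}}$; the underlying morphism of formal schemes is finite locally free by the usual formal-schematic gluing, and the Kummer and log flatness conditions for $(\fY,\cM_{\fY})\to (\fX,\cM_{\fX})$ can be checked on each strict reduction by Lemma~\ref{def of log flat mor}. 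The same construction applied to the Cartier dual system yields representability of $G^{\ast}$.

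The only mildly delicate step is the verification of compatibility of representability with the limit, but this amounts to the standard fact that finite locally free morphisms over a formal scheme are the limit of their truncations, together with the local-on-reduction characterization of Kummer log flatness, so the main content of the lemma really is the site-theoretic observation of the first paragraph.
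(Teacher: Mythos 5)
There is a genuine gap in your treatment of the essential step, namely showing that the sheaf $G$ glued from a compatible system $\{G_{n}\}$ is again a \emph{weak} log finite group scheme over $(\fX,\cM_{\fX})$. You propose to take a kfl covering $\{\fU_{i,0}\to (X_{0},\cM_{X_{0}})\}$ classicizing $G_{0}$ and to ``view the same $\fU_{i,0}$ as a kfl covering of $(\fX,\cM_{\fX})$ through the composition with $X_{0}\to \fX$.'' This does not work: the strict closed immersion $(X_{0},\cM_{X_{0}})\hookrightarrow (\fX,\cM_{\fX})$ is cut out by the ideal of definition $\cI$ and is not log flat (strict $+$ log flat would force strict flatness, Lemma \ref{fundamental properties of kfl mor}(1)), so the composites $\fU_{i,0}\to (\fX,\cM_{\fX})$ are not kfl morphisms and the family is not a kfl covering of $(\fX,\cM_{\fX})$, even though it is set-theoretically surjective. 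Hence your argument never exhibits a kfl covering of $(\fX,\cM_{\fX})$ itself on which $G$ becomes classical, which is exactly what the definition of $\mathrm{wFin}(\fX,\cM_{\fX})$ requires. This is precisely the point where the paper's proof has content: working \'{e}tale locally with an fs chart $P\to \cM_{\fX}$, it applies Lemma \ref{log fin grp is classical after n-power ext} to $G_{0}$ to find $m$ such that $G_{0}$ becomes classical after pullback along $(X_{0},\cM_{X_{0}})\times_{(\bZ[P],P)^{a}}(\bZ[P^{1/m}],P^{1/m})^{a}$; this covering \emph{does} lift to the kfl covering $(\fX,\cM_{\fX})\times_{(\bZ[P],P)^{a}}(\bZ[P^{1/m}],P^{1/m})^{a}\to(\fX,\cM_{\fX})$, and after passing to it one uses the pointwise criterion (Lemma \ref{classicality of log fin grp is pointwise}) to deduce that every $G_{n}$ is classical, so that the system comes from a genuine finite locally free group scheme over the covering of $\fX$. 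Your sketch omits both of these inputs, and without them the ``essentially immediate'' claim for $\mathrm{wFin}$ and $\mathrm{wBT}_{m}$ is unjustified; note also that your argument for $\mathrm{Fin}$ and $\mathrm{BT}_{m}$ presupposes this step.

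The remaining part of your proposal — the site-theoretic observation that every object of $(\fX,\cM_{\fX})_{\mathrm{kfl}}$ factors through some $(X_{n},\cM_{X_{n}})$, and the reconstruction of the representing object of a log finite group scheme as the colimit/limit $(\fZ,\cM_{\fZ})$ of the truncated representing objects, with log flatness checked on reductions via Lemma \ref{def of log flat mor} — is consistent with the paper's proof and fine in outline. But as it stands the central classicality step is missing, so the proof is incomplete.
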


\begin{proof}
    We shall construct a quasi-inverse functor of the first one. Let $\{G_{n}\}_{n\geq 0}$ be an object of $\displaystyle \varprojlim_{n\geq 0} \mathrm{wFin}(X_{n},\cM_{X_{n}})$. In other words, $\{G_{n}\}_{n\geq 0}$ is a system of weak log finite group schemes $G_{n}$ on $(X_{n},\cM_{X_{n}})$ with isomorphisms $G_{n+1,(X_{n},\cM_{X_{n}})}\cong G_{n}$ for each $n\geq 0$. Since $(\fX,\cM_{\fX})_{\mathrm{kfl}}$ has a basis consisting of objects in $\displaystyle \bigcup_{n\geq 0}(X_{n},\cM_{X_{n}})_{\mathrm{kfl}}$, the system $\{G_{n}\}_{n\geq 0}$ gives a unique sheaf $G$ on $(\fX,\cM_{\fX})_{\mathrm{kfl}}$. It is enough to show that $G$ is a weak log finite group scheme. By working \'{e}tale locally on $\fX$, we may assume that $\fX$ is quasi-compact and there exists an fs chart $P\to \cM_{\fX}$. Lemma \ref{log fin grp is classical after n-power ext} implies that the pullback of $G_{0}$ to $(X_{0},\cM_{X_{0}})\times_{(\bZ[P],P)^{a}} (\bZ[P^{1/m}],P^{1/m})^{a}$ is classical for some $m\geq 1$. Passing to a kfl covering $(\fX,\cM_{\fX})\times_{(\bZ[P],P)^{a}} (\bZ[P^{1/m}],P^{1/m})^{a}$, we may assume that $G_{0}$ is classical. In this case, $G_{n}$ is classical for each $n\geq 0$ by Lemma \ref{classicality of log fin grp is pointwise}, and so $G$ is the (classical) finite locally free group scheme corresponding to the system $\{G_{n}\}_{n\geq 0}$. This proves the claim. The equivalence of the third functor also follows in the same way.

    To complete the proof, it is enough to prove the equivalence of the second functor in the statement. To do this, it suffices to show the following: for a weak log finite group scheme $H$ over $(\fX,\cM_{\fX})$, if $H_{n}\coloneqq H_{(X_{n},\cM_{X_{n}})}$ belongs to $\mathrm{Fin}(X_{n},\cM_{X_{n}})$ for each $n\geq 0$, $H$ also belongs to $\mathrm{Fin}(\fX,\cM_{\fX})$. Let $(Z_{n},\cM_{Z_{n}})$ be a finite and kfl fs log scheme over $(X_{n},\cM_{X_{n}})$ representing $H_{n}$. If we set $\displaystyle \fZ\coloneqq \varinjlim_{n\geq 0} Z_{n}$, the formal scheme $\fZ$ is finite (in particular, adic) over $\fX$ by \cite[Tag 09B8]{sp24}. If we put $\displaystyle \cM_{\fZ}\coloneqq\varprojlim_{n\geq 0} \cM_{Z_{n}}$ 
    under the identification $\fZ_{\et}\simeq (Z_{n})_{\et}$, the pair $(\fZ,\cM_{\fZ})$ is a finite and kfl fs log formal scheme over $(\fX,\cM_{\fX})$ with
    \[
    (\fZ,\cM_{\fZ})\times_{(\fX,\cM_{\fX})} (X_{n},\cM_{X_{n}}) \cong (Z_{n},\cM_{Z_{n}})
    \]
    and represents $H$. By the same argument, we see that $H^{*}$ is also represented by a finite and kfl fs log formal scheme over $(\fX,\cM_{\fX})$. Therefore $H$ belongs to $\mathrm{Fin}(\fX,\cM_{\fX})$. 
\end{proof}

\begin{cor}\label{classicality of log fin grp can be checked after taking reduction}
     Let $(\fX,\cM_{\fX})$ be a log formal scheme admitting a finitely generated ideal of definition $\cI$ and $(X_{0},\cM_{X_{0}})$ be the strict closed subscheme defined by $\cI$. Let $G$ be a weak log finite group scheme over $(\fX,\cM_{\fX})$. Then $G$ is classical if and only if the pullback of $G$ to $(X_{0},\cM_{X_{0}})$ is classical.
\end{cor}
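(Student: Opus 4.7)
The \emph{only if} direction is immediate from functoriality of the pullback of weak log finite group schemes, so the whole content is in the converse. My plan is to imitate the argument used for kfl vector bundles in Corollary \ref{classicality of vect bdle can be checked after taking reduction}, namely reduce the question to the family of strict thickenings $(X_n,\cM_{X_n})$ cut out by $\cI^{n+1}$ and apply the limit equivalence of Lemma \ref{log fin grp as lim} combined with the pointwise criterion of Lemma \ref{classicality of log fin grp is pointwise}.

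Concretely, assume that $G_0 \coloneqq G|_{(X_0,\cM_{X_0})}$ is classical, and write $G_n \coloneqq G|_{(X_n,\cM_{X_n})}$. The underlying topological spaces of $X_n$ and $X_0$ coincide, and for every $x\in X_n$ the pullback log structure on $\mathrm{Spec}(k(x))$ agrees with that coming from $X_0$. Since $G_0$ is classical, its pullback to each $(\mathrm{Spec}(k(x)),\cM_{k(x)})$ is classical, and therefore so is the pullback of $G_n$ (as these pullbacks are canonically identified). Lemma \ref{classicality of log fin grp is pointwise} then yields that each $G_n$ is classical, i.e.\ $G_n \in \mathrm{Fin}(X_n)$.

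Next I would use the standard limit equivalence $\mathrm{Fin}(\fX)\isom \varprojlim_n \mathrm{Fin}(X_n)$ for classical finite locally free group schemes on a formal scheme (this follows, for instance, from \cite[Tag 09B8]{sp24} applied to the finite affine morphisms representing the $G_n$ together with their Hopf algebra structures) to produce a classical finite locally free group scheme $G'\in \mathrm{Fin}(\fX)$ whose reduction modulo $\cI^{n+1}$ is $G_n$. Viewing $G'$ as a weak log finite group scheme via the fully faithful embedding $\mathrm{Fin}(\fX)\hookrightarrow \mathrm{wFin}(\fX,\cM_\fX)$, the system $\{G_n\}$ attached to $G'$ is identified with the one attached to $G$. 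By Lemma \ref{log fin grp as lim}, the functor $\mathrm{wFin}(\fX,\cM_\fX)\to \varprojlim_n \mathrm{wFin}(X_n,\cM_{X_n})$ is an equivalence, so $G\cong G'$ and hence $G$ is classical.

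The only delicate point is the invocation of Lemma \ref{classicality of log fin grp is pointwise} at a thickening $(X_n,\cM_{X_n})$ starting from classicality on $(X_0,\cM_{X_0})$; this is harmless because the pointwise hypothesis depends only on the residue fields with their induced log structures, which are preserved under strict nilpotent thickenings. Once this is observed, everything else is a direct application of the previously established limit results, and no additional technical input is required.
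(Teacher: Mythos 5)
Your proposal is correct and follows essentially the same route as the paper: reduce the classicality of each $G_n$ on the thickenings $(X_n,\cM_{X_n})$ to the pointwise criterion of Lemma \ref{classicality of log fin grp is pointwise} (noting that residue fields and their induced log structures are unchanged under the strict nilpotent thickenings), algebraize the resulting compatible system to a classical finite locally free group scheme $G'$ over $\fX$, and conclude $G\cong G'$ via the limit equivalence of Lemma \ref{log fin grp as lim}. The only difference is that you spell out explicitly (via the Stacks Project reference) the step the paper leaves implicit, namely that the system $\{G_n\}$ of classical group schemes comes from an object of $\mathrm{Fin}(\fX)$.
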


\begin{proof}
    Let $G_{n}$ denote the pullback of $G$ to $(X_{n},\cM_{X_{n}})$ for $n\geq 0$. Suppose that $G_{0}$ is classical. By Lemma \ref{classicality of log fin grp is pointwise}, $G_{n}$ is also classical for every $n\geq 0$, and the system $\{G_{n}\}_{n\geq 0}$ corresponds to a (classical) finite locally free group scheme $G'$ over $\fX$. Then Lemma \ref{log fin grp as lim} implies that $G'$ is isomorphic to $G$. In particular, $G$ is classical.
\end{proof}

\begin{lem}\label{lem for conn et seq}
The following statements are true.
\begin{enumerate}
    \item Let $(S,\cM_{S})\hookrightarrow (T,\cM_{T})$ be a strict nilpotent closed immersion of fs log schemes. Let $G\in \mathrm{wFin}(T,\cM_{T})$ and $H$ be a (classical) finite \'{e}tale group scheme over $T$. Then the natural map
    \[
    \mathrm{Hom}_{(T,\cM_{T})}(G,H)\to \mathrm{Hom}_{(S,\cM_{S})}(G_{(S,\cM_{S})},H_{(S,\cM_{S})})
    \]
    is an isomorphism.
    \item Let $(S,\cM_{S})$ be an fs log scheme. Let $G\in \mathrm{wFin}(S,\cM_{S})$, $H$ be a (classical) finite \'{e}tale group scheme over $S$, and $f:G\to H$ be a group map. Then the sheaf $\mathrm{Ker}(f)$ is a weak log finite group scheme over $(S,\cM_{S})$.
\end{enumerate}

\end{lem}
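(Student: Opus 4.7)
For part (1), the plan is to reduce to the classical (non-log) situation by a Kummer log flat descent on $(T,\cM_T)$ and then invoke the formal \'{e}taleness of $H\to T$. Working Zariski-locally I may assume there exists an fs chart $P\to \cM_T$; by Lemma \ref{log fin grp is classical after n-power ext}, the kfl covering $(T',\cM_{T'})\coloneqq(T,\cM_T)\times_{(\bZ[P],P)^{a}}(\bZ[P^{1/n}],P^{1/n})^{a}\to (T,\cM_T)$ trivializes $G$ for a suitable $n\geq 1$. Because $(S,\cM_S)\hookrightarrow (T,\cM_T)$ is strict, its base change $(S',\cM_{S'})\hookrightarrow (T',\cM_{T'})$ remains a strict nilpotent closed immersion. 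Both Hom sets satisfy kfl descent as functors on $(T,\cM_T)_{\mathrm{kfl}}$, which follows formally from the fact that $\sHom$ of abelian sheaves is a sheaf together with Proposition \ref{representable presheaf is sheaf}; it thus suffices to prove the analogous bijection after pulling back to $(T',\cM_{T'})$ and to the double fiber product, where $G$ is now classical.

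In that classical reduction, by Remark \ref{classical representable sheaf} both sides become the ordinary sets of scheme-theoretic group-scheme homomorphisms $\Hom_{T'}(G,H_{T'})$ and $\Hom_{S'}(G_{S'},H_{S'})$. The bijection is then a standard consequence of the formal \'{e}taleness of $H\to T$: the graph of a $T'$-morphism $G\to H_{T'}$ is an open-and-closed subscheme of $G\times_{T'}H_{T'}$ projecting isomorphically onto $G$, the nilpotent thickening $S'\hookrightarrow T'$ is a homeomorphism on underlying spaces, so open-and-closed subschemes and the condition of projecting isomorphically onto $G$ correspond bijectively. Compatibility with the group law is automatic by uniqueness of the lift.

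For part (2) the strategy is parallel: after a kfl cover $(S',\cM_{S'})\to (S,\cM_S)$ provided by Lemma \ref{log fin grp is classical after n-power ext}, $G|_{S'}$ becomes a classical finite locally free group scheme while $H|_{S'}$ is classical finite \'{e}tale. Since $\Ker(f)$ is by definition the sheaf-theoretic fiber of $f$ over the identity section, its formation commutes with pullback along the kfl cover. It therefore suffices to show that $\Ker(f|_{S'})$ is classically representable by a finite locally free $S'$-scheme. This is immediate because the identity section $S'\to H|_{S'}$ is an open-and-closed immersion (as $H|_{S'}\to S'$ is finite \'{e}tale), so $\Ker(f|_{S'})=G|_{S'}\times_{H|_{S'}}S'$ is an open-and-closed subscheme of the finite locally free $S'$-scheme $G|_{S'}$, and is therefore itself finite locally free.

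The main subtlety I anticipate lies in the kfl-descent reduction in part (1): one must carefully confirm that the Hom-set bijection descends along the kfl cover, and that once $G$ has been made classical the kfl-sheaf Hom between the classical objects really agrees with the scheme-theoretic Hom via Yoneda. Both checks rely only on sheaf-theoretic generalities together with Remark \ref{classical representable sheaf}, but they must be spelled out to see that nothing in the log structure obstructs the classical infinitesimal-lifting argument; part (2) then becomes essentially a direct consequence of this reduction technique.
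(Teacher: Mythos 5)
Your proposal is correct and follows essentially the same route as the paper: reduce kfl-locally on the base to the case where $G$ is classical (the paper's ``both problems are kfl local on bases''), then apply the classical infinitesimal-lifting argument for maps to a finite \'{e}tale scheme in (1) and a short classical argument in (2). The only cosmetic difference is in (2), where the paper deduces that $f$ itself is finite locally free via its graph and then takes the fiber over the unit, while you pull back along the open-and-closed unit section of $H$ directly; both yield that $\Ker(f)$ is kfl-locally a classical finite locally free group scheme.
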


\begin{proof}
    Since both problems are kfl local on bases, we may assume that $G$ is classical. The assertion $(1)$ is clear. For the assertion $(2)$, considering a graph morphism, we see that $f$ is finite and locally free, which implies that $\mathrm{Ker}(f)$ is a finite and locally free group scheme.
\end{proof}

\begin{lem}\label{conn et seq}
    Let $(\fX,\cM_{\fX})$ be an fs log formal scheme and $G$ be a log finite group scheme over $(\fX,\cM_{\fX})$. Suppose that $G$ is killed by some power of $p$ and that $p$ is topologically nilpotent on $\fX$. Then there exists a strict \'{e}tale cover $(\fY,\cM_{\fY})\to (\fX,\cM_{\fX})$ and an exact sequence
    \[
    0\to H^{0}\to G_{(\fY,\cM_{\fY})}\to H^{\et}\to 0,
    \]
    where $H^{0}$ is a (classical) finite locally free group scheme over $\fY$ and $H^{\et}$ is a (classical) finite \'{e}tale group scheme over $\fY$. Moreover, if $G\in \mathrm{BT}_{n}(\fX,\cM_{\fX})$, then $H^{0}$ and $H^{\et}$ also belong to $\mathrm{BT}_{n}(\fY)$.
\end{lem}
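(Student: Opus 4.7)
My plan is to reduce the construction of the connected-\'{e}tale sequence for $G$ to the classical case over a strict henselian local ring. The core of the argument is to show that after a strict \'{e}tale cover the log finite group scheme $G$ becomes classical, at which point the assertion follows from the standard connected-\'{e}tale decomposition for classical finite locally free group schemes.

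First I would use Lemma \ref{log fin grp as lim} together with Corollary \ref{classicality of log fin grp can be checked after taking reduction} to reduce to the case where $\fX=X$ is a scheme on which a power of $p$ vanishes: the categories of weak log finite group schemes on $(\fX,\cM_{\fX})$ and on the tower of strict closed subschemes modulo powers of an ideal of definition are equivalent, and classicality is detected on any single reduction, so the existence of $H^{0}, H^{\et}$ and their classicality can be checked on a single scheme-theoretic layer and then glued back. Since the statement is local in the strict \'{e}tale topology on $X$, a limit argument as in \cite[Appendix]{ino23} reduces us further to $X = \mathrm{Spec}(R)$ with $R$ a strict henselian local ring in which $p$ is nilpotent.

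The main step is to prove that over such a base the log finite group scheme $G$ is classical. By Lemma \ref{classicality of log fin grp is pointwise} it suffices to check classicality at the closed log point $(\mathrm{Spec}(k), \cM_{k})$. The crucial fact is that $\overline{\cM_{k}}^{\mathrm{gp}}$ is finitely generated and torsion-free because $\cM_{k}$ is fs, so the log structure itself contributes no $p$-torsion; combining this with Kato's analysis of log finite group schemes in \cite{kat23}, every log finite group scheme over $(\mathrm{Spec}(k), \cM_{k})$ killed by a power of $p$ in residue characteristic $p$ is classical. With $G$ now a classical finite locally free commutative group scheme over $R$, the usual connected-\'{e}tale decomposition supplies the exact sequence $0 \to H^{0} \to G \to H^{\et} \to 0$ with $H^{0}$ finite locally free and $H^{\et}$ finite \'{e}tale, possibly after one further strict \'{e}tale extension to render $H^{\et}$ constant.

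For the moreover statement, the property of being $\mathrm{BT}_n$ is preserved under passage to the connected component and to the \'{e}tale quotient of a classical finite locally free group scheme by the fibrewise $p$-rank criterion together with flatness, giving $H^{0}, H^{\et} \in \mathrm{BT}_n(\fY)$ whenever $G \in \mathrm{BT}_n(\fX, \cM_{\fX})$. The main obstacle I anticipate is the classicality step: one must pin down the precise structural input from \cite{kat23} that forces log finite group schemes of $p$-power order over fs log points in characteristic $p$ to be classical; the remaining reductions are routine but need care to maintain compatibility between the kfl and strict \'{e}tale topologies when lifting from the scheme case back to the formal scheme $\fX$.
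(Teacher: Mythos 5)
There is a genuine gap, and it is the central step: your claim that a log finite group scheme of $p$-power order over an fs log point $(\mathrm{Spec}(k),\cM_{k})$ is automatically classical (and hence that $G$ becomes classical after a strict \'{e}tale cover) is false. Non-classical log finite group schemes of $p$-power order exist precisely when the log structure is nontrivial: the Kummer log flat topology has extensions of $\bZ/p^{n}$ by $\mu_{p^{n}}$ classified by classes coming from $\overline{\cM}_{k}^{\mathrm{gp}}\otimes\bZ/p^{n}$, which is nonzero for the standard log point, and the $p^{n}$-torsion of a degenerating (Tate-type) log abelian variety is a standard example of a log finite group scheme that is not classical and does not become classical under any strict \'{e}tale base change — only Kummer covers (e.g.\ extracting roots of the chart, as in Lemma \ref{log fin grp is classical after n-power ext}) trivialize it. The heuristic that an fs, torsion-free $\overline{\cM}_{k}^{\mathrm{gp}}$ "contributes no $p$-torsion" is exactly backwards; if your claim held, every log $p$-divisible group would be strict-\'{e}tale-locally classical and the log prismatic Dieudonn\'{e} theory of this paper would be vacuous. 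Consequently the reduction to the classical connected-\'{e}tale decomposition collapses, and the rest of your argument has nothing to apply to.

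The paper's proof avoids this by never claiming classicality of $G$ itself: it invokes Kato's structure result \cite[Proposition 2.7(3)]{kat23} (plus a limit argument) to produce, strict \'{e}tale locally on the reduction $(X_{0},\cM_{X_{0}})$, an exact sequence $0\to H^{0}_{0}\to G_{0,(Y_{0},\cM_{Y_{0}})}\to H^{\et}_{0}\to 0$ in which only the two outer terms are classical, while $G$ remains a genuinely non-classical extension in the kfl topos. It then lifts this sequence through the infinitesimal thickenings $(X_{n},\cM_{X_{n}})$: the \'{e}tale quotient and the surjection onto it lift uniquely by rigidity of maps to finite \'{e}tale group schemes (Lemma \ref{lem for conn et seq}(1)), the kernel is a weak log finite group scheme by Lemma \ref{lem for conn et seq}(2) and is classical by the pointwise criterion (Lemma \ref{classicality of log fin grp is pointwise}), and the tower is reassembled over the formal scheme via Lemma \ref{log fin grp as lim}. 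If you want to repair your write-up, the classicality step must be replaced by this use of Kato's connected-\'{e}tale result for log finite group schemes, and the passage from $X_{0}$ back to $\fX$ must be done by lifting the sequence itself (not just detecting classicality of $G$ on one layer, which fails).
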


\begin{proof}
    By working Zariski locally on $\fX$, we may assume that $\fX$ admits a finitely generated ideal of definition $\cI$. Let $(X_{n},\cM_{X_{n}})$ be a strict closed subscheme of $(\fX,\cM_{\fX})$ defined by $\cI^{n+1}$ and let $G_{n}\coloneqq G_{(X_{n},\cM_{X_{n}})}$ for $n\geq 0$. By \cite[Proposition 2.7(3)]{kat23} and the limit argument (see \cite[Appendix]{ino23}), there exists a strict \'{e}tale cover $(Y_{0},\cM_{Y_{0}})\to (X_{0},\cM_{X_{0}})$ and an exact sequence
    \[
    0\to H^{0}_{0}\to G_{0,(Y_{0},\cM_{Y_{0}})}\to H^{\et}_{0}\to 0
    \]
    where $H^{0}_{0}$ is a (classical) finite locally free group scheme over $Y_{0}$ and $H^{\et}_{0}$ is a (classical) finite \'{e}tale group scheme over $Y_{0}$. Let $Y_{n}\to X_{n}$ be a unique \'{e}tale cover lifting $Y_{0}\to X_{0}$ and $H^{\et}_{n}$ be a unique lift of $H^{\et}_{0}$ over $Y_{n}$. By Lemma \ref{lem for conn et seq}(1), the surjection $G_{0,(Y_{0},\cM_{Y_{0}})}\to H^{\et}_{0}$ has a unique lift $G_{n,(Y_{n},\cM_{Y_{n}})}\to H^{\et}_{n}$. Let $H^{0}_{n}\coloneqq \mathrm{Ker}(G_{n,(Y_{n},\cM_{Y_{n}})}\to H^{\et}_{n})$. By Lemma \ref{lem for conn et seq}(2), $H^{0}_{n}$ is a weak log finite group scheme over $(Y_{n},\cM_{Y_{n}})$, and we have an exact sequence
    \[
    0\to H^{0}_{n}\to G_{n,(Y_{n},\cM_{Y_{n}})}\to H^{\et}_{n}\to 0.
    \]
    By taking the pullback to $(Y_{0},\cM_{Y_{0}})$, we get $H^{0}_{n,(Y_{0},\cM_{Y_{0}})}\cong H^{0}_{0}$, which implies that $H^{0}_{n}$ is classical by Lemma \ref{classicality of log fin grp is pointwise}. If we let $H^{0}$ (resp. $H^{\et}$) denote the finite locally free (resp. finite \'{e}tale) group scheme over $\fY$ corresponding to a system $\{H^{0}_{n}\}_{n\geq 0}$ (resp. $\{H^{\et}_{n}\}_{n\geq 0}$), the assertion follows from Lemma \ref{log fin grp as lim}.
\end{proof}

We define the generic fiber of weak log finite group schemes over $p$-adic fs log formal schemes. To give a formulation over general $p$-adic fs log formal schemes, we use the formalism of log diamonds in \cite[Chapter 7]{ky23}.

\begin{construction}[Generic fibers of weak log finite group schemes]\label{gen fib of weak log fin}
Let $(\fX,\cM_{\fX})$ be a $p$-adic fs log formal scheme and $G$ be a weak log finite group scheme over $(\fX,\cM_{\fX})$. Let $\sC$ denote the category of a $p$-adic fs log formal scheme $(\fY,\cM_{\fY})$ over $(\fX,\cM_{\fX})$ satisfying the following conditions:
    \begin{itemize}
        \item $(\fY,\cM_{\fY})\to (\fX,\cM_{\fX})$ is kfl;
        \item $G_{(\fY,\cM_{\fY})}$ is classical;
        \item $(\fY,\cM_{\fY})^{\Diamond}_{\eta}\to (\fX,\cM_{\fX})^{\Diamond}_{\eta}$ is Kummer log \'{e}tale.
    \end{itemize}
The category $\sC$ has finite products. there exists a kfl covering of $(\fX,\cM_{\fX})$ consisting of objects of $\sC$. Indeed, when $\fX$ is quasi-compact and we are given an fs chart $P\to \cM_{\fX}$, a kfl cover
\[
(\fX,\cM_{\fX})\times_{(\bZ[P],P)^{a}} (\bZ[P^{1/n}],P^{1/n})^{a}\to (\fX,\cM_{\fX})
\]
belongs to $\sC$ for a sufficiently large $n\geq 1$ by Lemma \ref{log fin grp is classical after n-power ext} and \cite[Proposition 7.20]{ky23}. Hence, by Kummer log \'{e}tale descent, we have a fully faithful functor
\[
\varprojlim_{(\fY,\cM_{\fY})\in \sC} \mathrm{LC}(\fY^{\Diamond}_{\eta,\mathrm{qpro\et}})\to \mathrm{LC}((\fX,\cM_{\fX})^{\Diamond}_{\eta,\mathrm{qprok\et}}),
\]
where $\mathrm{LC}(\sA)$ denotes the category of locally constant sheaves on $\sA$ for a site $\sA$.

For $(\fY,\cM_{\fY})\in \sC$, we have the generic fiber $(G_{(\fY,\cM_{\fY})})_{\eta}\in \mathrm{LC}(\fY^{\Diamond}_{\eta,\mathrm{qpro\et}})$ of the (classical) finite locally free group $G_{(\fY,\cM_{\fY})}$ over $\fY$, and, via the functor in the previous paragraph, the family of $(G_{(\fY,\cM_{\fY})})_{\eta}$ gives a locally constant sheaf $G_{\eta}$ on $(\fX,\cM_{\fX})^{\Diamond}_{\eta,\mathrm{qprok\et}}$. The sheaf $G_{\eta}$ is called the \emph{generic fiber} of $G$. The functor taking generic fibers is exact. When $G$ is a weak log Barsotti-Tate group scheme of level $n$, the sheaf $G_{\eta}$ is a $\bZ/p^{n}$-local system on $(\fX,\cM_{\fX})^{\Diamond}_{\eta,\mathrm{qprok\et}}$.
\end{construction}

To formulate ``pro-kfl'' descent for weak log finite group schemes (Proposition \ref{pro-kfl descent for log fin grp}), we generalize the definition of the pullback functor of weak log finite group schemes to non-fs cases. Let $f\colon (\fY,\cM_{\fY})\to (\fX,\cM_{\fX})$ be an adic morphism from a (not necessarily fs) saturated log formal scheme $(\fY,\cM_{\fY})$ to an fs log formal scheme $(\fX,\cM_{\fX})$.

\begin{dfn}
    For a weak log finite group scheme $G$ over $(\fX,\cM_{\fX})$, we say that $G$ is \emph{$f^{*}$-admissible} if, strict flat locally on $(\fX,\cM_{\fX})$, the map $f$ factors as
    \[
    (\fY,\cM_{\fY})\stackrel{g}{\to} (\fZ,\cM_{\fZ})\stackrel{h}{\to} (\fX,\cM_{\fX}),
    \]
    where $(\fZ,\cM_{\fZ})$ is fs and $G_{(\fZ,\cM_{\fZ})}$ is classical. 

    Let $\mathrm{wFin}^{{f^{*}\text{-adm}}}(\fX,\cM_{\fX})$ denote the full subcategory of $\mathrm{wFin}(\fX,\cM_{\fX})$ consisting of $f^{*}$-admissible objects. We define an exact functor
    \[
    f^{*}\colon \mathrm{wFin}^{{f^{*}\text{-adm}}}(\fX,\cM_{\fX})\to \mathrm{Fin}(\fY)
    \]
    as follows. Let $G\in \mathrm{wFin}^{{f^{*}\text{-adm}}}(\fX,\cM_{\fX})$. By working flat locally on $\fY$, we may assume that $f$ admits a factorization as above. In this situation, we set 
    \[ G_{(\fX,\cM_{\fX})}\coloneqq G_{(\fZ,\cM_{\fZ})}\times_{\fZ} \fX,
    \]
    where $g^{*}$ means the pullback functor for finite locally free group schemes along $g\colon \fY\to \fZ$. This definition is independent of the choice of a factorization $f=hg$ because two such factorizations are dominated by another factorization (for example, by taking the fiber product). Note that, if $(\fX,\cM_{\fX})$ is fs, $G_{(\fX,\cM_{\fX})}$ coincides with the usual pullback.
\end{dfn}

\begin{lem}\label{functoriality of pro-kfl pullback of log fin grp}
    \begin{enumerate}
        \item Let $(\fZ,\cM_{\fZ})$ be a saturated log formal scheme, and $g\colon (\fZ,\cM_{\fZ})\to (\fY,\cM_{\fY})$ be a morphism. Let $G\in \mathrm{wFin}(\fX,\cM_{\fX})$. If $G$ is $f^{*}$-admissible, then $G$ is also $(fg)^{*}$-admissible and $G_{(\fZ,\cM_{\fZ})}\cong (G_{(\fY,\cM_{\fY})})\times_{\fY} \fZ$.
        \item Let $(\fW,\cM_{\fW})$ be an fs log formal scheme and $h\colon (\fX,\cM_{\fX})\to (\fW,\cM_{\fW})$ be a morphism. Let $G\in \mathrm{wFin}(\fW,\cM_{\fW})$. Then $G$ is $(hf)^{*}$-admissible if and only if $G_{(\fX,\cM_{\fX})}$ is $f^{*}$-admissible. Moreover, in this case, we have $G_{(\fY,\cM_{\fY})}\cong (G_{(\fX,\cM_{\fX})})\times_{\fX} {\fY}$.
    \end{enumerate}
\end{lem}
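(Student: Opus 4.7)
The plan is to unwind the definition of $f^{*}$-admissibility and verify both assertions by direct diagram chasing, using the functoriality of pullback for classical finite locally free group schemes. This is the exact analog of Lemma \ref{functoriality of pro-kfl pullback of kfl vect bdles} for kfl vector bundles, whose proof was noted to follow directly from the definition.

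For (1), suppose $G$ is $f^{*}$-admissible. Working strict flat locally on the base, we may assume $f$ factors as
\[
(\fY,\cM_{\fY}) \stackrel{f_{1}}{\to} (\fZ',\cM_{\fZ'}) \stackrel{f_{2}}{\to} (\fX,\cM_{\fX})
\]
with $(\fZ',\cM_{\fZ'})$ fs and $G_{(\fZ',\cM_{\fZ'})}$ classical. Pre-composing with $g$, the map $fg$ factors through the same fs log formal scheme $(\fZ',\cM_{\fZ'})$, which witnesses $(fg)^{*}$-admissibility of $G$. Since both $G_{(\fZ,\cM_{\fZ})}$ and $G_{(\fY,\cM_{\fY})}\times_{\fY}\fZ$ compute as $G_{(\fZ',\cM_{\fZ'})}\times_{\fZ'}\fZ$ by the definition of the pro-kfl pullback, the claimed isomorphism follows.

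For (2), suppose first that $G\in\mathrm{wFin}(\fW,\cM_{\fW})$ is $(hf)^{*}$-admissible, so that locally we have a factorization $(\fY,\cM_{\fY})\to(\fZ,\cM_{\fZ})\to(\fW,\cM_{\fW})$ with $(\fZ,\cM_{\fZ})$ fs and $G_{(\fZ,\cM_{\fZ})}$ classical. Forming the fs saturated fiber product $(\fZ'',\cM_{\fZ''})\coloneqq(\fZ,\cM_{\fZ})\times_{(\fW,\cM_{\fW})}(\fX,\cM_{\fX})$, the map $f$ acquires a factorization through $(\fZ'',\cM_{\fZ''})$; by the commutative square of pullback functors displayed just before Lemma \ref{log fin grp is classical after n-power ext}, the pullback $(h^{*}G)_{(\fZ'',\cM_{\fZ''})}$ coincides with $G_{(\fZ,\cM_{\fZ})}\times_{\fZ}\fZ''$, which is classical. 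Hence $h^{*}G$ is $f^{*}$-admissible. Conversely, a factorization $(\fY,\cM_{\fY})\to(\fZ,\cM_{\fZ})\to(\fX,\cM_{\fX})$ witnessing $f^{*}$-admissibility of $h^{*}G$ composed with $h$ gives a factorization of $hf$ through $(\fZ,\cM_{\fZ})$; here $G_{(\fZ,\cM_{\fZ})}$ equals $(h^{*}G)_{(\fZ,\cM_{\fZ})}$ by the same commutative square, and is therefore classical. The isomorphism $G_{(\fY,\cM_{\fY})}\cong(G_{(\fX,\cM_{\fX})})\times_{\fX}\fY$ is then built into the construction.

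No substantive obstacle arises: both assertions are formal consequences of the definition of $f^{*}$-admissibility, the existence of fiber products in the fs saturated category, and the stability of strict flat coverings under base change and composition (so that local factorizations assemble coherently). The only mild technical point is that fiber products should be taken in the fs saturated category so that the intermediate log formal scheme appearing in the refined factorization remains fs, as required by the admissibility definition.
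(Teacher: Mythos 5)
Your proposal is correct and matches the paper's approach: the paper's entire proof is ``This follows directly from the definition,'' and your argument is exactly the expected unwinding — reusing the witnessing factorization for (1), forming the fs fiber product $(\fZ,\cM_{\fZ})\times_{(\fW,\cM_{\fW})}(\fX,\cM_{\fX})$ for (2), and invoking compatibility of the weak pullback with classical pullback plus independence of the chosen factorization.
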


\begin{proof}
    This follows directly from the definition. 
\end{proof}

\begin{lem}\label{pro-kfl pullback of log fin grp is well-def}
    Suppose that we are given an fs chart $P\to \cM_{\fX}$ and a chart $P_{\bQ_{\geq 0}}\to \cM_{\fY}$ such that the following diagram is commutative:
    \[
    \begin{tikzcd}
        P \ar[r] \ar[d] & f^{-1}\cM_{\fX} \ar[d] \\
        P_{\bQ_{\geq 0}} \ar[r] & \cM_{\fY}.
    \end{tikzcd}
    \]
    Then $G$ is $f^{*}$-admissible for every $G\in \mathrm{wFin}(\fX,\cM_{\fX})$. 
\end{lem}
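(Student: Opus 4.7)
The plan is to mirror the proof of Lemma \ref{pro-kfl pullback of kfl vect bdles is well-def}, replacing the use of Lemma \ref{kfl vect bdle is classical after n-power ext} by its analog for weak log finite group schemes, namely Lemma \ref{log fin grp is classical after n-power ext}.

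First I would reduce to the case where $\fX$ is quasi-compact. Since the definition of $f^{*}$-admissibility is local in the strict flat topology on $(\fY,\cM_{\fY})$, and since $\fX$ admits a Zariski cover by quasi-compact open subsets (over which we can pull back both the chart $P \to \cM_{\fX}$ and the factorization through $P_{\bQ_{\geq 0}}$), this reduction is harmless; the restriction of $G$ to each piece is still a weak log finite group scheme to which we must show admissibility.

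Next, using the chart compatibility in the hypothesis, I would write down, for each integer $n \geq 1$, the canonical factorization
\[
(\fY,\cM_{\fY}) \longrightarrow (\fX,\cM_{\fX}) \times_{(\bZ[P],P)^{a}} (\bZ[P^{1/n}],P^{1/n})^{a} \stackrel{h_{n}}{\longrightarrow} (\fX,\cM_{\fX}),
\]
coming from the factorization $P \to P^{1/n} \to P_{\bQ_{\geq 0}}$. The middle term is an fs log formal scheme adic and kfl over $(\fX,\cM_{\fX})$, so it is a legitimate candidate for the intermediate object $(\fZ,\cM_{\fZ})$ in the definition of $f^{*}$-admissibility.

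Finally, I would invoke Lemma \ref{log fin grp is classical after n-power ext} applied to the quasi-compact fs log formal scheme $(\fX,\cM_{\fX})$ together with the chart $P \to \cM_{\fX}$ and the weak log finite group scheme $G$: it guarantees that there exists some $n \geq 1$ for which $h_{n}^{*} G$ is classical. Taking $(\fZ,\cM_{\fZ}) := (\fX,\cM_{\fX}) \times_{(\bZ[P],P)^{a}} (\bZ[P^{1/n}],P^{1/n})^{a}$ with this $n$ then verifies the definition of $f^{*}$-admissibility for $G$. There is no real obstacle here; the content of the lemma has already been packaged into Lemma \ref{log fin grp is classical after n-power ext}, and the role of the chart hypothesis is purely to produce the required factorization through an $n$-th root extension.
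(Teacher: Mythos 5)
Your proposal is correct and is essentially the paper's own argument: after localizing to reduce to quasi-compact $\fX$, one uses the chart compatibility to factor $f$ through $(\fX,\cM_{\fX})\times_{(\bZ[P],P)^{a}}(\bZ[P^{1/n}],P^{1/n})^{a}$ for each $n$, and then Lemma \ref{log fin grp is classical after n-power ext} produces an $n$ for which the pullback of $G$ is classical, verifying admissibility. The only cosmetic difference is that the paper localizes \'{e}tale locally while you use a Zariski cover, which changes nothing.
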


\begin{proof}
     By working \'{e}tale locally on $\fX$, we may assume that $\fX$ is quasi-compact. Since $f$ factors as
    \[
    (\fY,\cM_{\fY})\to (\fX,\cM_{\fX})\times_{(\bZ[P],P)^{a}} (\bZ[P^{1/n}],P^{1/n})^{a}\to (\fX,\cM_{\fX})
    \]
    for each $n\geq 1$, the assertion follows from Lemma \ref{log fin grp is classical after n-power ext}.
\end{proof}

\begin{prop}[``Pro-kfl'' descent for weak log finite group schemes]\label{pro-kfl descent for log fin grp}
    Let $f\colon (\fY,\cM_{\fY})\to (\fX,\cM_{\fX})$ be an adic surjection of saturated log formal schemes. Suppose that there exists an fs chart $\alpha\colon P\to \cM_{\fX}$ such that $f$ admits a factorization
    \[
    (\fY,\cM_{\fY})\to (\fX_{\infty,\alpha},\cM_{\fX_{\infty,\alpha}})\to (\fX,\cM_{\fX}),
    \]
    where $(\fY,\cM_{\fY})\to (\fX_{\infty,\alpha},\cM_{\fX_{\infty,\alpha}})$ is a strict, quasi-compact, and flat. Let $(\fY^{(\bullet)},\cM_{\fY^{(\bullet)}})$ denote the \v{C}ech nerve of $f$ in the category of saturated log formal schemes. Then there exists a natural bi-exact equivalences
    \begin{align*}
        \mathrm{wFin}(\fX,\cM_{\fX})&\isom \varprojlim_{\bullet\in \Delta}\mathrm{Fin}(\fY^{(\bullet)}), \\
        \mathrm{wBT}_{n}(\fX,\cM_{\fX})&\isom \varprojlim_{\bullet\in \Delta}\mathrm{BT}_{n}(\fY^{(\bullet)}).
    \end{align*}
\end{prop}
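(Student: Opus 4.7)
The plan is to mimic, step for step, the proof of Proposition \ref{pro-kfl descent for kfl vect bdle}, replacing vector bundles by finite locally free group schemes (resp.\ truncated Barsotti--Tate groups of level $n$). By Lemma \ref{functoriality of pro-kfl pullback of log fin grp}(1) combined with Lemma \ref{pro-kfl pullback of log fin grp is well-def}, pullback along $f$ makes sense on $\mathrm{wFin}(\fX,\cM_{\fX})$ and defines an exact functor
\[
\mathrm{wFin}(\fX,\cM_{\fX}) \to \varprojlim_{\bullet \in \Delta}\mathrm{Fin}(\fY^{(\bullet)}),
\]
and similarly for $\mathrm{wBT}_{n}$; we must show it is a bi-exact equivalence.

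Next I would run exactly the diagram-chase used for kfl vector bundles: form the saturated fiber square
\[
\begin{tikzcd}
(\fZ,\cM_{\fZ}) \ar[r]\ar[d] & (\fX_{\infty,\alpha}^{(1)},\cM_{\fX_{\infty,\alpha}^{(1)}}) \ar[d] \\
(\fY,\cM_{\fY}) \ar[r] & (\fX_{\infty,\alpha},\cM_{\fX_{\infty,\alpha}})
\end{tikzcd}
\]
and appeal to the pro-kfl/strict-fpqc combinatorics together with Lemma \ref{four point lemma in nonfs case} to conclude that both projections out of $(\fZ,\cM_{\fZ})$ are strict fpqc coverings. Since classical finite locally free group schemes (and the truncated BT property of level $n$) satisfy fpqc descent, this reduces the claim to showing the special case
\[
\mathrm{wFin}(\fX,\cM_{\fX}) \isom \varprojlim_{\bullet \in \Delta} \mathrm{Fin}(\fX_{\infty,\alpha}^{(\bullet)}),
\qquad
\mathrm{wBT}_{n}(\fX,\cM_{\fX}) \isom \varprojlim_{\bullet \in \Delta} \mathrm{BT}_{n}(\fX_{\infty,\alpha}^{(\bullet)}).
\]

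Now I would invoke Lemma \ref{log fin grp as lim} to reduce to the case that $\fX$ is a scheme. In this situation, Lemma \ref{log fin grp is classical after n-power ext} says that every weak log finite group scheme (resp.\ weak truncated BT of level $n$) becomes classical after pullback to $\fX_{m,\alpha}$ for some $m\ge 1$. Combined with ordinary fpqc descent for finite locally free (resp.\ truncated BT) group schemes along $\fX_{m,\alpha}^{(\bullet)} \to \fX$, this yields
\[
\mathrm{wFin}(\fX,\cM_{\fX}) \cong \varinjlim_{m\ge 1}\varprojlim_{\bullet\in\Delta} \mathrm{Fin}(\fX_{m,\alpha}^{(\bullet)}),
\]
and analogously for $\mathrm{wBT}_{n}$. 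A standard limit argument of the type used in \cite[Appendix]{ino23} lets us swap the inductive and projective limits, and then the identifications $\varinjlim_{m} \mathrm{Fin}(\fX_{m,\alpha}^{(\bullet)}) \cong \mathrm{Fin}(\fX_{\infty,\alpha}^{(\bullet)})$ (and likewise for $\mathrm{BT}_{n}$) finish the argument.

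The main obstacle I expect is the descent step in the middle paragraph: namely, arguing that a compatible system of classical finite locally free group schemes on $\fY^{(\bullet)}$ descends not merely to a sheaf of groups on $(\fX,\cM_{\fX})_{\mathrm{kfl}}$ but to an object of $\mathrm{wFin}(\fX,\cM_{\fX})$. The key point is that after further pullback to $\fX_{\infty,\alpha}$ the descended sheaf becomes classical by construction, which is precisely the defining property of a weak log finite group scheme; this makes the reduction to the pro-kfl cover $\fX_{\infty,\alpha} \to \fX$ legitimate. For $\mathrm{wBT}_n$ the same analysis applies, since being a truncated BT group of level $n$ is preserved under base change, strict fpqc descent, and the limit/colimit operations used above.
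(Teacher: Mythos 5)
Your proposal is correct and follows essentially the same route as the paper, which itself simply defines the functor via Lemmas \ref{functoriality of pro-kfl pullback of log fin grp} and \ref{pro-kfl pullback of log fin grp is well-def} and then repeats the argument of Proposition \ref{pro-kfl descent for kfl vect bdle} (saturated fiber square plus Lemmas \ref{sat prod of two pro-kfl cover} and \ref{four point lemma in nonfs case}, reduction to a scheme via Lemma \ref{log fin grp as lim}, classicality at a finite level via Lemma \ref{log fin grp is classical after n-power ext}, and the colimit--limit exchange). The only imprecision is your closing remark that classicality after pullback to $\fX_{\infty,\alpha}$ is ``precisely the defining property'' of a weak log finite group scheme --- the definition requires a kfl cover by \emph{fs} log formal schemes, so one really needs classicality at a finite level $\fX_{m,\alpha}$, which is exactly what your main argument supplies.
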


\begin{proof}
    By Lemma \ref{functoriality of pro-kfl pullback of log fin grp} (1) and Lemma \ref{pro-kfl pullback of log fin grp is well-def}, the functor $f^{*}$ induces the functor
    \[
    \mathrm{wFin}(\fX,\cM_{\fX})\to \varprojlim_{\bullet\in \Delta}\mathrm{Fin}(\fY^{(\bullet)}).
    \]
    The equivalence can be  proved in the same way as the proof of Proposition \ref{pro-kfl descent for kfl vect bdle}.
\end{proof}

\subsection{Log $p$-divisible groups}

\begin{dfn}
Let $(\fX,\cM_{\fX})$ be an fs log formal scheme. Let $G$ be a sheaf of abelian groups on $(\fX,\cM_{\fX})_{\mathrm{kfl}}$. We call $G$ a \emph{weak log $p$-divisible group} if the following conditions are satisfied.
\begin{enumerate}
    \item A map $\times p\colon G\to G$ is surjective.
    \item For every $n\geq 1$, the sheaf $G[p^n]\coloneqq\mathrm{Ker}(\times p^{n}\colon G\to G)$ is a weak log finite group scheme over $(\fX,\cM_{\fX})$.
    \item $G=\bigcup_{n\geq 1} G[p^{n}].$
\end{enumerate}
The category of weak log $p$-divisible groups over $(\fX,\cM_{\fX})$ is denoted by $\mathrm{wBT}(\fX,\cM_{\fX})$. A weak log $p$-divisible group $G$ over $(\fX,\cM_{\fX})$ is called a \emph{log $p$-divisible group} if $G[p^n]$ is a log finite group scheme for each $n\geq 1$. The category of log $p$-divisible groups over $(\fX,\cM_{\fX})$ is denoted by $\mathrm{BT}(\fX,\cM_{\fX})$. By Remark \ref{classical representable sheaf}, the category $\mathrm{BT}(\fX)$ is regarded as the full subcategory of $\mathrm{wBT}(\fX,\cM_{\fX})$. A weak log $p$-divisible group $G$ over $(\fX,\cM_{\fX})$ is called \emph{classical} if $G$ belongs to $\mathrm{BT}(\fX)$. Clearly, $G$ is classical if and only if $G[p^n]$ is classical for each $n\geq 1$. 
\end{dfn}

\begin{lem}[cf. {\cite[Lemma 4.2]{kat23}}]\label{p tor is cl implies weak log bt is cl}
    Let $(\fX,\cM_{\fX})$ be an fs log formal scheme. A weak log $p$-divisible group $G$ over $(\fX,\cM_{\fX})$ belongs to $\mathrm{BT}(\fX,\cM_{\fX})$ if $G[p^{n}]$ belongs to $\mathrm{BT}_{n}(\fX,\cM_{\fX})$ for some $n\geq 1$.
\end{lem}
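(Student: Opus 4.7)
The plan is to show that $G[p^m]$ belongs to $\mathrm{Fin}(\fX,\cM_{\fX})$ for every $m \geq 1$, since together with the standing hypothesis that $G$ is a weak log $p$-divisible group this gives $G \in \mathrm{BT}(\fX,\cM_{\fX})$ directly from the definition. The argument proceeds by induction on $m$, distinguishing the cases $m \leq n$ and $m > n$.

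For $1 \leq m \leq n$, the inclusion $G[p^m] \subset G[p^n]$ identifies $G[p^m]$ with $\ker(\times p^m \colon G[p^n] \to G[p^n])$. Since $G[p^n]$ is by hypothesis representable by a finite kfl log formal scheme, this kernel is represented by the fiber product of $\times p^m$ with the zero section. To verify that the representing object is kfl (not merely finite) over $(\fX,\cM_{\fX})$, one passes to a kfl cover on which $G[p^n]$ becomes a classical BT of level $n$, which exists because $G[p^n]\in \mathrm{wBT}_n(\fX,\cM_\fX)$; on such a cover the $p^m$-torsion is a classical BT of level $m$, in particular strict over the base, and kfl descent yields that $G[p^m]$ and its Cartier dual are representable by finite kfl log formal schemes over $(\fX,\cM_{\fX})$.

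For the inductive step $m > n$, the surjectivity of $\times p \colon G \to G$ produces the exact sequence
\[
0 \to G[p] \to G[p^m] \xrightarrow{\times p} G[p^{m-1}] \to 0
\]
of sheaves on $(\fX,\cM_{\fX})_{\mathrm{kfl}}$, where the base case and the induction hypothesis both place $G[p]$ and $G[p^{m-1}]$ in $\mathrm{Fin}(\fX,\cM_{\fX})$. To deduce the same for $G[p^m]$, work Zariski-locally on $\fX$ with an fs chart $\alpha \colon P \to \cM_{\fX}$, and apply Lemma \ref{log fin grp is classical after n-power ext} to the weak log finite group scheme $G[p^m]$ to obtain an integer $k \geq 1$ such that the pullback of $G[p^m]$ along the Kummer cover $(\fX,\cM_{\fX}) \times_{(\bZ[P],P)^a} (\bZ[P^{1/k}],P^{1/k})^a \to (\fX,\cM_{\fX})$ is classical. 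Over this cover, the exact sequence becomes an fppf extension of classical finite locally free group schemes, hence has classical finite locally free middle term, and kfl descent via Proposition \ref{pro-kfl descent for log fin grp} together with Proposition \ref{representable presheaf is sheaf} promotes this classical extension to a finite kfl log formal scheme over $(\fX,\cM_{\fX})$ representing $G[p^m]$; the Cartier dual is handled by the same descent argument applied to $\sHom_{\mathrm{kfl}}(-,\bG_m)$.

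The principal technical obstacle is the descent step in the inductive case: one must justify not merely that the Kummer trivialization given by Lemma \ref{log fin grp is classical after n-power ext} descends as a finite log formal scheme, but that the descended object is finite \emph{and} kfl over $(\fX,\cM_{\fX})$, with the analogous property holding for its Cartier dual. This amounts to checking that the kfl property is preserved under descent along Kummer covers and that Cartier duality commutes with the relevant base change, both of which are subsumed by the descent formalism of Proposition \ref{pro-kfl descent for log fin grp}.
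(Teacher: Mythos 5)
There is a genuine gap, and it sits exactly where you flag "the principal technical obstacle": the claim that kfl/pro-kfl descent "promotes" your kfl-locally classical extension to an object of $\mathrm{Fin}(\fX,\cM_{\fX})$. Proposition \ref{pro-kfl descent for log fin grp} is an equivalence $\mathrm{wFin}(\fX,\cM_{\fX})\isom \varprojlim_{\bullet}\mathrm{Fin}(\fY^{(\bullet)})$: descending a compatible family of classical finite locally free group schemes on a cover only produces a \emph{weak} log finite group scheme on the base, i.e.\ a sheaf. Being a log finite group scheme --- representability of $G[p^{m}]$ \emph{and} of its Cartier dual by finite kfl log formal schemes over $(\fX,\cM_{\fX})$ --- is precisely the extra condition that is not kfl-local and does not follow from Proposition \ref{representable presheaf is sheaf} (which says representable presheaves are sheaves, not that sheaves with kfl-locally representable restrictions are representable). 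Indeed, in your inductive step the only thing your argument actually establishes is that $G[p^{m}]$ is classical after a Kummer cover, which is already true for any object of $\mathrm{wFin}(\fX,\cM_{\fX})$ by definition; so the induction produces no new information, and the conclusion $G[p^{m}]\in\mathrm{Fin}(\fX,\cM_{\fX})$ is asserted rather than proved. The same objection applies to the case $m\le n$, where finiteness and kfl-ness of the fiber product over the base, and representability of the dual, are again asserted via "kfl descent".

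The real content of this statement is structural (closure of log finite group schemes under the relevant kernels and extensions), and it is exactly what Kato proves; the paper's proof accordingly does not reprove it: it uses Lemma \ref{log fin grp as lim} (via the ideal-of-definition dévissage) to reduce from formal schemes to schemes, and then quotes \cite[Lemma 4.2]{kat23}. If you want a self-contained argument in the formal-scheme setting, you would need to import the scheme-level input from \cite{kat23} (or redo its structure theory), not replace it by the descent formalism of Section 3, which by design only ever yields weak log finite group schemes.
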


\begin{proof}
    Lemma \ref{log fin grp as lim} reduces us to the case that $\fX$ is a scheme. In this case, the assertion follows from \cite[Lemma 4.2]{kat23}. 
\end{proof}

\begin{lem}\label{log BT as lim}
    Let $(\fX,\cM_{\fX})$ be an fs log formal scheme admitting a finitely generated ideal of definition $\cI$. Let $(X_{n},\cM_{X_{n}})$ be the strict closed subscheme of $(\fX,\cM_{\fX})$ defined by $\cI^{n+1}$ for $n\geq 0$. Then the natural functors
    \[
    \mathrm{wBT}(\fX,\cM_{\fX})\to \varprojlim_{n} \mathrm{wBT}(X_{n},\cM_{X_{n}})
    \]
    \[
    \mathrm{BT}(\fX,\cM_{\fX})\to \varprojlim_{n} \mathrm{BT}(X_{n},\cM_{X_{n}})
    \]
    give bi-exact equivalences.
\end{lem}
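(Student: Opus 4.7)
The plan is to construct a quasi-inverse functor by reducing to the corresponding statement for weak log finite group schemes (Lemma \ref{log fin grp as lim}) applied to $p^{m}$-torsion. Given a compatible system $\{G^{(n)}\}_{n\geq 0}$ with $G^{(n)}\in \mathrm{wBT}(X_{n},\cM_{X_{n}})$ and isomorphisms $G^{(n+1)}_{(X_{n},\cM_{X_{n}})}\cong G^{(n)}$, for each $m\geq 1$ the system $\{G^{(n)}[p^{m}]\}_{n\geq 0}$ is compatible and belongs to $\varprojlim_{n} \mathrm{wFin}(X_{n},\cM_{X_{n}})$, so by Lemma \ref{log fin grp as lim} it comes from a unique $H_{m}\in \mathrm{wFin}(\fX,\cM_{\fX})$. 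The inclusions $G^{(n)}[p^{m}]\hookrightarrow G^{(n)}[p^{m+1}]$ glue, by the same lemma applied to morphisms, to injections $H_{m}\hookrightarrow H_{m+1}$, and we set
\[
G\coloneqq \varinjlim_{m\geq 1} H_{m}\in \mathrm{Shv}((\fX,\cM_{\fX})_{\mathrm{kfl}}).
\]

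It remains to check that $G$ is a weak log $p$-divisible group with $G[p^{m}]=H_{m}$, and that the functor defines an equivalence. For $G[p^{m}]=H_{m}$: one has a natural map $H_{m}\to G[p^{m}]$ of sheaves whose restriction to any $(X_{n},\cM_{X_{n}})_{\mathrm{kfl}}$ is an isomorphism by construction; since the kfl site of $(\fX,\cM_{\fX})$ has a basis consisting of objects in $\bigcup_{n\geq 0}(X_{n},\cM_{X_{n}})_{\mathrm{kfl}}$ (as in the proof of Lemma \ref{log fin grp as lim}), this is already a sheaf isomorphism. The same basis argument shows $G=\bigcup_{m} G[p^{m}]$. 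The main point to verify is the surjectivity of $\times p\colon G\to G$ as a kfl sheaf: given a section of $G$ over some $(Y,\cM_{Y})\in (\fX,\cM_{\fX})_{\mathrm{kfl}}$, we may refine so that $(Y,\cM_{Y})$ lies in some $(X_{n},\cM_{X_{n}})_{\mathrm{kfl}}$ and the section factors through $G[p^{m}]$ for some $m$; then the surjectivity of $\times p\colon G^{(n)}\to G^{(n)}$ as a kfl sheaf on $(X_{n},\cM_{X_{n}})$ provides a kfl cover of $(Y,\cM_{Y})$ on which the section has a preimage. I expect this surjectivity verification (together with commuting the filtered colimit over $m$ with the kfl sheaf condition) to be the main technical point, but it is immediate from the basis property of the kfl site.

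Fully faithfulness follows by applying Lemma \ref{log fin grp as lim} to $p^{m}$-torsions: a morphism $G\to G'$ is the same as a compatible family of morphisms $G[p^{m}]\to G'[p^{m}]$, and the same is true for each $G^{(n)}$. For the assertion about $\mathrm{BT}(\fX,\cM_{\fX})$: a weak log $p$-divisible group $G$ belongs to $\mathrm{BT}(\fX,\cM_{\fX})$ iff each $G[p^{m}]$ belongs to $\mathrm{Fin}(\fX,\cM_{\fX})$; by Lemma \ref{log fin grp as lim} for $\mathrm{Fin}$, this can be checked on each level $(X_{n},\cM_{X_{n}})$, which precisely corresponds to $G^{(n)}\in \mathrm{BT}(X_{n},\cM_{X_{n}})$ for all $n$ (using Lemma \ref{p tor is cl implies weak log bt is cl} if one wishes to check only level~$1$ torsion). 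Bi-exactness in both equivalences follows by the same basis-of-kfl-site argument: a short exact sequence of weak log $p$-divisible groups over $(\fX,\cM_{\fX})$ is exact as kfl sheaves iff its restriction to each $(X_{n},\cM_{X_{n}})_{\mathrm{kfl}}$ is exact, which is exactness in each $\mathrm{wBT}(X_{n},\cM_{X_{n}})$.
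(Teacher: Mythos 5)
Your proposal is correct and takes essentially the same route as the paper, whose proof simply deduces the statement from Lemma \ref{log fin grp as lim}; your argument is just the detailed version of that reduction (gluing the $p^{m}$-torsions and using that $(\fX,\cM_{\fX})_{\mathrm{kfl}}$ has a basis of objects lying in $\bigcup_{n}(X_{n},\cM_{X_{n}})_{\mathrm{kfl}}$).
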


\begin{proof}
    This follows immediately from Lemma \ref{log fin grp as lim}.
\end{proof}

\begin{cor}\label{classicality of weak log bt can be checked after taking reduction}
     Let $(\fX,\cM_{\fX})$ be a log formal scheme admitting a finitely generated ideal of definition $\cI$. Let $(X_{0},\cM_{X_{0}})$ be the strict closed subscheme defined by $\cI$. Let $G$ be a weak log $p$-divisible group over $(\fX,\cM_{\fX})$. Then, $G$ is classical if and only if the pullback of $G$ to $(X_{0},\cM_{X_{0}})$ is classical.
\end{cor}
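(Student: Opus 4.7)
The ``only if'' direction is immediate, since the pullback of a classical object is classical. For the converse, the plan is to reduce the question to the analogous statement for weak log finite group schemes already established in Corollary \ref{classicality of log fin grp can be checked after taking reduction}.

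Concretely, assume that $G_{0} \coloneqq G|_{(X_{0}, \cM_{X_{0}})}$ is classical. As noted after the definition of log $p$-divisible groups, $G$ is classical if and only if $G[p^{n}]$ is classical for every $n \geq 1$; the same equivalence of course holds for $G_{0}$. Since pullback on $(\fX,\cM_{\fX})_{\mathrm{kfl}}$ is exact (as pullback of sheaves of abelian groups), it commutes with the formation of $p^{n}$-torsion, so
\[
G[p^{n}]|_{(X_{0},\cM_{X_{0}})} \cong G_{0}[p^{n}].
\]
By hypothesis each $G_{0}[p^{n}]$ is a classical finite locally free group scheme on $X_{0}$.

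Now apply Corollary \ref{classicality of log fin grp can be checked after taking reduction} to the weak log finite group scheme $G[p^{n}]$ over $(\fX,\cM_{\fX})$: its restriction to the strict closed subscheme $(X_{0},\cM_{X_{0}})$ is classical, hence $G[p^{n}]$ itself is classical. This holds for every $n \geq 1$, and therefore $G$ is classical. There is no real obstacle here; the work was already done in Corollary \ref{classicality of log fin grp can be checked after taking reduction}, and the only point to verify is the compatibility of pullback with $p^{n}$-torsion, which is formal.
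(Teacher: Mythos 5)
Your proof is correct and is essentially the paper's own argument: the paper likewise reduces the statement to the finite-level case, deducing it from Corollary \ref{classicality of log fin grp can be checked after taking reduction} together with Lemma \ref{p tor is cl implies weak log bt is cl}, which plays the role of your appeal to the remark that $G$ is classical if and only if each $G[p^{n}]$ is. The only point you needed to verify beyond that reduction, namely that pullback commutes with forming $p^{n}$-torsion, is indeed formal and unproblematic.
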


\begin{proof}
    This follows from Lemma \ref{classicality of log fin grp can be checked after taking reduction} and Lemma \ref{p tor is cl implies weak log bt is cl}.
\end{proof}

\begin{dfn}[Generic fibers of weak log $p$-divisible groups]

Let $(\fX,\cM_{\fX})$ be a $p$-adic fs log formal scheme and $G$ be a weak log $p$-divisible group on $(\fX,\cM_{\fX})$. By Construction \ref{gen fib of weak log fin}, we have $\bZ/p^{n}$-local systems $G[p^{n}]_{\eta}$ on $(\fX,\cM_{\fX})^{\Diamond}_{\eta,\mathrm{qprok\et}}$ for each $n\geq 1$. Hence, $G_{\eta}\coloneqq \displaystyle \varprojlim_{n\geq 1} G[p^{n}]_{\eta}$ is a $\bZ_{p}$-local system on $(\fX,\cM_{\fX})_{\eta,\mathrm{qprok\et}}$, and $G_{\eta}$ is called the \emph{generic fiber} of $G$.
\end{dfn}

\begin{dfn}
    Let $f\colon (\fY,\cM_{\fY})\to (\fX,\cM_{\fX})$ be a morphism from (not necessarily fs) saturated log formal scheme $(\fY,\cM_{\fY})$ to an fs log formal scheme $(\fX,\cM_{\fX})$. For a weak log $p$-divisible group $G$ on $(\fX,\cM_{\fX})$, we say that $G$ is \emph{$f^{*}$-admissible} if $G[p^{n}]$ is $f^{*}$-admissible for each $n\geq 1$. When $G$ is $f^{*}$-admissible, we define a $p$-divisible group on $\fY$ by
    \[
    G_{(\fY,\cM_{\fY})}\coloneqq\varinjlim_{n\geq 1} G[p^{n}]_{(\fY,\cM_{\fY})}.
    \]
\end{dfn}

\begin{prop}\label{pro-kfl descent for log BT}
     Let $f\colon (\fY,\cM_{\fY})\to (\fX,\cM_{\fX})$ be an adic surjection of saturated log formal schemes. Suppose that there exists an fs chart $\alpha\colon P\to \cM_{\fX}$ such that $f$ admits a factorization
    \[
    (\fY,\cM_{\fY})\to (\fX_{\infty,\alpha},\cM_{\fX_{\infty,\alpha}})\to (\fX,\cM_{\fX}),
    \]
    where $(\fY,\cM_{\fY})\to (\fX_{\infty,\alpha},\cM_{\fX_{\infty,\alpha}})$ is a strict, quasi-compact, and flat. Let $(\fY^{(\bullet)},\cM_{\fY^{(\bullet)}})$ denote the \v{C}ech nerve of $f$ in the category of saturated log formal schemes. Then there exists a natural bi-exact equivalence 
    \[
    \mathrm{wBT}(\fX,\cM_{\fX})\to \varprojlim_{\bullet\in \Delta}\mathrm{BT}(\fY^{(\bullet)}).
    \]
\end{prop}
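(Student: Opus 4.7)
The plan is to reduce the descent statement for weak log $p$-divisible groups to the already-established descent statement at finite level (Proposition \ref{pro-kfl descent for log fin grp}) by expressing both sides as limits over $n \geq 1$ of categories of (weak log) Barsotti--Tate groups of level $n$. The key input is the standard identification $\mathrm{BT}(\fZ) \cong \varprojlim_n \mathrm{BT}_n(\fZ)$ for a formal scheme $\fZ$ (with transition maps given by multiplication by $p$), together with its log analogue $\mathrm{wBT}(\fX,\cM_{\fX}) \cong \varprojlim_n \mathrm{wBT}_n(\fX,\cM_{\fX})$. The latter follows directly from the definition of weak log $p$-divisible groups combined with Lemma \ref{p tor is cl implies weak log bt is cl}, which ensures that a compatible system of weak log BT$_n$'s really comes from a weak log $p$-divisible group.

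First I would check that the pullback functor on the left-hand side is well-defined: applying Lemma \ref{pro-kfl pullback of log fin grp is well-def} to each $G[p^n]$ shows that $G$ is $f^*$-admissible under the chart hypothesis on $f$, so $G_{(\fY^{(\bullet)},\cM_{\fY^{(\bullet)}})} \coloneqq \varinjlim_{n\geq 1} G[p^n]_{(\fY^{(\bullet)},\cM_{\fY^{(\bullet)}})}$ assembles into a genuine $p$-divisible group on each $\fY^{(\bullet)}$. Then I would assemble the chain of equivalences
\begin{align*}
\mathrm{wBT}(\fX,\cM_{\fX}) &\isom \varprojlim_{n\geq 1} \mathrm{wBT}_n(\fX,\cM_{\fX}) \\
&\isom \varprojlim_{n\geq 1} \varprojlim_{\bullet\in \Delta} \mathrm{BT}_n(\fY^{(\bullet)}) \\
&\isom \varprojlim_{\bullet\in \Delta} \varprojlim_{n\geq 1} \mathrm{BT}_n(\fY^{(\bullet)}) \\
&\isom \varprojlim_{\bullet\in \Delta} \mathrm{BT}(\fY^{(\bullet)}),
\end{align*}
where the second equivalence is Proposition \ref{pro-kfl descent for log fin grp} applied at each level and the third is simply exchange of limits. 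Bi-exactness is inherited level-by-level: short exact sequences of $p$-divisible groups are precisely those whose $p^n$-torsions form short exact sequences, and Proposition \ref{pro-kfl descent for log fin grp} is bi-exact at each level.

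The one point requiring genuine verification is the naturality of the equivalence of Proposition \ref{pro-kfl descent for log fin grp} with respect to the transition maps $G[p^{n+1}] \twoheadrightarrow G[p^n]$ coming from multiplication by $p$, so that the order of the limits over $n$ and over $\bullet \in \Delta$ may be interchanged. This compatibility is automatic from the functoriality of the $f^*$-admissibility formalism and of the pullback on classical objects, so I do not expect any substantive obstacle beyond bookkeeping; the entire proof is, in essence, the observation that the finite-level descent statement extends to a pro-object statement because the functor $G \mapsto \{G[p^n]\}_n$ is an equivalence onto its image on both sides.
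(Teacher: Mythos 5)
Your proposal is essentially the paper's own proof: the paper deduces the statement in one line from Proposition \ref{pro-kfl descent for log fin grp}, which is exactly the finite-level descent plus exchange-of-limits argument you spell out, and the bi-exactness bookkeeping is the same. One small correction: Lemma \ref{p tor is cl implies weak log bt is cl} is not the right citation for assembling a compatible system of weak log $\mathrm{BT}_n$'s into a weak log $p$-divisible group (that lemma concerns upgrading a \emph{weak} log $p$-divisible group to a log $p$-divisible group, i.e.\ representability of the torsion); the assembly statement, and the fact that $G[p^{n}]$ of a weak log $p$-divisible group lies in $\mathrm{wBT}_{n}$, are instead checked kfl-locally directly from the definitions, as in the classical equivalence $\mathrm{BT}\cong \varprojlim_{n}\mathrm{BT}_{n}$.
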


\begin{proof}
    This follows from Proposition \ref{pro-kfl descent for log fin grp}.
\end{proof}

\begin{lem}\label{lie alg of weak log bt}
    Let $(\fX,\cM_{\fX})$ be an fs log formal scheme on which $p$ is topologically nilpotent and $G$ be a weak log $p$-divisible group over $(\fX,\cM_{\fX})$. We let $\mathrm{Lie}(G)$ denote the sheaf on $(\fX,\cM_{\fX})_{\mathrm{kfl}}$ given by
    \[
    \mathrm{Lie}(G)(Y,\cM_{Y})\coloneqq\mathrm{Ker}(G(Y[\epsilon],\cM_{Y[\epsilon]})\to G(Y,\cM_{Y})),
    \]
    where the scheme $Y[\epsilon]$ is defined by
    $Y[\epsilon]\coloneqq Y\times_{\bZ} \bZ[t]/(t^{2})$ and $\cM_{Y[\epsilon]}$ is the pullback log structure of $\cM_{Y}$.
    Then $\mathrm{Lie}(G)$ is a kfl vector bundle on $(\fX,\cM_{\fX})$.
\end{lem}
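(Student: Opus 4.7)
The plan is to equip $\mathrm{Lie}(G)$ with an $\cO_{(\fX,\cM_{\fX})}$-module structure, reduce to the case of a scheme on which some power of $p$ vanishes, identify $\mathrm{Lie}(G)$ with $\mathrm{Lie}(G[p^{N}])$ there, and then use that $G[p^{N}]$ becomes classical after a kfl cover.

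First, I would endow $\mathrm{Lie}(G)$ with an $\cO_{(\fX,\cM_{\fX})}$-module structure by the standard trick: for $(Y,\cM_{Y})\in (\fX,\cM_{\fX})_{\mathrm{kfl}}$ and $\lambda\in \cO_{Y}(Y)$, the scheme endomorphism of $Y[\epsilon]$ sending $a+b\epsilon \mapsto a+b\lambda\epsilon$ is a morphism of log schemes for the pullback log structure $\cM_{Y[\epsilon]}$ and restricts to the identity on $Y$. By functoriality of $G$, it induces multiplication by $\lambda$ on $\mathrm{Lie}(G)(Y,\cM_{Y})$. This structure is clearly kfl-local and makes $\mathrm{Lie}(G)$ a sheaf of $\cO_{(\fX,\cM_{\fX})}$-modules.

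Next, by Lemma~\ref{kfl vect bdle as lim}, the problem is local on $\fX$ and reduces, through the reductions $X_{n}\subset \fX$ associated with a finitely generated ideal of definition, to the case that $X\coloneqq \fX$ is a scheme on which $p^{N}=0$ for some $N\geq 1$. Under this assumption, $\cO_{(X,\cM_{X})}$ is $p^{N}$-torsion, so $\mathrm{Lie}(G)$ is $p^{N}$-torsion as a sheaf of $\cO$-modules; every section of $\mathrm{Lie}(G)(Y,\cM_{Y})$ therefore lies in $G[p^{N}](Y[\epsilon],\cM_{Y[\epsilon]})$ and is in the kernel of restriction to $Y$, yielding an identification $\mathrm{Lie}(G)=\mathrm{Lie}(G[p^{N}])$ of sheaves on $(X,\cM_{X})_{\mathrm{kfl}}$.

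I would then work Zariski locally on $X$ so as to dispose of an fs chart $P\to \cM_{X}$. By Lemma~\ref{log fin grp is classical after n-power ext}, for a sufficiently large $n\geq 1$, the pullback of $G[p^{N}]$ along the kfl covering
\[
\pi\colon (Y,\cM_{Y})\coloneqq (X,\cM_{X})\times_{(\bZ[P],P)^{a}} (\bZ[P^{1/n}],P^{1/n})^{a}\to (X,\cM_{X})
\]
is classical. Moreover, since $\pi^{*}G$ is still a weak log $p$-divisible group whose $p^{N}$-torsion is now a classical finite locally free group scheme on $Y$, one checks that $\pi^{*}G[p^{N}]$ is in fact a (classical) truncated Barsotti-Tate group of level $N$ on $Y$: the required conditions (the surjectivity of multiplication by $p$ on $\pi^{*}G$ and the identification $\pi^{*}G=\bigcup_{m} \pi^{*}G[p^{m}]$) are inherited from $G$ by kfl pullback and Nakayama. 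Consequently, $\mathrm{Lie}(\pi^{*}G[p^{N}])$ is a classical vector bundle on $Y$, and via the fully faithful embedding $\iota$ of Lemma~\ref{fully faithfulness for kfl vect bdle} it defines a kfl vector bundle on $(Y,\cM_{Y})$ that agrees with $\pi^{*}\mathrm{Lie}(G)$.

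Finally, since being a vector bundle on the ringed kfl site $((X,\cM_{X})_{\mathrm{kfl}},\cO_{(X,\cM_{X})})$ is a kfl-local property and $\pi$ is a kfl cover, the sheaf $\mathrm{Lie}(G)$ is a kfl vector bundle on $(X,\cM_{X})$. The main technical point is the promotion in the third paragraph of the classicality of $\pi^{*}G[p^{N}]$ as a finite locally free group scheme to classicality as a $\mathrm{BT}_{N}$-group, which is what guarantees that its classical Lie algebra is locally free of finite rank.
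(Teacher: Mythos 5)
Your overall skeleton matches the paper's proof: reduce via Lemma \ref{kfl vect bdle as lim} to the case of a scheme with $p^{N}=0$, identify $\mathrm{Lie}(G)$ with $\mathrm{Lie}(G[p^{N}])$, pass to a kfl cover where the $p^{N}$-torsion is classical, and conclude with the classical fact (\cite[Proposition 2.2.1]{ill85}) that the Lie algebra of a truncated Barsotti--Tate group is a vector bundle, using kfl-locality of the notion of vector bundle. The genuine gap is in your second paragraph, i.e.\ precisely at the step where the paper does its only real work. You assert that the action $\epsilon\mapsto\lambda\epsilon$ ``clearly'' makes $\mathrm{Lie}(G)$ an $\cO$-module and deduce from $p^{N}\cO=0$ that every section lies in $G[p^{N}](Y[\epsilon],\cM_{Y[\epsilon]})$. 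But for a bare sheaf of abelian groups on the kfl site, the construction $\epsilon\mapsto\lambda\epsilon$ only gives an action of the multiplicative monoid of $\cO$; the additivity of this action --- in particular the identity $p^{N}\cdot_{\cO}x = x+\cdots+x$ ($p^{N}$-fold sum in the group law of $G$), which is exactly what you need to conclude $x\in G[p^{N}]$ --- is not formal. It requires that $G$ carry $Y[\epsilon_{1},\epsilon_{2}]=Y[\epsilon_{1}]\sqcup_{Y}Y[\epsilon_{2}]$ to a fiber product, which one obtains by working kfl-locally where the relevant torsion subgroups are representable by classical group schemes. That is exactly the reduction the paper makes explicit: it proves $\mathrm{Ker}(G[p^{N+i}](Y[\epsilon],\cM_{Y[\epsilon]})\to G[p^{N+i}](Y,\cM_{Y}))\subset G[p^{N}](Y[\epsilon],\cM_{Y[\epsilon]})$ by reducing kfl-locally to classical $G[p^{N+i}]$ and citing \cite[Chapter II, Corollary 3.3.16]{mes72}. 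Your argument becomes correct once this kfl-local representability step (or an equivalent appeal to the classical theory) is inserted, but as written the crucial containment is asserted rather than proved.

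A secondary soft spot: you upgrade $\pi^{*}G[p^{N}]$ from a classical finite locally free group scheme to a classical $\mathrm{BT}_{N}$-group ``by kfl pullback and Nakayama.'' Nakayama's lemma plays no role here; what actually needs checking is that the surjectivity of $p^{N-i}\colon \pi^{*}G[p^{N}]\to\pi^{*}G[p^{i}]$ as kfl sheaves (inherited from $\times p$ on $G$) yields the corresponding surjectivity in the category of classical finite locally free group schemes. The paper is admittedly equally terse at this point (``we may assume that $G[p^{N}]$ is a classical truncated Barsotti--Tate group scheme of level $N$''), leaning on the framework of \cite{kat23}, but the justification you offer is not an argument, so you should either cite the relevant statement or supply the verification.
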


\begin{proof}
    We may assume that $X\coloneqq \fX$ is a scheme with $p^{N}=0$ on $X$ for some $N\geq 1$. Then we have
    \[
    \mathrm{Ker}(G[p^{N+i}](Y[\epsilon],\cM_{Y[\epsilon]})\to G[p^{N+i}](Y,\cM_{Y}))\subset G[p^{N}](Y[\epsilon],\cM_{Y[\epsilon]})
    \]
    for every $i\geq 0$ and $(Y,\cM_{Y})\in (X,\cM_{X})_{\mathrm{kfl}}$. Indeed, for each fixed $i$, working kfl locally on $(X,\cM_{X})$ reduces the problem to the case that $G[p^{N+i}]$ is classical, in which case the claim follows from \cite[Chapter II, Corollary 3.3.16]{mes72}. Therefore, the sheaf $\mathrm{Lie}(G)$ is given by
    \[
    (Y,\cM_{Y})\mapsto \mathrm{Ker}(G[p^{N}](Y[\epsilon],\cM_{Y[\epsilon]})\to G[p^{N}](Y,\cM_{Y})).
    \]
    By working kfl locally on $(\fX,\cM_{\fX})$ again, we may assume that $G[p^{N}]$ is a classical truncated Barsotti-Tate group scheme of level $N$ over $X$. Then the sheaf $\mathrm{Lie}(G)$ coincides with the (usual) Lie algebra of $G[p^{N}]$, which is a (classical) vector bundle on $X$ by \cite[Proposition 2.2.1 (b)]{ill85}. This completes the proof.
\end{proof}

\begin{dfn}[Lie algebras of weak log $p$-divisible groups]

Let $(\fX,\cM_{\fX})$ be an fs log formal scheme on which $p$ is topologically nilpotent and $G$ be a weak log $p$-divisible group over $(\fX,\cM_{\fX})$. The kfl vector bundle $\mathrm{Lie}(G)$ defined in Lemma \ref{lie alg of weak log bt} is called \emph{the Lie algebra} of $G$.
\end{dfn}

\begin{lem}\label{pullback compatibility for lie alg of log bt}

Let $f\colon (\fY,\cM_{\fY})\to (\fX,\cM_{\fX})$ be an adic morphism from a saturated log formal scheme $(\fY,\cM_{\fY})$ to an fs log formal scheme $(\fX,\cM_{\fX})$ on which $p$ is topologically nilpotent. Let $G$ be a $f^{*}$-admissible weak log $p$-divisible group on $(\fX,\cM_{\fX})$. Then $\mathrm{Lie}(G)$ is also $f^{*}$-admissible, and there exists a natural isomorphism in $\mathrm{Vect}(\fY)$
\[
f^{*}\mathrm{Lie}(G)\cong \mathrm{Lie}(G_{(\fY,\cM_{\fY})}).
\]
\end{lem}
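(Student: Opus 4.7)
The plan is to reduce to the classical case via the $f^{*}$-admissibility hypothesis and the strict flat local factorization it provides, and then invoke the standard compatibility of Lie algebras with pullback for classical $p$-divisible groups.

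First, by the definition of $f^{*}$-admissibility for weak log $p$-divisible groups and Lemma \ref{functoriality of pro-kfl pullback of kfl vect bdles}(1), both the admissibility of $\mathrm{Lie}(G)$ and the desired isomorphism are strict flat local on $(\fY,\cM_{\fY})$. We may therefore assume that $f$ factors as
\[
(\fY,\cM_{\fY}) \xrightarrow{g} (\fZ,\cM_{\fZ}) \xrightarrow{h} (\fX,\cM_{\fX})
\]
with $(\fZ,\cM_{\fZ})$ fs and $G_{(\fZ,\cM_{\fZ})}$ classical (i.e. each $G[p^{n}]_{(\fZ,\cM_{\fZ})}$ is a classical truncated Barsotti-Tate group).

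Next I would compare $h^{*}\mathrm{Lie}(G)$ and $\mathrm{Lie}(h^{*}G)$ as kfl vector bundles on $(\fZ,\cM_{\fZ})$. Since both $(\fZ,\cM_{\fZ})$ and $(\fX,\cM_{\fX})$ are fs, the kfl pullback along $h$ is simply restriction: for $(Y,\cM_{Y})\in(\fZ,\cM_{\fZ})_{\mathrm{kfl}}$ one computes directly from the definition in Lemma \ref{lie alg of weak log bt} that
\[
(h^{*}\mathrm{Lie}(G))(Y,\cM_{Y}) = \mathrm{Ker}\bigl(G(Y[\epsilon],\cM_{Y[\epsilon]})\to G(Y,\cM_{Y})\bigr) = \mathrm{Lie}(h^{*}G)(Y,\cM_{Y}),
\]
so the two sheaves agree. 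Because $h^{*}G$ is a classical $p$-divisible group on $\fZ$, the proof of Lemma \ref{lie alg of weak log bt} identifies $\mathrm{Lie}(h^{*}G)$ with the classical Lie algebra of the truncated Barsotti-Tate group $h^{*}G[p^{N}]$ (for $p^{N}=0$ on $\fZ$), which is a classical vector bundle on $\fZ$. Thus $h^{*}\mathrm{Lie}(G)$ is classical, proving that $\mathrm{Lie}(G)$ is $f^{*}$-admissible.

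Finally, applying the classical pullback $g^{*}\colon\mathrm{Vect}(\fZ)\to\mathrm{Vect}(\fY)$ to the identity $h^{*}\mathrm{Lie}(G)\cong\mathrm{Lie}(h^{*}G)$ and combining with the well-known compatibility $g^{*}\mathrm{Lie}(\cdot)\cong\mathrm{Lie}(g^{*}(\cdot))$ for classical $p$-divisible groups (again by \cite[Proposition 2.2.1(b)]{ill85}, since Lie is functorial in base change of classical BT groups), we obtain
\[
f^{*}\mathrm{Lie}(G) = g^{*}h^{*}\mathrm{Lie}(G) \cong g^{*}\mathrm{Lie}(h^{*}G) \cong \mathrm{Lie}(g^{*}h^{*}G) = \mathrm{Lie}(G_{(\fY,\cM_{\fY})}),
\]
which is the desired isomorphism. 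The main technical point is the identification of $h^{*}\mathrm{Lie}(G)$ with $\mathrm{Lie}(h^{*}G)$; once this is in hand, everything else is bookkeeping about the definitions of $f^{*}$-admissibility. Verifying that the strict flat local construction of $f^{*}$ on both sides glues consistently is routine since it follows the same recipe (cover-wise factorization through an fs intermediate log formal scheme) for both the vector bundle and the $p$-divisible group.
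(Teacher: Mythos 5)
There is a genuine gap at the very first reduction. You assume that, after a strict flat localization on $(\fY,\cM_{\fY})$, the map $f$ factors through a single fs $(\fZ,\cM_{\fZ})$ on which \emph{all} of $G$ becomes classical, ``i.e.\ each $G[p^{n}]_{(\fZ,\cM_{\fZ})}$ is classical.'' But the $f^{*}$-admissibility of a weak log $p$-divisible group is defined level-wise: for each $n$ separately there is a strict flat local factorization making $G[p^{n}]$ classical, and both the cover and the intermediate fs log formal scheme are allowed to depend on $n$. In the situations this lemma is actually used (pullback along pro-Kummer maps with chart $P\to P_{\bQ_{\geq 0}}$, cf.\ Lemma \ref{pro-kfl pullback of log fin grp is well-def}), the level-$n$ torsion typically only becomes classical after extracting $p^{n}$-th roots, so the required fs intermediate grows with $n$ and no single $(\fZ,\cM_{\fZ})$ trivializes every level; moreover classicality of $G[p]$ does not propagate to $G[p^{n}]$ for $n\geq 2$ (think of a log Tate-type extension of $\bQ_{p}/\bZ_{p}$ by $\mu_{p^{\infty}}$ whose class is a $p$-th power but not a $p^{2}$-th power of the log parameter). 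So ``we may assume $G_{(\fZ,\cM_{\fZ})}$ is classical'' is not available from the hypothesis, and the rest of your argument, which leans on $h^{*}G$ being a classical $p$-divisible group, does not go through as written.

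The paper's proof shows how to repair this with only the level-one datum. One works strict flat locally so that $\fX$ has a finitely generated ideal of definition $\cI$ containing $p$ and $f$ factors through an fs $(\fZ,\cM_{\fZ})$ on which merely $G[p]$ is classical. Passing to the strict closed subscheme $(Z_{0},\cM_{Z_{0}})$ cut out by $\cI\cO_{\fZ}$, one has $p=0$ there, so by the rigidity input in the proof of Lemma \ref{lie alg of weak log bt} (Messing) the tangent-space kernel is contained in the $p$-torsion and hence
\[
(h^{*}\mathrm{Lie}(G))|_{(Z_{0},\cM_{Z_{0}})}\cong \mathrm{Lie}(G_{(Z_{0},\cM_{Z_{0}})})\cong \mathrm{Lie}(G[p]_{(Z_{0},\cM_{Z_{0}})}),
\]
which is classical. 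Corollary \ref{classicality of vect bdle can be checked after taking reduction} then upgrades this to classicality of $h^{*}\mathrm{Lie}(G)$ on all of $(\fZ,\cM_{\fZ})$, giving the $f^{*}$-admissibility of $\mathrm{Lie}(G)$; the comparison isomorphism $f^{*}\mathrm{Lie}(G)\cong \mathrm{Lie}(G_{(\fY,\cM_{\fY})})$ is then constructed as in your last step. Your identification $h^{*}\mathrm{Lie}(G)\cong\mathrm{Lie}(h^{*}G)$ for a morphism of fs log formal schemes and the final bookkeeping are fine; what is missing is precisely the reduction-mod-$\cI$ argument that lets one get by with classicality of $G[p]$ alone.
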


\begin{proof}
    By working flat locally on $\fX$ and $\fY$, we may assume that $\fX$ admits a finitely generated ideal of definition $\cI$ containing $p$ and that there exists a factorization of $f$
    \[
    (\fY,\cM_{\fY})\stackrel{g}{\to} (\fZ,\cM_{\fZ})\stackrel{h}{\to} (\fX,\cM_{\fX})
    \]
    such that $G[p]_{(\fZ,\cM_{\fZ})}$ is classical. Let $(Z_{0},\cM_{Z_{0}})$ denote the strict closed subscheme of $(\fZ,\cM_{\fZ})$ defined by $\cI\cO_{\fZ}$. Then we have isomorphisms
    \[
    (h^{*}\mathrm{Lie}(G))|_{(Z_{0},\cM_{Z_{0}})}\cong \mathrm{Lie}(G_{(Z_{0},\cM_{Z_{0}})})\cong \mathrm{Lie}(G[p]_{(Z_{0},\cM_{Z_{0}})}),
    \]
    which implies that $h^{*}\mathrm{Lie}(G)$ is classical by Corollary \ref{classicality of vect bdle can be checked after taking reduction}. Therefore, $\mathrm{Lie}(G)$ is $f^{*}$-admissible. The isomorphism in the assertion is constructed in a straightforward way.
\end{proof}

\begin{prop}[cf.~{\cite[Proposition 7.3]{kat23}}]\label{lie is classical iff weak log bt is log bt}
    Let $(\fX,\cM_{\fX})$ be an fs log formal scheme on which $p$ is topologically nilpotent. Then, for $G\in \mathrm{wBT}(\fX,\cM_{\fX})$, kfl vector bundles $\mathrm{Lie}(G)$ and $\mathrm{Lie}(G^{*})$ are classical if and only if $G$ is a log $p$-divisible group.
\end{prop}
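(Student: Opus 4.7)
The plan is to reduce the statement to the case of fs log schemes, where it is Kato's Proposition 7.3 in \cite{kat23}. Working Zariski locally on $\fX$, I may assume that $\fX$ admits a finitely generated ideal of definition $\cI$, which can be enlarged so that $p\in \cI$ using the hypothesis that $p$ is topologically nilpotent. For $n\geq 0$ let $(X_{n},\cM_{X_{n}})$ denote the strict closed subscheme of $(\fX,\cM_{\fX})$ cut out by $\cI^{n+1}$; each $X_{n}$ is an fs log scheme on which $p^{n+1}=0$, so Kato's result applies to every $(X_{n},\cM_{X_{n}})$.

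The next step is to translate both sides of the equivalence into conditions on the system $\{G|_{(X_{n},\cM_{X_{n}})}\}_{n\geq 0}$. On the left, Lemma \ref{log BT as lim} provides bi-exact equivalences $\mathrm{wBT}(\fX,\cM_{\fX})\isom \varprojlim_{n}\mathrm{wBT}(X_{n},\cM_{X_{n}})$ and $\mathrm{BT}(\fX,\cM_{\fX})\isom \varprojlim_{n}\mathrm{BT}(X_{n},\cM_{X_{n}})$; since $\mathrm{BT}\hookrightarrow \mathrm{wBT}$ is a full subcategory inclusion, a chase through these equivalences shows that $G\in \mathrm{wBT}(\fX,\cM_{\fX})$ belongs to $\mathrm{BT}(\fX,\cM_{\fX})$ if and only if each $G|_{(X_{n},\cM_{X_{n}})}$ belongs to $\mathrm{BT}(X_{n},\cM_{X_{n}})$. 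On the right, Lemma \ref{pullback compatibility for lie alg of log bt} furnishes natural isomorphisms $\mathrm{Lie}(G)|_{(X_{n},\cM_{X_{n}})}\cong \mathrm{Lie}(G|_{(X_{n},\cM_{X_{n}})})$, and likewise for $G^{*}$; combined with Corollary \ref{classicality of vect bdle can be checked after taking reduction}, the classicality of $\mathrm{Lie}(G)$ and $\mathrm{Lie}(G^{*})$ on $(\fX,\cM_{\fX})$ is equivalent to the classicality of $\mathrm{Lie}(G|_{(X_{0},\cM_{X_{0}})})$ and $\mathrm{Lie}(G^{*}|_{(X_{0},\cM_{X_{0}})})$, and in fact implies the same at every level $n$.

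With these reductions in hand, I apply Kato's Proposition 7.3 level by level. For the ``only if'' direction, $G\in \mathrm{BT}(\fX,\cM_{\fX})$ yields $G|_{(X_{0},\cM_{X_{0}})}\in \mathrm{BT}(X_{0},\cM_{X_{0}})$, so Kato gives the classicality of $\mathrm{Lie}(G|_{X_{0}})$ and $\mathrm{Lie}(G^{*}|_{X_{0}})$, which then lifts to $(\fX,\cM_{\fX})$ by the corollary above. For the ``if'' direction, the classicality hypothesis descends to each $(X_{n},\cM_{X_{n}})$, so Kato produces $G|_{(X_{n},\cM_{X_{n}})}\in \mathrm{BT}(X_{n},\cM_{X_{n}})$ for every $n$, and the reformulation in the previous paragraph promotes this to $G\in \mathrm{BT}(\fX,\cM_{\fX})$. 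No step is genuinely difficult once Kato's scheme-level theorem is granted; the main point to be careful about is the bookkeeping in the previous paragraph, namely that the two equivalences of Lemma \ref{log BT as lim} are compatible with the full embedding $\mathrm{BT}\hookrightarrow \mathrm{wBT}$, so that a system of objects of $\mathrm{BT}(X_{n},\cM_{X_{n}})$ compatible with a given $G\in \mathrm{wBT}(\fX,\cM_{\fX})$ forces $G$ itself to lie in $\mathrm{BT}(\fX,\cM_{\fX})$.
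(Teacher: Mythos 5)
Your proposal is correct and follows essentially the same route as the paper: the paper's (two-line) proof likewise reduces to the scheme case via the ``classicality can be checked after reduction'' results and Lemma \ref{log fin grp as lim}/\ref{log BT as lim}, and then invokes \cite[Proposition 7.3]{kat23}; your version just spells out the level-by-level bookkeeping. The only minor caveat is your citation of Lemma \ref{pullback compatibility for lie alg of log bt}, which is stated for the admissible pullback to a non-fs base, whereas what you actually need is the compatibility $\mathrm{Lie}(G)|_{(X_{n},\cM_{X_{n}})}\cong \mathrm{Lie}(G|_{(X_{n},\cM_{X_{n}})})$ for the strict (fs) pullback, which is immediate from the definition of $\mathrm{Lie}$ and is used implicitly in the paper as well.
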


\begin{proof}
    Lemma \ref{classicality of vect bdle can be checked after taking reduction} and Lemma \ref{classicality of weak log bt can be checked after taking reduction} allows us to assume that $\fX$ is a scheme. Then the assertion follows from \cite[Proposition 7.3]{kat23}.
\end{proof}

\section{Kfl log prismatic crystals}

\subsection{Log prismatic crystals}

\begin{dfn}[Log prismatic crystals]\label{def of log pris crys}
    Let $(\fX,\cM_{\fX})$ be a bounded $p$-adic log formal scheme. We let $\mathrm{Vect}((\fX, \cM_{\fX})_{\Prism})$ (resp. $\mathrm{Vect}((\fX, \cM_{\fX})^{\mathrm{str}}_{\Prism})$) denote the category of vector bundles on the ringed site on $((\fX,\cM_{\fX})_{\Prism},\cO_{\Prism})$ (resp. $((\fX, \cM_{\fX})^{\mathrm{str}}_{\Prism},\cO_{\Prism})$). Objects of $\mathrm{Vect}((\fX,\cM_{\fX})_{\Prism})$ are called \emph{log prismatic crystals}. 
    
    We let $\mathrm{Vect}^{\varphi}((\fX,\cM_{\fX})_{\Prism})$ denote the category of pairs $(\cE, \varphi_{\cE})$ where $\cE$ is an object of $\mathrm{Vect}((\fX,\cM_{\fX})_{\Prism})$ and $\varphi_{\cE}$ is an isomorphism $(\phi^{\ast}\cE)[1/\cI_{\Prism}]\isom \cE[1/\cI_{\Prism}]$. Objects of $\mathrm{Vect}^{\varphi}((\fX,\cM_{\fX})_{\Prism})$ are called \emph{log prismatic F-crystals}. For integers $a\leq b$, we let $\mathrm{Vect}^{\varphi}_{[a,b]}((\fX,\cM_{\fX})_{\Prism})$ denote the full subcategory of $\mathrm{Vect}^{\varphi}((\fX,\cM_{\fX})_{\Prism})$ consisting of objects $(\cE,\varphi_{\cE})$ with
    \[
    \cI_{\Prism}^{b}\cE\subset \varphi_{\cE}(\phi^{*}\cE)\subset \cI_{\Prism}^{a}\cE.
    \]
    In particular, let $\mathrm{DM}((\fX,\cM_{\fX})_{\Prism})\coloneqq \mathrm{Vect}^{\varphi}_{[0,1]}((\fX,\cM_{\fX})_{\Prism})$, whose objects are called \emph{log prismatic Dieudonn\'{e} crystals}. 
    Finally, we let $\mathrm{Vect}((\fX,\cM_{\fX})_{\Prism},\widetilde{\cO}_{\Prism})$ denote the category of vector bundles on the ringed site $((\fX,\cM_{\fX})_{\Prism},\widetilde{\cO}_{\Prism})$. Similarly, we define variants
    \[
    \mathrm{Vect}^{\varphi}((\fX,\cM_{\fX})_{\Prism}^{\mathrm{str}}), \mathrm{Vect}^{\varphi}_{[a,b]}((\fX,\cM_{\fX})_{\Prism}^{\mathrm{str}}), \mathrm{DM}((\fX,\cM_{\fX})_{\Prism}^{\mathrm{str}}), \mathrm{Vect}((\fX,\cM_{\fX})_{\Prism}^{\mathrm{str}},\widetilde{\cO}_{\Prism}).
    \]
    All of these categories are canonically equipped with the structure of an exact category.
\end{dfn}

\begin{rem}\label{pris crys as lim}
    For $\star\in \{\mathrm{str}, \emptyset\}$, we have the following canonical bi-exact equivalences;
    \begin{align*}
    \mathrm{Vect}((\fX,\cM_{\fX})_{\Prism}^{\star})&\isom \varprojlim_{(A,I,\cM_{A})\in (\fX,\cM_{\fX})_{\Prism}^{\star}} \mathrm{Vect}(A), \\
    \mathrm{Vect}^{\varphi}((\fX,\cM_{\fX})^{\star}_{\Prism})&\isom \varprojlim_{(A,I,\cM_{A})\in (\fX,\cM_{\fX})^{\star}_{\Prism}} \mathrm{Vect}^{\varphi}(A,I), \\
    \mathrm{Vect}^{\varphi}_{[a,b]}((\fX,\cM_{\fX})^{\star}_{\Prism})&\isom \varprojlim_{(A,I,\cM_{A})\in (\fX,\cM_{\fX})^{\star}_{\Prism}} \mathrm{Vect}^{\varphi}_{[a,b]}(A,I), \\
    \mathrm{Vect}((\fX,\cM_{\fX})_{\Prism}^{\star},\widetilde{\cO}_{\Prism})&\isom \varprojlim_{(A,I,\cM_{A})\in (\fX,\cM_{\fX})^{\star}_{\Prism}} \mathrm{Vect}(A/(p,I)).
    \end{align*}
    Here, $\mathrm{Vect}(R)$ is the category of finite projective $R$-modules for a ring $R$. For a prism $(A,I)$, $\mathrm{Vect}^{\varphi}(A,I)$ is the category of pairs $(N,\varphi_{N})$ of a finite projective $A$-module $N$ and an isomorphism $\varphi_{N}\colon (\phi_{A}^{\ast}N)[1/I]\isom N[1/I]$, and $\mathrm{Vect}^{\varphi}_{[a,b]}(A,I)$ is the full subcategory of $\mathrm{Vect}^{\varphi}(A,I)$ consisting of objects $(N,\varphi_{N})$ such that $I^{b}N\subset \varphi_{N}(\phi_{A}^{*}N)\subset I^{a}N$. For an object $\cE$ of the source category, the image of $\cE$ via the projection with respect to $(A,I,\cM_{A})$ is denoted by $\cE_{(A,I,\cM_{A})}$. When no confusion occurs, we simply write $\cE_{A}$ for this.
\end{rem}

\begin{dfn}[Analytic log prismatic \texorpdfstring{$F$}--crystals, {\cite{gr24}}, {\cite{dlms24}}]\noindent

    Let $(A,I)$ be a prism. We let $\mathrm{Vect}^{\mathrm{an}}(A,I)$ denote the exact category of vector bundles on a scheme $U(A,I)\coloneqq\Spec(A)\backslash V(p,I)$, and $\mathrm{Vect}^{\mathrm{an},\varphi}(A,I)$ denote the exact category of pairs $(\cE,\varphi_{\cE})$ consisting of $\cE\in \mathrm{Vect}^{\mathrm
    {an}}(A,I)$ and an isomorphism $\varphi_{\cE}\colon (\phi_{A}^{*}\cE)[1/I]\isom \cE[1/I]$. 

    For a bounded $p$-adic formal scheme $\fX$, we define the exact categories $\mathrm{Vect}^{\mathrm{an}}(\fX_{\Prism})$ and $\mathrm{Vect}^{\mathrm{an},\varphi}(\fX_{\Prism})$ by
    \begin{align*}
    \mathrm{Vect}^{\mathrm{an}}(\fX_{\Prism})&\coloneqq \varprojlim_{(A,I,\cM_{A})\in \fX_{\Prism}} \mathrm{Vect}^{\mathrm{an}}(A,I), \\
    \mathrm{Vect}^{\mathrm{an},\varphi}(\fX_{\Prism})&\coloneqq \varprojlim_{(A,I,\cM_{A})\in \fX_{\Prism}} \mathrm{Vect}^{\mathrm{an},\varphi}(A,I).
    \end{align*}
    
    For a bounded $p$-adic log formal scheme $(\fX,\cM_{\fX})$, we define the exact categories $\mathrm{Vect}^{\mathrm{an}}((\fX,\cM_{\fX})_{\Prism})$ and $\mathrm{Vect}^{\mathrm{an},\varphi}((\fX,\cM_{\fX})_{\Prism})$ by
    \begin{align*}
    \mathrm{Vect}^{\mathrm{an}}((\fX,\cM_{\fX})_{\Prism})&\coloneqq \varprojlim_{(A,I,\cM_{A})\in (\fX,\cM_{\fX})_{\Prism}} \mathrm{Vect}^{\mathrm{an}}(A,I), \\
    \mathrm{Vect}^{\mathrm{an},\varphi}((\fX,\cM_{\fX})_{\Prism})&\coloneqq \varprojlim_{(A,I,\cM_{A})\in (\fX,\cM_{\fX})_{\Prism}} \mathrm{Vect}^{\mathrm{an},\varphi}(A,I).
    \end{align*}
    Similarly, we define the exact categories $\mathrm{Vect}^{\mathrm{an}}((\fX,\cM_{\fX})^{\mathrm{str}}_{\Prism})$ and  $\mathrm{Vect}^{\mathrm{an},\varphi}((\fX,\cM_{\fX})^{\mathrm{str}}_{\Prism})$ by
    \begin{align*}
    \mathrm{Vect}^{\mathrm{an}}((\fX,\cM_{\fX})^{\mathrm{str}}_{\Prism})&\coloneqq \varprojlim_{(A,I,\cM_{A})\in (\fX,\cM_{\fX})^{\mathrm{str}}_{\Prism}} \mathrm{Vect}^{\mathrm{an}}(A,I), \\
    \mathrm{Vect}^{\mathrm{an},\varphi}((\fX,\cM_{\fX})^{\mathrm{str}}_{\Prism})&\coloneqq \varprojlim_{(A,I,\cM_{A})\in (\fX,\cM_{\fX})^{\mathrm{str}}_{\Prism}} \mathrm{Vect}^{\mathrm{an},\varphi}(A,I).
    \end{align*}
    Objects of the category $\mathrm{Vect}^{\mathrm{an}}((\fX,\cM_{\fX})_{\Prism})$ (resp. $\mathrm{Vect}^{\mathrm{an},\varphi}((\fX,\cM_{\fX})_{\Prism})$) are called \emph{analytic log prismatic crystals} (resp. \emph{analytic log prismatic $F$-crystals}) on $(\fX,\cM_{\fX})$. 
\end{dfn}

\begin{lem}[{\cite[Lemma 5.10]{ino25}}]\label{category of crystals is unchanged}
    Let $(\fX,\cM_{\fX})$ be a quasi-coherent and integral bounded $p$-adic formal scheme. Then the inclusion functor $(\fX, \cM_{\fX})^{\mathrm{str}}_{\Prism}\hookrightarrow (\fX, \cM_{\fX})_{\Prism}$ has a right adjoint functor
    \[
    (\fX, \cM_{\fX})_{\Prism}\to (\fX, \cM_{\fX})^{\mathrm{str}}_{\Prism} \ \ \ \  ((A,I,\cM_{A})\mapsto (A,I,\cN_{A})).
    \]
    In particular, for any $\star\in \{\emptyset,\varphi,\mathrm{an},(\mathrm{an},\varphi)\}$, the following restriction functors give bi-exact equivalences:
    \begin{align*}
    \mathrm{Vect}^{\star}((\fX, \cM_{\fX})_{\Prism})&\to \mathrm{Vect}^{\star}((\fX, \cM_{\fX})^{\mathrm{str}}_{\Prism}), \\
    \mathrm{Vect}^{\varphi}_{[a,b]}((\fX,\cM_{\fX})_{\Prism})&\to \mathrm{Vect}^{\varphi}_{[a,b]}((\fX,\cM_{\fX})_{\Prism}^{\mathrm{str}}), \\
    \mathrm{Vect}((\fX, \cM_{\fX})_{\Prism},\widetilde{\cO}_{\Prism})&\to \mathrm{Vect}((\fX, \cM_{\fX})^{\mathrm{str}}_{\Prism},\widetilde{\cO}_{\Prism}).
    \end{align*}
\end{lem}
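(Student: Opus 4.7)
The plan is to construct the right adjoint $\rho$ first at the level of the underlying site categories and then to derive the bi-exact equivalences essentially formally from the crucial property that $\rho$ preserves the underlying prism. For an object $(A,I,\cM_{A})\in (\fX,\cM_{\fX})_{\Prism}$ with structure morphism $f\colon (\Spf(A/I),\cM_{A/I})\to (\fX,\cM_{\fX})$, I would set $\cN_{A/I}\coloneqq f^{*}\cM_{\fX}$; integrality of $\cM_{\fX}$ ensures the canonical map $\cN_{A/I}\to \cM_{A/I}$ is injective, so $\cN_{A/I}$ can be identified with a sub-log structure of $\cM_{A/I}$. I then define $\cN_{A}\subset \cM_{A}$ as the sub-log structure on $\Spf(A)$ obtained as the preimage of $\cN_{A/I}$ along the restriction $\cM_{A}\to \cM_{A/I}$. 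By construction, $(A,I,\cN_{A})$ is a log prism that is strict over $(\fX,\cM_{\fX})$, and the inclusion $\cN_{A}\hookrightarrow \cM_{A}$ gives a natural morphism $(A,I,\cN_{A})\to (A,I,\cM_{A})$ in $(\fX,\cM_{\fX})_{\Prism}$.

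Next, I would check the adjunction property. Given $(B,J,\cM_{B})$ in the strict site together with a morphism $(B,J,\cM_{B})\to (A,I,\cM_{A})$ in the big site, the underlying map $A\to B$ together with the induced log structure map (restricted to $B/J$) is compatible with the structure maps to $(\fX,\cM_{\fX})$. Since $\cM_{B}|_{B/J}$ is strict (i.e., equals the pullback of $\cM_{\fX}$), this compatibility forces the image of $\cM_{A}|_{A/I}$ in $\cM_{B}|_{B/J}$ to lie in the pullback of $\cN_{A/I}$. Lifting this to the ambient prisms, using the definition of $\cN_{A}$ as the preimage in $\cM_{A}$, yields a unique factorization $(B,J,\cM_{B})\to (A,I,\cN_{A})\to (A,I,\cM_{A})$.

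Finally, using the identification of crystal categories as limits (as in Remark~\ref{pris crys as lim}), I would deduce the equivalences as follows: restriction along $\iota$ sends a crystal $\cE$ on the big site to $\{\cE_{\iota(\mathbf{B})}\}_{\mathbf{B}}$, and the inverse is given by pullback along $\rho$, namely $\cE'\mapsto \{\cE'_{\rho(\mathbf{A})}\}_{\mathbf{A}}$. Because $\rho(\mathbf{A})$ has the same underlying prism as $\mathbf{A}$, the value $\cE'_{\rho(\mathbf{A})}$ is a finite projective module over exactly the expected ring $A$, and all auxiliary structures in $\star$ (the Frobenius isomorphism, the $[a,b]$-condition, analyticity on $U(A,I)$, and the $\widetilde{\cO}_{\Prism}$-structure) depend only on the underlying prism $(A,I)$ and so transport faithfully. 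The unit and counit of the site-level adjunction produce the natural isomorphisms making these mutually inverse, and bi-exactness is automatic since exactness of sequences of vector bundles is checked termwise on each prism. The main technical obstacle is verifying that $\cN_{A}$, defined as a preimage of a sub-log structure on $\Spf(A/I)$, actually defines a log structure on $\Spf(A)$ compatible with the $\delta$-structure on $A$; it is precisely the integrality hypothesis on $\cM_{\fX}$ that makes $\cN_{A/I}$ a genuine sub-log structure of $\cM_{A/I}$ and so allows the preimage construction to respect the prismatic compatibilities.
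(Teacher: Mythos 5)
The paper does not actually reprove this lemma here: it is imported verbatim from \cite[Lemma 5.10]{ino25}. Your overall strategy is the expected one for that statement: construct a strictification $(A,I,\cM_A)\mapsto (A,I,\cN_A)$, verify the adjunction, and then transport crystals back and forth using the crucial fact that $\rho$ does not change the underlying prism, so that the Frobenius structure, the $[a,b]$-condition, analyticity on $U(A,I)$ and the $\widetilde{\cO}_{\Prism}$-structure carry over and the bi-exact equivalences follow from the limit description as in Remark \ref{pris crys as lim}. That second half of your argument is fine.

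The genuine gap is in your construction of $\cN_A$. You assert that integrality of $\cM_{\fX}$ makes the canonical map $f^{*}\cM_{\fX}\to \cM_{A/I}$ injective and then define $\cN_A$ as the preimage of this ``sub-log structure''. Integrality gives no such injectivity: a morphism of (even fs) log formal schemes can induce a non-injective map on pullback log structures, e.g. a structure morphism whose chart map is $\bN^{2}\to \bN$, $(a,b)\mapsto a+b$ (this happens already when the reduction of the prism lands in a locus where two boundary coordinates of $\fX$ become identified), and nothing in the definition of the absolute log prismatic site excludes such objects. So ``preimage of a sub-log structure'' is not well defined in general, and taking the preimage of the image instead would destroy both strictness of $(A,I,\cN_A)$ over $(\fX,\cM_{\fX})$ and the universal property. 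The correct construction is the fiber product of sheaves of monoids $\cN_A\coloneqq \cM_A\times_{\cM_{A/I}} f^{*}\cM_{\fX}$ on the common \'{e}tale site of $\Spf(A)$ and $\Spf(A/I)$, which needs no injectivity; one must then check that $\cN_A$ is again a quasi-coherent integral log structure with the $\delta_{\mathrm{log}}$-structure restricted from $\cM_A$ (this, rather than injectivity, is where the quasi-coherence and integrality hypotheses on $(\fX,\cM_{\fX})$ enter, integrality being inherited by the fiber product of integral monoid sheaves), and that $(A,I,\cN_A)$ is strict over $(\fX,\cM_{\fX})$, which uses that units of $A/I$ lift to units of $A$ since $I$ lies in the Jacobson radical of the complete ring $A$. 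With the fiber product in place, your factorization and uniqueness argument for the adjunction, and the ensuing equivalences, go through essentially as you describe.
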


\begin{lem}[{\cite[Lemma 5.17]{ino25}}]\label{divisible monoid and log pris site}
    Let $(\fX,\cM_{\fX})$ be a bounded $p$-adic integral log formal scheme. Assume that $(\fX,\cM_{\fX})$ admits a chart $M\to \cM_{\fX}$ where $M$ is uniquely $p$-divisible (i.e. the monoid map $\times p\colon M\to M$ is bijective). Then the natural functor
    \[
    (\fX,\cM_{\fX})^{\mathrm{str}}_{\Prism}\to \fX_{\Prism}
    \]
    sending $(A,I,\cM_{A})$ to $(A,I)$ is an equivalence.

    In particular, for $\star\in \{\emptyset,\varphi,\mathrm{an},(\mathrm{an},\varphi)\}$, the following natural functors give bi-exact equivalences:
    \begin{align*}
    \mathrm{Vect}^{\star}(\fX_{\Prism})&\to \mathrm{Vect}^{\star}((\fX,\cM_{\fX})^{\mathrm{str}}_{\Prism}), \\
    \mathrm{Vect}^{\varphi}_{[a,b]}(\fX_{\Prism})&\to 
    \mathrm{Vect}^{\varphi}_{[a,b]}((\fX,\cM_{\fX})_{\Prism}^{\mathrm{str}}), \\
    \mathrm{Vect}(\fX_{\Prism},\widetilde{\cO}_{\Prism})&\to \mathrm{Vect}((\fX,\cM_{\fX})^{\mathrm{str}}_{\Prism},\widetilde{\cO}_{\Prism}). 
    \end{align*}
    
\end{lem}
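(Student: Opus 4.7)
The plan is to construct a quasi-inverse to the forgetful functor $F\colon (\fX,\cM_{\fX})^{\mathrm{str}}_{\Prism}\to \fX_{\Prism}$ by canonically equipping every prism $(A,I)\in \fX_{\Prism}$ with a log structure $\cM_{A}$ making $(A,I,\cM_{A})$ a strict log prism over $(\fX,\cM_{\fX})$. Composing the structure map $\Spf(A/I)\to \fX$ with the chart $M\to \cM_{\fX}$ and with $\cM_{\fX}\to \cO_{\fX}$ produces a monoid map $\bar\alpha\colon M\to A/I$. When $M$ is uniquely $p$-divisible, I will show that $\bar\alpha$ admits a unique lift to a pre-log map $\alpha\colon M\to A$ that is Frobenius compatible, and then define $\cM_{A}$ as the log structure associated to $\alpha$.

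The candidate lift is given by the Teichm\"uller-style formula
\[
\alpha(m)\;\coloneqq\; \lim_{n\to \infty}\, \tilde b_{m,n}^{\,p^{n}},
\]
where $\tilde b_{m,n}\in A$ is any lift of $\bar\alpha(m^{1/p^{n}})\in A/I$. The limit is taken in the $(p,I)$-adic topology on $A$, with respect to which $A$ is complete since $(A,I)$ is bounded. Both convergence (comparing $\tilde b_{m,n}^{p^{n}}$ with $(\tilde b_{m,n+1}^{p})^{p^{n}}$ via $\tilde b_{m,n+1}^{p}\equiv \tilde b_{m,n}\pmod{I}$) and independence of the choice of lifts reduce to the elementary estimate
\[
u\equiv v \pmod{I} \;\Longrightarrow\; u^{p^{n}}\equiv v^{p^{n}} \pmod{(p,I)^{n+1}},
\]
proved by induction on $n$ via the binomial expansion of $(V+\epsilon)^{p}-V^{p}$: each cross-term $\binom{p}{k}V^{p-k}\epsilon^{k}$ with $1\leq k\leq p-1$ carries a factor of $p$, raising the $(p,I)$-depth, while the $k=p$ term $\epsilon^{p}$ sits in $(p,I)^{p(n+1)}\subset (p,I)^{n+2}$ for $p\geq 2$.

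Multiplicativity of $\alpha$ is immediate from the formula by taking $\tilde b_{mm',n}=\tilde b_{m,n}\tilde b_{m',n}$, and Frobenius compatibility $\phi_{A}\circ\alpha = \alpha\circ(\cdot)^{p}$ follows from $\phi_{A}(a)\equiv a^{p}\pmod{p}$ together with a generalized form of the same estimate applied to $U-V\in (p)$. Uniqueness of the monoid lift $\alpha$ is then automatic: any such $\alpha'$ satisfies $\alpha'(m)=\alpha'(m^{1/p^{n}})^{p^{n}}$, so substituting $\tilde b_{m,n}=\alpha'(m^{1/p^{n}})$ in the formula yields $\alpha'=\alpha$. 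Hence $\cM_{A}$ is canonical and functorial in $(A,I)$, giving the required quasi-inverse to $F$ on the level of sites. The ``in particular'' statement then follows immediately, since $F$ preserves coverings and is compatible with the structure sheaves $\cO_{\Prism}$ and $\widetilde{\cO}_{\Prism}$, the ideal $\cI_{\Prism}$, and the Frobenius $\phi$, so it induces bi-exact equivalences on each listed flavor of vector bundle category. The main technical obstacle is the binomial convergence estimate above, which is routine but must be carried out precisely.
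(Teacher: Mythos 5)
The paper itself gives no proof of this lemma: it is imported verbatim from \cite[Lemma 5.17]{ino25}. Your core mechanism --- uniquely lifting the chart $\bar\alpha\colon M\to A/I$ to a multiplicative map $\alpha\colon M\to A$ by taking $(p,I)$-adic limits of $p^{n}$-th powers of lifts of $\bar\alpha(m^{1/p^{n}})$, using unique $p$-divisibility of $M$, classical $(p,I)$-completeness of a bounded prism, and the estimate $u\equiv v \ (\mathrm{mod}\ I)\Rightarrow u^{p^{n}}\equiv v^{p^{n}} \ (\mathrm{mod}\ (p,I)^{n+1})$ --- is indeed the expected engine of that proof (it is essentially \cite[Lemma A.3]{ino25}, which the present paper invokes for exactly this kind of unique lifting, e.g.\ in Lemma \ref{pro-kfl pullback of kfl pris crys is well-def}); your convergence argument and the uniqueness of the multiplicative lift are correct.

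However, there is a genuine gap: an object of $(\fX,\cM_{\fX})^{\mathrm{str}}_{\Prism}$ is not just a prism with a log structure, it carries a $\delta_{\mathrm{log}}$-structure, and morphisms must respect it; your proposal never produces or compares this datum. Concretely, two steps are missing. First, to endow $(A, M\xrightarrow{\alpha} A)$ with the trivial $\delta_{\mathrm{log}}$-structure one needs $\delta(\alpha(m))=0$, whereas your Frobenius compatibility $\phi_{A}(\alpha(m))=\alpha(m)^{p}$ only gives $p\,\delta(\alpha(m))=0$, which is weaker when $A$ has $p$-torsion; this is repairable (from the exact identity $\alpha(m)=\alpha(m^{1/p^{n}})^{p^{n}}$ and the $\delta$-ring identity $\delta(x^{p})\in p\,\delta(x)A$ one gets $\delta(\alpha(m))\in p^{n}A$ for all $n$, hence $\delta(\alpha(m))=0$ by $(p,I)$-adic separatedness), but it must be carried out. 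Second, and more seriously, to obtain an equivalence rather than merely a section of the forgetful functor, you must show that an \emph{arbitrary} strict log prism $(A,I,\cM_{A},\delta_{\mathrm{log}})$ over $(\fX,\cM_{\fX})$ is canonically isomorphic to the one you construct; this requires lifting the chart into $\cM_{A}$ itself (not only into the multiplicative monoid of $A$), using unique $p$-divisibility against the kernel $1+I\cO$ of $\cM_{A}\to\cM_{A/I}$, and proving uniqueness of the $\delta_{\mathrm{log}}$-structure. The relations available on chart values (multiplicativity of $1+p\,\delta_{\mathrm{log}}$ and $\delta(\alpha(m))=\alpha(m)^{p}\delta_{\mathrm{log}}(m)$) only yield $p\,\delta_{\mathrm{log}}(m)=0$ and $\alpha(m)^{p}\delta_{\mathrm{log}}(m)=0$, so this uniqueness is not automatic in the presence of $p$-torsion; it is precisely the kind of point the paper elsewhere delegates to \cite[Lemma 5.16]{ino25}. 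Without these steps, full faithfulness (equivalently, that your functor is a two-sided quasi-inverse) is not established, even though the essential image construction is sound.
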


\subsection{Kfl prismatic \texorpdfstring{$F$}--crystals}

The category of log prismatic crystals does not satisfy kfl descent. In this subsection, we construct a category which includes the category of log prismatic $F$-crystals as a full subcategory and satisfies the descent property for special kfl covers (Proposition \ref{kqsyn descent for kfl log pris crys}).

\begin{dfn}[Kfl prismatic crystals]\label{def of kfl pris crys}
    Let $(\fX,\cM_{\fX})$ be a bounded $p$-adic fs log formal scheme. We define an exact category $\mathrm{Vect}_{\mathrm{kfl}}((\fX,\cM_{\fX})_{\Prism})$ by
    \[
    \mathrm{Vect}_{\mathrm{kfl}}((\fX,\cM_{\fX})_{\Prism})\coloneqq \varprojlim_{(A,I,\cM_{A})\in (\fX,\cM_{\fX})^{\mathrm{str}}_{\Prism}} \mathrm{Vect}_{\mathrm{kfl}}((\Spf(A),\cM_{A})).
    \]
    An object of $\mathrm{Vect}_{\mathrm{kfl}}((\fX,\cM_{\fX})_{\Prism})$ are called a \emph{kfl prismatic crystal} on $(\fX,\cM_{\fX})$.

    Similarly, the exact category $\mathrm{Vect}_{\mathrm{kfl}} ((\fX,\cM_{\fX})_{\Prism},\widetilde{\cO}_{\Prism})$ is defined by
    \[
    \mathrm{Vect}_{\mathrm{kfl}}((\fX,\cM_{\fX})_{\Prism},\widetilde{\cO}_{\Prism})\coloneqq \varprojlim_{(A,I,\cM_{A})\in (\fX,\cM_{\fX})^{\mathrm{str}}_{\Prism}} \mathrm{Vect}_{\mathrm{kfl}}(\mathrm{Spec} (A/(p,I)),\cM_{A/(p,I)}).
    \]
    Here, $\cM_{A/(p,I)}$ is the inverse image log structure of $\cM_{A}$.
\end{dfn}

\begin{dfn}[Kfl prismatic \texorpdfstring{$F$}--crystals]

 Let $(\fX,\cM_{\fX})$ be a bounded $p$-adic fs log formal scheme. For a log prism $(A,I,\cM_{A})\in (\fX,\cM_{\fX})_{\Prism}^{\mathrm{str}}$ and integers $a\leq b$, we let
$\mathrm{Vect}^{\varphi}_{\mathrm{kfl},[a,b]}(A,I,\cM_{A})$ denote the exact category of triples consisting of a kfl vector bundle $\cE$ on $(\mathrm{Spf}(A),\cM_{A})$ and maps of kfl vector bundles $I^{b}\otimes_{A} \cE\to \phi_{A}^{*}\cE$ and  $\phi^{*}_{A}\cE\to I^{a}\otimes_{A} \cE$ such that the composition map $I^{b}\otimes_{A} \cE\to I^{a}\otimes_{A} \cE$ coincides with the multiplication by $I^{b-a}$. We define the exact category $\mathrm{Vect}^{\varphi}_{\mathrm{kfl},[a,b]}((\fX,\cM_{\fX})_{\Prism})$ by
    \[
    \mathrm{Vect}^{\varphi}_{\mathrm{kfl},[a,b]}((\fX,\cM_{\fX})_{\Prism})\coloneqq \varprojlim_{(A,I,\cM_{A})\in (\fX,\cM_{\fX})_{\Prism}^{\mathrm{str}}} \mathrm{Vect}^{\varphi}_{\mathrm{kfl},[a,b]}(A,I,\cM_{A}).
    \]
Especially, we let $\mathrm{DM}_{\mathrm{kfl}}((\fX,\cM_{\fX})_{\Prism})$ denote the category $\mathrm{Vect}^{\varphi}_{\mathrm{kfl},[0,1]}((\fX,\cM_{\fX})_{\Prism})$, and an object of the category $\mathrm{DM}_{\mathrm{kfl}}((\fX,\cM_{\fX})_{\Prism})$ is called a \emph{kfl prismatic Dieudonn\'{e} crystal} on $(\fX,\cM_{\fX})$. 

For a log prism $(A,I,\cM_{A})\in (\fX,\cM_{\fX})_{\Prism}^{\mathrm{str}}$, we let $\mathrm{Vect}_{\mathrm{kfl}}^{\varphi}(A,I,\cM_{A})$ denote the exact category $\displaystyle \varprojlim_{a\leq b} \mathrm{Vect}_{\mathrm{kfl},[a,b]}^{\varphi}(A,I,\cM_{A})$. We define the exact category $\mathrm{Vect}^{\varphi}_{\mathrm{kfl}}((\fX,\cM_{\fX})_{\Prism})$ by
    \[
    \mathrm{Vect}^{\varphi}_{\mathrm{kfl}}((\fX,\cM_{\fX})_{\Prism})\coloneqq \varprojlim_{(A,I,\cM_{A})\in (\fX,\cM_{\fX})_{\Prism}^{\mathrm{str}}} \mathrm{Vect}^{\varphi}_{\mathrm{kfl}}(A,I,\cM_{A}).
    \]
    An object of $\mathrm{Vect}^{\varphi}_{\mathrm{kfl}}((\fX,\cM_{\fX})_{\Prism})$ is called a \emph{kfl prismatic $F$-crystal} on $(\fX,\cM_{\fX})$.
\end{dfn}

\begin{rem}\label{rem on def of kfl pris F-crys}
    In general, a pair $(\cE,\varphi_{\cE})$ does not uniquely determine an object of $\mathrm{Vect}^{\varphi}_{\mathrm{kfl},[a,b]}(A,I,\cM_{A})$ because the natural map $I^{b}\otimes_{A} \cE\to I^{a}\otimes_{A} \cE$ is not necessarily a monomorphism in the category of kfl vector bundles. This problem is due to the fact that an underlying ring map of a Kummer log flat morphism is not necessarily flat. Hence, for example, when $(\mathrm{Spf}(A),\cM_{A})$ admits a free chart \'{e}tale locally, the natural map $I^{b}\otimes_{A} \cE\to I^{a}\otimes_{A} \cE$ is a monomorphism and the map $I^{b}\otimes_{A} \cE\to \phi_{A}^{*}\cE$ is uniquely determined by the pair $(\cE,\varphi_{\cE})$. However, we do not use it in this paper.
\end{rem}

\begin{rem}\label{no notion of kfl anal pris crys}
    We cannot define a kfl version of analytic log prismatic $F$-crystal in the same way because, for a log prism $(A,I,\cM_{A})$, we have no canonical log structure on $U(A,I)$.
\end{rem}

\begin{lem}\label{fully faithfulness for kfl pris crys}
    Let $(\fX,\cM_{\fX})$ be a bounded $p$-adic fs log formal scheme. Then the following functors are fully faithful and give bi-exact equivalences to the essential images:
    \begin{align*}
    \mathrm{Vect}((\fX,\cM_{\fX})_{\Prism})&\to \mathrm{Vect}_{\mathrm{kfl}}((\fX,\cM_{\fX})_{\Prism}), \\
    \mathrm{Vect}((\fX,\cM_{\fX})_{\Prism},\widetilde{\cO}_{\Prism})&\to \mathrm{Vect}_{\mathrm{kfl}}((\fX,\cM_{\fX})_{\Prism},\widetilde{\cO}_{\Prism}), \\
    \mathrm{Vect}^{\varphi}_{[a,b]}((\fX,\cM_{\fX})_{\Prism})&\to \mathrm{Vect}^{\varphi}_{\mathrm{kfl},[a,b]}((\fX,\cM_{\fX})_{\Prism}), \\
    \mathrm{Vect}^{\varphi}((\fX,\cM_{\fX})_{\Prism})&\to \mathrm{Vect}^{\varphi}_{\mathrm{kfl}}((\fX,\cM_{\fX})_{\Prism}).
    \end{align*}
\end{lem}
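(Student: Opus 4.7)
The plan is to reduce each of the four functors to a componentwise statement over individual log prisms and then invoke the non-prismatic results from Section 2.3. By Lemma \ref{category of crystals is unchanged} all four source categories may be computed as limits indexed by $(\fX,\cM_{\fX})^{\mathrm{str}}_{\Prism}$, matching the indexing convention used on the right-hand sides in Definition \ref{def of kfl pris crys}. Hence the four functors are limits, over $(A,I,\cM_{A}) \in (\fX,\cM_{\fX})^{\mathrm{str}}_{\Prism}$, of the componentwise functors $\mathrm{Vect}(A)\to \mathrm{Vect}_{\mathrm{kfl}}((\mathrm{Spf}(A),\cM_{A}))$ together with its $\widetilde{\cO}_{\Prism}$ and Frobenius analogs. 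A limit of fully faithful functors is fully faithful, and exactness in the target limit categories is tested componentwise, so both assertions can be verified at each log prism.

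For the two non-Frobenius functors, the componentwise map is exactly $\iota$ from Definition \ref{def of iota}, which is fully faithful and bi-exact onto its essential image by Lemma \ref{fully faithfulness for kfl vect bdle}. The essential image is closed under extensions by Proposition \ref{ext of classical vector bundle}, so the restricted functor is genuinely bi-exact after passing to the limit. For the Frobenius versions, unfold at a single log prism $(A,I,\cM_{A})$. Starting from $(\cE,\varphi_{\cE})\in \mathrm{Vect}^{\varphi}_{[a,b]}$, the invertibility of $I$ together with the bounds $I^{b}\cE\subset \varphi_{\cE}(\phi^{*}\cE)\subset I^{a}\cE$ produces two $A$-linear maps $m_{1}\colon I^{b}\otimes_{A}\cE\to \phi^{*}\cE$ and $m_{2}\colon \phi^{*}\cE\to I^{a}\otimes_{A}\cE$, defined as the restrictions of $\varphi_{\cE}^{-1}$ and $\varphi_{\cE}$ respectively, and a direct computation shows $m_{2}\circ m_{1}$ is the inclusion $I^{b}\otimes\cE\hookrightarrow I^{a}\otimes\cE$, i.e.\ multiplication by $I^{b-a}$. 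Applying $\iota$ termwise yields an object of $\mathrm{Vect}^{\varphi}_{\mathrm{kfl},[a,b]}(A,I,\cM_{A})$, and full faithfulness follows because any morphism between two such images reduces, via Lemma \ref{fully faithfulness for kfl vect bdle}, to an honest $A$-module map whose compatibility with $m_{1}$ and $m_{2}$ is equivalent, after inverting $I$, to compatibility with $\varphi_{\cE}$. Conversely, if $(\cE,m_{1},m_{2})$ lies in the essential image, so that $\cE$, $\phi^{*}\cE$ and the tensor products $I^{c}\otimes\cE$ are all classical, then $m_{1}$ and $m_{2}$ descend to actual $A$-module maps; after inverting $I$ the composition condition combined with the equality of ranks of $\cE$ and $\phi^{*}\cE$ forces $m_{1}[1/I]$ and $m_{2}[1/I]$ to be mutually inverse, whence $\varphi_{\cE}\coloneqq m_{2}[1/I]$ recovers a log prismatic $F$-crystal of slope range $[a,b]$. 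The unbounded variant follows by a further limit over pairs $a\le b$.

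The main subtlety, flagged in Remark \ref{rem on def of kfl pris F-crys}, is that for an abstract kfl $F$-crystal the structure pair $(m_{1},m_{2})$ a priori carries strictly more information than an isomorphism $\varphi_{\cE}$ after inverting $\cI$, because the natural map $I^{b}\otimes\cE\to I^{a}\otimes\cE$ need not be monic in the category of kfl vector bundles. This obstruction, however, evaporates exactly on the essential image, where $\cE$ is classical and $I$ acts injectively on $\cE$; keeping track of this ambiguity while identifying the essential image is the only step in the Frobenius part of the argument that requires a little care beyond a formal application of Lemma \ref{fully faithfulness for kfl vect bdle} and Proposition \ref{ext of classical vector bundle}.
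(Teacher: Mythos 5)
Your proposal is correct and follows essentially the same route as the paper: pass to the strict site via Lemma \ref{category of crystals is unchanged}, compute all categories as limits over log prisms as in Remark \ref{pris crys as lim}, apply $\iota$ from Definition \ref{def of iota} componentwise, and deduce full faithfulness and bi-exactness from Lemma \ref{fully faithfulness for kfl vect bdle}; your explicit unwinding of the Frobenius data $(m_{1},m_{2})$ just makes precise what the paper leaves implicit. The appeal to Proposition \ref{ext of classical vector bundle} is unnecessary for the bi-exactness claim, but harmless.
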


\begin{proof}
    Lemma \ref{category of crystals is unchanged} allows us to replace $(\fX,\cM_{\fX})_{\Prism}$ in the source categories with the strict site $(\fX,\cM_{\fX})^{\mathrm{str}}_{\Prism}$. Then, considering Remark \ref{pris crys as lim}, the functor $\iota$ in Definition \ref{def of iota}  induces the desired functors. The remaining assertions follow from Lemma \ref{fully faithfulness for kfl vect bdle}.
\end{proof}

To formulate ``Kummer quasi-syntomic descent for kfl prismatic crystals'' (Proposition \ref{kqsyn descent for kfl log pris crys}), we need to generalize the functoriality of kfl prismatic crystals to non-fs bases. Let $f\colon (\fY,\cM_{\fY})\to (\fX,\cM_{\fX})$ be a morphism from a bounded $p$-adic (not necessarily fs) saturated log formal scheme $(\fY,\cM_{\fY})$ to a bounded $p$-adic fs log formal scheme.

\begin{dfn}
    For a kfl prismatic crystal $\cE$ on $(\fX,\cM_{\fX})$, we say that $\cE$ is \emph{$f^{*}$-admissible} if, for any $(A,I,\cM_{A})\in (\fY,\cM_{\fY})_{\Prism}^{\mathrm{str}}$, the kfl vector bundle $\cE_{(A,I,\cN_{A})}$ is $\pi^{*}$-admissible, where the log prism $(A,I,\cN_{A})\in (\fX,\cM_{\fX})_{\Prism}^{\mathrm{str}}$ is the image of $(A,I,\cM_{A})$ via the functor $(\fX,\cM_{\fX})_{\Prism}\to (\fX,\cM_{\fX})_{\Prism}^{\mathrm{str}}$ in Lemma \ref{category of crystals is unchanged} and $\pi:(A,I,\cN_{A})\to (A,I,\cM_{A})$ is the natural map. Let $\mathrm{Vect}^{f^{*}\text{-}\mathrm{adm}}_{\mathrm{kfl}}((\fX,\cM_{\fX})_{\Prism})$ be the full subcategory of $\mathrm{Vect}_{\mathrm{kfl}}((\fX,\cM_{\fX})_{\Prism})$ consisting of $f^{*}$-admissible objects. We define an exact functor
    \[ 
    f^{*}\colon \mathrm{Vect}^{f^{*}\text{-}\mathrm{adm}}_{\mathrm{kfl}}((\fX,\cM_{\fX})_{\Prism})\to \mathrm{Vect}((\fY,\cM_{\fY})_{\Prism}^{\mathrm{str}})
    \] 
    as follows. Let $\cE\in \mathrm{Vect}^{f^{*}\text{-}\mathrm{adm}}_{\mathrm{kfl}}((\fX,\cM_{\fX})_{\Prism})$. We define $(f^{*}\cE)_{(A,I,\cM_{A})}\in \mathrm{Vect}(A)$ by
    \[ (f^{*}\cE)_{(A,I,\cM_{A})}\coloneqq\pi^{*}\cE_{(A,I,\cN_{A})}\in \mathrm{Vect}(A)
    \]
    for $(A,I,\cM_{A})\in (\fY,\cM_{\fY})_{\Prism}^{\mathrm{str}}$, and a map $(A,I,\cM_{A})\to (B,J,\cM_{B})$ in $(\fY,\cM_{\fY})_{\Prism}^{\mathrm{str}}$ induces an isomorphism 
    \[
    (f^{*}\cE)_{(A,I,\cM_{A})}\otimes_{A} B\isom (f^{*}\cE)_{(B,J,\cM_{B})}
    \]
    by Lemma \ref{functoriality of pro-kfl pullback of kfl vect bdles}. As a result, we obtain $f^{*}\cE\in \mathrm{Vect}((\fY,\cM_{\fY})_{\Prism}^{\mathrm{str}})$.

    In the same way, we can define the $f^{*}$-admissibility of an object of $\mathrm{Vect}_{\mathrm{kfl}}((\fX,\cM_{\fX})_{\Prism},\widetilde{\cO}_{\Prism})$, the full subcategory $\mathrm{Vect}^{f^{*}\text{-}\mathrm{adm}}_{\mathrm{kfl}}((\fX,\cM_{\fX})_{\Prism},\widetilde{\cO}_{\Prism})$ of $\mathrm{Vect}_{\mathrm{kfl}}((\fX,\cM_{\fX})_{\Prism},\widetilde{\cO}_{\Prism})$, and an exact functor
    \[
    f^{*}\colon \mathrm{Vect}^{f^{*}\text{-}\mathrm{adm}}_{\mathrm{kfl}}((\fX,\cM_{\fX})_{\Prism},\widetilde{\cO}_{\Prism})\to \mathrm{Vect}_{\mathrm{kfl}}((\fY,\cM_{\fY})_{\Prism}^{\mathrm{str}},\widetilde{\cO}_{\Prism}).
    \]

    Let $\mathrm{Vect}^{\varphi,f^{*}\text{-}\mathrm{adm}}_{\mathrm{kfl},[a,b]}((\fX,\cM_{\fX})_{\Prism})$ denote the full subcategory of $\mathrm{Vect}^{\varphi}_{\mathrm{kfl},[a,b]}((\fX,\cM_{\fX})_{\Prism})$ consisting of an object whose underlying kfl prismatic crystal is $f^{*}$-admissible. Let $(A,I,\cM_{A})\in (\fY,\cM_{\fY})_{\Prism}^{\mathrm{str}}$ and  $\mathrm{Vect}^{\varphi,\pi^{*}\text{-}\mathrm{adm}}_{\mathrm{kfl},[a,b]}(A,I,\cN_{A})$ be the full subcategory of $\mathrm{Vect}^{\varphi}_{\mathrm{kfl},[a,b]}(A,I,\cN_{A})$ consisting of an object whose underlying kfl vector bundle on $(\mathrm{Spf}(A),\cN_{A})$ is $\pi^{*}$-admissible. For an object
    \[
    (\cE,I^{b}\otimes_{A} \cE\to \phi_{A}^{*}\cE\to I^{a}\otimes_{A} \cE)
    \]
    of $\mathrm{Vect}^{\varphi,\pi^{*}\text{-}\mathrm{adm}}_{\mathrm{kfl},[a,b]}(A,I,\cN_{A})$, the kfl vector bundle $\phi_{A}^{*}\cE$ is also $\pi^{*}$-admissible and we have an isomorphism $\pi^{*}\phi_{A}^{*}\cE\cong \phi_{A}^{*}\pi^{*}\cE$ by Lemma \ref{functoriality of pro-kfl pullback of kfl vect bdles}. Hence, $\pi^{*}$ gives a functor
    \[ 
    \pi^{*}\colon \mathrm{Vect}^{\varphi,\pi^{*}\text{-}\mathrm{adm}}_{\mathrm{kfl},[a,b]}(A,I,\cN_{A})\to \mathrm{Vect}^{\varphi}_{[a,b]}(A,I).
    \]
    Taking a limit with respect to $(A,I,\cM_{A})\in (\fY,\cM_{\fY})_{\Prism}^{\mathrm{str}}$, we obtain an exact functor
    \[
    f^{*}\colon \mathrm{Vect}^{\varphi,f^{*}\text{-}\mathrm{adm}}_{\mathrm{kfl},[a,b]}((\fX,\cM_{\fX})_{\Prism})\to \mathrm{Vect}^{\varphi}_{[a,b]}((\fY,\cM_{\fY})^{\mathrm{str}}_{\Prism}).
    \]

    In the same way, we can define the full subcategory $\mathrm{Vect}^{\varphi,f^{*}\text{-}\mathrm{adm}}_{\mathrm{kfl}}((\fX,\cM_{\fX})_{\Prism})$ of the category $\mathrm{Vect}^{\varphi}_{\mathrm{kfl}}((\fX,\cM_{\fX})_{\Prism})$ and an exact functor
    \[
    f^{*}\colon \mathrm{Vect}^{\varphi,f^{*}\text{-}\mathrm{adm}}_{\mathrm{kfl}}((\fX,\cM_{\fX})_{\Prism})\to \mathrm{Vect}^{\varphi}((\fY,\cM_{\fY})^{\mathrm{str}}_{\Prism}).
    \]
\end{dfn}

\begin{lem}\label{functoriality of pro-kfl pullback of kfl pris crys}
    \begin{enumerate}
        \item Let $(\fZ,\cM_{\fZ})$ be a bounded $p$-adic saturated log formal scheme and $g:(\fZ,\cM_{\fZ})\to (\fY,\cM_{\fY})$ be a morphism. Let $\cE\in \mathrm{Vect}_{\mathrm{kfl}}((\fX,\cM_{\fX})_{\Prism})$. If $\cE$ is $f^{*}$-admissible, then $\cE$ is also $(fg)^{*}$-admissible and $(fg)^{*}\cE\cong g^{*}f^{*}\cE$.
        \item Let $(\fW,\cM_{\fW})$ be a bounded $p$-adic fs log formal scheme and $h:(\fX,\cM_{\fX})\to (\fW,\cM_{\fW})$ be a morphism. Let $\cE\in \mathrm{Vect}_{\mathrm{kfl}}((\fW,\cM_{\fW})_{\Prism})$. Then $\cE$ is $(hf)^{*}$-admissible if and only if $h^{*}\cE$ is $f^{*}$-admissible. Moreover, in this case, we have $(hf)^{*}\cE\cong f^{*}(h^{*}\cE)$.
    \end{enumerate}
\end{lem}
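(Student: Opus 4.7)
The strategy is to unwind the definition of kfl pullback prismatic-crystal-wise and reduce both statements directly to Lemma \ref{functoriality of pro-kfl pullback of kfl vect bdles}, which provides the analogous functoriality for pro-kfl pullbacks of kfl vector bundles. Throughout, for a composition $(\fT, \cM_{\fT}) \to (\fS, \cM_{\fS}) \to (\fR, \cM_{\fR})$ with $(\fR, \cM_{\fR})$ and $(\fS, \cM_{\fS})$ fs, and a log prism $(A, I, \cM_A)$ strict over $(\fT, \cM_{\fT})$, write $\cN_A^{S}$ and $\cN_A^{R}$ for the pullback log structures from $\cM_{\fS}$ and $\cM_{\fR}$ along the canonical maps $\mathrm{Spf}(A/I) \to \fS$ and $\mathrm{Spf}(A/I) \to \fR$; these yield a factorization $(A, I, \cN_A^{R}) \to (A, I, \cN_A^{S}) \to (A, I, \cM_A)$ in the log prismatic site which on underlying log schemes becomes a composition $\tilde\pi = \pi^{S}_{T} \circ \pi^{R}_{S}$ of structural maps on $\mathrm{Spf}(A)$.

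For part (1), given $g\colon (\fZ, \cM_{\fZ}) \to (\fY, \cM_{\fY})$ and $(B, J, \cM_B) \in (\fZ, \cM_{\fZ})_{\Prism}^{\mathrm{str}}$, form the pullback log structures $\cN_B^Y$ and $\cN_B^X$ associated to $g$ and $fg$. Since $(B, J, \cN_B^Y) \in (\fY, \cM_{\fY})_{\Prism}^{\mathrm{str}}$, the $f^*$-admissibility of $\cE$ says that $\cE_{(B,J,\cN_B^X)}$ is $(\pi^X_Y)^*$-admissible. Applying Lemma \ref{functoriality of pro-kfl pullback of kfl vect bdles} (1) to the composition $\pi^Y_Z \circ \pi^X_Y$ then yields both $(fg)^*$-admissibility and the identification $\tilde\pi^*\cE_{(B,J,\cN_B^X)} \cong (\pi^Y_Z)^*(\pi^X_Y)^*\cE_{(B,J,\cN_B^X)}$. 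Reading the right-hand side through the definitions of $f^*$ and $g^*$ (and using Lemma \ref{category of crystals is unchanged} to view $f^*\cE$ as a log prismatic crystal on $(\fY,\cM_{\fY})$) gives $(g^*f^*\cE)_{(B,J,\cM_B)}$, while the left-hand side is $((fg)^*\cE)_{(B,J,\cM_B)}$. Checking that these identifications are compatible with morphisms of log prisms in $(\fZ, \cM_{\fZ})_{\Prism}^{\mathrm{str}}$ is automatic from the functoriality clause of Lemma \ref{functoriality of pro-kfl pullback of kfl vect bdles}.

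For part (2), given $h\colon (\fX, \cM_{\fX}) \to (\fW, \cM_{\fW})$ and $(A, I, \cM_A) \in (\fY, \cM_{\fY})_{\Prism}^{\mathrm{str}}$, form the strictifications $(A, I, \cN_A^X)$ over $(\fX, \cM_{\fX})$ and $(A, I, \cN_A^W)$ over $(\fW, \cM_{\fW})$, and the induced factorization $\pi^X_Y \circ \pi^W_X$ of the structural map. By the definition of $h^*$, we have $(h^*\cE)_{(A,I,\cN_A^X)} = (\pi^W_X)^*\cE_{(A,I,\cN_A^W)}$. Now Lemma \ref{functoriality of pro-kfl pullback of kfl vect bdles} (2) applied to $\cE_{(A,I,\cN_A^W)}$ with the factorization above says that $(hf)^*$-admissibility of $\cE$ at this log prism is equivalent to $f^*$-admissibility of $h^*\cE$ at the corresponding log prism, and that when they hold the pullbacks are canonically identified. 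Again functoriality in $(A,I,\cM_A)$ is automatic, yielding the natural isomorphism $(hf)^*\cE \cong f^*(h^*\cE)$.

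No step is genuinely difficult; the only care needed is to track which log structure lives over which base so that the adjoint/strictification functor of Lemma \ref{category of crystals is unchanged} is invoked correctly when passing between $(\fR, \cM_{\fR})_{\Prism}$ and $(\fR, \cM_{\fR})_{\Prism}^{\mathrm{str}}$.
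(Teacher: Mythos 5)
Your plan is correct and matches the paper's proof, which simply observes that both parts follow by evaluating on strict log prisms and invoking Lemma \ref{functoriality of pro-kfl pullback of kfl vect bdles}; your write-up just makes the prism-wise bookkeeping of the strictification log structures explicit. No gap.
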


\begin{proof}
    This follows from Lemma \ref{functoriality of pro-kfl pullback of kfl vect bdles}. 
\end{proof}

\begin{lem}\label{pro-kfl pullback of kfl pris crys is well-def}
    Suppose that there exist a torsion-free fs monoid $P$, a chart $\alpha:P\to \cM_{\fX}$, and a chart $P_{\bQ_{\geq 0}}\to \cM_{\fY}$ such that the following diagram is commutative:
    \[
    \begin{tikzcd}
        P \ar[r] \ar[d] & f^{-1}\cM_{\fX} \ar[d] \\
        P_{\bQ_{\geq 0}} \ar[r] & \cM_{\fY}.
    \end{tikzcd}
    \]
    Here, $P_{\bQ_{\geq 0}}$ is the colimit of $P^{1/n}$ for integers $n\geq 1$. Then every kfl prismatic crystal $\cE$ on $(\fX,\cM_{\fX})$ is $f^{*}$-admissible.
\end{lem}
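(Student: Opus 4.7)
The plan is to reduce the statement, log prism by log prism, to the already-proven analogue for kfl vector bundles, namely Lemma \ref{pro-kfl pullback of kfl vect bdles is well-def}. Unwinding the definition of $f^{*}$-admissibility, I need to show that for every $(A,I,\cM_{A})\in(\fY,\cM_{\fY})^{\mathrm{str}}_{\Prism}$ with image $(A,I,\cN_{A})\in(\fX,\cM_{\fX})^{\mathrm{str}}_{\Prism}$, the kfl vector bundle $\cE_{(A,I,\cN_{A})}$ on $(\Spf(A),\cN_{A})$ is $\pi^{*}$-admissible for the natural adic morphism $\pi\colon(\Spf(A),\cM_{A})\to(\Spf(A),\cN_{A})$.

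The strategy is to transport the two charts along the strict structure morphisms. Since the structure map $(\Spf(A),\cN_{A})\to(\fX,\cM_{\fX})$ is strict by the definition of the strict prismatic site, the given fs chart $\alpha\colon P\to\cM_{\fX}$ pulls back to an fs chart $P\to\cN_{A}$. Likewise, because the structure map $(\Spf(A),\cM_{A})\to(\fY,\cM_{\fY})$ is strict, the chart $P_{\bQ_{\geq 0}}\to\cM_{\fY}$ pulls back to a chart $P_{\bQ_{\geq 0}}\to\cM_{A}$. The commutative square assumed in the statement pulls back to the commutative square
\[
\begin{tikzcd}
P \ar[r] \ar[d] & \pi^{-1}\cN_{A} \ar[d] \\
P_{\bQ_{\geq 0}} \ar[r] & \cM_{A}
\end{tikzcd}
\]
over $\Spf(A)$, which is exactly the hypothesis of Lemma \ref{pro-kfl pullback of kfl vect bdles is well-def} applied to $\pi$. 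That lemma then yields the $\pi^{*}$-admissibility of the kfl vector bundle $\cE_{(A,I,\cN_{A})}$.

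Since the verification is made separately at each $(A,I,\cM_{A})$, no global compatibility issues arise, and running the argument over the whole site $(\fY,\cM_{\fY})^{\mathrm{str}}_{\Prism}$ gives the $f^{*}$-admissibility of $\cE$. I do not anticipate any genuine obstacle: the only thing to double-check is that ``pullback of a chart along a strict morphism is a chart'', which is immediate from the definition of strictness, and that the existence of a global chart $\alpha\colon P\to\cM_{\fX}$ (built into the hypothesis) means no localization on $\fX$ is required.
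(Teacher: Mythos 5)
There is a genuine gap: you have misplaced where the structure morphisms of the log prismatic site live. For a log prism $(A,I,\cM_{A})\in(\fY,\cM_{\fY})^{\mathrm{str}}_{\Prism}$ the structure morphism is $(\mathrm{Spf}(A/I),\cM_{A/I})\to(\fY,\cM_{\fY})$, not a morphism from $(\mathrm{Spf}(A),\cM_{A})$; likewise strictness of $(A,I,\cN_{A})$ over $(\fX,\cM_{\fX})$ only says that the restriction $\cN_{A/I}$ of $\cN_{A}$ to $\mathrm{Spf}(A/I)$ is pulled back from $\cM_{\fX}$. So pulling back the charts along "strict structure maps" only produces chart data on $\mathrm{Spf}(A/I)$: a chart $P\to\cN_{A/I}$ and a chart $P_{\bQ_{\geq 0}}\to\cM_{A/I}$. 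Your commutative square over $\mathrm{Spf}(A)$, with $P\to\pi^{-1}\cN_{A}$ and $P_{\bQ_{\geq 0}}\to\cM_{A}$, does not come for free; producing it requires lifting monoid maps across the surjection $\cM_{A}\to\cM_{A/I}$, and such lifts are not automatic (this is exactly why the paper introduces the nontrivial notion of a \emph{prismatically liftable} chart elsewhere).

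The actual content of the lemma is this lifting step, which your argument omits. The paper first lifts $P_{\bQ_{\geq 0}}\to\Gamma(\mathrm{Spf}(A/I),\cM_{A/I})$ uniquely to $P_{\bQ_{\geq 0}}\to\Gamma(\mathrm{Spf}(A),\cM_{A})$ using that $P_{\bQ_{\geq 0}}$ is uniquely divisible (\cite[Lemma A.3]{ino25}); the composite $P\to P_{\bQ_{\geq 0}}\to\cM_{A}$ is then shown to factor through $\cN_{A}$ by the very definition of $\cN_{A}$ (its reduction mod $I$ is the pullback $(fg)^{*}\cM_{\fX}$, and the lifted map reduces to $\alpha$ there). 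Only after this does one have the commutative square over $\mathrm{Spf}(A)$ needed to invoke Lemma \ref{pro-kfl pullback of kfl vect bdles is well-def} for $\pi$. Your reduction target is the right one, and the "prism by prism" structure of the argument is fine, but without the lifting via unique divisibility (and the identification of where the restriction to $P$ lands) the proof does not go through; note also that for the non-divisible monoid $P$ itself a direct lift from $A/I$ to $A$ need not exist, so one really must pass through $P_{\bQ_{\geq 0}}$ as the paper does.
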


\begin{proof}
Let $(A,I,\cM_{A})\in (\fY,\cM_{\fY})^{\mathrm{str}}_{\Prism}$ and $g\colon (\mathrm{Spf}(A/I),\cM_{A/I})\to (\fY,\cM_{\fY})$ denote the structure morphism. By \cite[Lemma A.3]{ino25}, the monoid map 
\[
P_{\bQ_{\geq 0}}\to \Gamma(\fY,\cM_{\fY})\to (\mathrm{Spf}(A/I),\cM_{A/I})
\]
lifts uniquely to a monoid map $P_{\bQ_{\geq 0}}\to \Gamma(\mathrm{Spf}(A),\cM_{A})$. By the definition of $\cN_{A}$, we have the induced map $P\to \cN_{A}$ as in the following diagram:
\[
\begin{tikzcd}
    P_{\bQ_{\geq 0}} \ar[r] & \cM_{A} \ar[r] & \cM_{A/I} \\
    P \ar[r,dashed] \ar[u] \ar[rr, bend right, "\alpha"'] & \cN_{A} \ar[u] \ar[r] & (fg)^{*}\cM_{\fX} \ar[u].
\end{tikzcd}
\]
Therefore, Lemma \ref{pro-kfl pullback of kfl vect bdles is well-def} implies that every kfl vector bundle on $(\mathrm{Spf}(A),\cN_{A})$ is $\pi^{*}$-admissible, and so every kfl prismatic crystal on $(\fX,\cM_{\fX})$ is $f^{*}$-admissible.
\end{proof}

\begin{lem}\label{crys property for kfl pris crys}
    Let $\cE$ be an $f^{*}$-admissible prismatic crystal on $(\fX,\cM_{\fX})$. Consider log prisms $(A,I,\cM_{A})\in (\fX,\cM_{\fX})_{\Prism}^{\mathrm{str}}$ and  $(B,J,\cM_{B})\in (\fY,\cM_{\fY})_{\Prism}^{\mathrm{str}}$ with a map $p\colon (A,I,\cM_{A})\to (B,J,\cM_{B})$ in $(\fX,\cM_{\fX})_{\Prism}$. Then $\cE_{(A,I,\cM_{A})}$ is $p^{*}$-admissible, and we have a natural isomorphism
    \[
    p^{*}\cE_{(A,I,\cM_{A})}\isom (f^{*}\cE)_{(B,J,\cM_{B})}.
    \]
\end{lem}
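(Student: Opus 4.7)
The plan is to use the right adjoint from Lemma~\ref{category of crystals is unchanged} to factor $p$ into a morphism living in the strict site $(\fX,\cM_\fX)^{\mathrm{str}}_{\Prism}$ followed by a strictification map, so that the crystal property of $\cE$ handles the first factor while the definition of $f^{*}$ handles the second. Explicitly, regard $(B,J,\cM_B)$ as an object of $(\fX,\cM_\fX)_{\Prism}$ via the structure morphism, let $(B,J,\cN_B)\in(\fX,\cM_\fX)^{\mathrm{str}}_{\Prism}$ be its image under the right adjoint, and let $\pi\colon(B,J,\cN_B)\to(B,J,\cM_B)$ be the associated counit. Because $(A,I,\cM_A)$ is already strict over $(\fX,\cM_\fX)$, the adjunction produces a unique factorization
\[
(A,I,\cM_A)\xrightarrow{\;q\;}(B,J,\cN_B)\xrightarrow{\;\pi\;}(B,J,\cM_B)
\]
of $p$, with $q$ a morphism in $(\fX,\cM_\fX)^{\mathrm{str}}_{\Prism}$.

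Viewing $\cE$ as a compatible system of kfl vector bundles indexed by $(\fX,\cM_\fX)^{\mathrm{str}}_{\Prism}$, the transition isomorphism along $q$ yields a canonical identification $q^{*}\cE_{(A,I,\cM_A)}\cong\cE_{(B,J,\cN_B)}$ in $\mathrm{Vect}_{\mathrm{kfl}}((\mathrm{Spf}(B),\cN_B))$. Applying the $f^{*}$-admissibility hypothesis at the log prism $(B,J,\cM_B)\in(\fY,\cM_\fY)^{\mathrm{str}}_{\Prism}$, the kfl vector bundle $\cE_{(B,J,\cN_B)}$ is $\pi^{*}$-admissible and, by the very construction of $f^{*}$, satisfies $\pi^{*}\cE_{(B,J,\cN_B)}=(f^{*}\cE)_{(B,J,\cM_B)}$. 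In particular, $q^{*}\cE_{(A,I,\cM_A)}$ is $\pi^{*}$-admissible.

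Finally, I would invoke Lemma~\ref{functoriality of pro-kfl pullback of kfl vect bdles}(2) applied with $h=q$ and with the not-necessarily-fs morphism of that lemma taken to be $\pi$. Since $\pi\circ q=p$ as a morphism of fs log formal schemes, this lemma gives both that $\cE_{(A,I,\cM_A)}$ itself is $p^{*}$-admissible and the canonical isomorphism
\[
p^{*}\cE_{(A,I,\cM_A)}\cong\pi^{*}\bigl(q^{*}\cE_{(A,I,\cM_A)}\bigr)\cong\pi^{*}\cE_{(B,J,\cN_B)}=(f^{*}\cE)_{(B,J,\cM_B)}
\]
claimed in the statement. There is no real obstacle here: the substantive content is the bookkeeping required to align the crystal property of $\cE$ with the definition of $f^{*}$, which is precisely what the adjunction of Lemma~\ref{category of crystals is unchanged} supplies; once the factorization $p=\pi\circ q$ is in place, functoriality of the pro-kfl pullback handles everything.
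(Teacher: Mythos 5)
Your proof is correct and follows essentially the same route as the paper: both factor $p$ as $(A,I,\cM_{A})\to(B,J,\cN_{B})\to(B,J,\cM_{B})$, identify $q^{*}\cE_{(A,I,\cM_{A})}\cong\cE_{(B,J,\cN_{B})}$ via the crystal property, use the definition of $f^{*}$-admissibility at $(B,J,\cM_{B})$, and conclude with Lemma \ref{functoriality of pro-kfl pullback of kfl vect bdles}(2). The only difference is cosmetic: you make explicit that the factorization comes from the adjunction of Lemma \ref{category of crystals is unchanged}, which the paper leaves implicit.
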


\begin{proof}
    The log prism map $p$ factors as
    \[
    (A,I,\cM_{A})\stackrel{q}{\to} (B,J,\cN_{B})\stackrel{r}{\to} (B,J,\cM_{B}).
    \]
    Since $\cE_{(B,J,\cN_{B})}\cong q^{*}\cE_{(A,I,\cM_{A})}$ is $r^{*}$-admissible, $\cE_{(A,I,\cM_{A})}$ is $p^{*}$-admissible by Lemma \ref{functoriality of pro-kfl pullback of kfl vect bdles}, and so we have isomorphisms
    \[
    p^{*}\cE_{(A,I,\cM_{A})}\isom r^{*}q^{*}\cE_{(A,I,\cM_{A})}\isom r^{*}\cE_{(B,J,\cN_{B})}\isom (f^{*}\cE)_{(B,J,\cM_{B})}.
    \]
\end{proof}

The goal in this subsection is to prove Proposition \ref{kqsyn descent for kfl log pris crys}. To do this, we start with some technical lemmas.

\begin{lem}\label{pro-kfl cover of log prism}
    Let $(A,I,\cM_{A})$ be a log prism.
    \begin{enumerate}
        \item Suppose that we are given a chart $\widetilde{\alpha}\colon \bN^{r}\to \cM_{A}$ with $\delta_{\mathrm{log}}(\widetilde{\alpha}(m))=0$ for every $m\in \bN^{r}$. We let
        $(\mathrm{Spf}(A_{\infty,\widetilde{\alpha}}),\cM_{A_{\infty,\widetilde{\alpha}}})$ denote the fiber product 
        \[
        (\mathrm{Spf}(A),\cM_{A})\times_{(\bZ[\bN^{r}],\bN^{r})^{a}} (\bZ[\bQ_{\geq 0}^{r}],\bQ_{\geq 0}^{r})^{a}.
        \]
        Then $(A_{\infty,\widetilde{\alpha}},IA_{\infty,\widetilde{\alpha}},\cM_{A_{\infty,\widetilde{\alpha}}})$ has a unique log prism structure such that the natural map
        \[
        (A,I,\cM_{A})\to (A_{\infty,\widetilde{\alpha}},IA_{\infty,\widetilde{\alpha}},\cM_{A_{\infty,\widetilde{\alpha}}})
        \]
        is a log prism map.
        \item Suppose that, for each $1\leq i\leq n$, we are given a  charts $\widetilde{\alpha}_{i}\colon \bN^{r}\to \cM_{A}$ with $\delta_{\mathrm{log}}(\widetilde{\alpha}_{i}(m))=0$ for every $m\in \bN^{r}$ and a strict $(p,I)$-completely flat map $(\mathrm{Spf}(B_{i}),\cM_{B_{i}})\to (\mathrm{Spf}(A_{\infty,\widetilde{\alpha}_{i}}),\cM_{A_{\infty,\widetilde{\alpha}_{i}}})$. We let 
        \[
        (\mathrm{Spf}(A_{\infty,\widetilde{\alpha}_{1},\dots,\widetilde{\alpha}_{n}}),\cM_{A_{\infty,\widetilde{\alpha}_{1},\dots,\widetilde{\alpha}_{n}}}) \ (\text{resp.} \ (\mathrm{Spf}(B'),\cM_{B'}))
        \]
        denote the saturated fiber product of $(\mathrm{Spf}(A_{\infty,\widetilde{\alpha}_{i}}),\cM_{A_{\infty,\widetilde{\alpha}_{i}}})$ (resp. $(\mathrm{Spf}(B_{i}),\cM_{B_{i}})$) for $1\leq i\leq n$ over $(\mathrm{Spf}(A),\cM_{A})$. Then $(B',IB',\cM_{B'})$ has a unique log prism structure such that the natural maps
        \[ (B_{i},IB_{i},\cM_{B_{i}})\to (B',IB',\cM_{B'})
        \] 
        are log prism maps for every $1\leq i\leq n$.
    \end{enumerate}
\end{lem}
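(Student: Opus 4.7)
The plan is to reduce both parts to a single basic fact about $\delta$-rings: if $(A,\delta)$ is a $\delta$-ring and $u\in A$ satisfies $\phi(u)=u^{p}$ (equivalently $\delta(u)=0$), then for every $n\geq 1$ there is a unique extension of $\delta$ to $A[T]/(T^{n}-u)$ with $\delta(T)=0$. The hypothesis $\delta_{\mathrm{log}}(\widetilde{\alpha}(m))=0$ is precisely what encodes this $\delta$-compatibility for the images $u_{i}\coloneqq \alpha(\widetilde{\alpha}(e_{i}))$ and, transitively, for all the roots adjoined along the way.

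For (1), I realize $A_{\infty,\widetilde{\alpha}}$ as the $(p,I)$-completion of $\varinjlim_{n\geq 1} A_{n}$, where $A_{n}\coloneqq A\otimes_{\bZ[\bN^{r}]} \bZ[(\tfrac{1}{n}\bN)^{r}]$ is obtained from $A$ by adjoining $n$-th roots of the $u_{i}$. Applying the basic fact above iteratively in the $r$ coordinates produces a unique $\delta$-structure on each $A_{n}$ with $\delta(u_{i}^{1/n})=0$, compatibly under the transition maps. Passing to the filtered colimit and $(p,I)$-adically completing yields a $\delta$-structure on $A_{\infty,\widetilde{\alpha}}$. Invertibility of $IA_{\infty,\widetilde{\alpha}}$ is inherited from $(A,I)$ since the construction only modifies things over $\bZ[\bN^{r}]$, and the (bounded) prism property is preserved because each finite stage is a finite free extension of the previous one. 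The pullback log structure $\cM_{A_{\infty,\widetilde{\alpha}}}$ carries a canonical chart $\bQ_{\geq 0}^{r}\to \cM_{A_{\infty,\widetilde{\alpha}}}$ on which the induced $\delta_{\mathrm{log}}$ vanishes, yielding the desired log prism structure. Uniqueness is forced by the fact that any log prism structure makes $\phi$ act by multiplication by $p$ on this chart, which pins down $\delta$ on each adjoined root.

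For (2), I first upgrade each $(\mathrm{Spf}(B_{i}),\cM_{B_{i}})$ to a log prism over $(A_{\infty,\widetilde{\alpha}_{i}},IA_{\infty,\widetilde{\alpha}_{i}},\cM_{A_{\infty,\widetilde{\alpha}_{i}}})$: strictness means $\cM_{B_{i}}$ is the pulled back log structure, and the $(p,I)$-completely flat extension of a bounded prism is again a bounded prism with a unique compatible $\delta$-structure by the standard rigidity result in non-log prism theory. Next, I form $B''\coloneqq B_{1}\widehat{\otimes}_{A}\cdots \widehat{\otimes}_{A} B_{n}$ with its pullback log structure; iterated $(p,I)$-completely flat base change endows $B''$ with a compatible $\delta$-structure and prism condition. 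Finally, the saturated fiber product $B'$ differs from $B''$, Zariski-locally, by adjoining $n$-th roots of certain monomials in the images of the chart elements coming from the various $\widetilde{\alpha}_{i}$. Because each such chart element has vanishing $\delta_{\mathrm{log}}$, the argument of (1) applies once more to extend $\delta$ uniquely to $B'$, producing the log prism structure.

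The main obstacle is extending the $\delta$-structure when $n$ is divisible by $p$: adjoining a $p$-th root of $u$ is neither \'{e}tale nor $(p,I)$-completely flat, so existence and uniqueness of a $\delta$-extension is not automatic. The hypothesis $\delta_{\mathrm{log}}(\widetilde{\alpha}(m))=0$ is precisely what resolves this, by making the rule ``$\phi$ acts as the $p$-th power on all adjoined roots'' consistent and forcing uniqueness. A secondary technical point is verifying that the $(p,I)$-completed colimits and tensor products remain bounded prisms, which I expect to follow from the flatness assumptions together with the finite-free nature of each stage in the colimit.
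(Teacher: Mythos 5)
Your treatment of part (1) is essentially the paper's argument in disguise (flat base change of $\delta$-structures along $\bZ[\bN^{r}]\to\bZ[\bQ_{\geq 0}^{r}]$ with $\delta$ vanishing on monomials, then the prism condition via $(p,I)$-complete flatness as in \cite[Lemma 3.7(3)]{bs22}), but even there your uniqueness argument begs the question: you assert that \emph{any} compatible log prism structure must satisfy $\phi(t)=t^{p}$ on the adjoined chart elements, which is exactly what has to be shown — compatibility with the map from $(A,I,\cM_{A})$ only constrains $\delta_{\mathrm{log}}$ on the image of $\cM_{A}$, and ruling out nonzero $\delta_{\mathrm{log}}$ on the new elements $\tfrac1n e_i$ (note $(1+p\delta_{\mathrm{log}}(\tfrac1n e_i))^{n}=1$ does not trivially force vanishing, e.g.\ when $p\mid n$ or $A$ has $p$-torsion) is precisely what the paper delegates to \cite[Remark 2.6, Lemma 2.9]{kos22} and, in part (2), to \cite[Lemma 5.16]{ino25}. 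Also, a side remark: adjoining a $p$-th root of $u$ \emph{is} $(p,I)$-completely flat (it is finite free of rank $p$); the genuine obstacle is only that it is not \'{e}tale, so the extension of $\delta$ is not automatic without $\delta(u)=0$.

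The serious gap is in part (2). First, your claim that a strict $(p,I)$-completely flat extension of a bounded prism ``is again a bounded prism with a unique compatible $\delta$-structure by the standard rigidity result'' is false: \cite[Lemma 3.7(3)]{bs22} takes a $\delta$-$A$-algebra as input, and a flat ring extension admits neither a canonical nor a unique $\delta$-structure in general (already $A[x]$ over $A$ has many). Second, and more importantly, the entire technical content of (2) is hidden in your one sentence that $B'$ differs from the completed tensor product ``by adjoining $n$-th roots of certain monomials in the images of the chart elements.'' That description is both unproven and inaccurate: what the saturated fiber product of the two towers actually adjoins (after an \'{e}tale-local comparison of the two charts via the paper's Lemma \ref{free monoid to free monoid}) are compatible root systems of the \emph{unit discrepancies} $u_i$, $v_i$, $w_i$ with $\widetilde{\alpha}_{1}(e_{i})=u_{i}\widetilde{\alpha}_{2}(e_{i})$, etc.; identifying $A_{\infty,\widetilde{\alpha}_{1},\widetilde{\alpha}_{2}}$ with the explicit ring $D=(A_{\infty,\widetilde{\alpha}_{2}}\otimes_{\bZ[x_{1},\dots,x_{r}]}\bZ[x_{1}^{\bQ_{\geq 0}},\dots,x_{r}^{\bQ_{\geq 0}}])^{\wedge}$ requires checking a universal property, and extending $\delta$ to $D$ requires the extra deduction that $\delta(u_{i})=\delta(v_{i})=\delta(w_{i})=0$, which follows from multiplicativity of $1+p\delta_{\mathrm{log}}$ applied to \emph{both} charts, not merely from ``each chart element has vanishing $\delta_{\mathrm{log}}$.'' Without this computation (and without an actual uniqueness argument, which again cannot be ``the argument of (1)''), your proof of (2) does not go through.
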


\begin{proof}\noindent
\begin{enumerate}
    \item We regard the prelog ring $(\bZ[\bN^{r}],\bN^{r})$ (resp. $(\bZ[\bQ_{\geq 0}^{r}],\bQ_{\geq 0}^{r})$) as a $\delta_{\mathrm{log}}$-ring by setting $\delta(m)=\delta_{\mathrm{log}}(m)=0$ for every $m\in \bN^{r}$ (resp. $m\in \bQ_{\geq 0}^{r}$). Then a map of prelog rings $(\bZ[\bN^{r}],\bN^{r})\to (A,\bN^{r})$ is compatible with $\delta_{\mathrm{log}}$-structures by the hypothesis on $\widetilde{\alpha}$, and the prelog ring $(A_{\infty,\widetilde{\alpha}},\bQ_{\geq 0}^{r})=((A,\bN^{r})\otimes_{(\bZ[\bN^{r}],\bN^{r})} (\bZ[\bQ_{\geq 0}^{r}],\bQ_{\geq 0}^{r}))^{\wedge}$ is equipped with a unique $\delta_{\mathrm{log}}$-structure by \cite[Remark 2.6 and Lemma 2.9]{kos22}. Since $A\to A_{\infty,\widetilde{\alpha}}$ is $(p,I)$-completely flat cover, \cite[Lemma 3.7(3)]{bs22} implies that $(A_{\infty,\widetilde{\alpha}},IA_{\infty,\widetilde{\alpha}},\bQ_{\geq 0}^{r})$ is a prelog prism and $(A_{\infty,\widetilde{\alpha}},IA_{\infty,\widetilde{\alpha}},\bQ_{\geq 0}^{r})^{a}=(A_{\infty,\widetilde{\alpha}},IA_{\infty,\widetilde{\alpha}},\cM_{ A_{\infty,\widetilde{\alpha}}})$ belongs to $(\fX_{\infty,\alpha},\cM_{\fX_{\infty,\alpha}})^{\mathrm{str}}_{\Prism}$. 

    \item For a subset $I\subset \{1,2,\dots,n\}$, we let $(\mathrm{Spf}(B_{I}),\cM_{B_{I}})$ denote the saturated fiber product of $(\mathrm{Spf}(B_{i}),\cM_{B_{i}})$ for $i\in I$ over $(\mathrm{Spf}(A),\cM_{A})$. When $n\geq 3$, we have the following Cartesian diagram in the category of saturated log formal schemes:
    \[
    \begin{tikzcd}
        (\mathrm{Spf}(B'),\cM_{B'}) \ar[r] \ar[d] & (\mathrm{Spf}(B_{\{1,2\}}),\cM_{B_{\{1,2\}}}) \ar[d] \\
        (\mathrm{Spf}(B_{\{2,\dots,n\}}),\cM_{B_{\{2,\dots,n\}}}) \ar[r] & (\mathrm{Spf}(B),\cM_{B}).
    \end{tikzcd}
    \]
    Every morphism in this diagram is strict flat by Lemma \ref{sat prod of two pro-kfl cover}. Hence, by \cite[Lemma 3.7 (3)]{bs22}, we can reduce the problem to the $n=1$ case by induction.

    We let $(\mathrm{Spf}(C_{i}),\cM_{C_{i}})$ denote the saturated fiber product of $(\mathrm{Spf}(B_{i}),\cM_{B_{i}})$ and $(\mathrm{Spf}(A_{\infty,\widetilde{\alpha}_{3-i}}),\cM_{A_{\infty,\widetilde{\alpha}_{3-i}}})$ over $(\mathrm{Spf}(A),\cM_{A})$ for $i=1,2$. Then we have the following diagram in which every square is a Cartesian diagram in the category of saturated log formal schemes:
    \[
    \begin{tikzcd}
    (\mathrm{Spf}(B'),\cM_{B'}) \ar[d] \ar[r] & (\mathrm{Spf}(C_{1}),\cM_{C_{1}}) \ar[d] \ar[r] & (\mathrm{Spf}(B_{1}),\cM_{B_{1}}) \ar[d] \\
    (\mathrm{Spf}(C_{2}),\cM_{C_{2}}) \ar[r] \ar[d] &(\mathrm{Spf}(A_{\infty,\widetilde{\alpha}_{1},\widetilde{\alpha}_{2}}),\cM_{A_{\infty,\widetilde{\alpha}_{1},\widetilde{\alpha}_{2}}}) \ar[d] \ar[r] & (\mathrm{Spf}(A_{\infty,\widetilde{\alpha}_{1}}),\cM_{A_{\infty,\widetilde{\alpha}_{1}}}) \\
    (\mathrm{Spf}(B_{2}),\cM_{B_{2}}) \ar[r] & (\mathrm{Spf}(A_{\infty,\widetilde{\alpha}_{2}}),\cM_{A_{\infty,\widetilde{\alpha}_{2}}}).
    \end{tikzcd}
    \]
    Every morphism in this diagram is strict flat by Lemma \ref{sat prod of two pro-kfl cover}. Therefore, by the same argument as the previous paragraph, it suffices to prove that the triple $(A_{\infty,\widetilde{\alpha}_{1},\widetilde{\alpha}_{2}},IA_{\infty,\widetilde{\alpha}_{1},\widetilde{\alpha}_{2}},\cM_{A_{\infty,\widetilde{\alpha}_{1},\widetilde{\alpha}_{2}}})$ has a unique log prism structure such that the natural maps
    \[ (A_{\infty,\widetilde{\alpha}_{i}},IA_{\infty,\widetilde{\alpha}_{i}},\cM_{A_{\infty,\widetilde{\alpha}_{i}}})\to (A_{\infty,\widetilde{\alpha}_{1},\widetilde{\alpha}_{2}},IA_{\infty,\widetilde{\alpha}_{1},\widetilde{\alpha}_{2}},\cM_{A_{\infty,\widetilde{\alpha}_{1},\widetilde{\alpha}_{2}}})
    \]
    are log prism maps for $i=1,2$.
        
    \cite[Lemma 2.13]{kos22} allows us to work \'{e}tale locally on $\mathrm{Spf}(A)$. By Lemma \ref{free monoid to free monoid} (cf. the proof of Lemma \ref{sat prod of two pro-kfl cover}), we may assume that there exists an integer $s$ with $1\leq s\leq r$ such that we can write
    \begin{align*}
        \widetilde{\alpha}_{1}(e_{i})&=u_{i}\widetilde{\alpha}_{2}(e_{i})\ \ (1\leq i\leq s) \\
        \widetilde{\alpha}_{1}(e_{i})&=v_{i}\ \ (s<i\leq r) \\
        \widetilde{\alpha}_{2}(e_{i})&=w_{i}\ \ (s<i\leq r)
    \end{align*}
        for $u_{i},v_{i},w_{i}\in A^{\times}$. Consider a ring map $f\colon \bZ[x_{1},\dots,x_{r}]\to A_{\infty,\widetilde{\alpha}_{2}}$ mapping $x_{i}$ to $u_{i}$ or $v_{i}$ depending on whether $i\leq s$ or not. We set
        \[ D\coloneqq(A_{\infty,\widetilde{\alpha}_{2}}\otimes_{\bZ[x_{1},\dots,x_{r}]} \bZ[x_{1}^{\bQ_{\geq 0}},\dots,x_{r}^{\bQ_{\geq 0}}])^{\wedge}_{(p,I)}.
        \]
        Let $\cM_{D}$ be the inverse image log structure of $\cM_{A_{\infty,\widetilde{\alpha}_{2}}}$. We consider a monoid map $\beta\colon \bQ^{r}\to \Gamma(\mathrm{Spf}(D),\cM_{D})$ mapping $(1/n)e_{i}$ to $x_{i}^{1/n}\widetilde{\alpha}_{2}((1/n)e_{i})$ or $x_{i}^{1/n}$ depending on whether $i\leq s$ or not. Here, $\widetilde{\alpha}_{2}((1/n)e_{i})$ is the image of $(1/n)e_{i}$ by the map
        \[
        \bQ_{\geq 0}^{r}\to \Gamma(\mathrm{Spf}(A_{\infty,\widetilde{\alpha}_{2}}),\cM_{A_{\infty,\widetilde{\alpha}_{2}}})\to \Gamma(\mathrm{Spf}(D),\cM_{D}).
        \]
        The map $\beta$ induces a morphism 
        \[
        (\mathrm{Spf}(D),\cM_{D})\to (\mathrm{Spf}(A_{\infty,\widetilde{\alpha}_{1}}),\cM_{A_{\infty,\widetilde{\alpha}_{1}}}).
        \]
        Then the morphisms
        \[
        (\mathrm{Spf}(D),\cM_{D})\to (\mathrm{Spf}(A_{\infty,\widetilde{\alpha}_{i}}),\cM_{A_{\infty,\widetilde{\alpha}_{i}}})\ \ \ (i=1,2)
        \]
        induce a morphism
        \[
        (\mathrm{Spf}(D),\cM_{D})\to (\mathrm{Spf}(A_{\infty,\widetilde{\alpha}_{1},\widetilde{\alpha}_{2}}),\cM_{A_{\infty,\widetilde{\alpha}_{1},\widetilde{\alpha}_{2}}}).
        \]
        We can show directly that this morphism is an isomorphism by checking that $(\mathrm{Spf}(D),\cM_{D})$ satisfies the universal property of the saturated fiber product.
        
        To complete the proof, it suffices to prove that $(\mathrm{Spf}(D),\cM_{D})$ admits a unique $\delta_{\mathrm{log}}$-structure such that projection maps
        \[
        (\mathrm{Spf}(D),\cM_{D})\to (A_{\infty,\widetilde{\alpha}_{i}},\cM_{A_{\infty,\widetilde{\alpha}_{i}}})\ \ \ (i=1,2)
        \]
        are compatible with $\delta_{\mathrm{log}}$-structures by \cite[Lemma 3.7 (3)]{bs22}. Since $\delta_{\mathrm{log}}(\widetilde{\alpha}_{1}(e_{i}))=\delta_{\mathrm{log}}(\widetilde{\alpha}_{2}(e_{i}))=0$, we have $\delta_{\mathrm{log}}(u_{i})=\delta_{\mathrm{log}}(v_{i})=\delta_{\mathrm{log}}(w_{i})=0$ for every $i$. If we regard $\bZ[x_{1},\dots,x_{r}]$ (resp. $\bZ[x_{1}^{\bQ_{\geq 0}},\dots,x_{r}^{\bQ_{\geq 0}}]$) as a $\delta$-ring by defining $\delta(x_{i})=0$ for every $i$ (resp. $\delta(x_{i}^{1/n})=0$ for every $i$ and $n\geq 1$), the ring map $f\colon \bZ[x_{1},\dots,x_{r}]\to A_{\infty,\widetilde{\alpha}_{2}}$ is compatible with $\delta$-structures, and so $D$ is endowed with the natural $\delta$-structure. We equip $(\mathrm{Spf}(D),\cM_{D})$ with the $\delta_{\mathrm{log}}$-structure such that the strict morphism 
        \[
        (\mathrm{Spf}(D),\cM_{D})\to (\mathrm{Spf}(A_{\infty,\widetilde{\alpha}_{2}}),\cM_{A_{\infty,\widetilde{\alpha}_{2}}})
        \]
        is compatible with $\delta_{\mathrm{log}}$-structures. Then the projection map
        \[
        (\mathrm{Spf}(D),\cM_{D})\to (\mathrm{Spf}(A_{\infty,\widetilde{\alpha}_{1}}),\cM_{A_{\infty,\widetilde{\alpha}_{1}}})
        \]
        is also compatible with $\delta_{\mathrm{log}}$-structures by construction. The uniqueness of the $\delta_{\mathrm{log}}$-structure on $(\mathrm{Spf}(D),\cM_{D})$ follows from \cite[Lemma 5.16]{ino25}.
    \end{enumerate}
\end{proof}

Being motivated by this lemma, we define the following.

\begin{dfn}
    Let $(\fX,\cM_{\fX})$ be a bounded $p$-adic fs log formal scheme and $\alpha\colon \bN^{r}\to \cM_{\fX}$ be a chart. We say that $\alpha$ is \emph{prismatically liftable} if, for an arbitrary log prism $(A,I,\cM_{A})$ in $(\fX,\cM_{\fX})^{\mathrm{str}}_{\Prism}$, there exists a strict flat cover $(A,I,\cM_{A})\to (B,J,\cM_{B})$ such that a map 
    \[
    \bN^{r}\to \Gamma(\fX,\cM_{\fX})\to \Gamma(\Spf(A/I),\cM_{A/I})\to \Gamma(\Spf(B/J),\cM_{B/J})
    \]
    admits a lift $\widetilde{\alpha}:\bN^{r}\to \Gamma(\Spf(B),\cM_{B})$ with $\delta_{\mathrm{log}}(\widetilde{\alpha}(m))=0$ for every $m\in \bN^{r}$.
\end{dfn}

\begin{lem}\label{ex of pris liftable chart}\noindent
    \begin{enumerate}
        \item Let $(\fX,\cM_{\fX})$ be a small affine log formal scheme over $\cO_{K}$ and $\alpha:\bN^{r}\to \cM_{\fX}$ be a tautological chart. Then $\alpha$ is prismatically liftable. 
        \item Let $f\colon (\fX,\cM_{\fX})\to (\fY,\cM_{\fY})$ be a strict map of $p$-adic fs log formal schemes. Suppose that we are given a prismatically liftable chart $\alpha\colon \bN^{r}\to \cM_{\fY}$ of $(\fY,\cM_{\fY})$. Then a chart
        \[
        \bN^{r}\stackrel{\alpha}{\to} f^{-1}\cM_{\fY}\to \cM_{\fX}
        \]
        is also prismatically liftable.
    \end{enumerate}
\end{lem}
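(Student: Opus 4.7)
The plan is to handle the two parts separately: part (2) follows nearly immediately from strictness of $f$, while part (1) requires constructing an explicit strict $(p,I)$-completely faithfully flat cover along which $\delta_{\log}$ is killed on the tautological generators. For (2), let $(A,I,\cM_{A})\in(\fX,\cM_{\fX})^{\mathrm{str}}_{\Prism}$. Strictness of $f$ lets us view $(A,I,\cM_{A})$ as an object of $(\fY,\cM_{\fY})^{\mathrm{str}}_{\Prism}$, and the composition $\bN^{r}\xrightarrow{\alpha}\cM_{\fY}\to f^{-1}\cM_{\fY}=\cM_{\fX}$ agrees with the pulled-back chart on $(\fX,\cM_{\fX})$. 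Applying the prismatic liftability of $\alpha$ on $(\fY,\cM_{\fY})$ to this object produces the required strict flat cover together with a lift $\widetilde{\alpha}$ satisfying $\delta_{\log}(\widetilde{\alpha}(m))=0$ for every $m$.

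For (1), fix $(A,I,\cM_{A})\in(\fX,\cM_{\fX})^{\mathrm{str}}_{\Prism}$. Strictness over $\Spf(R)$ gives a canonical lift $\widetilde{\alpha}^{0}\colon\bN^{r}\to\cM_{A}$ of the chart $\alpha$; set $c_{i}:=\delta_{\log}(\widetilde{\alpha}^{0}(e_{i}))\in A$. Using the multiplicativity identity $1+p\delta_{\log}(xy)=(1+p\delta_{\log}(x))(1+p\delta_{\log}(y))$, it suffices to construct a strict $(p,I)$-completely faithfully flat cover $(B,IB,\cM_{B})$ together with units $u_{i}\in 1+IB$ solving $\phi(u_{i})=u_{i}^{p}(1+pc_{i})^{-1}$, so that the corrected chart $\widetilde{\alpha}(e_{i}):=u_{i}\widetilde{\alpha}^{0}(e_{i})$ has vanishing $\delta_{\log}$; since $u_{i}\equiv 1\bmod I$, the pullback log structure on $\Spf(B/IB)$ is unchanged and strictness is preserved. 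To build $B$, expanding $\phi(u_{i})=u_{i}^{p}+p\delta(u_{i})$ turns the constraint into $\delta(u_{i})=p^{-1}u_{i}^{p}\bigl((1+pc_{i})^{-1}-1\bigr)$, whose right hand side already lies in $A[u_{i}]$ since $(1+pc_{i})^{-1}-1\in pA$. Hence the free $\delta$-$A$-algebra on $u_{i}$ subject to this single relation is the polynomial ring $A[u_{i}]$ equipped with a canonical $\delta$-structure; working Zariski locally to assume $I=(d)$ is principal, substitute $u_{i}=1+dv_{i}$, perform this simultaneously for $i=1,\dots,r$, and take the $(p,I)$-adic completion to obtain $B$.

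The main obstacle is verifying that the resulting $(B,IB,\cM_{B})$ is indeed a strict cover in the prismatic site. Concretely, one must check that the simultaneous $\delta$-extension over all $r$ generators is $(p,I)$-completely faithfully flat (which holds because the $v_{i}$ are algebraically independent over $A$), that the pullback log structure on $\Spf(B)$ from $\cM_{A}$ remains compatible with the chosen chart modulo $I$ after the $u_i$-modification, and that the resulting prism is bounded. Each of these is a bookkeeping check rather than a genuinely new difficulty, relying on the compatibility of $\delta$-theory with $(p,I)$-adic completion and log structures as developed in \cite{bs22} and \cite{kos22}; once they are in place, the construction above gives the required witness, proving prismatic liftability of the tautological chart.
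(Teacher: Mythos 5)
Part (2) of your argument is fine and agrees with the paper (which disposes of it as ``follows from the definition''): strictness lets you regard any $(A,I,\cM_{A})\in(\fX,\cM_{\fX})^{\mathrm{str}}_{\Prism}$ as an object of $(\fY,\cM_{\fY})^{\mathrm{str}}_{\Prism}$ and the pulled-back chart agrees with $\alpha$ there, so liftability transfers.

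For part (1), however, your central construction has a genuine gap. Two preliminary points first: the existence of the initial lift $\widetilde{\alpha}^{0}$ is not a consequence of ``strictness over $\Spf(R)$'' but of surjectivity of $\Gamma(\Spf(A),\cM_{A})\to\Gamma(\Spf(A/I),\cM_{A/I})$ (or of passing to a further cover), and the correction $\widetilde{\alpha}(e_{i})=u_{i}\widetilde{\alpha}^{0}(e_{i})$ only makes sense, and only lifts the \emph{given} map $\bN^{r}\to\Gamma(\Spf(B/J),\cM_{B/J})$, if $u_{i}$ is a unit with $u_{i}\equiv 1 \bmod IB$ (integrality of the log structure forces this), which you correctly identify. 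The problem is that your cover $B$ does not produce such elements. If you simply prescribe $\delta(u_{i})=-u_{i}^{p}c_{i}(1+pc_{i})^{-1}$ on the polynomial ring $A[u_{i}]$, then indeed you get a flat $\delta$-$A$-algebra, but its distinguished solution $u_{i}$ is a free variable: it is neither a unit nor congruent to $1$ modulo $I$, so $u_{i}\widetilde{\alpha}^{0}(e_{i})$ is not even a section of $\cM_{B}$. If instead you substitute $u_{i}=1+dv_{i}$ (with $I=(d)$), the prescribed value of $\delta(u_{i})$ no longer determines a $\delta$-structure on the polynomial ring $A[v_{i}]$: one computes $\delta(1+dv_{i})=\phi(d)\delta(v_{i})+\delta(d)v_{i}^{p}+\tfrac{1+(dv_{i})^{p}-(1+dv_{i})^{p}}{p}$, so solving for $\delta(v_{i})$ requires dividing by $\phi(d)$, which is not invertible (e.g.\ $\phi(d)\equiv d^{p}$ mod $p$); already at $v_{i}=0$ you would need $\phi(d)\mid c_{i}$ in $A$. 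So after the substitution the object you must take is the $(p,I)$-completed quotient of the free $\delta$-algebra $A\{v_{i}\}$ by the $\delta$-ideal generated by $\phi(d)\phi(v_{i})-\bigl((1+dv_{i})^{p}(1+pc_{i})^{-1}-1\bigr)$, and its $(p,I)$-complete faithful flatness is exactly a prismatic-envelope–type statement that your ``the $v_{i}$ are algebraically independent'' remark does not address; it is the whole difficulty, not a bookkeeping check.

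Note also that the paper avoids this local Frobenius-equation analysis entirely: it uses the global small-affine structure, namely that by \cite[Lemma 5.12]{ino25} every log prism in $(\fX,\cM_{\fX})^{\mathrm{str}}_{\Prism}$ admits a strict flat cover receiving a map from the Breuil--Kisin log prism $(\fS_{R},(E),\cM_{\fS_{R}})$, whose tautological chart has $\delta_{\mathrm{log}}=0$ by construction; composing that chart with the log prism map gives the lift, with compatibility modulo the ideal automatic because the map lives over $(\fX,\cM_{\fX})$. If you want to salvage your approach, you would have to prove the flatness of the completed $\delta$-quotient above (and check it stays a bounded log prism with the pulled-back $\delta_{\mathrm{log}}$-structure), which is substantially harder than invoking the Breuil--Kisin prism.
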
 

\begin{proof}\noindent
    \begin{enumerate}
        \item \cite[Lemma 5.12]{ino25} imply that, for every $(A,I,\cM_{A})$ belonging to $ (\fX,\cM_{\fX})^{\mathrm{str}}_{\Prism}$, there exist a flat cover $(A,I,\cM_{A})\to (B,J,\cM_{B})$ and a log prism map 
        \[
        (\fS_{R},(E),\bN^{r})^{a}=(\fS_{R},(E),\cM_{\fS_{R}})\to (B,J,\cM_{B}),
        \]
        which gives the desired lift $\widetilde{\alpha}:\bN^{r}\to \Gamma(\mathrm{Spf}(B),\cM_{B})$.
        \item This follows from the definition.
    \end{enumerate}
\end{proof}

\begin{prop}[``Kummer quasi-syntomic'' descent for kfl prismatic crystals]\label{kqsyn descent for kfl log pris crys}\noindent
Let $(\fX,\cM_{\fX})=(\mathrm{Spf}(R),\cM_{R})$ and $(\fY,\cM_{\fY})=(\mathrm{Spf}(S),\cM_{S})$ be bounded $p$-adic saturated log formal schemes, and $f\colon (\fY,\cM_{\fY})\to (\fX,\cM_{\fX})$ be a surjective map. Suppose that there exists a prismatically liftable chart $\alpha\colon \bN^{r}\to \cM_{\fX}$ such that $f$ admits a factorization
\[
(\fY,\cM_{\fY})\to (\fX_{\infty,\alpha},\cM_{\fX_{\infty,\alpha}})=(\mathrm{Spf}(R_{\infty,\alpha}),\cM_{R_{\infty,\alpha}})\to (\fX,\cM_{\fX}),
\]
where $(\fX_{\infty,\alpha},\cM_{\fX_{\infty,\alpha}})$ is one defined in Setting \ref{setting for pro-kfl descent} and the map $(\fY,\cM_{\fY})\to (\fX_{\infty,\alpha},\cM_{\fX_{\infty,\alpha}})$ is strict quasi-syntomic. We let $(\fY^{(\bullet)},\cM_{\fY^{(\bullet)}})$ denote the \v{C}ech nerve of $f$ in the category of $p$-adic saturated log formal schemes. Then there exist natural bi-exact equivalences:
\begin{align*}
\mathrm{Vect}_{\mathrm{kfl}}((\fX,\cM_{\fX})_{\Prism})&\isom \varprojlim_{\bullet\in \Delta} \mathrm{Vect}(\fY^{(\bullet)}_{\Prism}), \\
\mathrm{Vect}_{\mathrm{kfl}}((\fX,\cM_{\fX})_{\Prism},\widetilde{\cO}_{\Prism})&\isom \varprojlim_{\bullet\in \Delta} \mathrm{Vect}(\fY^{(\bullet)}_{\Prism},\widetilde{\cO}_{\Prism}), \\
\mathrm{Vect}^{\varphi}_{\mathrm{kfl}}((\fX,\cM_{\fX})_{\Prism})&\isom \varprojlim_{\bullet\in \Delta} \mathrm{Vect}^{\varphi}(\fY^{(\bullet)}_{\Prism}), \\
\mathrm{Vect}^{\varphi}_{\mathrm{kfl},[a,b]}((\fX,\cM_{\fX})_{\Prism})&\isom \varprojlim_{\bullet\in \Delta} \mathrm{Vect}^{\varphi}_{[a,b]}(\fY^{(\bullet)}_{\Prism}).    
\end{align*}
 
\end{prop}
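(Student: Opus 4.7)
The plan is to factor $f = f_2 \circ f_1$ where $f_1\colon (\fY,\cM_{\fY})\to (\fX_{\infty,\alpha},\cM_{\fX_{\infty,\alpha}})$ is the strict quasi-syntomic refinement and $f_2\colon (\fX_{\infty,\alpha},\cM_{\fX_{\infty,\alpha}})\to (\fX,\cM_{\fX})$ is the canonical pro-kfl cover, and to prove descent along each piece separately, combining the two at the end. By Lemma \ref{category of crystals is unchanged} we work with strict log prismatic sites throughout, and by Lemma \ref{pro-kfl pullback of kfl pris crys is well-def} combined with Lemma \ref{functoriality of pro-kfl pullback of kfl pris crys} the factorization guarantees that the kfl pullback $f^{*}$ is defined on every level of the saturated \v{C}ech nerve $(\fY^{(\bullet)},\cM_{\fY^{(\bullet)}})$ and yields a functor in the stated direction.

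For descent along $f_1$, every level of its \v{C}ech nerve inherits a chart from $\bN^{r}_{\bQ_{\geq 0}}$ by strict saturated pullback, so Lemma \ref{divisible monoid and log pris site} identifies strict log prismatic crystals on these log formal schemes with ordinary prismatic crystals on the underlying formal schemes. Classical quasi-syntomic descent for prismatic crystals, together with its $F$-crystal, $[a,b]$-bounded, and $\widetilde{\cO}_{\Prism}$-variants, then yields the required equivalences over $(\fX_{\infty,\alpha},\cM_{\fX_{\infty,\alpha}})$, reducing the problem to descent along $f_2$. For $f_2$, I argue fiberwise: for each $(A,I,\cM_{A})\in(\fX,\cM_{\fX})^{\mathrm{str}}_{\Prism}$, prismatic liftability of $\alpha$ provides a strict flat cover $(A,I,\cM_{A})\to(B,J,\cM_{B})$ together with a $\delta_{\mathrm{log}}$-compatible lift $\widetilde{\alpha}\colon \bN^{r}\to \Gamma(\mathrm{Spf}(B),\cM_{B})$. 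Lemma \ref{pro-kfl cover of log prism}(1) then endows the pro-kfl cover $(B_{\infty,\widetilde{\alpha}},IB_{\infty,\widetilde{\alpha}},\cM_{B_{\infty,\widetilde{\alpha}}})$ with a canonical log prism structure lying over $(\fX_{\infty,\alpha},\cM_{\fX_{\infty,\alpha}})^{\mathrm{str}}_{\Prism}$, and Lemma \ref{pro-kfl cover of log prism}(2) propagates this compatibly through the iterated saturated \v{C}ech nerve. Applying Proposition \ref{pro-kfl descent for kfl vect bdle} to the cover $(\mathrm{Spf}(B_{\infty,\widetilde{\alpha}}),\cM_{B_{\infty,\widetilde{\alpha}}})\to(\mathrm{Spf}(B),\cM_{B})$ at each level, and assembling these equivalences as $(A,I,\cM_{A})$ varies via the crystal property of Lemma \ref{crys property for kfl pris crys}, produces the desired equivalence for $\mathrm{Vect}_{\mathrm{kfl}}$ and, by the same argument replacing vector bundles with their $\widetilde{\cO}_{\Prism}$-counterparts, for the $\widetilde{\cO}_{\Prism}$-variant.

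The $F$-crystal refinements then follow formally: the Frobenius on $\cO_{\Prism}$ commutes with every pullback functor in play by naturality, and the bounded condition $I^{b}\otimes\cE\to \phi^{*}\cE\to I^{a}\otimes\cE$ is preserved and detected under strict flat pullback because on a log prism admitting a free chart \'{e}tale locally the two structure maps are uniquely determined by the underlying $F$-crystal (Remark \ref{rem on def of kfl pris F-crys}), which applies in particular to the $(B,J,\cM_{B})$ produced above. The main obstacle I expect is the fiberwise gluing step for $f_2$: one must verify that the descent datum on $B_{\infty,\widetilde{\alpha}}^{(\bullet)}$ is independent of the choice of lift $\widetilde{\alpha}$ and patches compatibly along morphisms in $(\fX,\cM_{\fX})^{\mathrm{str}}_{\Prism}$. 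This should reduce, via the fully faithfulness of Lemma \ref{fully faithfulness for kfl pris crys} together with Lemma \ref{crys property for kfl pris crys}, to the uniqueness clauses in Lemma \ref{pro-kfl cover of log prism}(1) and to the fact that any two strict flat refinements of a given log prism are dominated by a common one, obtained as a saturated fiber product whose log prism structure is again governed by Lemma \ref{pro-kfl cover of log prism}(2).
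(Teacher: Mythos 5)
Your proposal is correct in substance and runs on the same engine as the paper: prismatic liftability of $\alpha$, Lemma \ref{pro-kfl cover of log prism} to put log prism structures on the pro-Kummer covers and their saturated \v{C}ech nerves, Proposition \ref{pro-kfl descent for kfl vect bdle} for the Kummer direction, and \cite[Proposition 7.11]{bs22} for the quasi-syntomic direction. The difference is organizational. You separate the two layers: first trade the \v{C}ech nerve of $f$ for that of $f_{2}\colon (\fX_{\infty,\alpha},\cM_{\fX_{\infty,\alpha}})\to (\fX,\cM_{\fX})$ by levelwise quasi-syntomic descent (a bisimplicial refinement argument you leave implicit), then do a fiberwise pro-kfl descent over each log prism of $(\fX,\cM_{\fX})^{\mathrm{str}}_{\Prism}$. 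The paper instead builds the quasi-inverse in one pass: for each log prism $(A,I,\cM_{A})$ admitting a $\delta_{\mathrm{log}}$-compatible lift $\widetilde{\alpha}$ (which suffices by prismatic liftability), it applies \cite[Proposition 7.11]{bs22} to the quasi-syntomic map $A_{\infty,\widetilde{\alpha}}/IA_{\infty,\widetilde{\alpha}}\to A_{\infty,\widetilde{\alpha}}/IA_{\infty,\widetilde{\alpha}}\otimes_{R_{\infty,\alpha}} S$ to produce a single log prism $(B,IB,\cM_{B})\in (\fY,\cM_{\fY})^{\mathrm{str}}_{\Prism}$ covering $(A,I,\cM_{A})$, evaluates the given descent datum on its saturated \v{C}ech nerve $(B^{(\bullet)},IB^{(\bullet)},\cM_{B^{(\bullet)}})$ (which lies over $(\fY^{(\bullet)},\cM_{\fY^{(\bullet)}})^{\mathrm{str}}_{\Prism}$ by Lemma \ref{pro-kfl cover of log prism}(2)), and descends via Proposition \ref{pro-kfl descent for kfl vect bdle}; independence of $\widetilde{\alpha}$ and $B$ is checked by a common saturated refinement, exactly as you anticipate. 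Your route buys a cleaner conceptual split of the Kummer and quasi-syntomic parts; the paper's buys a shorter argument that never needs a descent statement over the non-fs intermediate object nor the refinement yoga between the two \v{C}ech nerves.

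One point needs repair: for the $\varphi$- and $[a,b]$-variants you invoke Remark \ref{rem on def of kfl pris F-crys} to recover the bounded structure maps from the underlying $F$-crystal, but that remark applies only when the log prism admits a free chart \'{e}tale locally; you can arrange this on the auxiliary covers $(B,J,\cM_{B})$, but not on an arbitrary $(A,I,\cM_{A})$ of the strict site, which is where the descended object must ultimately be specified. The paper sidesteps this by descending the two maps $I^{b}\otimes\cE\to \phi^{*}\cE\to I^{a}\otimes\cE$ as morphisms of kfl vector bundles through the already-established equivalence of Proposition \ref{pro-kfl descent for kfl vect bdle}, with the composition condition checked by full faithfulness; your argument should be adjusted in the same way, after which it goes through.
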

\begin{proof}
    By Lemma \ref{divisible monoid and log pris site}, Lemma \ref{functoriality of pro-kfl pullback of kfl pris crys} (1), and Lemma \ref{pro-kfl pullback of kfl pris crys is well-def}, the functor $f^{*}$ induces functors in the statement. We use the notations in Lemma \ref{pro-kfl cover of log prism}. First, we shall prove the statement for $\mathrm{Vect}_{\mathrm{kfl}}((\fX,\cM_{\fX})_{\Prism})$. We construct an exact quasi-inverse functor. Let $p_{i}\colon \fY^{(1)}\to \fY$ be an $i$-th projection for $i=1,2$ and $(\cE_{\infty},p_{1}^{*}\cE_{\infty}\cong p_{2}^{*}\cE_{\infty})$ be an object of $\displaystyle \varprojlim\mathrm{Vect}(\fY^{(\bullet)}_{\Prism})$. We define $\cE\in \mathrm{Vect}_{\mathrm{kfl}}((\fX,\cM_{\fX})_{\Prism})$ associated with $(\cE_{\infty},p_{1}^{*}\cE_{\infty}\cong p_{2}^{*}\cE_{\infty})$ as follows. In order to define $\cE$, it suffices to define the evaluation of $\cE$ at a log prism $(A,I,\cM_{A})\in (\fX,\cM_{\fX})^{\mathrm{str}}_{\Prism}$ for which the map $\bN^{r} \stackrel{\alpha}{\to} \Gamma(\fX,\cM_{\fX})\to \Gamma(\mathrm{Spf}(A/I),\cM_{A/I})$ 
    lifts to a monoid map $\widetilde{\alpha}:\bN^{r}\to \Gamma(\mathrm{Spf}(A),\cM_{A})$ with $\delta_{\mathrm{log}}(\widetilde{\alpha}(m))=0$ for every $m\in \bN^{r}$ because $\alpha$ is prismatically liftable. Applying \cite[Proposition 7.11]{bs22} to a log prism $(A_{\infty,\widetilde{\alpha}},IA_{\infty,\widetilde{\alpha}},\cM_{A_{\infty,\widetilde{\alpha}}})\in (\fX_{\infty,\alpha},\cM_{\fX_{\infty,\alpha}})_{\Prism}^{\mathrm{str}}$ constructed in Lemma \ref{pro-kfl cover of log prism} and a quasi-syntomic map 
    \[ A_{\infty,\widetilde{\alpha}}/IA_{\infty,\widetilde{\alpha}}\to A_{\infty,\widetilde{\alpha}}/IA_{\infty,\widetilde{\alpha}}\otimes_{R_{\infty,\alpha}} S \ (\cong A/I\otimes_{R} S),
    \]
    we obtain a log prism $(B,IB,\cM_{B})\in (\fY,\cM_{\fY})_{\Prism}^{\mathrm{str}}$ and a log prism map 
    \[
    (A,I,\cM_{A})\to (A_{\infty,\widetilde{\alpha}},IA_{\infty,\widetilde{\alpha}},\cM_{A_{\infty,\widetilde{\alpha}}})\to (B,IB,\cM_{B})
    \]
    in $(\fX,\cM_{\fX})_{\Prism}^{\mathrm{str}}$
    such that the map of log formal schemes $(\mathrm{Spf}(B),\cM_{B})\to (\mathrm{Spf}(A),\cM_{A})$ is surjective. We let $(\mathrm{Spf}(B^{(n)}),\cM_{B^{(n)}})$ denote the $(n+1)$-fold saturated fiber product of $(\mathrm{Spf}(B),\cM_{B})$ over $(\mathrm{Spf}(A),\cM_{A})$. Then Lemma \ref{pro-kfl cover of log prism} (2) implies that $(B^{(n)},IB^{(n)},\cM_{B^{(n)}})$ is a log prism belonging to $(\fY^{(n)},\cM_{\fY^{(n)}})^{\mathrm{str}}_{\Prism}$. We have a vector bundle $(\cE_{\infty})_{B}$ with an isomorphism
    \[
        q_{1}^{*}((\cE_{\infty})_{B})\cong (p_{1}^{*}\cE_{\infty})_{B^{(1)}}\cong (p_{2}^{*}\cE_{\infty})_{B^{(1)}}\cong q_{2}^{*}((\cE_{\infty})_{B}),
    \]
    which satisfies the cocycle condition in $\mathrm{Vect}(B^{(2)})$. Here, $q_{i}\colon \mathrm{Spf}(B^{(1)})\to \mathrm{Spf}(B)$ is an $i$-th projection for $i=1,2$. This object descents to a kfl vector bundle on $\mathrm{Vect}_{\mathrm{kfl}}(\mathrm{Spf}(A),\cM_{A})$ by Proposition \ref{pro-kfl descent for kfl vect bdle}. The resulting kfl vector bundle is denoted by $\cE_{A}$.  The formation of $\cE_{A}$ is independent of the choice of $\widetilde{\alpha}$ and $B$. To check this, take another $\widetilde{\alpha}'$ and $(B',IB',\cM_{B'})\in (\fY,\cM_{\fY})_{\Prism}^{\mathrm{str}}$ satisfying the same condition as $\widetilde{\alpha}$ and $(B,IB,\cM_{B})$. Let $(\mathrm{Spf}(B''),\cM_{B''})$ be the saturated fiber product of $(\mathrm{Spf}(B),\cM_{B})$ and $(\mathrm{Spf}(B'),\cM_{B'})$ over $(\mathrm{Spf}(A),\cM_{A})$, and $q\colon (\mathrm{Spf}(B''),\cM_{B''})\to (\mathrm{Spf}(B),\cM_{B})$ and $q'\colon (\mathrm{Spf}(B''),\cM_{B''})\to (\mathrm{Spf}(B'),\cM_{B'})$ (which are strict flat covers) be natural projection maps. Then, by Lemma \ref{pro-kfl cover of log prism}, $(B'',IB'',\cM_{B''})$ is a log prism belonging to $(\fY^{(1)},\cM_{\fY^{(1)}})$, and the given isomorphism $p_{1}^{*}\cE_{\infty}\cong p_{2}^{*}\cE_{\infty}$ induces an isomorphism
    \[
        q^{*}((\cE_{\infty})_{B})\cong (p_{1}^{*}\cE_{\infty})_{B''}\cong (p_{2}^{*}\cE_{\infty})_{B''}\cong q'^{*}((\cE_{\infty})_{B'}),
    \]
    which gives the desired independence. Therefore, the family of $\cE_{A}$ forms a kfl prismatic crystal $\cE$ on $(\fX,\cM_{\fX})$, and the functor sending $(\cE_{\infty},p_{1}^{*}\cE_{\infty}\cong p_{2}^{*}\cE_{\infty})$ to $\cE$ is an exact quasi-inverse by Proposition \ref{pro-kfl descent for kfl vect bdle}. 

    In order to apply the same argument to the remaining cases, it suffices to prove that the following functors give bi-exact equivalences:
    \begin{align*}
    \mathrm{Vect}^{\varphi}_{\mathrm{kfl},[a,b]}(A,I,\cM_{A})&\to \varprojlim_{\bullet\in \Delta} \mathrm{Vect}^{\varphi}_{[a,b]}(B^{(\bullet)},IB^{(\bullet)}), \\
    \mathrm{Vect}^{\varphi}_{\mathrm{kfl}}(A,I,\cM_{A})&\to \varprojlim_{\bullet\in \Delta} \mathrm{Vect}^{\varphi}(B^{(\bullet)},IB^{(\bullet)}). \end{align*}
    The former equivalence follows from Lemma \ref{pro-kfl descent for kfl vect bdle}, and the latter one is reduced to the former one by definition.
\end{proof}

\begin{dfn}\label{def of log qrsp site}
     Let $(\fX,\cM_{\fX})$ be a bounded $p$-adic fs log formal scheme. Suppose that $(\fX,\cM_{\fX})$ admits a free chart \'{e}tale locally and that $\fX$ is quasi-syntomic. We define $(\fX,\cM_{\fX})'_{\mathrm{kqsyn}}$ as the category of a bounded $p$-adic saturated log formal scheme $(\fY,\cM_{\fY})$ over $(\fX,\cM_{\fX})$ such that the structure morphism $(\fY,\cM_{\fY})\to (\fX,\cM_{\fX})$ admits a factorization
    \[
    (\fY,\cM_{\fY})\to (\fU_{\infty,\alpha},\cM_{\fU_{\infty,\alpha}})\to (\fU,\cM_{\fU})\to (\fX,\cM_{\fX}),
    \] 
    where the morphism $(\fU,\cM_{\fU})\to (\fX,\cM_{\fX})$ is strict \'{e}tale, the map $\alpha\colon \bN^{r}\to \cM_{\fU}$ is a chart, and the morphism $(\fY,\cM_{\fY})\to (\fU_{\infty,\alpha},\cM_{\fU_{\infty,\alpha}})$ is strict quasi-syntomic. The category $(\fX,\cM_{\fX})'_{\mathrm{kqsyn}}$ has finite products by Lemma \ref{sat prod of two pro-kfl cover} (2).

    Let $(\fX,\cM_{\fX})'_{\mathrm{qrsp}}$ denote the full subcategory of $(\fX,\cM_{\fX})'_{\mathrm{kqsyn}}$ consisting of $(\mathrm{Spf}(S),\cM_{S})$ with $S$ being qrsp. 
\end{dfn}

\begin{cor}\label{kfl pris crys and kqsyn site}
    Let $(\fX,\cM_{\fX})$ be a $p$-adic fs log formal scheme. Suppose that $(\fX,\cM_{\fX})$ admits a prismatically liftable chart \'{e}tale locally and that $\fX$ is quasi-syntomic. Then there exists a natural bi-exact equivalence
    \begin{align*}
        \mathrm{Vect}_{\mathrm{kfl}}((\fX,\cM_{\fX})_{\Prism})&\isom  \varprojlim_{(\fY,\cM_{\fY})\in (\fX,\cM_{\fX})'_{\mathrm{kqsyn}}} \mathrm{Vect}(\fY_{\Prism}) \\
        &\isom  \varprojlim_{(S,\cM_{S})\in (\fX,\cM_{\fX})'_{\mathrm{qrsp}}} \mathrm{Vect}(S_{\Prism}).
    \end{align*}
    The analogous assertion also holds for $\mathrm{Vect}_{\mathrm{kfl}}((\fX,\cM_{\fX})_{\Prism},\widetilde{\cO}_{\Prism})$, $ \mathrm{Vect}^{\varphi}_{\mathrm{kfl}}((\fX,\cM_{\fX})_{\Prism})$, and $\mathrm{Vect}^{\varphi}_{\mathrm{kfl},[a,b]}((\fX,\cM_{\fX})_{\Prism})$.
\end{cor}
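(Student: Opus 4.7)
My plan is to deduce both equivalences from Proposition \ref{kqsyn descent for kfl log pris crys} by constructing a functor from kfl prismatic crystals into the indicated limit, then establishing essential surjectivity via a suitable cover whose \v{C}ech nerve lies inside the indexing category.

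First, I construct the comparison functor. For $(\fY,\cM_{\fY})\in (\fX,\cM_{\fX})'_{\mathrm{kqsyn}}$, the structure map $f$ factors through some $(\fU_{\infty,\alpha},\cM_{\fU_{\infty,\alpha}})$, so $\cM_{\fY}$ carries a chart from $\bN^r_{\bQ_{\geq 0}}$, a uniquely $p$-divisible monoid. Consequently, Lemma \ref{pro-kfl pullback of kfl pris crys is well-def} ensures that every kfl prismatic crystal on $(\fX,\cM_{\fX})$ is $f^{*}$-admissible, and Lemma \ref{divisible monoid and log pris site} identifies $\mathrm{Vect}((\fY,\cM_{\fY})^{\mathrm{str}}_{\Prism})$ with $\mathrm{Vect}(\fY_{\Prism})$. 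Hence $f^{*}$ yields a well-defined exact functor into $\mathrm{Vect}(\fY_{\Prism})$, and Lemma \ref{functoriality of pro-kfl pullback of kfl pris crys} shows the collection is compatible with morphisms in $(\fX,\cM_{\fX})'_{\mathrm{kqsyn}}$, so we land in the limit.

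For essential surjectivity of the first equivalence, I work strict \'{e}tale locally on $\fX$ to assume that $(\fX,\cM_{\fX})$ admits a prismatically liftable chart $\alpha\colon \bN^{r}\to \cM_{\fX}$ (strict \'{e}tale descent follows from the classical case, using Lemma \ref{fully faithfulness for kfl pris crys}). Take $(\fY,\cM_{\fY})\coloneqq (\fX_{\infty,\alpha},\cM_{\fX_{\infty,\alpha}})$, which is tautologically an object of $(\fX,\cM_{\fX})'_{\mathrm{kqsyn}}$ with strict (indeed identity) structure morphism to $(\fU_{\infty,\alpha},\cM_{\fU_{\infty,\alpha}})$ for $\fU=\fX$. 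Let $(\fY^{(\bullet)},\cM_{\fY^{(\bullet)}})$ be the corresponding \v{C}ech nerve in bounded $p$-adic saturated log formal schemes; iterated application of the saturated fiber product analysis used in Lemma \ref{pro-kfl cover of log prism} (and Lemma \ref{sat prod of two pro-kfl cover}) shows every $(\fY^{(n)},\cM_{\fY^{(n)}})$ is strict quasi-syntomic over an appropriate $(\fX_{\infty,\alpha'},\cM_{\fX_{\infty,\alpha'}})$ and hence lies in $(\fX,\cM_{\fX})'_{\mathrm{kqsyn}}$. Given a compatible family in the limit, restriction to this cosimplicial diagram produces an object of $\varprojlim_{\bullet}\mathrm{Vect}(\fY^{(\bullet)}_{\Prism})$, and Proposition \ref{kqsyn descent for kfl log pris crys} recovers a kfl prismatic crystal on $(\fX,\cM_{\fX})$. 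A standard cofinality check shows the two constructions are mutually inverse: any other $(\fZ,\cM_{\fZ})\in (\fX,\cM_{\fX})'_{\mathrm{kqsyn}}$ is dominated (after a strict flat localization) by a saturated product with some $\fY^{(n)}$, which still lies in the category.

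For the second equivalence, it suffices to show that $(\fX,\cM_{\fX})'_{\mathrm{qrsp}}$ is cofinal in $(\fX,\cM_{\fX})'_{\mathrm{kqsyn}}$ for the purpose of the limit. Given $(\mathrm{Spf}(S),\cM_{S})\in (\fX,\cM_{\fX})'_{\mathrm{kqsyn}}$, the standard construction (cf.\ \cite[Proposition 7.10, 7.11]{bs22}) produces a strict quasi-syntomic cover $S\to S'$ with $S'$ quasiregular semiperfectoid, giving an object of $(\fX,\cM_{\fX})'_{\mathrm{qrsp}}$ lying over it; iterating on self-products then shows the restriction functor between the two limits is a bi-exact equivalence.

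The analogues for $\mathrm{Vect}_{\mathrm{kfl}}((\fX,\cM_{\fX})_{\Prism},\widetilde{\cO}_{\Prism})$, $\mathrm{Vect}^{\varphi}_{\mathrm{kfl}}((\fX,\cM_{\fX})_{\Prism})$, and $\mathrm{Vect}^{\varphi}_{\mathrm{kfl},[a,b]}((\fX,\cM_{\fX})_{\Prism})$ follow verbatim by inserting the corresponding categories in each step, since Proposition \ref{kqsyn descent for kfl log pris crys} already delivers descent in all four flavors simultaneously. The main technical obstacle I expect is verifying that the \v{C}ech nerve of $(\fX_{\infty,\alpha},\cM_{\fX_{\infty,\alpha}})\to (\fX,\cM_{\fX})$ indeed stays inside $(\fX,\cM_{\fX})'_{\mathrm{kqsyn}}$ at every level; this requires a careful identification of the saturated self-products with objects of the form $(\fU_{\infty,\alpha'},\cM_{\fU_{\infty,\alpha'}})$ along the lines of the proof of Lemma \ref{pro-kfl cover of log prism}(2).
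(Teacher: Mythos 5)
Your proposal is correct and follows essentially the same route as the paper: you build the comparison functor from $f^{*}$-admissibility (Lemmas \ref{pro-kfl pullback of kfl pris crys is well-def}, \ref{functoriality of pro-kfl pullback of kfl pris crys}, \ref{divisible monoid and log pris site}), reduce \'{e}tale-locally to a prismatically liftable chart, and compare with the \v{C}ech nerve of $(\fX_{\infty,\alpha},\cM_{\fX_{\infty,\alpha}})\to(\fX,\cM_{\fX})$ via Proposition \ref{kqsyn descent for kfl log pris crys}, with the qrsp cofinality handled as in \cite{bs22}. The one step you leave compressed, the ``standard cofinality check,'' is precisely where the paper does real work: for each $(\fZ,\cM_{\fZ})\in(\fX,\cM_{\fX})'_{\mathrm{kqsyn}}$ one shows that $(\fX_{\infty,\alpha},\cM_{\fX_{\infty,\alpha}})\times^{\mathrm{sat}}_{(\fX,\cM_{\fX})}(\fZ,\cM_{\fZ})\to(\fZ,\cM_{\fZ})$ is a strict quasi-syntomic cover (Lemma \ref{four point lemma in nonfs case} together with Lemma \ref{sat prod of two pro-kfl cover}) and then invokes quasi-syntomic descent for non-log prismatic crystals (\cite[Proposition 7.11]{bs22}), since domination of each object by saturated products with the \v{C}ech nerve does not by itself identify the limit over the category with the limit over the nerve.
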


\begin{proof}
    For any $(f\colon (\fY,\cM_{\fY})\to (\fX,\cM_{\fX}))\in (\fX,\cM_{\fX})'_{\mathrm{kqsyn}}$, every kfl prismatic crystal on $(\fX,\cM_{\fX})$ is $f^{*}$-admissible, and the functors $f^{*}$ induce the functor in the statement by Lemma \ref{divisible monoid and log pris site}, Lemma \ref{functoriality of pro-kfl pullback of kfl pris crys}, and Lemma \ref{pro-kfl pullback of kfl crys crys is well-def}.
    
    We shall prove that this functor gives a bi-exact equivalence. By working \'{e}tale locally on $\fX$, we may assume that there exists a prismatically liftable chart $\alpha\colon \bN^{r}\to \cM_{\fX}$. By "Kummer quasi-syntomic descent for log prismatic crystals" (Proposition \ref{kqsyn descent for kfl log pris crys}), it is enough to show that the natural functor
    \[
    \varprojlim_{(\fY,\cM_{\fY})\in (\fX,\cM_{\fX})'_{\mathrm{kqsyn}}} \mathrm{Vect}(\fY_{\Prism})\to \varprojlim_{\bullet\in \Delta} \mathrm{Vect}((\fX^{(\bullet)}_{\infty,\alpha})_{\Prism})
    \]
    is a bi-exact equivalence. For any $(f\colon (\fY,\cM_{\fY})\to (\fX,\cM_{\fX}))\in (\fX,\cM_{\fX})'_{\mathrm{kqsyn}}$, the projection morphism from the saturated fiber product
    \[ (\fX_{\infty,\alpha},\cM_{\fX_{\infty,\alpha}})\times_{(\fX,\cM_{\fX})}^{\mathrm{sat}} (\fY,\cM_{\fY})\to (\fY,\cM_{\fY})
    \]
    is a quasi-syntomic cover by Lemma \ref{four point lemma in nonfs case} and Lemma \ref{sat prod of two pro-kfl cover} (2). Therefore, the claim follows from quasi-syntomic descent for (non-log) prismatic crystals (\cite[Proposition 7.11]{bs22}).
\end{proof}

\subsection{Kfl crystalline crystals}

We introduce the notion of the kfl version of crystalline crystals. Let us begin with review on fundamental properties of crystals on absolute log crystalline sites.

\begin{dfn}[Crystalline crystals on log schemes]\label{def of F-isoc} \noindent

Let $\mathrm{Crys}((X,\cM_{X})_{\mathrm{crys}})$ be the category of crystals of quasi-coherent sheaves of finite type on the log crystalline site $(X,\cM_{X})_{\mathrm{crys}}$. An object of $\mathrm{Crys}((X,\cM_{X})_{\mathrm{crys}})$ is called a \emph{crystalline crystal} on $(X,\cM_{X})$. The Frobenius morphism $F\colon (X,\cM_{X})\to (X,\cM_{X})$ induces a functor
\[
F^{*}\colon \mathrm{Crys}((X,\cM_{X})_{\mathrm{crys}})\to \mathrm{Crys}((X,\cM_{X})_{\mathrm{crys}}).
\]
Let $\mathrm{Vect}((X,\cM_{X})_{\mathrm{crys}})$ (resp. $\mathrm{Coh}((X,\cM_{X})_{\mathrm{crys}})$) denote the full subcategory of the category $\mathrm{Coh}((X,\cM_{X})_{\mathrm{crys}})$ consisting of crystals of vector bundles (resp. coherent sheaves) on $(X,\cM_{X})_{\mathrm{crys}}$. An object of $\mathrm{Vect}((X,\cM_{X})_{\mathrm{crys}})$ (resp. $\mathrm{Coh}((X,\cM_{X})_{\mathrm{crys}})$) is called a \emph{locally free crystalline crystal} (resp. \emph{coherent crystalline crystal}) on $(X,\cM_{X})$. 

Let $\mathrm{Vect}^{\varphi}((X,\cM_{X})_{\mathrm{crys}})$ be the category of pairs $(\cE,\varphi_{\cE})$ consisting of a locally free crystalline crystal $\cE$ on $(X,\cM_{X})$ and an isomorphism $\varphi_{\cE}\colon F^{*}\cE\isom \cE$ in the isogeny category $\mathrm{Crys}((X,\cM_{X})_{\mathrm{crys}})\otimes_{\bZ_{p}} \bQ_{p}$. Morphisms $(\cE_{1},\varphi_{\cE_{1}})\to (\cE_{2},\varphi_{\cE_{2}})$ are morphisms $\cE_{1}\to \cE_{2}$ that are compatible with $\varphi_{\cE_{1}}$ and $\varphi_{\cE_{2}}$. When no confusion occurs, we simply write $\cE$ for $(\cE,\varphi_{\cE})$. 

We also define similar categories by replacing $(X,\cM_{X})_{\mathrm{crys}}$ with a strict site $(X,\cM_{X})_{\mathrm{crys}}^{\mathrm{str}}$.
\end{dfn}

\begin{rem}\label{crystalline crystal as lim}
    For $\star\in \{\mathrm{str},\emptyset \}$, we have canonical bi-exact equivalences
    \begin{align*}
        \mathrm{Crys}((X,\cM_{X})^{\star}_{\mathrm{crys}})&\simeq \varprojlim_{(T,\cM_{T})\in (X,\cM_{X})^{\star}_{\mathrm{crys}}} \mathrm{FQCoh}(T), \\
        \mathrm{Vect}((X,\cM_{X})^{\star}_{\mathrm{crys}})&\simeq \varprojlim_{(T,\cM_{T})\in (X,\cM_{X})^{\star}_{\mathrm{crys}}} \mathrm{Vect}(T), \\
        \mathrm{Coh}((X,\cM_{X})^{\star}_{\mathrm{crys}})&\simeq \varprojlim_{(T,\cM_{T})\in (X,\cM_{X})^{\star}_{\mathrm{crys}}} \mathrm{Coh}(T).
    \end{align*}
\end{rem}

\begin{rem}\label{category of crystalline crystals is unchanged}
    When $(X,\cM_{X})$ is integral, it follows from the same argument as \cite[Remark 3.6]{ino25} that the restriction functors
    \begin{align*}
        \mathrm{Crys}((X,\cM_{X})_{\mathrm{crys}})&\to \mathrm{Crys}((X,\cM_{X})^{\mathrm{str}}_{\mathrm{crys}}) \\
        \mathrm{Vect}^{\star}((X, \cM_{X})_{\mathrm{crys}})&\to \mathrm{Vect}^{\star}((X, \cM_{X})^{\mathrm{str}}_{\mathrm{crys}}) \\
        \mathrm{Coh}((X, \cM_{X})_{\mathrm{crys}})&\to \mathrm{Coh}((X, \cM_{X})^{\mathrm{str}}_{\mathrm{crys}})
    \end{align*}
    give bi-exact equivalences for $\star\in \{\emptyset,\varphi \}$. 
\end{rem}

\begin{lem}\label{log crys site p-div case}
    Let $(X,\cM_{X})$ be a log scheme over $\bF_{p}$. Suppose that there is a chart $M\to \cM_{X}$ where $M$ is uniquely $p$-divisible (i.e. the monoid map $\times p\colon M\to M$ given by $m\mapsto m^{p}$ is an isomorphism). Then a natural functor
    \[
    (X,\cM_{X})^{\mathrm{str}}_{\mathrm{crys}}\to X_{\mathrm{crys}}
    \]
    sending $(U,T,\cM_{T})$ to $(U,T)$ gives an equivalence.

    In particular, natural functors
    \begin{align*}
        \mathrm{Crys}(X_{\mathrm{crys}})&\to \mathrm{Crys}((X,\cM_{X})_{\mathrm{crys}}) \\
        \mathrm{Vect}^{\star}(X_{\mathrm{crys}})&\to \mathrm{Vect}^{\star}((X,\cM_{X})_{\mathrm{crys}}) \\
        \mathrm{Coh}(X_{\mathrm{crys}})&\to \mathrm{Coh}((X,\cM_{X})_{\mathrm{crys}})
    \end{align*}
    give a bi-exact equivalence for $\star\in \{\emptyset,\varphi \}$.
\end{lem}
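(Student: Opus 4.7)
The plan is to construct an explicit quasi-inverse functor $\iota \colon X_{\mathrm{crys}} \to (X,\cM_{X})^{\mathrm{str}}_{\mathrm{crys}}$ sending a PD thickening $(U,T)$ to a strict log PD thickening $(U,T,\cM_{T})$ over $(X,\cM_{X})$, where the log structure $\cM_{T}$ is canonically determined by the chart $M \to \cM_{X}$ and the hypothesis that $M$ is uniquely $p$-divisible.

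The key step is to lift the prelog structure. The composition $M \to \cM_{X} \to \cM_{U} \to \cO_{U}$ must be lifted to a monoid map $\alpha_{T} \colon M \to \cO_{T}$, whose associated log structure will serve as $\cM_{T}$. Let $I \subset \cO_{T}$ denote the PD ideal; by assumption it contains $p$ and is PD-nilpotent, so $I^{[N]}=0$ for some $N\geq 1$. For $m \in M$, let $m_{n} \in M$ be the unique element with $m_{n}^{p^{n}} = m$, and choose any lift $\widetilde{a}_{n} \in \cO_{T}$ of $\alpha(m_{n}) \in \cO_{U}$; then $\widetilde{a}_{n}^{p^{n}}$ lifts $\alpha(m)$. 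The heart of the argument is the divided-power congruence: if $b \equiv b' \pmod{I^{[k]}}$ with $k\geq 1$, then $b^{p} \equiv (b')^{p} \pmod{I^{[k+1]}}$, obtained by expanding $(b'+\delta)^{p}$ with $\delta \in I^{[k]}$ and using both $p \in I$ and $\delta^{j} \in I^{[jk]}$. Iterating $n$ times starting from $\widetilde{a}_{n} \equiv \widetilde{a}_{n}' \pmod{I}$ yields $\widetilde{a}_{n}^{p^{n}} \equiv (\widetilde{a}_{n}')^{p^{n}} \pmod{I^{[n+1]}}$, which for $n \geq N-1$ is an equality. A similar comparison between indices $n$ and $n'>n$ (using that $\widetilde{a}_{n'}^{p^{n'-n}}$ also lifts $\alpha(m_{n})$) shows that $\alpha_{T}(m) := \widetilde{a}_{n}^{p^{n}}$ is independent of $n$ for $n\geq N-1$ and independent of the chosen lift. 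Multiplicativity is immediate from $(\widetilde{a}_{n}\widetilde{b}_{n})^{p^{n}} = \widetilde{a}_{n}^{p^{n}} \widetilde{b}_{n}^{p^{n}}$.

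Let $\cM_{T}$ denote the log structure on $T$ associated with $\alpha_{T} \colon M \to \cO_{T}$. Since its pullback to $U$ is the log structure associated with $\alpha \colon M \to \cO_{U}$, which is $\cM_{U}$, the triple $(U,T,\cM_{T})$ is a strict log PD thickening over $(X,\cM_{X})$. Functoriality in $(U,T)$ follows from the uniqueness of $\alpha_{T}$, so $\iota$ is a well-defined functor, and by construction it is quasi-inverse to the forgetful functor $(X,\cM_{X})^{\mathrm{str}}_{\mathrm{crys}} \to X_{\mathrm{crys}}$.

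Granting the equivalence of sites, the induced equivalences on the categories of (quasi-)coherent, locally free, and coherent crystals follow formally because the underlying scheme $T$, together with its structure sheaf, is preserved by $\iota$. Compatibility with Frobenius in the $\varphi$-versions is ensured because the underlying morphism of $F_{(X,\cM_{X})} \colon (X,\cM_{X}) \to (X,\cM_{X})$ coincides with the absolute Frobenius $F_{X} \colon X \to X$, so the two pullback functors $F^{*}$ agree under the equivalence. The main technical obstacle is the lift-independence of $\widetilde{a}_{n}^{p^{n}}$, which rests on the PD-nilpotence of $I$ together with the congruence $b^{p} \equiv (b')^{p} \pmod{I^{[k+1]}}$; once this is in hand, the rest of the argument is formal.
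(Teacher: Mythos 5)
Your overall strategy---lift the uniquely $p$-divisible chart $M\to\cO_{U}$ uniquely to $\cO_{T}$ and take the associated log structure to build a quasi-inverse of the forgetful functor---is exactly the intended one (the paper simply invokes the same unique-lifting argument from \cite[Lemma 5.17, Lemma A.3]{ino25}). However, there is a genuine gap at the decisive step: you assert that the PD ideal $I\subset\cO_{T}$ ``contains $p$ and is PD-nilpotent, so $I^{[N]}=0$ for some $N$.'' PD-nilpotence is not part of the definition of the absolute log crystalline site used here, and it actually fails for the objects the paper needs: since $p\in I$, the equality $I^{[N]}=0$ would force $p^{N}=N!\,\gamma_{N}(p)=0$, whereas the site contains $p$-adic formal PD thickenings such as the Breuil thickenings $(S_{R}\twoheadrightarrow R/p,\cM_{S_{R}})$ and $(A_{\mathrm{crys}}(S)\twoheadrightarrow S,\cM_{A_{\mathrm{crys}}(S)})$ evaluated on in Construction \ref{crys real by using bk prisms} and Proposition \ref{two crys real}, for which $I^{[N]}\neq 0$ for every $N$. (Even on a thickening where $p$ is nilpotent, PD-nilpotence of $I$ is not automatic, e.g.\ the augmentation ideal of a divided power polynomial algebra over $\bF_{p}$.) Consequently your congruences modulo $I^{[k]}$ never become equalities, and the definition of $\alpha_{T}(m)$ as a stabilized power $\widetilde{a}_{n}^{\,p^{n}}$ is not available.

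The argument can be repaired by running the approximation $p$-adically rather than along the PD filtration: if $a\equiv b \pmod{I}$ then $a^{p}\equiv b^{p}\pmod{pA}$ (the middle binomial terms are divisible by $p$, and $\delta^{p}=p!\,\gamma_{p}(\delta)\in pA$), and the standard induction then gives $a^{p^{n}}\equiv b^{p^{n}}\pmod{p^{n}A}$. Since the objects of the site are either $p$-nilpotent or $p$-adically complete, the sequence $\widetilde{a}_{n}^{\,p^{n}}$ is $p$-adically Cauchy (as $\widetilde{a}_{n+1}^{\,p}$ also lifts $\alpha(m_{n})$) and its limit $\alpha_{T}(m)$ is independent of all choices by $p$-adic separatedness; with this change the rest of your construction (multiplicativity, strictness, Frobenius compatibility) goes through. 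One further point you should make explicit: ``by construction it is quasi-inverse'' hides the full-faithfulness direction, for which you must lift the chart uniquely into $\cM_{T}$ itself for an arbitrary strict object $(U,T,\cM_{T})$---using that $\Gamma(T,\cM_{T})\to\Gamma(U,\cM_{U})$ is surjective with units kernel $1+I$ (cf.\ \cite[Lemma A.1]{ino25}) together with unique $p$-divisibility---and then check that a lift of a chart along a strict PD thickening is again a chart, so that $\cM_{T}$ is canonically the log structure associated with $\alpha_{T}$.
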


\begin{proof}
    This can be proved in the same way as \cite[Lemma 5.17]{ino25}.
\end{proof}

\begin{dfn}[Kfl crystalline crystals]
    Let $(X,\cM_{X})$ be an fs log scheme over $\bF_{p}$. We define exact categories $\mathrm{Crys}_{\mathrm{kfl}}((X,\cM_{X})_{\mathrm{crys}})$,  $\mathrm{Vect}_{\mathrm{kfl}}((X,\cM_{X})_{\mathrm{crys}})$, and $\mathrm{Coh}((X,\cM_{X})_{\mathrm{crys}})$ by
    \begin{align*}
        \mathrm{Crys}_{\mathrm{kfl}}((X,\cM_{X})_{\mathrm{crys}})&\coloneqq \varprojlim_{(T,\cM_{T})\in (X,\cM_{X})^{\mathrm{str}}_{\mathrm{crys}}} \mathrm{FQCoh}_{\mathrm{kfl}}(T,\cM_{T}), \\
        \mathrm{Vect}_{\mathrm{kfl}}((X,\cM_{X})_{\mathrm{crys}})&\coloneqq \varprojlim_{(T,\cM_{T})\in (X,\cM_{X})^{\mathrm{str}}_{\mathrm{crys}}} \mathrm{Vect}_{\mathrm{kfl}}(T,\cM_{T}), \\
        \mathrm{Coh}_{\mathrm{kfl}}((X,\cM_{X})_{\mathrm{crys}})&\coloneqq \varprojlim_{(T,\cM_{T})\in (X,\cM_{X})^{\mathrm{str}}_{\mathrm{crys}}} \mathrm{Coh}_{\mathrm{kfl}}(T,\cM_{T}).
    \end{align*}
The categories $\mathrm{Vect}_{\mathrm{kfl}}((X,\cM_{X})_{\mathrm{crys}})$ and $\mathrm{Coh}((X,\cM_{X})_{\mathrm{crys}})$ are full subcategories of the category $\mathrm{Crys}_{\mathrm{kfl}}((X,\cM_{X})_{\mathrm{crys}})$. An object of $\mathrm{Crys}_{\mathrm{kfl}}((X,\cM_{X})_{\mathrm{crys}})$ (resp. $\mathrm{Vect}_{\mathrm{kfl}}((X,\cM_{X})_{\mathrm{crys}})$) (resp. $\mathrm{Coh}_{\mathrm{kfl}}((X,\cM_{X})_{\mathrm{crys}})$) is called a \emph{kfl crystalline crystal} (resp. \emph{locally free kfl crystalline crystal})(resp. \emph{coherent kfl crystalline crystal}) on $(X,\cM_{X})$. For a morphism $f\colon (X,\cM_{X})\to (Y,\cM_{Y})$ of fs log schemes over $\bF_{p}$, we have a functor
    \[
    f^{*}\colon \mathrm{Crys}_{\mathrm{kfl}}((Y,\cM_{Y})_{\mathrm{crys}})\to \mathrm{Crys}_{\mathrm{kfl}}((X,\cM_{X})_{\mathrm{crys}})
    \]
induced from the morphism of sites $(Y,\cM_{Y})_{\mathrm{crys}}^{\mathrm{str}}\to (X,\cM_{X})_{\mathrm{crys}}^{\mathrm{str}}$. In particular, the Frobenius morphism $F\colon (X,\cM_{X})\to (X,\cM_{X})$ induces
    \[
    F^{*}\colon \mathrm{Coh}_{\mathrm{kfl}}((X,\cM_{X})_{\mathrm{crys}})\to \mathrm{Coh}_{\mathrm{kfl}}((X,\cM_{X})_{\mathrm{crys}}).
    \]

Let $\mathrm{Vect}^{\varphi}_{\mathrm{kfl}}((X,\cM_{X})_{\mathrm{crys}})$ be the category of pairs $(\cE,\varphi_{\cE})$ consisting of a locally free kfl crystalline crystal $\cE$ and an isomorphism $\varphi_{\cE}\colon F^{*}\cE\isom \cE$ in the isogeny category $\mathrm{Crys}_{\mathrm{kfl}}((X,\cM_{X})_{\mathrm{crys}})\otimes_{\bZ_{p}} \bQ_{p}$. Morphisms are ones of isocrystals that are compatible with Frobenius isomorphisms.
\end{dfn}

\begin{rem}
    We can equip the log crystalline site with kfl topology unlike the log prismatic site (cf. Remark \ref{gap in wz}), and the notion of kfl crystalline crystals coincides with one of coherent sheaves on this site. However, this approach is not considered in this work.
\end{rem}

\begin{lem}\label{fully faithfulness for kfl crys crys}
    Let $(X,\cM_{X})$ be an fs log scheme over $\bF_{p}$. Then, for $\star\in \{\emptyset,\varphi\}$, there are natural fully faithful functors:
\begin{align*}
    \mathrm{Crys}((X,\cM_{X})_{\mathrm{crys}})&\to \mathrm{Crys}_{\mathrm{kfl}}((X,\cM_{X})_{\mathrm{crys}}), \\
    \mathrm{Vect}^{\star}((X,\cM_{X})_{\mathrm{crys}})&\to \mathrm{Vect}^{\star}_{\mathrm{kfl}}((X,\cM_{X})_{\mathrm{crys}}), \\
    \mathrm{Coh}((X,\cM_{X})_{\mathrm{crys}})&\to \mathrm{Coh}_{\mathrm{kfl}}((X,\cM_{X})_{\mathrm{crys}}).
    \end{align*}
\end{lem}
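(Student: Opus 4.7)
The plan is to reduce to the pointwise fully faithfulness of the embedding $\iota\colon \mathrm{QCoh}(T) \hookrightarrow \mathrm{QCoh}_{\mathrm{kfl}}(T,\cM_T)$ established in Lemma~\ref{fully faithfulness for kfl vect bdle} and then glue over the strict absolute log crystalline site. Since $(X,\cM_X)$ is fs, hence integral, Remark~\ref{category of crystalline crystals is unchanged} gives bi-exact equivalences between each classical category in the statement and its strict counterpart, so it suffices to produce fully faithful functors out of $\mathrm{Crys}((X,\cM_X)^{\mathrm{str}}_{\mathrm{crys}})$, $\mathrm{Vect}^{\star}((X,\cM_X)^{\mathrm{str}}_{\mathrm{crys}})$ and $\mathrm{Coh}((X,\cM_X)^{\mathrm{str}}_{\mathrm{crys}})$. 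Using the limit descriptions in Remark~\ref{crystalline crystal as lim} together with the definitions of $\mathrm{Crys}_{\mathrm{kfl}}$, $\mathrm{Vect}_{\mathrm{kfl}}$ and $\mathrm{Coh}_{\mathrm{kfl}}$, I define the desired functors by applying $\iota$ termwise to the evaluation $\cE_{(T,\cM_T)}$ at each strict log PD thickening; the commutative square displayed immediately after Definition~\ref{def of iota} shows that $\iota$ intertwines the pullback functors along the transition maps in $(X,\cM_X)^{\mathrm{str}}_{\mathrm{crys}}$, so the system $\{\iota(\cE_{(T,\cM_T)})\}$ indeed assembles into a kfl crystal.

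Fully faithfulness is then inherited from the termwise statement. By Lemma~\ref{fully faithfulness for kfl vect bdle}, each $\iota\colon \mathrm{QCoh}(T)\to \mathrm{QCoh}_{\mathrm{kfl}}(T,\cM_T)$ is fully faithful, and this is preserved when restricted to the full subcategories $\mathrm{Vect}(T)$, $\mathrm{FQCoh}(T)$ and (when $T$ is a scheme, which it is) $\mathrm{Coh}(T)$. A $2$-limit of fully faithful functors between the corresponding limit categories is again fully faithful, yielding the three $\star=\emptyset$ functors.

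Finally, to upgrade $\mathrm{Vect}$ to $\mathrm{Vect}^{\varphi}$, I note that the Frobenius pullback $F^*$ on either side is an instance of the general pullback of (kfl) locally free crystalline crystals, so the compatibility of $\iota$ with pullbacks gives a canonical isomorphism $F^*\iota(\cE)\cong \iota(F^*\cE)$ in the isogeny category. Sending $(\cE,\varphi_\cE)$ to $(\iota(\cE),\iota(\varphi_\cE))$ therefore defines the functor on $\mathrm{Vect}^{\varphi}$, and since morphisms in $\mathrm{Vect}^{\varphi}_{\mathrm{kfl}}$ are exactly those morphisms in $\mathrm{Vect}_{\mathrm{kfl}}$ commuting with Frobenius, fully faithfulness on $\mathrm{Vect}$ propagates to $\mathrm{Vect}^{\varphi}$.

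There is no serious obstacle: every ingredient has already been developed in the subsection on kfl vector bundles. The only point demanding minor care is the verification that $\iota$ commutes with the transition pullbacks of the crystalline site, but this is exactly the diagram placed right after Definition~\ref{def of iota}.
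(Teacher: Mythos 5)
Your proposal is correct and follows essentially the same route as the paper: reduce to the strict site via Remark \ref{category of crystalline crystals is unchanged}, use the limit description of Remark \ref{crystalline crystal as lim} to apply $\iota$ termwise, and deduce fully faithfulness (including the $\varphi$-variant) from Lemma \ref{fully faithfulness for kfl vect bdle}. The extra details you supply (compatibility of $\iota$ with transition pullbacks and with Frobenius) are exactly what the paper leaves implicit.
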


\begin{proof}
    Due to Remark \ref{category of crystalline crystals is unchanged}, we may replace $(X,\cM_{X})_{\mathrm{crys}}$ in source categories with the strict site $(X,\cM_{X})_{\mathrm{crys}}^{\mathrm{str}}$. Then, by Remark \ref{crystalline crystal as lim}, the functor $\iota$ in Definition \ref{def of iota}  induces the desired functors. The remaining assertions follow from Lemma \ref{fully faithfulness for kfl vect bdle}.
\end{proof}

\begin{dfn}[Root morphisms, (cf. {\cite[\S 7.1]{lau18}})]\label{def of root map} \noindent

\begin{enumerate}
    \item Let $R\to S$ be a ring map. We say that $S$ is an \emph{extraction of a $p$-th root} (resp. \emph{extraction of a root}) if $S\cong R[a^{1/p}]\coloneqq R[x]/(x^{p}-a)$ for some $a\in R$ (resp. $S\cong R[a^{1/n}]\coloneqq R[x]/(x^{n}-a)$ for some $a\in R$ and integer $n\geq 1$). More generally, the map $R\to S$ is called a \emph{$p$-root extension} (resp. \emph{root extension}) if we can write $R\to S$ as the transfinite composition of extractions of a $p$-th root (resp. extractions of a root).
    \item Let $f\colon X\to Y$ be a morphism of schemes. We say that $f$ is a \emph{$p$-root morphism} (resp. \emph{root morphism}) if, for each $x\in X$, there exist an affine \'{e}tale neighborhood $U$ of $x$ and an affine \'{e}tale neighborhood $V$ of $f(x)$ such that $f$ restricts to a morphism $U\to V$ corresponding to the $p$-root extension ring map (resp. root extension ring map). A family of morphisms $\{U_{i}\to X\}_{i\in I}$ is called a \emph{$p$-root covering} (resp. \emph{root covering}) if $U_{i}\to X$ is a $p$-root morphism (resp. root morphism) for each $i\in I$ and the induced morphism $\displaystyle \bigsqcup_{i\in I} U_{i}\to X$ is a quasi-compact surjection. The Grothendieck topology on the category of schemes  defined by $p$-root coverings (resp. root coverings) is called \emph{$p$-root topology} (resp. \emph{root topology}).
\end{enumerate}
\end{dfn}

\begin{lem}\label{root descent for crys crys}
    Let $f\colon X\to Y$ be a quasi-compact root morphism of schemes over $\bF_{p}$ and $(U,T)$ be an affine PD-thickening in $Y_{\mathrm{crys}}$. Then there exist an affine PD-thickening $(U',T')\in X_{\mathrm{crys}}$ and a Cartesian flat map $(U',T')\to (U,T)$ of affine PD-thickenings such that the natural map $U'\to X\times_{Y} U$ is an \'{e}tale covering. In particular, for $\star\in \{\emptyset,\varphi\}$, the fibered categories over the category of schemes over $\bF_{p}$ given by 
    \[
    X\mapsto \mathrm{Crys}(X_{\mathrm{crys}}) \ (\text{resp.} \ \mathrm{Vect}^{\star}(X_{\mathrm{crys}})) \ (\text{resp.} \ \mathrm{Coh}(X_{\mathrm{crys}})) 
    \]
    are $2$-sheaves for root topology. 
\end{lem}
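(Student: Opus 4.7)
The plan is to construct the Cartesian flat cover of PD-thickenings $(U',T')\to (U,T)$ explicitly, and then to deduce the descent assertion from this by combining it with faithfully flat descent for quasi-coherent sheaves (resp.\ vector bundles, resp.\ coherent sheaves) on affine schemes together with \'{e}tale descent.

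By topological invariance of the \'{e}tale site of a thickening, I may work \'{e}tale-locally on $X$ and $Y$ and so reduce to the case where $f$ is induced by a root extension of rings $A\to B$. A root extension is by definition a transfinite composition of extractions of single roots, and since the construction of $T'$ given below is functorial and compatible with filtered colimits of flat ring maps, it suffices to treat the basic case $B=A[a^{1/n}]=A[x]/(x^n-a)$. Writing $(U,T)=(\Spec(R),\Spec(T))$ with PD-ideal $J\subset T$ and $R=T/J$, I would lift the image of $a$ in $R$ to some $\widetilde{a}\in T$ and set $T'\coloneqq T[x]/(x^n-\widetilde{a})$, which is finite free over $T$. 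By flatness, the PD-structure on $J$ extends uniquely to a PD-structure on $JT'\subset T'$ (see e.g.\ \cite[Tag 07H8]{sp24}); since $T'/JT'=R[x]/(x^n-a)=R\otimes_A B$, the pair $(U',T')\coloneqq(\Spec(R\otimes_A B),\Spec(T'))$ is an affine PD-thickening in $X_{\mathrm{crys}}$ equipped with the desired Cartesian flat map to $(U,T)$. Re-instating the \'{e}tale localization on $X$ turns the equality $U'=X\times_Y U$ into the statement that $U'\to X\times_Y U$ is an \'{e}tale covering.

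For the sheaf assertion, the quasi-compactness of root coverings reduces me to a single quasi-compact root morphism $f\colon X\to Y$ with \v{C}ech nerve $X^{(\bullet)}$ over $Y$. For each $(U,T)\in Y_{\mathrm{crys}}$, applying the preceding construction inductively to $X^{(n)}\to Y$ produces a compatible cosimplicial affine PD-thickening $(U'^{(\bullet)},T'^{(\bullet)})$ over $(U,T)$ whose underlying cosimplicial ring map $T\to T'^{(\bullet)}$ is the \v{C}ech nerve of a faithfully flat morphism. A descent datum for crystals (resp.\ locally free or coherent crystals, resp.\ equipped with an $F$-isomorphism) on $X^{(\bullet)}_{\mathrm{crys}}$ then restricts to an fpqc descent datum on $T\to T'^{(\bullet)}$ in the relevant category of sheaves, which descends canonically to a value on $(U,T)$; functoriality in $(U,T)$ follows from the naturality of the construction of $T'$. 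The main obstacle I anticipate is controlling the PD-structure through both the transfinite composition of extractions of roots and the \'{e}tale-localization step, but both are resolved by the uniqueness of the extension of a PD-structure along a flat ring map and by the compatibility of PD-structures with filtered colimits.
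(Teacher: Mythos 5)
Your proposal is correct and follows essentially the same route as the paper: reduce by base change to the affine thickening, use invariance of the \'{e}tale site along the nil-immersion $U\hookrightarrow T$ to lift the \'{e}tale localization, and handle a root extension by adjoining a root of a lifted element so that flatness extends the PD-structure --- this explicit construction of $T'=T[x]/(x^{n}-\widetilde{a})$ together with the transfinite/colimit step is precisely the argument of \cite[Lemma 7.5]{lau18} that the paper cites, and the descent assertion is then deduced, as you do, from fpqc descent along $T\to T'^{(\bullet)}$. The only details to supply are that quasi-compactness of $f$ and affineness of $U$ let you take finitely many affine \'{e}tale pieces and their disjoint union so as to obtain the single \emph{affine} $(U',T')$ demanded by the statement, and that in the transfinite induction the element whose root is extracted at a successor stage lies in the intermediate ring, so its lift to the previously constructed thickening must be chosen stage by stage (the limit stages being the genuine filtered-colimit step), with independence of these choices checked by a common refinement when verifying functoriality in $(U,T)$.
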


\begin{proof}
    Replacing $f$ with $X\times_{Y} U\to U$, we may assume that $Y=U$ (in particular, $Y$ is affine). Take an \'{e}tale covering $\{V_{i}\to X\}_{1\leq i\leq n}$ of $X$ and a family of \'{e}tale maps $\{W_{i}\to Y\}_{1\leq i\leq n}$ such that $V_{i}$ and $W_{i}$ are affine and $f$ restricts to a map $V_{i}\to W_{i}$ corresponding to a root extension ring map for each $1\leq i\leq n$. The \'{e}tale map $W_{i}\to U$ extends uniquely to an \'{e}tale map $T_{i}\to T$, and the closed immersion $W_{i}\hookrightarrow T_{i}$ has a unique PD-structure extending the PD-structure of $U\hookrightarrow T$. Since the map $V_{i}\to W_{i}$ corresponds to a root extension ring map, this extends to a Cartesian flat map of affine PD-thickenings
    \[
    (V_{i}\hookrightarrow T'_{i})\to (W_{i}\hookrightarrow T_{i})
    \]
    by the same argument as \cite[Lemma 7.5]{lau18}. Then, if we set
    \[
    (U',T')\coloneqq (\bigsqcup_{i=1}^{n} V_{i},\bigsqcup_{i=1}^{n} T'_{i}),
    \]
    the assertion holds.
\end{proof}

To formulate ``Kummer root descent for kfl crystalline crystals'' (Proposition \ref{kpr descent for kfl crystalline crystals}), we need to generalize the functoriality of kfl crystalline crystals to non-fs bases. Let $f\colon (Y,\cM_{Y})\to (X,\cM_{X})$ be a morphism from a (not necessarily fs) saturated log scheme $(Y,\cM_{Y})$ to an fs log formal scheme $(X,\cM_{X})$ over $\bF_{p}$.

\begin{dfn}
    For a kfl crystalline crystal $\cE$ on $(X,\cM_{X})$, we say that $\cE$ is \emph{$f^{*}$-admissible} if, for any $(T,\cM_{T})\in (Y,\cM_{Y})_{\mathrm{crys}}^{\mathrm{str}}$, the kfl coherent sheaf $\cE_{(T,\cN_{T})}$ is $\pi^{*}$-admissible, where the log PD-thickening $(T,\cN_{T})\in (X,\cM_{X})_{\mathrm{crys}}^{\mathrm{str}}$ is the image of $(T,\cM_{T})$ via the functor $(X,\cM_{X})_{\mathrm{crys}}\to (X,\cM_{X})_{\mathrm{crys}}^{\mathrm{str}}$ in Remark \ref{category of crystalline crystals is unchanged} and $\pi\colon (T,\cM_{T})\to (T,\cN_{T})$ is the natural map. Let $\mathrm{Crys}^{f^{*}\text{-adm}}_{\mathrm{kfl}}((X,\cM_{X})_{\mathrm{crys}})$ be the full subcategory of the category $\mathrm{Crys}_{\mathrm{kfl}}((X,\cM_{X})_{\mathrm{crys}})$ consisting of $f^{*}$-admissible objects. We define an exact functor 
    \[
    f^{*}\colon \mathrm{Crys}^{f^{*}\text{-adm}}_{\mathrm{kfl}}((X,\cM_{X})_{\mathrm{crys}})\to \mathrm{Crys}((Y,\cM_{Y})_{\mathrm{crys}}^{\mathrm{str}})
    \]
    as follows. Let $\cE\in \mathrm{Crys}^{f^{*}\text{-adm}}_{\mathrm{kfl}}((X,\cM_{X})_{\mathrm{crys}})$. For $(T,\cM_{T})\in (Y,\cM_{Y})_{\mathrm{crys}}^{\mathrm{str}}$, we define $(f^{*}\cE)_{(T,\cM_{T})}\in \mathrm{FQCoh}(T)$ by
    \[ (f^{*}\cE)_{(T,\cM_{T})}\coloneqq\pi^{*}\cE_{(T,\cN_{T})},
    \]
    and a map $g\colon (T',\cM_{T'})\to (T,\cM_{T})$ in $(Y,\cM_{Y})_{\mathrm{crys}}^{\mathrm{str}}$ induces an isomorphism
    \[
    g^{*}((f^{*}\cE)_{(T,\cM_{T})})\cong (f^{*}\cE)_{(T',\cM_{T'})}
    \]
    by Lemma \ref{functoriality of pro-kfl pullback of kfl vect bdles}. As a result, we obtain $f^{*}\cE\in \mathrm{Crys}((Y,\cM_{Y})_{\mathrm{crys}}^{\mathrm{str}})$.

    We define categories
    \begin{align*}
        \mathrm{Vect}^{f^{*}\text{-adm}}_{\mathrm{kfl}}((X,\cM_{X})_{\mathrm{crys}})&\coloneqq \mathrm{Vect}_{\mathrm{kfl}}((X,\cM_{X})_{\mathrm{crys}})\cap \mathrm{Crys}^{f^{*}\text{-adm}}_{\mathrm{kfl}}((X,\cM_{X})_{\mathrm{crys}}), \\
        \mathrm{Coh}^{f^{*}\text{-adm}}_{\mathrm{kfl}}((X,\cM_{X})_{\mathrm{crys}})&\coloneqq \mathrm{Coh}_{\mathrm{kfl}}((X,\cM_{X})_{\mathrm{crys}})\cap \mathrm{Crys}^{f^{*}\text{-adm}}_{\mathrm{kfl}}((X,\cM_{X})_{\mathrm{crys}}),
    \end{align*}
    and the functor $f^{*}$ defined in the previous paragraph restricts to functors
    \begin{align*}
        f^{*}\colon \mathrm{Vect}^{f^{*}\text{-adm}}_{\mathrm{kfl}}((X,\cM_{X})_{\mathrm{crys}})\to \mathrm{Vect}((Y,\cM_{Y})_{\mathrm{crys}}^{\mathrm{str}}), \\
        f^{*}\colon \mathrm{Coh}^{f^{*}\text{-adm}}_{\mathrm{kfl}}((X,\cM_{X})_{\mathrm{crys}})\to \mathrm{Coh}((Y,\cM_{Y})_{\mathrm{crys}}^{\mathrm{str}}).
    \end{align*}

Let $\mathrm{Vect}^{\varphi,f^{*}\text{-adm}}_{\mathrm{kfl}}((X,\cM_{X})_{\mathrm{crys}})$ denote the full subcategory of $\mathrm{Vect}^{\varphi}_{\mathrm{kfl}}((X,\cM_{X})_{\mathrm{crys}})$ consisting of an object whose underlying locally free kfl crystalline crystal is $f^{*}$-admissible. For an object $(\cE,\varphi_{\cE})\in \mathrm{Vect}^{\varphi,f^{*}\text{-adm}}_{\mathrm{kfl}}((X,\cM_{X})_{\mathrm{crys}})$, a locally free kfl crystalline crystal $F^{*}\cE$ is $f^{*}$-admissible and there exists an isomorphism
\[
F^{*}f^{*}\cE[1/p]\cong f^{*}F^{*}\cE[1/p]\stackrel{f^{*}(\varphi_{\cE})}{\isom} f^{*}\cE[1/p]
\]
by Lemma \ref{functoriality of pro-kfl pullback of kfl crys crys} below. Hence, the functor $f^{*}$ gives a functor
\[
f^{*}\colon \mathrm{Vect}^{\varphi,f^{*}\text{-adm}}_{\mathrm{kfl}}((X,\cM_{X})_{\mathrm{crys}})\to \mathrm{Vect}^{\varphi}((Y,\cM_{Y})_{\mathrm{crys}}^{\mathrm{str}}).
\]
\end{dfn}

\begin{lem}\label{crys property for kfl cry crys}
    Let $\cE$ be an $f^{*}$-admissible crystalline crystal on $(X,\cM_{X})$. Consider $(T,\cM_{T})\in (X,\cM_{X})_{\mathrm{crys}}^{\mathrm{str}}$, $(T',\cM_{T'})\in (Y,\cM_{Y})_{\mathrm{crys}}^{\mathrm{str}}$, and a map of log PD-thickenings $p\colon (T',\cM_{T'})\to (T,\cM_{T})$ in $(X,\cM_{X})_{\mathrm{crys}}$. Then $\cE_{(T,\cM_{T})}$ is $p^{*}$-admissible, and we have a natural isomorphism
    \[
    p^{*}\cE_{(T,\cM_{T})}\isom (f^{*}\cE)_{(T',\cM_{T'})}.
    \]
\end{lem}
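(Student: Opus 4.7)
The plan is to mimic the proof of Lemma~\ref{crys property for kfl pris crys} verbatim, replacing log prisms with log PD-thickenings; the two arguments are formally identical, with only the direction of the natural strictification map changed to reflect that morphisms of log PD-thickenings reverse the direction of the underlying log-structure maps.

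First I would factor $p\colon (T',\cM_{T'})\to (T,\cM_{T})$ in $(X,\cM_{X})_{\mathrm{crys}}$ as
\[
(T',\cM_{T'})\xrightarrow{r}(T',\cN_{T'})\xrightarrow{q}(T,\cM_{T}),
\]
where $(T',\cN_{T'})$ is the strictification of $(T',\cM_{T'})$ viewed as an object of $(X,\cM_{X})_{\mathrm{crys}}$ (so $\cN_{T'}$ is the pullback of $\cM_{X}$ to $T'$), $r$ is the natural map $\pi$ appearing in the definition of $f^{*}$-admissibility for kfl crystalline crystals, and $q$ is a morphism in $(X,\cM_{X})_{\mathrm{crys}}^{\mathrm{str}}$. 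This factorization is forced: since $\cM_{T}$ equals the pullback of $\cM_{X}$ to $T$, the log-structure component of $p$ factors through $\cN_{T'}$.

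Next, the crystal property along the strict map $q$ gives an identification $\cE_{(T',\cN_{T'})}\cong q^{*}\cE_{(T,\cM_{T})}$ of kfl coherent sheaves on $(T',\cN_{T'})$. By the $f^{*}$-admissibility of $\cE$ applied to the object $(T',\cM_{T'})\in (Y,\cM_{Y})_{\mathrm{crys}}^{\mathrm{str}}$, the kfl coherent sheaf $\cE_{(T',\cN_{T'})}$ is $r^{*}$-admissible. Hence $q^{*}\cE_{(T,\cM_{T})}$ is $r^{*}$-admissible, and the coherent-sheaf analogue of Lemma~\ref{functoriality of pro-kfl pullback of kfl vect bdles}(2) then yields that $\cE_{(T,\cM_{T})}$ itself is $(qr)^{*}=p^{*}$-admissible.

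Finally, the desired natural isomorphism arises by chaining the identifications:
\[
p^{*}\cE_{(T,\cM_{T})}\cong r^{*}q^{*}\cE_{(T,\cM_{T})}\cong r^{*}\cE_{(T',\cN_{T'})}\cong (f^{*}\cE)_{(T',\cM_{T'})},
\]
where the first isomorphism uses Lemma~\ref{functoriality of pro-kfl pullback of kfl vect bdles}(2), the middle one is the crystal property along the strict map $q$, and the last is literally the definition of $(f^{*}\cE)_{(T',\cM_{T'})}$ in terms of the strictification. No genuine obstacle is expected; the main thing to be careful about is the orientation of the natural map $\pi$ in the crystalline convention, and that the pullback functor $q^{*}$ along a strict map of log PD-thickenings commutes with the classical pullback so that the $r^{*}$-admissibility passes cleanly between $\cE_{(T',\cN_{T'})}$ and $q^{*}\cE_{(T,\cM_{T})}$.
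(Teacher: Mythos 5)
Your proposal is correct and is essentially the paper's own argument: the paper proves this lemma by citing the proof of Lemma \ref{crys property for kfl pris crys}, which is exactly the factorization of $p$ through the strictification, the crystal property along the strict map, $f^{*}$-admissibility giving $r^{*}$-admissibility, and Lemma \ref{functoriality of pro-kfl pullback of kfl vect bdles}(2) to conclude $p^{*}$-admissibility and the chain of isomorphisms. Your remark about the orientation of the strictification map in the crystalline convention is the only adjustment needed, and you handle it correctly.
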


\begin{proof}
    This follows from the same argument as the proof of Lemma \ref{crys property for kfl pris crys}.
\end{proof}

\begin{lem}\label{functoriality of pro-kfl pullback of kfl crys crys}
The following statements are true.
    \begin{enumerate}
        \item Let $(Z,\cM_{Z})$ be a saturated log scheme and $g\colon (Z,\cM_{Z})\to (Y,\cM_{Y})$ be a morphism. Let $\cE\in \mathrm{Crys}_{\mathrm{kfl}}((X,\cM_{X})_{\mathrm{crys}})$. If $\cE$ is $f^{*}$-admissible, then $\cE$ is also $(fg)^{*}$-admissible and $(fg)^{*}\cE\cong g^{*}f^{*}\cE$.
        \item Let $(W,\cM_{W})$ be an fs log scheme over $\bF_{p}$ and $h\colon (X,\cM_{X})\to (W,\cM_{W})$ be a morphism. Let $\cE\in \mathrm{Crys}_{\mathrm{kfl}}((W,\cM_{W})_{\mathrm{crys}})$. Then $\cE$ is $(hf)^{*}$-admissible if and only if $h^{*}\cE$ is $f^{*}$-admissible. Moreover, in this case, we have $(hf)^{*}\cE\cong f^{*}(h^{*}\cE)$.
    \end{enumerate}
\end{lem}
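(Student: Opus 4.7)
The plan is to reduce both parts to the corresponding statement for kfl coherent sheaves, namely Lemma \ref{functoriality of pro-kfl pullback of kfl vect bdles}, applied pointwise at each object of the strict absolute log crystalline site. The mechanism is entirely formal once the right diagram of strict-ifications of log PD-thickenings is set up, so no new geometric input is needed beyond what has already been proved.

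For part $(1)$, I would fix $(T,\cM_{T})\in (Z,\cM_{Z})_{\mathrm{crys}}^{\mathrm{str}}$ and apply the right adjoint of Remark \ref{category of crystalline crystals is unchanged} along the maps $Z\to Y\to X$. This yields a diagram of log PD-thickenings
\[
(T,\cM_{T})\xrightarrow{\rho}(T,\cM'_{T})\xrightarrow{\pi}(T,\cN'_{T}),
\]
with $(T,\cM'_{T})\in (Y,\cM_{Y})_{\mathrm{crys}}^{\mathrm{str}}$ and $(T,\cN'_{T})\in (X,\cM_{X})_{\mathrm{crys}}^{\mathrm{str}}$, and the composition $\pi\rho$ is precisely the canonical map attached to $(T,\cM_{T})$ for the composite $fg$. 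By $f^{*}$-admissibility of $\cE$, the kfl coherent sheaf $\cE_{(T,\cN'_{T})}$ is $\pi^{*}$-admissible. Lemma \ref{functoriality of pro-kfl pullback of kfl vect bdles}$(1)$ (applied to kfl coherent sheaves) then shows that $\cE_{(T,\cN'_{T})}$ is $(\pi\rho)^{*}$-admissible and that $(\pi\rho)^{*}\cE_{(T,\cN'_{T})}\cong \rho^{*}\pi^{*}\cE_{(T,\cN'_{T})}$. Unwinding the definitions, the right-hand side equals $\rho^{*}(f^{*}\cE)_{(T,\cM'_{T})}=(g^{*}f^{*}\cE)_{(T,\cM_{T})}$, while the left-hand side equals $((fg)^{*}\cE)_{(T,\cM_{T})}$. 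Since these identifications are natural in $(T,\cM_{T})$, they assemble into the desired isomorphism $(fg)^{*}\cE\cong g^{*}f^{*}\cE$.

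For part $(2)$, I would proceed analogously. For $(T,\cM_{T})\in (Y,\cM_{Y})_{\mathrm{crys}}^{\mathrm{str}}$, the two sequences of adjoints produce maps
\[
(T,\cM_{T})\xrightarrow{\pi}(T,\cN_{T})\xrightarrow{\sigma}(T,\cN''_{T}),
\]
with $(T,\cN_{T})\in (X,\cM_{X})_{\mathrm{crys}}^{\mathrm{str}}$ and $(T,\cN''_{T})\in (W,\cM_{W})_{\mathrm{crys}}^{\mathrm{str}}$, and $\sigma\pi$ is the canonical map for $hf$. By definition, $(h^{*}\cE)_{(T,\cN_{T})}=\sigma^{*}\cE_{(T,\cN''_{T})}$. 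Thus asking that $\cE_{(T,\cN''_{T})}$ be $(\sigma\pi)^{*}$-admissible is, by Lemma \ref{functoriality of pro-kfl pullback of kfl vect bdles}$(2)$, equivalent to asking that $\sigma^{*}\cE_{(T,\cN''_{T})}=(h^{*}\cE)_{(T,\cN_{T})}$ be $\pi^{*}$-admissible. This equivalence, as $(T,\cM_{T})$ varies, is exactly the equivalence between $(hf)^{*}$-admissibility of $\cE$ and $f^{*}$-admissibility of $h^{*}\cE$. The same lemma gives $(\sigma\pi)^{*}\cE_{(T,\cN''_{T})}\cong \pi^{*}\sigma^{*}\cE_{(T,\cN''_{T})}$, which globalizes to the natural isomorphism $(hf)^{*}\cE\cong f^{*}(h^{*}\cE)$.

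There is no real obstacle here: the only thing to verify is that the relevant compositions of adjoint maps really do produce the canonical strictifications attached to $fg$ and $hf$, and this is immediate from the universal property of the right adjoint in Remark \ref{category of crystalline crystals is unchanged}. The extension of Lemma \ref{functoriality of pro-kfl pullback of kfl vect bdles} from vector bundles to coherent sheaves was already stated there, so it can be invoked directly.
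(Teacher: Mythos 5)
Your argument is correct and is essentially the paper's own proof: the paper disposes of this lemma with the single line ``This follows from Lemma \ref{functoriality of pro-kfl pullback of kfl vect bdles}'', and your write-up simply makes explicit the pointwise evaluation on objects of the strict log crystalline sites together with the (routine) compatibility of the canonical strictifications for $fg$ and $hf$. No discrepancy to report.
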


\begin{proof}
    This follows from Lemma \ref{functoriality of pro-kfl pullback of kfl vect bdles}. 
\end{proof}

\begin{lem}\label{pro-kfl pullback of kfl crys crys is well-def}
    Suppose that there exist a torsion-free fs monoid $P$, a chart $\alpha\colon P\to \cM_{X}$, and a chart $P_{\bQ_{\geq 0}}\to \cM_{Y}$ such that the following diagram is commutative:
    \[
    \begin{tikzcd}
        P \ar[r] \ar[d] & f^{-1}\cM_{X} \ar[d] \\
        P_{\bQ_{\geq 0}} \ar[r] & \cM_{Y}.
    \end{tikzcd}
    \]
    Then every kfl crystalline crystal $\cE$ on $(X,\cM_{X})$ is $f^{*}$-admissible.
\end{lem}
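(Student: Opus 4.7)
The plan is to mirror the proof of Lemma \ref{pro-kfl pullback of kfl pris crys is well-def}, replacing log prisms by log PD-thickenings. Fix a kfl crystalline crystal $\cE$ on $(X,\cM_{X})$, a strict log PD-thickening $(T,\cM_{T})\in (Y,\cM_{Y})^{\mathrm{str}}_{\mathrm{crys}}$ with underlying closed immersion $(U,\cM_{U})\hookrightarrow (T,\cM_{T})$ and structure morphism $h\colon (U,\cM_{U})\to (Y,\cM_{Y})$, and let $(T,\cN_{T})\in (X,\cM_{X})^{\mathrm{str}}_{\mathrm{crys}}$ be its image under the right adjoint of Remark \ref{category of crystalline crystals is unchanged}, with natural morphism $\pi\colon (T,\cM_{T})\to (T,\cN_{T})$. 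We must show that $\cE_{(T,\cN_{T})}$ is $\pi^{*}$-admissible, for which it suffices, by the coherent-sheaf form of Lemma \ref{pro-kfl pullback of kfl vect bdles is well-def}, to produce charts $P\to \cN_{T}$ and $P_{\bQ_{\geq 0}}\to \cM_{T}$ fitting into the appropriate commutative square compatible with $\pi$ and with the maps $P\to P_{\bQ_{\geq 0}}$ and $\cN_{T}\to \cM_{T}$.

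The chart $P\to \cN_{T}$ is immediate from the strictness of the structure morphism $(T,\cN_{T})\to (X,\cM_{X})$: composing $\alpha\colon P\to \cM_{X}$ with global sections yields it. For $P_{\bQ_{\geq 0}}\to \cM_{T}$, we must lift the composite $P_{\bQ_{\geq 0}}\to \Gamma(Y,\cM_{Y})\to \Gamma(U,\cM_{U})$ along the restriction $\Gamma(T,\cM_{T})\to \Gamma(U,\cM_{U})$. Because the log PD-thickening is strict, the map $\cM_{T}\to \cM_{U}$ is surjective with kernel $1+\cI$, where $\cI=\ker(\cO_{T}\to \cO_{U})$, and the PD-structure on $\cI$ provides mutually inverse exponential and logarithm isomorphisms between $(\cI,+)$ and $(1+\cI,\cdot)$. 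In particular $1+\cI$ is uniquely $p$-divisible; combined with the unique $p$-divisibility of $P_{\bQ_{\geq 0}}$, this yields a canonical (and indeed unique) lift $P_{\bQ_{\geq 0}}\to \Gamma(T,\cM_{T})$ of the given map, directly paralleling the role played by \cite[Lemma A.3]{ino25} in the prismatic proof.

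With both charts in hand, commutativity of the diagram
\[
\begin{tikzcd}
P_{\bQ_{\geq 0}} \ar[r] & \cM_{T} \ar[r] & \cM_{U} \\
P \ar[r,dashed] \ar[u] \ar[rr, bend right, "\alpha"'] & \cN_{T} \ar[u] \ar[r] & (fh)^{*}\cM_{X} \ar[u]
\end{tikzcd}
\]
follows from the universal characterization of $\cN_{T}$ as the log structure rendering $(T,\cN_{T})\to (X,\cM_{X})$ strict. Invoking Lemma \ref{pro-kfl pullback of kfl vect bdles is well-def} then establishes $\pi^{*}$-admissibility of $\cE_{(T,\cN_{T})}$, and hence $f^{*}$-admissibility of $\cE$. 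The only delicate step is the lifting in the second paragraph: its key input is the PD-structure on $\cI$, which upgrades $1+\cI$ from merely $p$-divisible to uniquely $p$-divisible and thereby permits the canonical extraction of arbitrary $p$-power roots needed to extend a lift from $P$ to all of $P_{\bQ_{\geq 0}}$. Should a clean lifting statement not be directly available in the crystalline portion of \cite{ino25}, a short argument using PD exponentials can be inserted.
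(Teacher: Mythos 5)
Your overall route is exactly the paper's: evaluate at a strict affine log PD-thickening $(U,T,\cM_{T})\in (Y,\cM_{Y})^{\mathrm{str}}_{\mathrm{crys}}$, pass to $(T,\cN_{T})$, produce the chart $P\to \cN_{T}$ and a lift of the $P_{\bQ_{\geq 0}}$-chart to $\Gamma(T,\cM_{T})$, and conclude by Lemma \ref{pro-kfl pullback of kfl vect bdles is well-def}. The paper does precisely this, but it obtains the surjectivity of $\Gamma(T,\cM_{T})\to \Gamma(U,\cM_{U})$ from \cite[Lemma A.1]{ino25} and, crucially, the unique lifting of $P_{\bQ_{\geq 0}}\to \Gamma(U,\cM_{U})$ from \cite[Lemma A.3]{ino25}, in exact parallel with the prismatic Lemma \ref{pro-kfl pullback of kfl pris crys is well-def}.

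The gap is in your replacement for that lifting lemma. You claim the PD-structure on $\cI=\ker(\cO_{T}\to\cO_{U})$ gives mutually inverse exponential and logarithm isomorphisms between $(\cI,+)$ and $(1+\cI,\cdot)$, hence unique $p$-divisibility of $1+\cI$. This is false for the $p$-adically complete PD-thickenings occurring in the absolute log crystalline site: the PD-logarithm $\log(1+x)=\sum_{n\geq 1}(-1)^{n-1}(n-1)!\,x^{[n]}$ does converge, but the PD-exponential $\sum_{n\geq 0}x^{[n]}$ has no convergence-forcing coefficients and does not converge in general, so the two maps are not inverse bijections. Concretely, for $U=\mathrm{Spec}(\bF_{p})$, $T=\Spf(\bZ_{p})$ with the canonical divided powers on $(p)$, one has $1+\cI=1+p\bZ_{p}\cong \bZ_{p}$ (for $p>2$), which is not $p$-divisible: $1+p$ has no $p$-th root in $1+p\bZ_{p}$ (nor in $\bZ_{p}^{\times}$). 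So the "canonical extraction of arbitrary $p$-power roots" on which your existence and uniqueness both rest is unavailable; note also that lifting a map from $P_{\bQ_{\geq 0}}$ requires compatible $n$-th roots for all $n\geq 1$, not only $p$-power roots. The lifting statement itself is true in the situation at hand, but not for the reason you give; the argument should instead invoke \cite[Lemma A.1, Lemma A.3]{ino25} as in the paper (or reproduce the proof of that lemma), rather than unique divisibility of $1+\cI$.
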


\begin{proof}
Let $(U,T,\cM_{T})\in (Y,\cM_{Y})^{\mathrm{str}}_{\mathrm{crys}}$ be an affine log PD-thickening and $g\colon (U,\cM_{U})\to (Y,\cM_{Y})$ denote the structure morphism. By \cite[Lemma A.1]{ino25}, the natural map $\Gamma(T,\cM_{T})\to \Gamma(U,\cM_{U})$ is surjective. By \cite[Lemma A.3]{ino25}, the monoid map
\[
P_{\bQ_{\geq 0}}\to \Gamma(Y,\cM_{Y})\to \Gamma(U,\cM_{U})
\]
lifts uniquely to a monoid map $P_{\bQ_{\geq 0}}\to \Gamma(T,\cM_{T})$. By the definition of $\cN_{T}$, we have the induced map $P\to \cN_{T}$ as in the following diagram:
\[
\begin{tikzcd}
    P_{\bQ_{\geq 0}} \ar[r] & \cM_{T} \ar[r] & \cM_{U} \\
    P \ar[r,dashed] \ar[u] \ar[rr, bend right, "\alpha"'] & \cN_{T} \ar[u] \ar[r] & (fg)^{*}\cM_{X}. \ar[u] 
\end{tikzcd}
\]
Therefore, Lemma \ref{pro-kfl pullback of kfl vect bdles is well-def} implies that every kfl coherent sheaves on $(T,\cN_{T})$ is $\pi^{*}$-admissible, and so every kfl crystalline crystal on $(X,\cM_{X})$ is $f^{*}$-admissible.
\end{proof}

\begin{prop}[``Kummer root'' descent for kfl crystalline crystals]\label{kpr descent for kfl crystalline crystals}\noindent

Let $f\colon (Y,\cM_{Y})\to (X,\cM_{X})$ be a surjection of saturated log schemes over $\bF_{p}$. Suppose that we are given a chart $\alpha\colon \bN^{r}\to \cM_{X}$ such that $f$ admits a factorization
\[
(Y,\cM_{Y})\to (X_{\infty,\alpha},\cM_{X_{\infty,\alpha}})\to (X,\cM_{X}),
\]
where $(X_{\infty,\alpha},\cM_{X_{\infty,\alpha}})$ is the one defined as in Setting \ref{setting for pro-kfl descent} and $(Y,\cM_{Y})\to (X_{\infty,\alpha},\cM_{X_{\infty,\alpha}})$ is a strict root morphism. We let $(Y^{(\bullet)},\cM_{Y^{(\bullet)}})$ be the \v{C}ech nerve of $f$ in the category of saturated log schemes. Then, for $\star\in \{\emptyset,\varphi\}$, there exist natural bi-exact equivalences
\begin{align*}
\mathrm{Vect}^{\star}_{\mathrm{kfl}}((X,\cM_{X})_{\mathrm{crys}})&\isom \varprojlim_{\bullet\in \Delta} \mathrm{Vect}^{\star}(Y^{(\bullet)}_{\mathrm{crys}}) \\
\mathrm{Coh}_{\mathrm{kfl}}((X,\cM_{X})_{\mathrm{crys}})&\isom \varprojlim_{\bullet\in \Delta} \mathrm{Coh}(Y^{(\bullet)}_{\mathrm{crys}}).
\end{align*}
\end{prop}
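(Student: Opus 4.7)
The proof will parallel that of Proposition \ref{kqsyn descent for kfl log pris crys}, with root descent for (non-log) crystalline crystals (Lemma \ref{root descent for crys crys}) playing the role that quasi-syntomic prismatic descent played there. First, by Lemma \ref{functoriality of pro-kfl pullback of kfl crys crys}(1) combined with Lemma \ref{pro-kfl pullback of kfl crys crys is well-def}, every object of $\mathrm{Vect}^{\star}_{\mathrm{kfl}}((X,\cM_X)_{\mathrm{crys}})$ and of $\mathrm{Coh}_{\mathrm{kfl}}((X,\cM_X)_{\mathrm{crys}})$ is $f^{*}$-admissible, so the functors $f^{*}$ give the forward direction of both equivalences. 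Note that each $(Y^{(n)},\cM_{Y^{(n)}})$ inherits a uniquely $p$-divisible chart from $(X_{\infty,\alpha},\cM_{X_{\infty,\alpha}})$, so Lemma \ref{log crys site p-div case} lets us identify the log and non-log crystalline sites of $Y^{(n)}$; this justifies writing the target in terms of $Y^{(\bullet)}_{\mathrm{crys}}$.

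To build an exact quasi-inverse, let $(\cE_{\infty}, p_{1}^{*}\cE_{\infty}\cong p_{2}^{*}\cE_{\infty})$ be an object of $\varprojlim_{\bullet\in\Delta}\mathrm{Vect}^{\star}(Y^{(\bullet)}_{\mathrm{crys}})$. For each $(T,\cM_{T})\in(X,\cM_{X})_{\mathrm{crys}}^{\mathrm{str}}$, the chart $\alpha\colon \bN^{r}\to\cM_{X}$ lifts uniquely to $\widetilde{\alpha}\colon \bN^{r}\to\Gamma(T,\cM_{T})$ by \cite[Lemma A.3]{ino25}. For each $m\geq 1$, form the log scheme $(T_{m,\widetilde{\alpha}},\cM_{T_{m,\widetilde{\alpha}}})\coloneqq (T,\cM_{T})\times_{(\bZ[\bN^{r}],\bN^{r})^{a}}(\bZ[(\bN^{r})^{1/m}],(\bN^{r})^{1/m})^{a}$; the underlying ring map $T\to T_{m,\widetilde{\alpha}}$ is a finite root extension, hence flat, so the PD-structure on $U\hookrightarrow T$ extends uniquely and $(T_{m,\widetilde{\alpha}},\cM_{T_{m,\widetilde{\alpha}}})$ becomes a log PD-thickening naturally lying over $(X_{m,\alpha},\cM_{X_{m,\alpha}})$. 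Since the map $(Y,\cM_{Y})\to(X_{\infty,\alpha},\cM_{X_{\infty,\alpha}})$ is a strict root morphism and root extensions are locally of finite presentation, after working locally on $Y$ and enlarging $m$ we may assume that $(Y,\cM_{Y})\to(X_{m,\alpha},\cM_{X_{m,\alpha}})$ is strict root on underlying schemes. Applying Lemma \ref{root descent for crys crys} to this root cover and the affine PD-thickening underlying $(T_{m,\widetilde{\alpha}},\cM_{T_{m,\widetilde{\alpha}}})$, we obtain an affine log PD-thickening $(T',\cM_{T'})\in(Y,\cM_{Y})_{\mathrm{crys}}^{\mathrm{str}}$ with a Cartesian flat cover $(T',\cM_{T'})\to(T_{m,\widetilde{\alpha}},\cM_{T_{m,\widetilde{\alpha}}})$, and its $(n+1)$-fold fiber powers $(T'^{(n)},\cM_{T'^{(n)}})$ are log PD-thickenings over $(Y^{(n)},\cM_{Y^{(n)}})$ by an analog of Lemma \ref{pro-kfl cover of log prism} in the PD setting.

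Evaluating $\cE_{\infty}$ on $T'$ and using the isomorphism $p_{1}^{*}\cE_{\infty}\cong p_{2}^{*}\cE_{\infty}$ on $T'^{(1)}$ yields a vector bundle (resp.\ coherent sheaf) on $T'$ equipped with descent data satisfying the cocycle condition on $T'^{(2)}$. Flat descent along $T'\to T_{m,\widetilde{\alpha}}$ followed by pro-kfl descent along $T_{\infty,\widetilde{\alpha}}\to T$ (Proposition \ref{pro-kfl descent for kfl vect bdle} and its coherent analog) produces a kfl coherent sheaf $\cE_{(T,\cM_{T})}$ on $(T,\cM_{T})$. Independence of the choices of $\widetilde{\alpha}$, $m$, and the cover $T'$ is verified by the familiar cofinality argument: two such choices are dominated by a common refinement obtained by taking saturated fiber products, and the given descent data on $Y^{(1)}$ supplies the required gluing isomorphism. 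The resulting assignment $(T,\cM_{T})\mapsto \cE_{(T,\cM_{T})}$ is functorial in $(T,\cM_{T})$ by Lemma \ref{crys property for kfl cry crys}, and one checks directly that $f^{*}\cE\cong \cE_{\infty}$ as objects with descent data, giving the quasi-inverse. The Frobenius-equivariant version follows because all constructions are functorial in $(T,\cM_{T})$ and hence commute with the Frobenius pullback.

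The main obstacle is the interface between the PD-structure and the root extensions: one must know that forming $T_{m,\widetilde{\alpha}}$ and then pulling back to $Y$ still yields a legitimate log PD-thickening, and that the entire procedure is independent of the auxiliary lift and cover. Both points are handled by Lemma \ref{root descent for crys crys}, which provides Cartesian flat PD-extensions along root covers, combined with the uniqueness statements in \cite[Lemma A.3]{ino25} and pro-kfl descent (Proposition \ref{pro-kfl descent for kfl vect bdle}). Beyond this, the argument is a routine translation of the prismatic proof.
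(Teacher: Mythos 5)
Your overall skeleton matches the paper's proof (same forward functor via Lemma \ref{pro-kfl pullback of kfl crys crys is well-def} and Lemma \ref{log crys site p-div case}, quasi-inverse built by lifting the chart to a PD-thickening, invoking Lemma \ref{root descent for crys crys}, and descending via Proposition \ref{pro-kfl descent for kfl vect bdle}), but the central step of your quasi-inverse has a genuine gap. You reduce to a finite level $m$, claiming that because ``root extensions are locally of finite presentation'' one may assume $(Y,\cM_Y)\to (X_{m,\alpha},\cM_{X_{m,\alpha}})$ is strict root. Both halves of this fail: by Definition \ref{def of root map} a root extension is a \emph{transfinite} composition of extractions of roots, so root morphisms need not be locally of finite presentation (the hypothesis only gives a strict root morphism to the infinite-level object $X_{\infty,\alpha}$, and there is no reason it descends to any $X_{m,\alpha}$); and strictness over the fs log scheme $X_{m,\alpha}$ is impossible whenever the log structure is nontrivial, since $\cM_Y$ is pulled back from $\cM_{X_{\infty,\alpha}}$, which has the divisible (non-fs) chart $\bQ_{\geq 0}^{r}$. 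Consequently the thickening $(T',\cM_{T'})$ you produce, with log structure pulled back from $T_{m,\widetilde{\alpha}}$, does not lie in the strict site $(Y,\cM_Y)^{\mathrm{str}}_{\mathrm{crys}}$ as claimed, and your two-step descent (``flat descent along $T'\to T_{m,\widetilde{\alpha}}$ followed by pro-kfl descent along $T_{\infty,\widetilde{\alpha}}\to T$'') does not match where the given descent data live: they are defined over $Y^{(\bullet)}$, i.e.\ over the saturated \v{C}ech nerve of $T'$ over $T$, not over fiber products relative to $T_{m,\widetilde{\alpha}}$.

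The paper's route avoids the finite-level detour entirely: choose a lift $\widetilde{\alpha}$ of $\alpha$ to $\Gamma(T,\cM_T)$ (this exists by surjectivity, \cite[Lemma A.1]{ino25} — note it is not unique, contrary to your appeal to Lemma A.3, which concerns divisible monoids; non-uniqueness is absorbed by the independence argument), form the infinite-level thickening $T_{\infty,\widetilde{\alpha}}$, which is flat over $T$ and hence a strict log PD-thickening over $(X_{\infty,\alpha},\cM_{X_{\infty,\alpha}})$, and apply Lemma \ref{root descent for crys crys} directly to the given strict root morphism $Y\to X_{\infty,\alpha}$. Strictness of the resulting $(U',T',\cM_{T'})$ over $(Y,\cM_Y)$ is then automatic, the projections of the saturated \v{C}ech nerve $T'^{(\bullet)}\to T$ are flat by Lemma \ref{sat prod of two pro-kfl cover} (which is what guarantees the PD-structures extend — the content you gesture at with ``an analog of Lemma \ref{pro-kfl cover of log prism} in the PD setting''), so $(T'^{(n)},\cM_{T'^{(n)}})$ lies in $(Y^{(n)},\cM_{Y^{(n)}})^{\mathrm{str}}_{\mathrm{crys}}$, and a single application of Proposition \ref{pro-kfl descent for kfl vect bdle} to $(T',\cM_{T'})\to (T,\cM_T)$ produces $\cE_{(T,\cM_T)}$. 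To repair your argument you should delete the finite-level reduction and run the construction at infinite level as above; the rest of your outline (independence of choices by common refinement, functoriality in $(T,\cM_T)$, Frobenius-compatibility) then goes through as in the paper.
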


\begin{proof}
    It suffices to prove the assertion for $\mathrm{Coh}((X,\cM_{X})_{\mathrm{crys}})$ because the remaining assertions can be proved in the same way. By Lemma \ref{pro-kfl pullback of kfl crys crys is well-def}, we have the pullback functor
    \[
    f^{*}\colon \mathrm{Coh}_{\mathrm{kfl}}((X,\cM_{X})_{\mathrm{crys}})\to \mathrm{Coh}((Y,\cM_{Y})_{\mathrm{crys}}^{\mathrm{str}}),
    \]
    which induces an exact functor
    \[
    \mathrm{Coh}_{\mathrm{kfl}}((X,\cM_{X})_{\mathrm{crys}})\to \varprojlim_{\bullet\in \Delta}\mathrm{Coh}(Y^{(\bullet)}_{\mathrm{crys}})
    \]
    by Lemma \ref{log crys site p-div case} and Lemma \ref{functoriality of pro-kfl pullback of kfl crys crys}. We shall prove that this functor is a bi-exact equivalence.
    
    We shall construct an exact quasi-inverse functor. Let $p_{i}\colon (Y^{(1)},\cM_{Y^{(1)}})\to (Y,\cM_{Y})$ be the $i$-th projection for $i=1,2$. For an object $(\cE_{\infty},p_{1}^{*}\cE_{\infty}\cong p_{2}^{*}\cE_{\infty})\in \displaystyle \varprojlim_{\bullet\in \Delta}\mathrm{Coh}(Y^{(\bullet)}_{\mathrm{crys}})$, we define $\cE\in \mathrm{Coh}_{\mathrm{kfl}}((X,\cM_{X})_{\mathrm{crys}})$ associated with this object as follows. To do this, it suffices to define the evaluation of $\cE$ at an affine PD-thickening $(U,T,\cM_{T})\in (X,\cM_{X})^{\mathrm{str}}_{\mathrm{crys}}$. Since $\Gamma(T,\cM_{T})\to \Gamma(U,\cM_{U})$ is surjective by \cite[Lemma A.1]{ino25}, we can choose a lift $\widetilde{\alpha}:\bN^{r}\to \cM_{T}$ of $\alpha$. We set $(T_{\infty,\widetilde{\alpha}},\cM_{T_{\infty,\widetilde{\alpha}}})$ as in Setting \ref{setting for pro-kfl descent}. Since $T_{\infty,\widetilde{\alpha}}\to T$ is flat, $(U_{\infty,\alpha},T_{\infty,\widetilde{\alpha}},\cM_{T_{\infty,\widetilde{\alpha}}})$ is a log PD-thickening in $(X_{\infty,\alpha},\cM_{X_{\infty,\alpha}})_{\mathrm{crys}}^{\mathrm{str}}$. By Lemma \ref{root descent for crys crys}, there exists an affine log PD-thickening $(U',T',\cM_{T'})\in (Y,\cM_{Y})_{\mathrm{crys}}^{\mathrm{str}}$ and a Cartesian strict flat map of PD-thickenings
    \[
    (U',T',\cM_{T'})\to (U_{\infty,\alpha},T_{\infty,\widetilde{\alpha}},\cM_{T_{\infty,\widetilde{\alpha}}})
    \]
    such that the natural map $U'\to U_{\infty,\alpha}\times_{X_{\infty,\alpha}} Y=U\times_{X} Y$ is an \'{e}tale covering. In particular, $U'\to U$ is surjective. We let $(T'^{(\bullet)},\cM_{T'^{(\bullet)}})$ denote the \v{C}ech nerve of $(T',\cM_{T'})\to (T,\cM_{T})$ in the category of saturated log schemes. For each $n\geq 1$, every projection map $T'^{(n)}\to T$ is flat by Lemma \ref{sat prod of two pro-kfl cover}, and so $(T'^{(n)},\cM_{T'^{(n)}})$ is a log PD-thickening in $(Y^{(n)},\cM_{Y^{(n)}})_{\mathrm{crys}}^{\mathrm{str}}$. Let $q_{i}\colon (T'^{(1)},\cM_{T'^{(1)}})\to (T',\cM_{T'})$ be the $i$-th projection for $i=1,2$. Then the coherent sheaf $(\cE_{\infty})_{(T',\cM_{T'})}$ equipped with the isomorphism
    \[
    q_{1}^{*}((\cE_{\infty})_{(T',\cM_{T'})})\cong (p_{1}^{*}\cE_{\infty})_{(T'^{(1)},\cM_{T'^{(1)}})}\cong (p_{2}^{*}\cE_{\infty})_{(T'^{(1)},\cM_{T'^{(1)}})} \cong q_{2}^{*}((\cE_{\infty})_{(T',\cM_{T'})})
    \]
    which satisfies the cocycle condition in $\mathrm{Coh}(T'^{(2)})$
    descents to an object of $\mathrm{Coh}_{\mathrm{kfl}}(T,\cM_{T})$ by Proposition \ref{pro-kfl descent for kfl vect bdle}. The resulting object is denoted by $\cE_{(T,\cM_{T})}$.  It follows from the same argument as the proof of Proposition \ref{kqsyn descent for kfl log pris crys} that the formation of $\cE_{(T,\cM_{T})}$ is independent of the choice of $\widetilde{\alpha}$ and $T'$. Therefore, the family of $\cE_{(T,\cM_{T})}$ forms $\cE\in \mathrm{Coh}_{\mathrm{kfl}}((X,\cM_{X})_{\mathrm{crys}})$, and the functor sending $(\cE_{\infty},p_{1}^{*}\cE_{\infty}\cong p_{2}^{*}\cE_{\infty})$ to $\cE$ is indeed an exact quasi-inverse by Proposition \ref{pro-kfl descent for kfl vect bdle}. 
\end{proof}

\begin{cor}\label{kfl crys crys and kqsyn site}
    Let $(X,\cM_{X})$ be a fs log scheme over $\bF_{p}$. Suppose that $(X,\cM_{X})$ admits a free chart \'{e}tale locally and $X$ is quasi-syntomic. Then there exists a natural equivalence
    \begin{align*}
    \mathrm{Coh}_{\mathrm{kfl}}((X,\cM_{X})_{\mathrm{crys}})&\simeq \varprojlim_{(Y,\cM_{Y})\in (X,\cM_{X})'_{\mathrm{kqsyn}}} \mathrm{Coh}(Y_{\mathrm{crys}}) \\
    &\simeq \varprojlim_{(S,\cM_{S})\in (X,\cM_{X})'_{\mathrm{qrsp}}} \mathrm{Coh}(S_{\mathrm{crys}}).
    \end{align*}
    The analogous assertion also holds for categories $\mathrm{Vect}^{\star}_{\mathrm{kfl}}((X,\cM_{X})_{\mathrm{crys}})$ for $\star\in \{\emptyset,\varphi\}$. 
\end{cor}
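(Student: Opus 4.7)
The plan is to follow the template of Corollary \ref{kfl pris crys and kqsyn site}, with Proposition \ref{kpr descent for kfl crystalline crystals} replacing Proposition \ref{kqsyn descent for kfl log pris crys} and Lemma \ref{log crys site p-div case} replacing Lemma \ref{divisible monoid and log pris site}. First, for any $f\colon (Y,\cM_{Y})\to (X,\cM_{X})$ in $(X,\cM_{X})'_{\mathrm{kqsyn}}$, the factorization in Definition \ref{def of log qrsp site} provides charts fitting into the commutative square required by Lemma \ref{pro-kfl pullback of kfl crys crys is well-def} (with $P=\bN^{r}$), so every kfl crystalline crystal on $(X,\cM_{X})$ is $f^{*}$-admissible. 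Combining this with the identification $\mathrm{Coh}(Y_{\mathrm{crys}})\simeq \mathrm{Coh}((Y,\cM_{Y})^{\mathrm{str}}_{\mathrm{crys}})$ from Lemma \ref{log crys site p-div case} (valid since $\cM_{Y}$ inherits a uniquely $p$-divisible chart from $\bQ_{\geq 0}^{r}$) and the compatibilities of Lemma \ref{functoriality of pro-kfl pullback of kfl crys crys}, the functors $f^{*}$ assemble into the desired exact functor
\[
\mathrm{Coh}_{\mathrm{kfl}}((X,\cM_{X})_{\mathrm{crys}})\to \varprojlim_{(Y,\cM_{Y})\in (X,\cM_{X})'_{\mathrm{kqsyn}}} \mathrm{Coh}(Y_{\mathrm{crys}}),
\]
and analogously for the $\mathrm{Vect}^{\star}_{\mathrm{kfl}}$ variants with $\star \in \{\emptyset,\varphi\}$ and for the restriction to $(X,\cM_{X})'_{\mathrm{qrsp}}$.

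To prove this functor is a bi-exact equivalence, I work \'{e}tale locally on $X$ to reduce to the case that a free chart $\alpha\colon \bN^{r}\to \cM_{X}$ exists globally. Applying Proposition \ref{kpr descent for kfl crystalline crystals} to the strict root cover $(X_{\infty,\alpha},\cM_{X_{\infty,\alpha}})\to (X,\cM_{X})$, the problem reduces to showing that the natural functor
\[
\varprojlim_{(Y,\cM_{Y})\in (X,\cM_{X})'_{\mathrm{kqsyn}}} \mathrm{Coh}(Y_{\mathrm{crys}})\isom \varprojlim_{\bullet\in \Delta} \mathrm{Coh}(X^{(\bullet)}_{\infty,\alpha,\mathrm{crys}})
\]
is a bi-exact equivalence, and similarly for the $\mathrm{Vect}^{\star}_{\mathrm{kfl}}$ variants. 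For any $f\colon (Y,\cM_{Y})\to (X,\cM_{X})$ in $(X,\cM_{X})'_{\mathrm{kqsyn}}$, the projection from the saturated fiber product $(X_{\infty,\alpha},\cM_{X_{\infty,\alpha}})\times^{\mathrm{sat}}_{(X,\cM_{X})} (Y,\cM_{Y})\to (Y,\cM_{Y})$ is a strict quasi-syntomic surjection (by Lemma \ref{four point lemma in nonfs case} and Lemma \ref{sat prod of two pro-kfl cover}(2)). The equivalence then follows termwise in the \v{C}ech nerve from quasi-syntomic descent for non-log crystalline crystals on quasi-syntomic $\bF_{p}$-schemes. The qrsp variant is deduced analogously, since any $(Y,\cM_{Y})\in (X,\cM_{X})'_{\mathrm{kqsyn}}$ admits a cover by objects of $(X,\cM_{X})'_{\mathrm{qrsp}}$.

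The main obstacle is the final appeal to quasi-syntomic descent for non-log crystalline crystals on $\bF_{p}$-schemes, which plays the role that \cite[Proposition 7.11]{bs22} played in the prismatic argument. Since only root descent is directly available via Lemma \ref{root descent for crys crys}, I expect to reduce quasi-syntomic descent to root descent on a basis of qrsp objects: on such objects every quasi-syntomic cover can be refined by a root cover, and crystalline crystals on qrsp $\bF_{p}$-algebras are controlled by their values on canonical PD-envelopes, through which the refinement passes cleanly. This reduction is precisely where the second, qrsp-restricted formulation of the corollary becomes essential.
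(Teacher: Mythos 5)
Your construction of the functor and the reduction, via Proposition \ref{kpr descent for kfl crystalline crystals}, to comparing the limit over $(X,\cM_{X})'_{\mathrm{kqsyn}}$ with the \v{C}ech-nerve limit over $X^{(\bullet)}_{\infty,\alpha}$ all match the paper. The gap is in the last step. You invoke ``quasi-syntomic descent for non-log crystalline crystals on $\bF_{p}$-schemes,'' but no such statement is available in the paper, and it is exactly the point at issue: general quasi-syntomic (or even flat) covers do not lift along PD-thickenings, which is why the paper introduces root morphisms and proves only \emph{root} descent for crystalline crystals (Lemma \ref{root descent for crys crys}). Your proposed repair --- refining quasi-syntomic covers of qrsp objects by root covers --- is not justified and is false as a general principle: already a finite \'{e}tale Artin--Schreier-type cover in characteristic $p$ cannot be refined by a cover built from extracting roots, so you cannot bootstrap quasi-syntomic descent from root descent this way.

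The fix is that the workaround is unnecessary: for the specific cover you need, namely the projection $(X_{\infty,\alpha},\cM_{X_{\infty,\alpha}})\times^{\mathrm{sat}}_{(X,\cM_{X})}(Y,\cM_{Y})\to (Y,\cM_{Y})$, you should cite Lemma \ref{sat prod of two pro-kfl cover}~(3) rather than (2). Since $X$ is a scheme over $\bF_{p}$ and all charts involved are free, part (3) shows this projection is a strict \emph{root} covering (surjective by Lemma \ref{four point lemma in nonfs case}), and then Lemma \ref{root descent for crys crys} applies directly to give the descent you need along the \v{C}ech nerve. This is precisely the route the paper takes; with that substitution your argument goes through, and the qrsp-restricted formulation is then a formal consequence (qrsp objects are cofinal among the relevant covers), not a load-bearing ingredient as your last paragraph suggests.
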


\begin{proof}
For any $(f\colon (Y,\cM_{Y})\to (X,\cM_{X}))\in (X,\cM_{X})'_{\mathrm{kqsyn}}$, every crystalline crystal on $(X,\cM_{X})$ is $f^{*}$-admissible, and the functors $f^{*}$ induce the functor in the statement by Lemma \ref{log crys site p-div case}, Lemma \ref{functoriality of pro-kfl pullback of kfl crys crys}, and Lemma \ref{pro-kfl pullback of kfl crys crys is well-def}. 

We shall prove that this functor gives an equivalence. By working \'{e}tale locally on $X$, we may assume that there exists a free chart $\alpha\colon \bN^{r}\to \cM_{X}$. By ``Kummer root descent for log crystalline crystals'' (Proposition \ref{kpr descent for kfl crystalline crystals}), it is enough to show that the natural functor
    \[
    \varprojlim_{(Y,\cM_{Y})\in (X,\cM_{X})'_{\mathrm{kqsyn}}} \mathrm{Vect}(Y_{\mathrm{crys}})\to \varprojlim_{\bullet\in \Delta} \mathrm{Vect}((X^{(\bullet)}_{\infty,\alpha})_{\mathrm{crys}})
    \]
    is an equivalence. For any $(f\colon (Y,\cM_{Y})\to (X,\cM_{X}))\in (X,\cM_{X})'_{\mathrm{kqsyn}}$, the projection morphism from the saturated fiber product
    \[ (X_{\infty,\alpha},\cM_{X_{\infty,\alpha}})\times_{(X,\cM_{X})}^{\mathrm{sat}} (Y,\cM_{Y})\to (Y,\cM_{Y})
    \]
    is a strict root covering by Lemma \ref{four point lemma in nonfs case} and Lemma \ref{sat prod of two pro-kfl cover} (3). Therefore, the claim follows from root descent for (non-log) crystalline crystals (Lemma \ref{root descent for crys crys}).
\end{proof}

\begin{prop}\label{pris crys is equal to crys crys}
    Let $(X,\cM_{X})$ be an fs log scheme over $\bF_{p}$. Suppose that the underlying scheme $X$ is quasi-syntomic and that $(X,\cM_{X})$ admits a prismatically liftable chart \'{e}tale locally. Then there exist canonical bi-exact equivalences
    \[
    \mathrm{Vect}^{\star}_{\mathrm{kfl}}((X,\cM_{X})_{\Prism})\simeq \mathrm{Vect}^{\star}_{\mathrm{kfl}}((X,\cM_{X})_{\mathrm{crys}})
    \]
    for $\star\in \{\emptyset,\varphi\}$.
\end{prop}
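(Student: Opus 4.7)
The plan is to reduce the global equivalence to a pointwise comparison on quasi-regular semiperfect $\bF_p$-algebras, where the identification between (non-log) prismatic and crystalline cohomology is standard. By hypothesis and by Corollary \ref{kfl pris crys and kqsyn site} and Corollary \ref{kfl crys crys and kqsyn site}, both $\mathrm{Vect}^{\star}_{\mathrm{kfl}}((X,\cM_{X})_{\Prism})$ and $\mathrm{Vect}^{\star}_{\mathrm{kfl}}((X,\cM_{X})_{\mathrm{crys}})$ admit bi-exact descriptions as limits over the \emph{same} indexing category $(X,\cM_{X})'_{\mathrm{qrsp}}$, namely
\[
\mathrm{Vect}^{\star}_{\mathrm{kfl}}((X,\cM_{X})_{\Prism})\simeq \varprojlim_{(\Spf S,\cM_{S})\in (X,\cM_{X})'_{\mathrm{qrsp}}}\mathrm{Vect}^{\star}(S_{\Prism}),
\]
\[
\mathrm{Vect}^{\star}_{\mathrm{kfl}}((X,\cM_{X})_{\mathrm{crys}})\simeq \varprojlim_{(\Spec S,\cM_{S})\in (X,\cM_{X})'_{\mathrm{qrsp}}}\mathrm{Vect}^{\star}(S_{\mathrm{crys}}).
\]
Note that for any such $(S,\cM_{S})$ the chart $P_{\bQ_{\geq 0}}\to \cM_{S}$ is uniquely $p$-divisible, so Lemma \ref{divisible monoid and log pris site} and Lemma \ref{log crys site p-div case} allow us to replace the log sites on the right by non-log sites in the limits.

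The next step is to construct, for each qrsp $\bF_{p}$-algebra $S$, a canonical bi-exact equivalence $\mathrm{Vect}^{\star}(S_{\Prism})\simeq \mathrm{Vect}^{\star}(S_{\mathrm{crys}})$. The crystalline site $S_{\mathrm{crys}}$ has a weakly final affine object given by $A_{\mathrm{crys}}(S)$, the $p$-adically completed PD-envelope of the surjection $W(S^{\flat})\to S$ along its kernel; and the prismatic site $S_{\Prism}$ has an initial object $\Prism_{S}$. Since $S$ is an $\bF_{p}$-algebra the orientation of any prism over $S$ necessarily contains $p$, so $\cI_{\Prism}=(p)$ on the prismatic side. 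The standard identification $\Prism_{S}\cong A_{\mathrm{crys}}(S)$ of $\delta$-rings (cf.~\cite{bs22}) now identifies both $\mathrm{Vect}(S_{\Prism})$ and $\mathrm{Vect}(S_{\mathrm{crys}})$ with the category of finite projective $A_{\mathrm{crys}}(S)$-modules. Because $\cI_{\Prism}=(p)$, the Frobenius condition ``isomorphism after inverting $\cI_{\Prism}$'' coincides with the crystalline condition ``isomorphism in the isogeny category $\otimes_{\bZ_{p}}\bQ_{p}$'', so the equivalence upgrades to the $\star=\varphi$ case.

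To conclude, I would verify that this identification is functorial in $S$: a morphism $S\to S'$ of qrsp $\bF_{p}$-algebras induces compatible maps $\Prism_{S}\to \Prism_{S'}$ and $A_{\mathrm{crys}}(S)\to A_{\mathrm{crys}}(S')$ by the universal properties of the initial prism and the PD-envelope, and these maps coincide under the identification $\Prism_{S}\cong A_{\mathrm{crys}}(S)$. Passing to the limit over $(X,\cM_{X})'_{\mathrm{qrsp}}$ then yields the desired equivalence. The main obstacle is precisely this functorial identification $\Prism_{(-)}\cong A_{\mathrm{crys}}(-)$ with its Frobenius compatibility on qrsp objects; once that input is in place, the descent mechanism supplied by Corollaries \ref{kfl pris crys and kqsyn site} and \ref{kfl crys crys and kqsyn site} reduces the globalization to a formal manipulation of $2$-limits.
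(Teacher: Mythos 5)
Your proposal is correct and follows essentially the same route as the paper: the paper's proof consists precisely of combining Corollary \ref{kfl pris crys and kqsyn site} and Corollary \ref{kfl crys crys and kqsyn site} with the qrsp-level equivalence, which it cites from \cite[\S 2.1.1.1]{iky23} rather than spelling out. Your sketch of that pointwise input (the functorial identification of the initial prism of a qrsp $\bF_{p}$-algebra with $A_{\mathrm{crys}}(S)$, with $\cI_{\Prism}=(p)$ matching the two Frobenius conditions) is exactly what that reference supplies.
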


\begin{proof}
    The assertion follows from Corollary \ref{kfl pris crys and kqsyn site}, Corollary \ref{kfl crys crys and kqsyn site}, and the equivalence in \cite[\S 2.1.1.1]{iky23}.
\end{proof}

\begin{dfn}[Crystalline realization]

Let $(\fX,\cM_{\fX})$ be a bounded $p$-adic fs log formal scheme and $(X_{0},\cM_{X_{0}})$ be the mod-$p$ fiber of $(\fX,\cM_{\fX})$. Suppose that both of $\fX$ and $X_{0}$ are quasi-syntomic and $(\fX,\cM_{\fX})$ admits a prismatically liftable chart \'{e}tale locally. Note that $(X,\cM_{X})$ also admits a prismatically liftable chart \'{e}tale locally by Lemma \ref{ex of pris liftable chart}. Then, by composing the equivalences in Proposition \ref{pris crys is equal to crys crys} with the restriction functors
\[
\mathrm{Vect}^{\star}_{\mathrm{kfl}}((\fX,\cM_{\fX})_{\Prism})\to \mathrm{Vect}^{\star}_{\mathrm{kfl}}((X,\cM_{X})_{\Prism}),
\]
we obtain exact functors
\[
\mathrm{Vect}^{\star}_{\mathrm{kfl}}((\fX,\cM_{\fX})_{\Prism})\to \mathrm{Vect}^{\star}_{\mathrm{kfl}}((X,\cM_{X})_{\mathrm{crys}})
\]
for $\star\in \{\emptyset,\varphi\}$. These functors are called \emph{crystalline realization functors} and denoted by $T_{\mathrm{crys}}$.

\end{dfn}

\begin{construction}[Construction of crystalline realization functors using Breuil log prisms]\label{crys real by using bk prisms}\noindent

Let $(\fX,\cM_{\fX})=(\mathrm{Spf}(R),\cM_{R})$ be a small affine log formal scheme over $\cO_{K}$, and fix a framing. Let $\star\in \{\emptyset,\varphi\}$. Let $\mathrm{DD}^{\mathrm{vect}}_{S_{R}}$ be the category of pairs $(M,\epsilon)$ consisting of a finite projective $S_{R}$-module $M$ equipped with an isomorphism
\[
\epsilon\colon M\otimes_{S_{R},p_{1}} S_{R}^{(1)}\cong M\otimes_{S_{R},p_{2}} S_{R}^{(1)}
\]
satisfying the cocycle condition over $S_{R}^{(2)}$. Let $\mathrm{DD}^{\varphi,\mathrm{vect}}_{S_{R}}$ be the category of triples $(M,\epsilon,\varphi_{M})$ consisting of a pair $(M,\epsilon)\in \mathrm{DD}^{\mathrm{vect}}_{S_{R}}$ and an isomorphism $\varphi_{M}\colon (\phi_{S_{R}}^{*}M)[1/p]\isom M[1/p]$ such that $f$ is compatible with $\varphi_{M}$.

For $\cE\in \mathrm{Vect}^{\star}((\fX,\cM_{\fX})_{\Prism})$, the evaluation at Breuil log prisms $(S_{R}^{(i)},(p),\phi^{*}_{S_{R}^{(i)}}\cM_{S_{R}^{(i)}})$ gives a functor
\[
\mathrm{Vect}^{\star}((\fX,\cM_{\fX})_{\Prism})\to \mathrm{DD}_{S_{R}}^{\star,\mathrm{vect}}.
\]
Since $(\mathrm{Spf}(S_{R}),\cM_{S_{R}})$ covers the final object of the topos associated with the absolute log crystalline site $(X_{p=0},\cM_{X_{p=0}})_{\mathrm{crys}}$, there is a natural equivalence
\[
\mathrm{DD}_{S_{R}}^{\star,\mathrm{vect}}\simeq \mathrm{Vect}^{\star}((X_{p=0},\cM_{X_{p=0}})_{\mathrm{crys}}).
\]
Taking the composition of them gives a functor
\[
\mathrm{Vect}^{\star}((\fX,\cM_{\fX})_{\Prism})\to \mathrm{Vect}^{\star}((X_{p=0},\cM_{X_{p=0}})_{\mathrm{crys}}).
\]
\end{construction}

\begin{prop}\label{two crys real}
    Let $(\fX,\cM_{\fX})$ be a small affine log formal scheme over $\cO_{K}$ with a fixed framing. Let $\star\in \{\emptyset,\varphi \}$ and $\cE\in \mathrm{Vect}^{\star}((\fX,\cM_{\fX})_{\Prism})$. Then the image of $\cE$ in $\mathrm{Vect}^{\star}((X_{p=0},\cM_{X_{p=0}})_{\mathrm{crys}})$ via the functor constructed in Construction \ref{crys real by using bk prisms} is naturally isomorphic to $T_{\mathrm{crys}}(\cE)$. 
\end{prop}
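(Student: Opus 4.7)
The plan is to compare the two functors by evaluating them on a common log PD-thickening of $(X_{p=0},\cM_{X_{p=0}})$ that covers the final object of the crystalline topos. Since $(\mathrm{Spf}(S_{R}),\cM_{S_{R}})$ provides such a cover, it suffices to produce a functorial isomorphism between the values of the two functors on this thickening, compatible with the descent data coming from the \v{C}ech nerve $(S_{R}^{(\bullet)},\cM_{S_{R}^{(\bullet)}})$ in the log crystalline site.

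For the Breuil-based functor of Construction \ref{crys real by using bk prisms}, the evaluation on the log PD-thickening associated with $S_{R}$ is, essentially by definition, $\cE_{(S_{R},(p),\cM_{S_{R}})}\otimes_{S_{R}} (S_{R}/p)$, with descent datum obtained by evaluating $\cE$ on the self-products $(S_{R}^{(\bullet)},(p),\cM_{S_{R}^{(\bullet)}})$ inside the log prismatic site. On the other hand, $T_{\mathrm{crys}}(\cE)$ is obtained by composing the fully faithful embedding of log prismatic crystals into kfl prismatic crystals, restriction to $(X,\cM_{X})$, and the equivalence of Proposition \ref{pris crys is equal to crys crys}. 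By Corollary \ref{kfl pris crys and kqsyn site} and Corollary \ref{kfl crys crys and kqsyn site}, this equivalence is induced by the common identification of both categories with their limits over the qrsp refinement site $(X,\cM_{X})'_{\mathrm{qrsp}}$, which in turn rests on the non-log prismatic-to-crystalline equivalence of \cite[\S 2.1.1.1]{iky23}.

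To produce the comparison, I would choose a qrsp object $(\mathrm{Spf}(S),\cM_{S})\in (X,\cM_{X})'_{\mathrm{qrsp}}$ together with a map from $(\mathrm{Spf}(S_{R}/p),\cM_{S_{R}/p})$ into it; such a refinement exists after passing to a quasi-syntomic cover. The log prism over $(S,\cM_{S})$ furnished by the equivalence lives canonically over $(S_{R},(p),\cM_{S_{R}})$, and Lemma \ref{crys property for kfl pris crys} identifies the value of $\cE$ at it with the corresponding pullback of $\cE_{(S_{R},(p),\cM_{S_{R}})}$. Reducing mod $p$ and applying the non-log comparison then identifies this with the value of $T_{\mathrm{crys}}(\cE)$ on the refined PD-thickening, and strict flat descent along the cover produced by the choice of $S$ yields the required identification over $S_{R}/p$.

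The main obstacle is that $(S_{R}/p,\cM_{S_{R}/p})$ is not itself qrsp in general, so the comparison must be routed through a qrsp refinement, and one has to check that the \v{C}ech-nerve descent datum of the Breuil construction on $(S_{R}^{(\bullet)},\cM_{S_{R}^{(\bullet)}})$ matches the descent datum implicit in the qrsp presentation of $T_{\mathrm{crys}}(\cE)$. This amounts to unwinding the functoriality of both equivalences and invoking the compatibility of pullback functors expressed by Lemma \ref{crys property for kfl pris crys} and Lemma \ref{crys property for kfl cry crys}, together with the naturality of the non-log prismatic-to-crystalline equivalence in \cite[\S 2.1.1.1]{iky23}; once that is done, the two descent data agree tautologically and the resulting isomorphism descends from $S_{R}/p$ to define a natural isomorphism of crystalline $F$-crystals.
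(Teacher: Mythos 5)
Your overall skeleton (run the comparison through the qrsp limit descriptions of both functors and the non-log equivalence, then use crystal/descent properties) is close to the paper's, but the step you dispose of with ``the log prism over $(S,\cM_{S})$ furnished by the equivalence lives canonically over $(S_{R},(p),\cM_{S_{R}})$'' is exactly where the real content of the proof lies, and the claimed canonicity is false. What is needed is a map of log PD-thickenings $s\colon (S_{R}\twoheadrightarrow R/p,\cM_{S_{R}})\to (A_{\mathrm{crys}}(S)\twoheadrightarrow S,\cM_{A_{\mathrm{crys}}(S)})$ compatible with $\delta_{\mathrm{log}}$-structures, so that the induced map of log prisms allows one to apply the crystal property of $\cE$ and transport the Breuil evaluation $\cE_{(S_{R},(p),\phi^{*}\cM_{S_{R}})}$ to $A_{\mathrm{crys}}(S)$. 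Neither the initiality of $A_{\mathrm{crys}}(S)$ in $S_{\Prism}$ nor its universal PD-thickening property produces such a map: both give maps \emph{out of} $A_{\mathrm{crys}}(S)$ (and only over $S$, whereas the Breuil thickening is a thickening of $R/p$). The paper has to construct $s$ by hand --- lifting the $\bQ_{\geq 0}^{r}$-chart uniquely via \cite[Lemma A.3]{ino25}, sending the framing coordinates $x_{i}\mapsto [x_{i}^{\flat}]$, and lifting along the \'{e}tale map $S_{R^{0}}\to S_{R}$ --- and must then verify that the resulting identification is independent of this choice, using the crystal property over $S_{R}^{(1)}$. Your proposal contains no substitute for this construction and no independence argument; a mere ring map from $S_{R}$ to a common thickening would not suffice, because the prismatic crystal property of $\cE$ can only be invoked along morphisms of log prisms (compatible with $\delta$ and $\delta_{\mathrm{log}}$).

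Two further points need repair. First, the evaluation of the Breuil-based functor at the thickening $(S_{R}\twoheadrightarrow R/p,\cM_{S_{R}})$ is the $S_{R}$-module $\cE_{(S_{R},(p),\phi^{*}\cM_{S_{R}})}$ (note the Frobenius-twisted log structure), not $\cE_{(S_{R},(p),\cM_{S_{R}})}\otimes_{S_{R}}(S_{R}/p)$; reducing mod $p$ discards precisely the information the proposition compares. Second, the refinement you ask for --- a map from $(\mathrm{Spf}(S_{R}/p),\cM_{S_{R}/p})$ \emph{into} a qrsp object $(\mathrm{Spf}(S),\cM_{S})$, i.e.\ a ring map $S\to S_{R}/p$ --- generally does not exist even after covers: $S$ is semiperfect, while $S_{R}/p$ is a divided-power polynomial-type algebra over $R/p$ admitting no compatible $p$-power roots of the coordinates. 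The maps that do exist, and that the argument must use, go the other way, from the Breuil thickening to the canonical thickening $A_{\mathrm{crys}}(S)\twoheadrightarrow S$, lying over $R/p\to S$; this is the paper's reduction (functorial comparison of the two evaluations at $A_{\mathrm{crys}}(S)$ for all $(S,\cM_{S})\in (X,\cM_{X})'_{\mathrm{qrsp}}$), which avoids having to recompute $T_{\mathrm{crys}}(\cE)$ on the Breuil thickening by Kummer-root descent as your outline would require.
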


\begin{proof}
    We just treat the case that $\star=\emptyset$. Let 
    \[
    T_{\mathrm{crys}}'\colon \mathrm{Vect}((\fX,\cM_{\fX})_{\Prism})\to \mathrm{Vect}((X,\cM_{X})_{\mathrm{crys}})
    \]
    denote the functor defined in Construction \ref{crys real by using bk prisms}. By Corollary \ref{kfl pris crys and kqsyn site} and Corollary \ref{kfl crys crys and kqsyn site}, it suffices to prove that there exists a functorial isomorphism
    \[
    T_{\mathrm{crys}}(\cE)_{(A_{\mathrm{crys}}(S)\twoheadrightarrow S,\cM_{A_{\mathrm{crys}}(S)})}\cong T_{\mathrm{crys}}'(\cE)_{(A_{\mathrm{crys}}(S)\twoheadrightarrow S,\cM_{A_{\mathrm{crys}}(S)})}
    \]
    for $(S,\cM_{S})\in (X,\cM_{X})'_{\mathrm{qrsp}}$. We may assume that the structure morphism $(\mathrm{Spec}(S),\cM_{S})\to (X,\cM_{X})$ admits a factorization
    \[
    (\mathrm{Spec}(S),\cM_{S})\to (X_{\infty,\alpha},\cM_{X_{\infty,\alpha}})\to  (X,\cM_{X}).
    \]
    
    Choose a map of $p$-adic PD-thickenings in $(X,\cM_{X})_{\mathrm{crys}}$
    \[
    s\colon (S_{R}\twoheadrightarrow R/p,\cM_{S_{R}})\to (A_{\mathrm{crys}}(S)\twoheadrightarrow S,\cM_{A_{\mathrm{crys}}(S)})
    \]
    that is compatible with $\delta_{\mathrm{log}}$-structures. For example, we can construct such a map as the following. By \cite[Lemma A.3]{ino25}, the monoid map $\bQ^{r}_{\geq 0}\to \cM_{X_{\infty,\alpha}}\to \cM_{S}$ lifts uniquely to a monoid map $\bQ^{r}_{\geq 0}\to \cM_{A_{\mathrm{crys}}(S)}$, which gives a chart of $(\mathrm{Spf}(A_{\mathrm{crys}}(S)),\cM_{A_{\mathrm{crys}}(S)})$. Consider a prelog ring map $(A,\bN^{r})\to (A_{\mathrm{crys}}(S),\bQ_{\geq 0}^{r})$ such that the ring map $A\to A_{\mathrm{crys}}(S)$ sends $x_{i}$ to $[x_{i}^{\flat}]$ and the monoid map $\bN^{r}\to \bQ_{\geq 0}^{r}$ is a natural inclusion. This prelog ring map induces a map of $p$-adic PD-thickenings in $(\mathrm{Spec}(R^{0}/p),\cM_{R^{0}/p})_{\mathrm{crys}}$
    \[
    (S_{R^{0}}\twoheadrightarrow R^{0}/p,\cM_{S_{R^{0}}})\to (A_{\mathrm{crys}}(S)\twoheadrightarrow S/p,\cM_{A_{\mathrm{crys}}(S)})
    \]
    which is compatible with $\delta_{\mathrm{log}}$-structures. Since $S_{R^{0}}\to S_{R}$ is \'{e}tale, there exists a unique lift
    \[
    s\colon (S_{R}\twoheadrightarrow R/p,\cM_{S_{R}})\to (A_{\mathrm{crys}}(S)\twoheadrightarrow S/p,\cM_{A_{\mathrm{crys}}(S)})
    \]
    that is compatible with $\delta_{\mathrm{log}}$-structures. 
    
    When we fix such a choice $t$, we have a log prism map
    \[
    t\colon (S_{R},(p),\phi^{*}\cM_{S_{R}})\to (A_{\mathrm{crys}}(S),(p),\phi^{*}\cM_{A_{\mathrm{crys}}(S)}),
    \]
    and we have isomorphisms
    \begin{align*}
        T_{\mathrm{crys}}(\cE)_{(A_{\mathrm{crys}}(S)\twoheadrightarrow S,\cM_{A_{\mathrm{crys}}(S)})}
        &\cong \cE_{(A_{\mathrm{crys}}(S),(p),\phi^{*}\cM_{A_{\mathrm{crys}}(S)})} \\
        &\cong \cE_{(S_{R},(p),\phi^{*}\cM_{S_{R}})}\otimes_{S_{R},s} A_{\mathrm{crys}}(S) \\ 
        &\cong T_{\mathrm{crys}}'(\cE)_{(S_{R}\twoheadrightarrow R/p,\cM_{S_{R}})}\otimes_{S_{R},s} A_{\mathrm{crys}}(S) \\
        &\cong T_{\mathrm{crys}}'(\cE)_{(A_{\mathrm{crys}}(S)\twoheadrightarrow S,\cM_{A_{\mathrm{crys}}(S)})}.
    \end{align*}
    For another choice $t'$, the maps $t$ and $t'$ induce a map of $p$-adic log PD-thickenings
    \[
    (S_{R}^{(1)}\twoheadrightarrow R/p,\cM_{S_{R}^{(1)}})\to (A_{\mathrm{crys}}(S)\twoheadrightarrow S,\cM_{A_{\mathrm{crys}}(S)})
    \]
    and a log prism map
    \[
    (S_{R}^{(1)},(p),\phi^{*}\cM_{S_{R}^{(1)}})\to (A_{\mathrm{crys}}(S),(p),\phi^{*}\cM_{A_{\mathrm{crys}}(S)}),
    \]
    and it follows from the crystal property of $\cE$ and $T'_{\mathrm{crys}}(\cE)$ that the isomorphism defined above is independent of the choice of $t$. This completes the proof.
\end{proof}

\begin{cor}
    Let $(\fX,\cM_{\fX})$ be a semi-stable log formal scheme over $\cO_{K}$. Then the crystalline realization functor restricts to functors
    \[
    T_{\mathrm{crys}}\colon \mathrm{Vect}^{\star}((\fX,\cM_{\fX})_{\Prism})\to \mathrm{Vect}^{\star}((X_{p=0},\cM_{X_{p=0}})_{\mathrm{crys}}).
    \]
\end{cor}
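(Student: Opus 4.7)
The plan is to reduce to the small affine situation and invoke the Breuil log prism description of $T_{\mathrm{crys}}$ supplied by Proposition~\ref{two crys real}, exploiting the fact that Construction~\ref{crys real by using bk prisms} lands in the classical (non-kfl) crystalline category by design.

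First I would note that the question is \'etale local on $\fX$. Indeed, Lemma~\ref{fully faithfulness for kfl crys crys} says that the functor $\iota\colon \mathrm{Vect}^{\star}((X_{p=0},\cM_{X_{p=0}})_{\mathrm{crys}})\hookrightarrow \mathrm{Vect}^{\star}_{\mathrm{kfl}}((X_{p=0},\cM_{X_{p=0}})_{\mathrm{crys}})$ is fully faithful, and being in its essential image is an \'etale-local property (it can be verified on stalks, or equivalently on any \'etale cover, by the very construction of $\iota$). Since by definition a semi-stable log formal scheme over $\cO_{K}$ admits, \'etale locally, small affine charts with framings, I may assume $(\fX,\cM_{\fX})=(\mathrm{Spf}(R),\cM_{R})$ is small affine with a fixed framing.

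In this setting the key input is Proposition~\ref{two crys real}, which identifies $T_{\mathrm{crys}}(\cE)$ with the image of $\cE$ under the functor of Construction~\ref{crys real by using bk prisms}. That latter functor is constructed by first evaluating $\cE$ at the Breuil log prisms $(S_{R}^{(i)},(p),\phi^{*}\cM_{S_{R}^{(i)}})$ to produce descent data $(M,\epsilon)\in\mathrm{DD}^{\star,\mathrm{vect}}_{S_{R}}$ with $M$ a \emph{finite projective} $S_{R}$-module, and then applying the canonical equivalence $\mathrm{DD}^{\star,\mathrm{vect}}_{S_{R}}\simeq \mathrm{Vect}^{\star}((X_{p=0},\cM_{X_{p=0}})_{\mathrm{crys}})$ coming from the fact that $(\mathrm{Spf}(S_{R}),\cM_{S_{R}})$ covers the final object of the log crystalline topos. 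The output of this composite functor is tautologically an object of the \emph{classical} category $\mathrm{Vect}^{\star}((X_{p=0},\cM_{X_{p=0}})_{\mathrm{crys}})$, not merely of its kfl enlargement. Hence $T_{\mathrm{crys}}(\cE)$, being isomorphic to such an object via Proposition~\ref{two crys real}, lies in the essential image of $\iota$, which is exactly what the corollary asserts.

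I do not anticipate a serious obstacle: the only subtlety is that $T_{\mathrm{crys}}$ is defined via the quasi-syntomic descent equivalence of Proposition~\ref{pris crys is equal to crys crys}, whereas Construction~\ref{crys real by using bk prisms} works directly with Breuil log prisms, so one must trust Proposition~\ref{two crys real} to bridge the two. Beyond that, the argument is purely a matter of re-packaging: the semi-stable hypothesis is used only to secure the existence of framings locally, which in turn legitimizes the Breuil log prism evaluation and therefore the descent to a genuine locally free crystalline $F$-crystal.
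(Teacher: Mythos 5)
Your argument is correct and is essentially the paper's own proof: the paper also reduces to the small affine case by working étale locally (using that semi-stability provides framed small affine charts) and then quotes Proposition \ref{two crys real}, whose Breuil-log-prism construction visibly outputs an object of the classical category $\mathrm{Vect}^{\star}((X_{p=0},\cM_{X_{p=0}})_{\mathrm{crys}})$. Your extra remarks on the étale-locality of classicality and on full faithfulness of $\iota$ simply make explicit what the paper leaves implicit.
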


\begin{proof}
    We may work \'{e}tale locally on $\fX$, and so the assertion follows from Proposition \ref{two crys real}.
\end{proof}

\section{Log prismatic Dieudonn\'{e} theory}

\subsection{Review on prismatic Dieudonn\'{e} theory}

we recall some results on prismatic Dieudonn\'{e} theory for classical $p$-divisible groups. Let $\fX$ be a bounded $p$-adic formal scheme.

\begin{construction}
    Let $G$ be a finite and locally free group scheme or a $p$-divisible group over $\fX$. We define a sheaf $G_{\Prism}$ on $\fX_{\Prism}$ by
    \[
    G_{\Prism}(A,I)\coloneqq G(A/I).
    \]
    Here, we consider an $\cO_{\Prism}$-module 
    \[
    \cM_{\Prism}(G)\coloneqq \mathcal{E}xt^{1}_{(\fX_{\Prism})}(G_{\Prism},\cO_{\Prism}).
    \]
    The morphism $\varphi\colon \cO_{\Prism}\to \cO_{\Prism}$ induces a $\cO_{\Prism}$-linear morphism $\varphi_{\cM_{\Prism}(G)}\colon \varphi^{\ast}\cM_{\Prism}(G)\to \cM_{\Prism}(G)$
\end{construction}

\begin{thm}[Ansch\"{u}tz--Le Bras]\label{prismatic Dieudonne}

Let $\fX$ be a quasi-syntomic $p$-adic formal scheme. Then the following statements hold.
\begin{enumerate}
    \item Let $G$ be a truncated Barsotti-Tate group over $\fX$ of level $n$ and height $h$. Then $\cM_{\Prism}(G)$ is a vector bundle of rank $h$ on the ringed site $(\fX_{\Prism},\cO_{\Prism}/p^{n})$.
    \item Let $G$ be a $p$-divisible group over $\fX$. Then $(\cM_{\Prism}(G),\varphi_{\cM_{\Prism}(G)})$ belongs to $\mathrm{DM}(\fX_{\Prism})$, and the contravariant functor
\[
\cM_{\Prism}\colon \mathrm{BT}(\fX)\to \mathrm{DM}(\fX_{\Prism}) 
\]
is fully faithful. Moreover, if $\fX$ has an \'{e}tale cover consisting of an affine formal scheme $\Spf R$ which admits a quasi-syntomic cover $R\to S$ with $S$ being an integral perfectoid ring, this functor gives an anti-equivalence. 
\end{enumerate}
\end{thm}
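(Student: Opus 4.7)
The plan is to follow the Anschütz--Le Bras strategy of quasi-syntomic descent. The key observation is that both fibered categories $\fX\mapsto \mathrm{BT}(\fX)$ and $\fX\mapsto \mathrm{DM}(\fX_{\Prism})$ satisfy descent for the quasi-syntomic topology --- the first by effectivity of flat descent for finite locally free group schemes, the second essentially by the definition of $\mathrm{DM}(\fX_{\Prism})$ as an inverse limit over the prismatic site. So I would first reduce to the affine case $\fX=\Spf R$ with $R$ quasi-regular semiperfectoid (qrsp). In this situation, the absolute prismatic site of $R$ admits a weakly initial object $(\Prism_R,I_R)$, reducing $\mathrm{DM}(R_{\Prism})$ concretely to the category of finite projective $\Prism_R$-modules $M$ equipped with $\varphi_M\colon \phi^{\ast}M\to M$ satisfying $I_R M\subset \varphi_M(\phi^{\ast}M)\subset M$.

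For part (1), I would compute $\cM_{\Prism}(G)=\mathcal{E}xt^{1}_{R_{\Prism}}(G_{\Prism},\cO_{\Prism})$ by passing further along a quasi-syntomic cover $R\to S$ with $S$ integral perfectoid. On the perfectoid prism $(A_{\inf}(S),\ker\theta)$, the Ext-sheaf recovers the classical covariant Dieudonné module of $G_S$ via the Scholze--Weinstein/Lau classification, which is finite projective of rank $h$ modulo $p^{n}$. Descending along $\Prism_R\to A_{\inf}(S)$ (with cocycle compatibility checked over the self-products, themselves covered by perfectoid prisms) transfers the vector-bundle property and rank back to $\Prism_R/p^{n}$, proving (1).

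For the first half of (2), the Dieudonné-module structure on $(\cM_{\Prism}(G),\varphi_{\cM_{\Prism}(G)})$ is then inherited from the perfectoid case, where the divisibility $I_{\Prism} M\subset \varphi_{M}(\phi^{\ast}M)\subset M$ corresponds to the classical Hodge filtration on the Dieudonné module of a $p$-divisible group. Full faithfulness of $\cM_{\Prism}$ reduces, again by descent, to the Scholze--Weinstein anti-equivalence over $A_{\inf}(S)$: morphisms on the source side can be computed locally on the quasi-syntomic site, where they match $\Hom_{\mathrm{DM}}$ on the target side via the explicit perfectoid identification. For the essential surjectivity assertion under the covering hypothesis, I would invert the functor: given $\cE\in \mathrm{DM}(\fX_{\Prism})$, evaluate at the perfectoid prism to obtain a BKF-module, produce the corresponding $p$-divisible group $G_S$ over $\Spf S$ via Scholze--Weinstein, and descend to $\fX$ using the descent datum inherited from $\cE$ together with flat descent for finite locally free group schemes applied to each $G_S[p^{n}]$.

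The main technical obstacle is the identification of $\cM_{\Prism}(G)$ with the classical Dieudonné module over a perfectoid prism in step (1): this rests on an explicit Koszul-type resolution of $G_{\Prism}$ on the quasi-syntomic site of the perfectoid base, and carefully matching the resulting Ext computation with the output of the Scholze--Weinstein classification is the technical heart of the argument. The remaining descent steps are comparatively formal once this perfectoid case is established.
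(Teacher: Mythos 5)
First, note that the paper does not actually prove this statement: it is recalled from Ansch\"utz--Le Bras, and the ``proof'' consists of pointers to \cite[Proposition 4.68, Theorem 4.74]{alb23} and the proof of \cite[Proposition 5.10]{alb23}. Your skeleton --- quasi-syntomic descent on both sides, reduction to quasi-regular semiperfectoid rings and the initial prism $\Prism_R$, comparison with Breuil--Kisin--Fargues modules and the Scholze--Weinstein/Lau classification over perfectoid rings, and recovering the $p$-divisible group by descent for essential surjectivity --- is indeed the broad strategy of that source (modulo the cosmetic slip that the functor here is contravariant, not covariant).

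However, there is a genuine gap in how you distribute the perfectoid input. You invoke a quasi-syntomic cover $R\to S$ with $S$ integral perfectoid both to prove part (1) and to prove full faithfulness in part (2), but those assertions are made for an \emph{arbitrary} quasi-syntomic $\fX$, and such covers need not exist: in characteristic $p$ an integral perfectoid ring is perfect, hence reduced, while any ($p$-completely) faithfully flat algebra over a non-reduced ring is again non-reduced; so a quasi-syntomic ring such as $\bF_{p}[x]/(x^{2})$ admits no quasi-syntomic cover by a perfectoid ring. This is precisely why the perfectoid-cover hypothesis appears in the theorem only for the anti-equivalence (essential surjectivity), and why Ansch\"utz--Le Bras prove the vector-bundle/rank statement and full faithfulness over general quasi-regular semiperfectoid rings by direct arguments on the quasi-syntomic site (d\'evissage to level $1$, analysis of $\mathcal{E}xt^{1}$ against $\cO_{\Prism}$, comparison with the crystalline theory), with the perfectoid classification entering only as one ingredient. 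The gap also propagates into your essential-surjectivity step: to descend $G_{S}$ along $R\to S$ you must transport the descent datum of $\cE$ through the functor over the \v{C}ech self-products $S\widehat{\otimes}_{R}S$, which are quasi-syntomic but in general neither perfectoid nor perfectoid-coverable, so you need the general full-faithfulness statement there --- which your plan only provides via the perfectoid covers you do not have. As written, your argument establishes the theorem only under the extra covering hypothesis of the last sentence, not in the stated generality.
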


\begin{proof}

\begin{enumerate}
    \item See \cite[Proposition 4.68]{alb23}.
    \item These are direct consequences of \cite[Theorem 4.74]{alb23} and the proof of \cite[Proposition 5.10]{alb23}. 
\end{enumerate}
    
\end{proof}

\begin{thm}[Ansch\"{u}tz-Le Bras]\label{prismatic Dieudonne and crystalline realization}
Let $X$ be a quasi-syntomic scheme over $\bF_{p}$ and $G$ be a $p$-divisible group on $X$. Then there exists a canonical isomorphism 
\[
T_{\mathrm{crys}}(\cM_{\Prism}(G))\cong \bD(G),
\]
where $\bD$ is a contravariant crystalline Dieudonn\'{e} functor of Berthelot-Breen-Messing (\cite[D\'{e}finition 3.3.6]{bbm82}).
    
\end{thm}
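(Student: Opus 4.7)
The plan is to reduce to the case of quasiregular semiperfect $\bF_p$-algebras by quasi-syntomic descent, and then to directly match the two Ext-group constructions on such rings. Both $G \mapsto \cM_{\Prism}(G)$ and $G \mapsto \bD(G)$ behave well under flat base change in $X$; combined with Corollary \ref{kfl pris crys and kqsyn site} (specialized to the non-log case) and the classical flat descent for $\bD$ established in \cite{bbm82}, this reduces the problem to constructing a natural (in $S$ and $G$) isomorphism when $X = \Spec S$ with $S$ a qrsp $\bF_p$-algebra.

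For such $S$, the prismatic site $S_{\Prism}$ admits the crystalline prism $(A_{\mathrm{crys}}(S), (p))$ as a weakly final (in fact, covering) object, where $A_{\mathrm{crys}}(S)$ is the usual divided-power envelope of the surjection $W(S^{\flat}) \twoheadrightarrow S$. By the construction of $T_{\mathrm{crys}}$ (via Corollary \ref{kfl pris crys and kqsyn site} and Corollary \ref{kfl crys crys and kqsyn site}), the value of $T_{\mathrm{crys}}(\cM_{\Prism}(G))$ at the PD-thickening $(S \hookrightarrow A_{\mathrm{crys}}(S))$ coincides with $\cM_{\Prism}(G)$ evaluated at $(A_{\mathrm{crys}}(S), (p))$, which by definition is $\mathcal{E}xt^{1}_{S_{\Prism}}(G_{\Prism}, \cO_{\Prism})$ evaluated at $A_{\mathrm{crys}}(S)$. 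On the other hand, by Berthelot-Breen-Messing, $\bD(G)$ at the same PD-thickening is $\mathcal{E}xt^{1}_{(\Spec S)_{\mathrm{crys}}}(G, \cO_{\mathrm{crys}})$ evaluated at $A_{\mathrm{crys}}(S)$. The heart of the argument is to exhibit a canonical comparison between these two Ext groups.

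To set up this comparison, I would exploit the fact that, when restricted to qrsp $S$-algebras, both the prismatic topos of $S$ and the absolute crystalline topos of $\Spec S$ are represented by sheaves on a common site whose objects are qrsp $S$-algebras $T$, each carrying the single canonical ring $A_{\mathrm{crys}}(T)$ that plays the role of both the structure sheaf of the prismatic topos and of the crystalline topos. Under this identification, the morphism of ringed topoi induced by $(A_{\mathrm{crys}}(T), (p)) \mapsto (T \hookrightarrow A_{\mathrm{crys}}(T))$ is an equivalence on qrsp covers, as in \cite[\S 2.1.1.1]{iky23} and the proof of Proposition \ref{pris crys is equal to crys crys}. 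Choosing a two-term resolution of $G$ by representable affine group schemes then reduces the Ext comparison to the tautology that both presheaves of algebras are $A_{\mathrm{crys}}(-)$, and Frobenius equivariance is automatic since the Frobenius on $A_{\mathrm{crys}}(T)$ is the same in both formalisms. Quasi-syntomic descent on both sides then globalizes the pointwise isomorphism.

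The main obstacle is the bookkeeping needed to match the two Ext computations on qrsp rings simultaneously as functors in $G$ and $S$, and to verify that the resulting isomorphism is strictly compatible with the quasi-syntomic descent presentations used to define $T_{\mathrm{crys}}$ and to globalize $\bD$. Once this compatibility is in place, the canonicity claim and the global statement follow essentially formally.
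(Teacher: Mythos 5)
The paper does not actually prove this statement: it is imported verbatim from Ansch\"{u}tz--Le Bras, and the ``proof'' in the text is the citation \cite[Theorem 4.44]{alb23}. So the relevant comparison is between your sketch and the argument in \emph{loc.\ cit.} Your overall architecture does mirror theirs: reduce by quasi-syntomic descent to a qrsp $\bF_{p}$-algebra $S$, use that $A_{\mathrm{crys}}(S)$ is simultaneously the initial prism of $S_{\Prism}$ and the (weakly final) PD-thickening computing the crystalline site of $\Spec S$, identify the two structure sheaves $\Prism_{(-)}\cong A_{\mathrm{crys}}(-)$ compatibly with Frobenius, and then globalize.

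However, as a standalone proof your sketch has two genuine gaps at exactly the points where the real work lies. First, you assert that the value of $T_{\mathrm{crys}}(\cM_{\Prism}(G))$ at $(S\hookrightarrow A_{\mathrm{crys}}(S))$ ``by definition'' is $\mathcal{E}xt^{1}_{S_{\Prism}}(G_{\Prism},\cO_{\Prism})$ evaluated at $A_{\mathrm{crys}}(S)$, and similarly that $\bD(G)$ evaluated there is the corresponding crystalline $\mathcal{E}xt^{1}$ over $A_{\mathrm{crys}}(S)$. Neither identification is definitional: $\cM_{\Prism}(G)$ and $\bD(G)$ are sheaf-Ext's, and identifying their evaluations at a single (weakly final) object with honest $\mathrm{Ext}$-groups over $A_{\mathrm{crys}}(S)$ (equivalently, with $\mathrm{Ext}^{1}$ computed in the quasi-syntomic topos with values in $\Prism_{(-)}$, resp.\ $A_{\mathrm{crys}}(-)$) is precisely the content of the ``unfolding''/base-change results in \cite{alb23} (and, on the crystalline side, of the crystal and base-change properties established in \cite{bbm82}); without these, the ``tautological'' comparison of Ext-groups has nothing to compare. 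Second, the proposed ``two-term resolution of $G$ by representable affine group schemes'' does not exist: a $p$-divisible group is only an ind-(finite flat) group, and neither Ansch\"{u}tz--Le Bras nor Berthelot--Breen--Messing compute these Ext-sheaves from such a resolution; once the sites, group sheaves, and structure sheaves are genuinely identified over qrsp rings no resolution is needed, and before that identification a resolution does not help. So the route is the right one, but the two steps you label as bookkeeping are the theorem.
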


\begin{proof}
    See \cite[Theorem 4.44]{alb23}.
\end{proof}

\begin{cor}\label{hodge filt on pris Dieudonne}
    Let $\fX$ be a quasi-syntomic $p$-adic formal scheme and $G$ be a $p$-divisible group over $\fX$. Suppose that $X\coloneqq\fX\times_{\bZ_{p}} \bF_{p}$ is also quasi-syntomic. Then we have the canonical exact sequence
    \[
    0\to \mathrm{Lie}(G)^{*}\to T_{\mathrm{crys}}(\cM_{\Prism}(G))_{(X,\fX)}\to \mathrm{Lie}(G^{*})\to 0.
    \]
\end{cor}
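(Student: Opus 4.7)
The thickening $X\hookrightarrow\fX$ is a PD thickening because the ideal $(p)\subset\cO_{\fX}$ carries the canonical divided powers in any residue characteristic (since $v_p(p^n/n!)\geq 0$). Hence evaluating the crystalline crystal $T_{\mathrm{crys}}(\cM_{\Prism}(G))$ at $(X,\fX)$ produces an honest coherent sheaf on $\fX$, and I claim the exact sequence of the corollary is, after this identification, nothing but the classical Hodge filtration of Messing/BBM applied to the lift $G$ of $G_{X}$.

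First I would invoke Theorem \ref{prismatic Dieudonne and crystalline realization} to produce a canonical functorial isomorphism
\[
T_{\mathrm{crys}}(\cM_{\Prism}(G))_{(X,\fX)}\;\cong\;\bD(G_{X})_{(X,\fX)},
\]
where $\bD$ denotes the contravariant Berthelot-Breen-Messing Dieudonn\'e crystal. Since $\bD(G_{X})$ is a crystal on the (classical) absolute crystalline site of $X$, its value at the object $(X,\fX)\in X_{\mathrm{crys}}$ is well defined, and the lift $G$ of $G_{X}$ to $\fX$ furnishes the canonical Hodge exact sequence of $\cO_{\fX}$-modules
\[
0\to \mathrm{Lie}(G)^{*}\to \bD(G_{X})_{(X,\fX)}\to \mathrm{Lie}(G^{*})\to 0
\]
(see \cite[Proposition 3.3.5 and \S 4.3]{bbm82}, or equivalently Messing, Chapter IV). Splicing this with the identification above yields the required sequence.

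The only point that needs justification is the compatibility of the Lie and co-Lie terms on both sides of the isomorphism coming from Theorem \ref{prismatic Dieudonne and crystalline realization}. For this I would argue functorially: both $\mathrm{Lie}(G)^{*}$ and $\mathrm{Lie}(G^{*})$ can be recovered from the $p$-divisible group $G$ on $\fX$ using only the universal vector extension / infinitesimal deformation formalism, and the isomorphism $T_{\mathrm{crys}}\circ\cM_{\Prism}\cong\bD$ in Theorem \ref{prismatic Dieudonne and crystalline realization} is constructed so as to identify these intrinsic invariants; concretely one reduces, via quasi-syntomic descent to the qrsp case and the Breuil-type description of $T_{\mathrm{crys}}$ from Construction \ref{crys real by using bk prisms} and Proposition \ref{two crys real}, to the already-known perfectoid case treated in \cite{alb23}.

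The main (mild) obstacle is therefore bookkeeping rather than substance: verifying that the identification with $\bD$ is truly an identification of \emph{filtered} objects, so that the subobject $\mathrm{Lie}(G)^{*}$ and the quotient $\mathrm{Lie}(G^{*})$ match up on both sides. Once that is in place, no further computation is needed and the corollary follows from the classical Hodge filtration for $p$-divisible groups over a PD thickening.
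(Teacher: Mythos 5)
Your proposal is correct and follows essentially the same route as the paper: the paper's proof simply combines Theorem \ref{prismatic Dieudonne and crystalline realization} with the classical Hodge filtration of \cite[Corollaire 3.3.5]{bbm82} evaluated on the PD-thickening $(X,\fX)$, exactly as you do. The extra compatibility bookkeeping you flag is reasonable but is not carried out in the paper either, which treats the identification of Theorem \ref{prismatic Dieudonne and crystalline realization} as already canonical.
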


\begin{proof}
    This follows Theorem \ref{prismatic Dieudonne and crystalline realization} and \cite[Corollaire 3.3.5]{bbm82}
\end{proof}

\begin{thm}[Ansch\"{u}tz-Le Bras]\label{prismatic Dieudonne and etale realization}
Let $\fX$ be a quasi-syntomic $p$-adic formal scheme and $G$ be a $p$-divisible group on $\fX$. Then there exists a canonical isomorphism 
\[
T_{\et}(\cM_{\Prism}(G))^{\vee}\cong G_{\eta}.
\]
    
\end{thm}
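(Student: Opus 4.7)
The plan is to reduce the problem via quasi-syntomic descent to the case of a perfectoid base, and then verify the isomorphism at the associated perfect prism directly using the classical Breuil--Kisin--Fargues module theory of $p$-divisible groups.

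First, both sides of the asserted isomorphism are $\bZ_{p}$-local systems on $\fX_{\eta}$ in the pro-\'etale topology, and each is compatible with quasi-syntomic localization on $\fX$: the left-hand side because $\cM_{\Prism}(G)=\mathcal{E}xt^{1}(G_{\Prism},\cO_{\Prism})$ is built from constructions (Ext of sheaves, $T_{\et}$) that satisfy quasi-syntomic descent, and the right-hand side because $p$-divisible groups form an \'etale (hence pro-\'etale) sheaf on the generic fiber. I would therefore reduce to the affine case $\fX=\Spf R$ with $R$ quasi-regular semiperfectoid, equipped with a quasi-syntomic cover $R\to S$ by an integral perfectoid ring, and then compute on the perfect prism $(A_{\mathrm{inf}}(S),I)$, where $I=\ker\theta$.

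On this perfect prism, the sheaf $\cM_{\Prism}(G)$ evaluates to the Breuil--Kisin--Fargues module $M(G_{S})$ of $G_{S}$, as computed in \cite{alb23} via an explicit resolution of $G_{\Prism}$ by formally smooth $p$-divisible groups and a direct Ext calculation. The \'etale realization $T_{\et}$ is given on affinoid perfectoid geometric points $\mathrm{Spa}(C,C^{+})\to \fX_{\eta}$ by the formula
\[
T_{\et}(\cE)(\mathrm{Spa}(C,C^{+})) \cong \bigl(\cE(A_{\mathrm{inf}}(\cO_{C}))[1/I]\bigr)^{\varphi=1}.
\]
Applied to $\cE=\cM_{\Prism}(G)$, the statement then reduces to the classical identification, due to Scholze--Weinstein and Fargues, between the Tate module $T_{p}(G(\cO_{C}))$ and the $\varphi$-invariants of the Breuil--Kisin--Fargues module after inverting $I$; the dual in the statement reflects the fact that $\cM_{\Prism}$ is a contravariant Dieudonn\'e functor whereas the Tate module is a covariant invariant of $G$.

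The main obstacle will be a careful bookkeeping of the several dualizations involved—contravariant versus covariant Dieudonn\'e theory, and the relation between $G_{\eta}$ and the Tate module $T_{p}G$—and verifying that the $\mathrm{Ext}$-sheaf definition of $\cM_{\Prism}$ produces precisely the Breuil--Kisin--Fargues module up to these conventions. Once the perfectoid case is settled, the functoriality of both sides together with quasi-syntomic descent produces the isomorphism on general quasi-syntomic $\fX$.
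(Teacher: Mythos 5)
The paper does not actually prove this statement: it is quoted as a theorem of Ansch\"{u}tz--Le Bras, and the entire proof is the citation ``See \cite[Proposition 5.25]{alb23}.'' So there is no internal argument to compare yours against; what you have written is, in effect, a reconstruction of the proof from the cited reference, and it follows the same strategy that Ansch\"{u}tz--Le Bras use there: reduce by quasi-syntomic descent to the quasi-regular semiperfectoid/perfectoid case, identify the evaluation of $\cM_{\Prism}(G)$ on the perfect prism $(A_{\mathrm{inf}}(S),I)$ with the Breuil--Kisin--Fargues module of $G_{S}$ (Scholze--Weinstein/Fargues classification), and identify $T_{\et}$ on geometric points with $\varphi$-invariants after inverting $I$, keeping track of the contravariance of $\cM_{\Prism}$ versus the covariance of the Tate module. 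Two small caveats if you were to write this out in full: the descent step is legitimate only because $\cM_{\Prism}(G)$ is already known to be a prismatic crystal, i.e.\ a vector bundle on $\fX_{\Prism}$ (Theorem \ref{prismatic Dieudonne}, \cite[Theorem 4.74, Proposition 4.68]{alb23}), which is what lets you glue isomorphisms of $\bZ_{p}$-local systems along the induced cover of generic fibers; and the fiber formula for the \'{e}tale realization should read $\bigl(\cE(A_{\mathrm{inf}}(\cO_{C}))[1/I]^{\wedge}_{p}\bigr)^{\varphi=1}$, i.e.\ one must $p$-complete after inverting $I$ before taking $\varphi$-invariants. With those adjustments your sketch matches the argument of the source that the paper black-boxes.
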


\begin{proof}
    See \cite[Proposition 5.25]{alb23}. 
\end{proof}

\subsection{Definition of the log prismatic Dieudonn\'{e} functor}

In order to construct the log prismatic Dieudonn\'{e} functor, we work in the larger categories and construct a functor
\[
\cM_{\Prism}: \mathrm{wBT}(\fX,\cM_{\fX})\to \mathrm{DM}_{\mathrm{kfl}}((\fX,\cM_{\fX})_{\Prism})
\]
under some assumptions. For later use, we also construct a functor
\[
\widetilde{\cM}_{\Prism}: \mathrm{wBT}_{1}(\fX,\cM_{\fX})\to \mathrm{Vect}_{\mathrm{kfl}}((\fX,\cM_{\fX})_{\Prism},\widetilde{\cO}_{\Prism})
\]
as an auxiliary object. In order to achieve these, we reduce the problems to the non-log cases by using some results on the kfl descent.

\begin{construction}\label{construction of log prismatic Dieudonne}

Let $(\fX,\cM_{\fX})$ be a $p$-adic fs log formal scheme. Suppose that $\fX$ is quasi-syntomic and that $(\fX,\cM_{\fX})$ admits a prismatically liftable chart \'{e}tale locally.

Let $(\fY,\cM_{\fY})\in (\fX,\cM_{\fX})'_{\mathrm{kqsyn}}$ and $f\colon (\fY,\cM_{\fY})\to (\fX,\cM_{\fX})$ be the structure morphism. Since every weak log $p$-divisible group on $(\fX,\cM_{\fX})$ is $f^{*}$-admissible by Lemma \ref{functoriality of pro-kfl pullback of log fin grp} and Lemma \ref{pro-kfl pullback of log fin grp is well-def}, we have functors
\[
\mathrm{wBT}(\fX,\cM_{\fX})\to \mathrm{BT}(\fY)\stackrel{\cM_{\Prism}}{\to} \mathrm{DM}(\fY_{\Prism}),
\]
where $\cM_{\Prism}$ is the prismatic Dieudonn\'{e} functor in Theorem \ref{prismatic Dieudonne}. Taking the limit with respect to $(\fY,\cM_{\fY})\in (\fX,\cM_{\fX})'_{\mathrm{kqsyn}}$, we obtain a functor
\[
\mathrm{wBT}(\fX,\cM_{\fX})\to \mathrm{DM}_{\mathrm{kfl}}((\fX,\cM_{\fX})_{\Prism})
\]
by Corollary \ref{kfl pris crys and kqsyn site}. The resulting functor is also denoted by $\cM_{\Prism}$. 

In the same way, we define a functor
\[
\widetilde{\cM}_{\Prism}: \mathrm{wBT}_{\mathrm{1}}(\fX,\cM_{\fX})\to \mathrm{Vect}_{\mathrm{kfl}}((\fX,\cM_{\fX})_{\Prism},\widetilde{\cO}_{\Prism}).
\]
\end{construction}

\begin{prop}\label{log pris dieudonne weak ver} 
Let $(\fX,\cM_{\fX})$ be a $p$-adic fs log formal scheme. Suppose that $\fX$ is quasi-syntomic and $(\fX,\cM_{\fX})$ admits a prismatically liftable chart \'{e}tale locally.

\begin{enumerate}
\item The following diagram is commutative:
\[
\begin{tikzcd}
        \mathrm{wBT}(\fX,\cM_{\fX}) \ar[r,"\cM_{\Prism}"] \ar[d,"G\mapsto {G[p]}"'] & \mathrm{DM}_{\mathrm{kfl}}((\fX,\cM_{\fX})_{\Prism}) \ar[d,"(\cE{,}\varphi_{\cE})\mapsto \cE\otimes_{\cO_{\Prism}} \widetilde{\cO}_{\Prism}"] \\
        \mathrm{wBT}_{\mathrm{1}}(\fX,\cM_{\fX}) \ar[r," \widetilde{\cM}_{\Prism}"] & \mathrm{Vect}_{\mathrm{kfl}}((\fX,\cM_{\fX})_{\Prism},\widetilde{\cO}_{\Prism}).
    \end{tikzcd}
    \]
    
\item The log prismatic Dieudonn\'{e} functors are compatible with the (non-log) prismatic Dieudonn\'{e} functors. To be precise, we have the following commutative diagrams.
    \[
    \begin{tikzcd}
    \mathrm{BT}(\fX) \ar[r,"\cM_{\Prism}"] \ar[d,hook] & \mathrm{DM}(\fX_{\Prism}) \ar[d,hook] \\
    \mathrm{wBT}(\fX,\cM_{\fX}) \ar[r,"\cM_{\Prism}"] &
    \mathrm{DM}_{\mathrm{kfl}}((\fX,\cM_{\fX})_{\Prism})
    \end{tikzcd}
    \]
    \[
    \begin{tikzcd}
    \mathrm{BT}_{1}(\fX) \ar[r,"\cM_{\Prism}"] \ar[d,hook] & \mathrm{Vect}(\fX_{\Prism},\cO_{\Prism}/p) \ar[r] & \mathrm{Vect}(\fX_{\Prism},\widetilde{\cO}_{\Prism}) \ar[d,hook] \\
    \mathrm{wBT}_{1}(\fX,\cM_{\fX}) \ar[rr,"\widetilde{\cM}_{\Prism}"] & &
    \mathrm{Vect}_{\mathrm{kfl}}((\fX,\cM_{\fX})_{\Prism},\widetilde{\cO}_{\Prism})
    \end{tikzcd}
    \]
    
\item Suppose that, \'{e}tale locally, $(\fX,\cM_{\fX})$ admits a prismatically liftable chart $\alpha$ such that $\fX_{\infty,\alpha}$ has a perfectoid quasi-syntomic cover. Then the functor $\cM_{\Prism}$ gives an equivalence.
    \end{enumerate}
    
\end{prop}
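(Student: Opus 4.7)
The plan is to reduce each statement to its non-log counterpart via the descent results established earlier in the paper. For parts (1) and (2), I would unwind the definitions of $\cM_{\Prism}$ and $\widetilde{\cM}_{\Prism}$ through the equivalences of Corollary \ref{kfl pris crys and kqsyn site} and Proposition \ref{pro-kfl descent for log BT}: both sides of each diagram are, by construction, obtained from the classical prismatic Dieudonn\'{e} functor by termwise application to objects in $(\fX,\cM_{\fX})'_{\mathrm{kqsyn}}$. The commutativity in (1) then reduces to the classical identity $\cM_{\Prism}(G)\otimes_{\cO_{\Prism}}\widetilde{\cO}_{\Prism}\cong \widetilde{\cM}_{\Prism}(G[p])$ from \cite{alb23}, applied termwise. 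The commutativity in (2) is immediate from the construction, since the pullback of a classical $p$-divisible group along any $(\fY,\cM_{\fY})\to (\fX,\cM_{\fX})$ in $(\fX,\cM_{\fX})'_{\mathrm{kqsyn}}$ coincides with its classical pullback to $\fY$, and descent returns the classical prismatic Dieudonn\'{e} crystal on $\fX$.

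For part (3), I plan to work \'{e}tale locally on $\fX$ so that a prismatically liftable chart $\alpha\colon \bN^{r}\to \cM_{\fX}$ is given together with a perfectoid quasi-syntomic cover $\mathrm{Spf}(S_{0})\to \fX_{\infty,\alpha}$. Endow $\fY\coloneqq \mathrm{Spf}(S_{0})$ with the pullback log structure $\cM_{\fY}$ from $(\fX_{\infty,\alpha},\cM_{\fX_{\infty,\alpha}})$, so that the composition $(\fY,\cM_{\fY})\to (\fX_{\infty,\alpha},\cM_{\fX_{\infty,\alpha}})\to (\fX,\cM_{\fX})$ satisfies the hypotheses of both Proposition \ref{pro-kfl descent for log BT} and Proposition \ref{kqsyn descent for kfl log pris crys}. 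Letting $(\fY^{(\bullet)},\cM_{\fY^{(\bullet)}})$ denote the \v{C}ech nerve in saturated log formal schemes, these descent results give identifications
\[
\mathrm{wBT}(\fX,\cM_{\fX})\simeq \varprojlim_{\bullet\in\Delta}\mathrm{BT}(\fY^{(\bullet)}),\qquad \mathrm{DM}_{\mathrm{kfl}}((\fX,\cM_{\fX})_{\Prism})\simeq \varprojlim_{\bullet\in\Delta}\mathrm{DM}(\fY^{(\bullet)}_{\Prism}),
\]
under which $\cM_{\Prism}$ becomes the termwise application of the classical prismatic Dieudonn\'{e} functor.

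It then suffices to apply Theorem \ref{prismatic Dieudonne}(2) at each level of the \v{C}ech nerve, which requires verifying that every $\fY^{(n)}$ admits \'{e}tale locally an affine open whose underlying ring has a quasi-syntomic cover by an integral perfectoid ring. For $n=0$ this is automatic since $\fY=\mathrm{Spf}(S_{0})$ is already perfectoid. The principal obstacle is the case $n\geq 1$: one must control the iterated saturated products $\fY\times^{\mathrm{sat}}_{\fX}\cdots\times^{\mathrm{sat}}_{\fX}\fY$. I would handle this by separating the structure of the map $\fY\to \fX$ into its strict quasi-syntomic part over $\fX_{\infty,\alpha}$ and the pro-Kummer part coming from $\fX_{\infty,\alpha}\to \fX$. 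The strict self-products of $\fY$ over $\fX_{\infty,\alpha}$ are $(p,I)$-completed tensor powers of $S_{0}$, which are qrsp and hence admit perfectoid quasi-syntomic covers; the remaining contribution from the \v{C}ech nerve of $\fX_{\infty,\alpha}\to \fX$ is Kummer of monoid type and is resolved by further adjoining roots of the chart, preserving the perfectoid quasi-syntomic cover property. Combining these inputs produces a perfectoid quasi-syntomic cover of each $\fY^{(n)}$; Theorem \ref{prismatic Dieudonne}(2) then yields a termwise anti-equivalence, and passing to the limit along $\bullet\in\Delta$ gives the desired anti-equivalence.
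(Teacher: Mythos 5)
Parts (1) and (2) are fine and coincide with the paper's (very short) argument: both functors are defined termwise over $(\fX,\cM_{\fX})'_{\mathrm{kqsyn}}$ via Corollary \ref{kfl pris crys and kqsyn site}, so the diagrams commute by the corresponding classical identities from \cite{alb23}.

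For part (3) your strategy diverges from the paper's, and it has a genuine gap. You want to apply the anti-equivalence criterion of Theorem \ref{prismatic Dieudonne}(2) at \emph{every} level of the \v{C}ech nerve, and for $n\geq 1$ you justify this by claiming that the completed tensor powers of $S_{0}$ over $R_{\infty,\alpha}$ are qrsp ``and hence admit perfectoid quasi-syntomic covers''. That implication is false: a qrsp ring need not admit any faithfully flat (let alone quasi-syntomic) map to an integral perfectoid ring. For instance $\cO_{\bC_{p}}/p$ is qrsp but has nonzero nilpotents, while any integral perfectoid ring killed by $p$ is perfect, hence reduced, so no faithfully flat map can exist; the same obstruction occurs for the \v{C}ech terms $S_{0}\widehat{\otimes}_{R_{\infty,\alpha}}S_{0}$ in characteristic $p$ situations, which are typically non-reduced. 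So the hypothesis of the anti-equivalence part of Theorem \ref{prismatic Dieudonne}(2) simply cannot be verified at levels $n\geq 1$ in general, and the subsequent handling of the Kummer part (``adjoining roots of the chart preserves the perfectoid quasi-syntomic cover property'') inherits the same problem. The paper's proof avoids this entirely: one only needs the anti-equivalence at level $0$, where $\fY=\mathrm{Spf}(R)$ with $R$ perfectoid by the hypothesis on $\fX_{\infty,\alpha}$, together with \emph{fully faithfulness} of $\cM_{\Prism}$ over each $\fY^{(n)}$ for $n\geq 1$, which Theorem \ref{prismatic Dieudonne}(2) gives over any quasi-syntomic base. Equivalence at level $0$ plus fully faithfulness at the higher levels is enough to conclude that the induced functor between the limits furnished by Proposition \ref{pro-kfl descent for log BT} and Proposition \ref{kqsyn descent for kfl log pris crys} is an equivalence; if you replace your termwise-equivalence step by this descent argument, the rest of your reduction goes through.
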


\begin{proof}
    The assertions $(1)$ and $(2)$ follow from the construction.
    
    $(3)$: By working \'{e}tale locally on $\fX$, we may assume that $(\fX,\cM_{\fX})$ admits a prismatically liftable chart $\alpha:\bN^{r}\to \cM_{\fX}$. Take a strict quasi-syntomic cover $(\fY,\cM_{\fY})=(\mathrm{Spf}(R),\cM_{R})\to (\fX,\cM_{\fX})$ where $R$ is perfectoid. Then the functor
    \[
    \cM_{\Prism}\colon \mathrm{BT}(\fY)\to \mathrm{DM}(\fY_{\Prism})
    \]
    gives an equivalence, and the functor
    \[
    \cM_{\Prism}\colon \mathrm{BT}(\fY^{(n)})\to \mathrm{DM}((\fY^{(n)})_{\Prism})
    \]
    is fully faithful for $n\geq 1$. Therefore, the assertion follows from Proposition \ref{pro-kfl descent for log BT} and Proposition \ref{kqsyn descent for kfl log pris crys}.
\end{proof} 

\begin{prop}[Pullback compatibility of log prismatic Dieudonn\'{e} functors]
    Let $f\colon (\fX,\cM_{\fX})\to (\fY,\cM_{\fY})$ be a morphism of $p$-adic fs log formal schemes. Suppose that $\fX$ and $\fY$ are quasi-syntomic and that $(\fX,\cM_{\fX})$ and $(\fY,\cM_{\fY})$ admit prismatically liftable charts \'{e}tale locally. Let $G$ be a weak log $p$-divisible group on $(\fY,\cM_{\fY})$. Then there exists a natural isomorphism
    \[
    \cM_{\Prism}(G_{(\fX,\cM_{\fX})})\cong f^{*}\cM_{\Prism}(G).
    \]
    The analogous assertion holds for $\widetilde{\cM}_{\Prism}$ as well.
\end{prop}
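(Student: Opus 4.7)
The plan is to reduce the claim to the pullback compatibility of the non-log prismatic Dieudonné functor (Theorem \ref{prismatic Dieudonne}) via the descent equivalence of Corollary \ref{kfl pris crys and kqsyn site}. Since the assertion is strict étale local on both $(\fX,\cM_{\fX})$ and $(\fY,\cM_{\fY})$, I would first pass to the case where both admit prismatically liftable charts, so that the hypotheses of Construction \ref{construction of log prismatic Dieudonne} and Corollary \ref{kfl pris crys and kqsyn site} apply on each side.

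By Corollary \ref{kfl pris crys and kqsyn site}, an object of $\mathrm{DM}_{\mathrm{kfl}}((\fX,\cM_{\fX})_{\Prism})$ is determined by its restrictions to $\fZ_{\Prism}$ for $(\fZ,\cM_{\fZ})\in (\fX,\cM_{\fX})'_{\mathrm{qrsp}}$, compatibly in $(\fZ,\cM_{\fZ})$. Fix such a $(\fZ,\cM_{\fZ})$ with structure morphism $g\colon (\fZ,\cM_{\fZ})\to (\fX,\cM_{\fX})$, and set $h\coloneqq fg\colon (\fZ,\cM_{\fZ})\to (\fY,\cM_{\fY})$. By Lemma \ref{functoriality of pro-kfl pullback of log fin grp} combined with Lemma \ref{pro-kfl pullback of log fin grp is well-def}, $G$ is $h^{*}$-admissible and there is a canonical identification $h^{*}G\cong g^{*}(G_{(\fX,\cM_{\fX})})=G_{(\fZ,\cM_{\fZ})}$ as classical $p$-divisible groups on $\fZ$.

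On one side, Construction \ref{construction of log prismatic Dieudonne} gives by definition a canonical isomorphism $\cM_{\Prism}(G_{(\fX,\cM_{\fX})})|_{\fZ_{\Prism}}\cong \cM_{\Prism}(G_{(\fZ,\cM_{\fZ})})$, where the right-hand side is the non-log prismatic Dieudonné crystal of the classical $p$-divisible group $G_{(\fZ,\cM_{\fZ})}$. On the other side, by Lemma \ref{functoriality of pro-kfl pullback of kfl pris crys}(1) one has $g^{*}(f^{*}\cM_{\Prism}(G))\cong h^{*}\cM_{\Prism}(G)$, and Lemma \ref{crys property for kfl pris crys} computes this log-prism-wise: for $(A,I,\cM_{A})\in (\fZ,\cM_{\fZ})^{\mathrm{str}}_{\Prism}$, one pulls back $\cM_{\Prism}(G)_{(A,I,\cN_{A})}$ along the natural map. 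Invoking Construction \ref{construction of log prismatic Dieudonne} again, $\cM_{\Prism}(G)_{(A,I,\cN_{A})}$ is in turn computed as the non-log prismatic Dieudonné crystal of a classical pullback of $G$; the pullback compatibility of the non-log $\cM_{\Prism}$ (Theorem \ref{prismatic Dieudonne}) then yields a canonical identification of this value with $\cM_{\Prism}(G_{(\fZ,\cM_{\fZ})})_{(A,I)}$. Composing these two chains of identifications supplies the desired isomorphism over $\fZ_{\Prism}$.

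The main technical obstacle I anticipate is verifying that the two identifications above are natural in $(\fZ,\cM_{\fZ})\in (\fX,\cM_{\fX})'_{\mathrm{qrsp}}$, since both $f^{*}$ on kfl prismatic crystals and the formation of $\cM_{\Prism}$ involve non-canonical choices of strict flat covers (realizing $f^{*}$-admissibility and realizing prismatic liftability of charts, respectively). This is handled in essentially the same style as the independence-of-choice argument in Proposition \ref{kqsyn descent for kfl log pris crys}, by passing to a common refinement and invoking the crystal property. The $\widetilde{\cM}_{\Prism}$-statement follows from the identical argument applied to $G[p]$, using the $\widetilde{\cO}_{\Prism}$-version of Theorem \ref{prismatic Dieudonne}(1).
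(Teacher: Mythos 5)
Your overall strategy (restrict along the descent equivalence of Corollary \ref{kfl pris crys and kqsyn site}, identify both sides prism-wise, and reduce to pullback compatibility of the non-log functor in Theorem \ref{prismatic Dieudonne}) is the same as the paper's, and your identifications on the group side ($h^{*}G\cong G_{(\fZ,\cM_{\fZ})}$) and on the crystal side ($g^{*}f^{*}\cM_{\Prism}(G)\cong h^{*}\cM_{\Prism}(G)$) are fine. The gap is in the key prism-wise step: you assert that, via Construction \ref{construction of log prismatic Dieudonne}, the value $\cM_{\Prism}(G)_{(A,I,\cN_{A})}$ at an arbitrary log prism of $(\fY,\cM_{\fY})^{\mathrm{str}}_{\Prism}$ ``is computed as the non-log prismatic Dieudonn\'{e} crystal of a classical pullback of $G$''. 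It is not: at such a prism the value is a genuinely kfl vector bundle, obtained by descent from a pro-kfl cover built from roots of a chart of $\cM_{\fY}$ (the quasi-inverse in Proposition \ref{kqsyn descent for kfl log pris crys}); there is no direct non-log formula there, and Lemma \ref{crys property for kfl pris crys} only tells you how to pull that kfl value back, not how to compare it with $\cM_{\Prism}(G_{(\fZ,\cM_{\fZ})})_{(A,I)}$. To make the comparison you need, for your chosen $(\fZ,\cM_{\fZ})\in (\fX,\cM_{\fX})'_{\mathrm{qrsp}}$, a compatible object $(\fY',\cM_{\fY'})\in (\fY,\cM_{\fY})'_{\mathrm{kqsyn}}$ receiving a map from $(\fZ,\cM_{\fZ})$ over $f$, so that both sides become non-log Dieudonn\'{e} crystals over a common base. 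Such compatible objects do not exist for an arbitrary $(\fZ,\cM_{\fZ})$: the covers in $(\fY,\cM_{\fY})'_{\mathrm{kqsyn}}$ adjoin roots of a chart of $\fY$, while $\fZ$ only carries roots of a chart of $\fX$, and $f$ need not be strict.

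This is exactly what the paper's proof supplies and what your ``common refinement'' remark glosses over: one first uses Lemma \ref{modify chart of sch to chart of mor} to produce a chart $\alpha'\colon \bN^{r}\oplus\bZ^{s}\to\cM_{\fX}$ extending $\alpha$ together with a chart of the morphism $f$, which yields a map $(\fX_{\infty,\alpha'},\cM_{\fX_{\infty,\alpha'}})\to (\fY_{\infty,\beta},\cM_{\fY_{\infty,\beta}})$ over $f$; this shows that the full subcategory $\sC\subset (\fX,\cM_{\fX})'_{\mathrm{kqsyn}}$ of objects admitting a compatible map to some object of $(\fY,\cM_{\fY})'_{\mathrm{kqsyn}}$ is cofinal (by taking products with $\fX_{\infty,\alpha'}$), so the descent equivalence can be run over $\sC$ alone, where the non-log pullback compatibility applies directly and independence of the choice of $(\fY',\cM_{\fY'})$ is checked by taking products in $(\fY,\cM_{\fY})'_{\mathrm{kqsyn}}$. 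So the missing ingredient is not merely naturality bookkeeping within one index category, but the construction of these cross-base compatible covers; without Lemma \ref{modify chart of sch to chart of mor} (or an equivalent chart-of-morphism argument) your prism-wise identification cannot be carried out. The reduction of the $\widetilde{\cM}_{\Prism}$-statement to the same argument is fine once this is repaired.
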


\begin{proof}
    By working \'{e}tale locally on $\fX$ and $\fY$, we may assume that $\fX$ and $\fY$ have prismatically liftable charts $\alpha\colon \bN^{r}\to \cM_{\fX}$ and $\beta\colon \bN^{s}\to \cM_{\fY}$, a chart $\alpha'\colon \bN^{r}\oplus \bZ^{s}\to \cM_{\fX}$, and a chart $\gamma\colon \bN^{s}\to \bN^{r}\oplus \bZ^{s}$ of $f$ such that $\alpha'$ restricts to $\alpha$ by Lemma \ref{modify chart of sch to chart of mor}. Consider the full subcategory $\sC$ of $(\fX,\cM_{\fX})'_{\mathrm{kqsyn}}$ consisting of objects $(\fX',\cM_{\fX'})$ such that there exist $(\fY',\cM_{\fY'})\in (\fY,\cM_{\fY})'_{\mathrm{kqsyn}}$ and the following commutative diagram:
    \[
    \begin{tikzcd}
        (\fX',\cM_{\fX'}) \ar[r,"f'"] \ar[d,"p"] & (\fY',\cM_{\fY'}) \ar[d,"q"] \\
        (\fX,\cM_{\fX}) \ar[r,"f"] & (\fY,\cM_{\fY}).
    \end{tikzcd}
    \]
    The full subcategory $\sC$ is closed under finite products.

    We shall check that $\sC$ is cofinal in $(\fX,\cM_{\fX})'_{\mathrm{kqsyn}}$. We use the notation in Setting \ref{setting for pro-kfl descent}. We have the following commutative diagram:
    \[
    \begin{tikzcd}
    (\fX_{\infty,\alpha'},\cM_{\fX_{\infty,\alpha'}}) \ar[r] \ar[d] & (\fX_{\infty,\alpha},\cM_{\fX_{\infty,\alpha}}) \ar[r] & (\fX,\cM_{\fX}) \ar[d] \\
    (\fY_{\infty,\beta},\cM_{\fY_{\infty,\beta}}) \ar[rr] & & (\fY,\cM_{\fY}).
    \end{tikzcd}
    \]
    In this diagram, the upper left horizontal morphism is a strict quasi-syntomic cover, and every underlying formal scheme is quasi-syntomic. Then $(\fX_{\infty,\alpha'},\cM_{\fX_{\infty,\alpha'}})$ belongs to $\sC$, and we see that $\sC$ is cofinal in $(\fX,\cM_{\fX})'_{\mathrm{kqsyn}}$ by considering products with $(\fX_{\infty,\alpha'},\cM_{\fX_{\infty,\alpha'}})$. Therefore, by Corollary \ref{kfl pris crys and kqsyn site}, there exists a natural equivalence
    \[
    \mathrm{DM}((\fX,\cM_{\fX})_{\Prism})\isom \varprojlim_{(\fX',\cM_{\fX'})\in \sC} \mathrm{DM}(\fX'_{\Prism}).
    \]
    
    Let $(\fX',\cM_{\fX'})\in \sC$ and choose a commutative diagram as in the first paragraph. We have isomorphisms
    \begin{align*}
        p^{*}\cM_{\Prism}(G_{(\fX,\cM_{\fX})})&\cong \cM_{\Prism}(G_{(\fX',\cM_{\fX'})}) \\
        &\cong f'^{*}\cM_{\Prism}(G_{(\fY',\cM_{\fY'})}) \\
        &\cong f'^{*}(q^{*}\cM_{\Prism}(G)) \\
        &\cong p^{*}(f^{*}\cM_{\Prism}(G)),
    \end{align*}
    where the second isomorphism is deduced from the pullback compatibility for (non-log) prismatic Dieudonn\'{e} functor and the forth isomorphism comes from Lemma \ref{functoriality of pro-kfl pullback of kfl pris crys}. We see that this isomorphism is independent of the choice of $(\fY',\cM_{\fY'})$ by considering products in $(\fY,\cM_{\fY})'_{\mathrm{kqsyn}}$. By construction, this isomorphism is natural in $(\fX',\cM_{\fX'})\in \sC$. Therefore, we obtain an isomorphism 
    \[
    \cM_{\Prism}(G_{(\fX,\cM_{\fX})})\cong f^{*}\cM_{\Prism}(G)
    \]
    via the equivalence we checked in the previous paragraph.
\end{proof}

\begin{cor}\label{Hodge filt on log pris Dieudonne}
    Let $(\fX,\cM_{\fX})$ be a $p$-adic fs log formal scheme and $G$ be a weak log $p$-divisible group over $(\fX,\cM_{\fX})$. Suppose that $(\fX,\cM_{\fX})$ and the mod-$p$ fiber $(X,\cM_{X})$ of $(\fX,\cM_{\fX})$ are quasi-syntomic and that $(\fX,\cM_{\fX})$ admits a prismatically liftable chart \'{e}tale locally.
    Then there exists the following natural exact sequence of kfl vector bundles on $\fX$.
    \[
    0\to \mathrm{Lie}(G)^{\vee}\to T_{\mathrm{crys}}(\cM_{\Prism}(G))_{(\fX,\cM_{\fX})}\to \mathrm{Lie}(G^{*})\to 0.
    \]
\end{cor}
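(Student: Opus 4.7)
The plan is to reduce the construction and exactness to the non-log Corollary~\ref{hodge filt on pris Dieudonne} via Kummer quasi-syntomic descent. Working \'{e}tale locally on $\fX$, we may assume $(\fX,\cM_{\fX})$ admits a prismatically liftable chart $\alpha\colon \bN^{r}\to \cM_{\fX}$. By Corollary~\ref{kfl pris crys and kqsyn site} for kfl prismatic crystals together with the parallel descent for kfl vector bundles on $\fX$ (Proposition~\ref{pro-kfl descent for kfl vect bdle}), a kfl vector bundle on $\fX$ (and a short complex thereof) is equivalent to a compatible family of ordinary vector bundles (resp. complexes) over the objects $(\fY,\cM_{\fY})\in (\fX,\cM_{\fX})'_{\mathrm{qrsp}}$. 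It therefore suffices to produce the desired sequence functorially on each such $\fY$.

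Fix $f\colon (\fY,\cM_{\fY})=(\Spf S,\cM_{S})\to (\fX,\cM_{\fX})$ in $(\fX,\cM_{\fX})'_{\mathrm{qrsp}}$. Both $\fY$ and its mod-$p$ fiber $Y_{0}$ are quasi-syntomic (since $\fY\to \fX$ factors through a strict quasi-syntomic map from the pro-kfl cover $\fX_{\infty,\alpha}$, and $X_{0}$ is quasi-syntomic by assumption), and $G_{(\fY,\cM_{\fY})}$ is a classical $p$-divisible group on $\fY$. The following pullback compatibilities will be used: (a) pullback compatibility of the log prismatic Dieudonn\'{e} functor, giving $f^{*}\cM_{\Prism}(G)\cong \cM_{\Prism}(G_{(\fY,\cM_{\fY})})$; (b) pullback compatibility of the crystalline realization, identifying $f^{*}T_{\mathrm{crys}}(\cM_{\Prism}(G))_{(\fX,\cM_{\fX})}$ with $T_{\mathrm{crys}}(\cM_{\Prism}(G_{(\fY,\cM_{\fY})}))_{(Y_{0},\fY)}$, which follows from the Breuil log prism description of $T_{\mathrm{crys}}$ (Construction~\ref{crys real by using bk prisms} and Proposition~\ref{two crys real}) combined with the crystal property; and (c) Lemma~\ref{pullback compatibility for lie alg of log bt}, which gives $f^{*}\mathrm{Lie}(G)\cong \mathrm{Lie}(G_{(\fY,\cM_{\fY})})$ and analogously for $G^{*}$. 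Under these identifications, the non-log Corollary~\ref{hodge filt on pris Dieudonne} applied on $\fY$ provides a canonical short exact sequence
\[
0\to \mathrm{Lie}(G_{(\fY,\cM_{\fY})})^{\vee}\to T_{\mathrm{crys}}(\cM_{\Prism}(G_{(\fY,\cM_{\fY})}))_{(Y_{0},\fY)}\to \mathrm{Lie}(G^{*}_{(\fY,\cM_{\fY})})\to 0
\]
of usual vector bundles on $\fY$.

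The final step is to check functoriality in $(\fY,\cM_{\fY})\in (\fX,\cM_{\fX})'_{\mathrm{qrsp}}$ so that these sequences assemble into a descent datum. Since each of the three ingredients (prismatic Dieudonn\'{e}, crystalline realization, and $\mathrm{Lie}$) is compatible with pullback, this amounts to the naturality of the Hodge maps in Berthelot--Breen--Messing's original sequence; applying the descent equivalences then yields the claimed exact sequence of kfl vector bundles on $\fX$, and independence of the \'{e}tale-local choice of $\alpha$ follows from naturality. The main obstacle I anticipate is verifying (b): the pullback compatibility of $T_{\mathrm{crys}}$ along a general $f\in (\fX,\cM_{\fX})'_{\mathrm{qrsp}}$ is not spelled out in the excerpt and requires some care with charts and log PD-thickenings. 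The cleanest route seems to be via Proposition~\ref{pris crys is equal to crys crys}, which identifies kfl prismatic and kfl crystalline crystals in the relevant generality, transporting the crystal property of $\cM_{\Prism}(G)$ through the composite $f$ as in the proof of Proposition~\ref{two crys real}.
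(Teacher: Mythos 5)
Your proposal is correct and follows essentially the same route as the paper: pull back to objects of $(\fX,\cM_{\fX})'_{\mathrm{kqsyn}}$, apply the non-log Corollary \ref{hodge filt on pris Dieudonne} together with the pullback compatibilities of $\cM_{\Prism}$, $T_{\mathrm{crys}}$ and $\mathrm{Lie}$ (Lemma \ref{pullback compatibility for lie alg of log bt}), and then descend via Proposition \ref{pro-kfl descent for kfl vect bdle}. The pullback compatibility of $T_{\mathrm{crys}}$ that you flag as the main obstacle is exactly what the paper settles with the functoriality of the crystalline realization functor together with Lemma \ref{crys property for kfl cry crys}, in the spirit of what you propose via Proposition \ref{two crys real}.
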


\begin{proof}
    Let $(\fY,\cM_{\fY})\in (\fX,\cM_{\fX})'_{\mathrm{kqsyn}}$ and $f\colon (\fY,\cM_{\fY})\to (\fX,\cM_{\fX})$ be the structure morphism. Let $f_{0}\colon (Y,\cM_{Y})\to (X,\cM_{X})$ denote the mod-$p$ fiber of $f$. By Lemma \ref{pullback compatibility for lie alg of log bt} and Corollary \ref{hodge filt on pris Dieudonne}, we have an exact sequence of vector bundles on $\fY_{\eta}$
    \[
    0\to f^{*}\mathrm{Lie}(G)^{\vee}\to T_{\mathrm{crys}}(\cM_{\Prism}(G_{(\fY,\cM_{\fY})}))_{(\fY,\cM_{\fY})}\to f^{*}\mathrm{Lie}(G^{*})\to 0.
    \]
    We have
    \begin{align*}
    T_{\mathrm{crys}}(\cM_{\Prism}(G_{(\fY,\cM_{\fY})}))_{(\fY,\cM_{\fY})} &\cong T_{\mathrm{crys}}(f^{*}\cM_{\Prism}(G))_{(\fY,\cM_{\fY})} \\
    &\cong (f_{0}^{*}T_{\mathrm{crys}}(\cM_{\Prism}(G)))_{(\fY,\cM_{\fY})} \\
    &\cong f_{\eta}^{*}(T_{\mathrm{crys}}(\cM_{\Prism}(G))_{(\fX,\cM_{\fX})}),
    \end{align*}
    where the first (resp. second) isomorphism comes from the functoriality of log prismatic Dieudonn\'{e} functors (resp. the crystalline realization functors), and the third isomorphism is defined by Lemma \ref{crys property for kfl cry crys}. Therefore, the assertion follows from Proposition \ref{pro-kfl descent for kfl vect bdle}.
\end{proof}

\begin{cor}\label{log pris dieudonne special case equivalence weak ver}
    Suppose that $(\fX,\cM_{\fX})$ is a semi-stable log formal scheme over $\cO_{K}$. Then the functor
    \[
    \mathrm{wBT}(\fX,\cM_{\fX})\to \mathrm{DM}_{\mathrm{kfl}}((\fX,\cM_{\fX})_{\Prism})
    \]
    gives an equivalence.
\end{cor}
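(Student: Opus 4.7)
The plan is to deduce the statement from Proposition \ref{log pris dieudonne weak ver}~(3) by checking that a semi-stable log formal scheme over $\cO_{K}$ satisfies all of its hypotheses. Since the assertion is \'{e}tale local on $\fX$, we may assume that $(\fX,\cM_{\fX})=(\Spf R,\cM_{R})$ is a small affine log formal scheme over $\cO_{K}$ with a fixed framing, so that $\cM_{\fX}$ admits a tautological chart $\alpha\colon \bN^{r}\to \cM_{\fX}$ sending $e_{i}$ to the $i$-th irreducible component of the boundary. The underlying ring $R$ is then (formally) \'{e}tale over a ring of the form $\cO_{K}\langle t_{1},\dots,t_{d},x_{1},\dots,x_{r}\rangle/(x_{1}\cdots x_{r}-\pi^{e})$, so $R$ is a (formally) local complete intersection flat over $\cO_{K}$; in particular, $\fX$ is quasi-syntomic.

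Next, the chart $\alpha$ is prismatically liftable by Lemma \ref{ex of pris liftable chart}~(1), so it only remains to exhibit a perfectoid quasi-syntomic cover of $\fX_{\infty,\alpha}$. By construction, $\fX_{\infty,\alpha}=\Spf R_{\infty,\alpha}$ where $R_{\infty,\alpha}$ is the $p$-adic completion of the filtered colimit of $R[x_{i}^{1/n}\ (1\le i\le r)]$ over $n\ge 1$. Choose a compatible system of $p$-power roots $\pi^{1/p^{m}}$ of a uniformiser $\pi$ of $\cO_{K}$ inside an algebraic closure, and let $\cO_{K,\infty}$ be the $p$-adic completion of $\cO_{K}[\pi^{1/p^{\infty}}]$, which is a perfectoid valuation ring. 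Define $R'$ to be the $(p)$-adic completion of
\[
R_{\infty,\alpha}\otimes_{\cO_{K}}\cO_{K,\infty}[t_{1}^{1/p^{\infty}},\dots,t_{d}^{1/p^{\infty}},x_{1}^{1/p^{\infty}},\dots,x_{r}^{1/p^{\infty}}].
\]
Then $R'$ is perfectoid (all the generators of $R_{\infty,\alpha}$ admit compatible $p$-power roots, and the relation $x_{1}\cdots x_{r}=\pi^{e}$ is preserved), and $R_{\infty,\alpha}\to R'$ is a $p$-completely faithfully flat map whose cotangent complex is $p$-completely of Tor-amplitude in $[-1,0]$; hence it is a quasi-syntomic cover.

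All hypotheses of Proposition \ref{log pris dieudonne weak ver}~(3) are thus verified, and the functor $\cM_{\Prism}\colon \mathrm{wBT}(\fX,\cM_{\fX})\to \mathrm{DM}_{\mathrm{kfl}}((\fX,\cM_{\fX})_{\Prism})$ is an equivalence. The only non-formal ingredient is the construction of the perfectoid quasi-syntomic cover of $\fX_{\infty,\alpha}$; in the semi-stable setting this is straightforward because the sole relation in the local model involves exactly those monoid generators to which $\alpha$ already adjoins $p$-power roots in $\fX_{\infty,\alpha}$, so adjoining the remaining $p$-power roots does not destroy the relation.
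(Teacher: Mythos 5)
Your overall strategy matches the paper's: reduce to a small affine semi-stable model and feed it into Proposition \ref{log pris dieudonne weak ver}~(3) (the paper's citation of part (2) in its one-line proof is evidently meant to be (3)), with the quasi-syntomicity of $\fX$ and the prismatic liftability of the tautological chart handled exactly as you do; the paper outsources the remaining hypothesis, the existence of a perfectoid quasi-syntomic cover of $\fX_{\infty,\alpha}$, to \cite[Lemma 2.3]{ino25}. The gap is precisely in your hand-made construction of that cover: the ring $R'$ you define is \emph{not} perfectoid. However one parses your displayed formula, nothing in $R'$ identifies the compatible system of roots $x_{1}^{1/p^{m}}\cdots x_{r}^{1/p^{m}}$ of $\pi^{e}=x_{1}\cdots x_{r}$ already present in $R_{\infty,\alpha}$ with the chosen system $\pi^{1/p^{m}}\in\cO_{K,\infty}$; imposing only the single top-level relation $x_{1}\cdots x_{r}=\pi^{e}$ yields a semiperfectoid ring of the same shape as the standard non-perfectoid example $\cO\langle x^{1/p^{\infty}}\rangle/(x-p)$ over a perfectoid base $\cO$. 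Concretely, already for the local model $R=\cO_{K}\langle t_{1},\dots,t_{d},x_{1},\dots,x_{r}\rangle/(x_{1}\cdots x_{r}-\pi)$, pick $\varpi\in\cO_{K,\infty}$ with $\varpi^{p}=\pi$ and set $w\coloneqq x_{1}^{1/p}\cdots x_{r}^{1/p}-\pi^{1/p}$. Then $w^{p}\in \varpi^{p}R'$ (the leading terms cancel by the relation and the cross terms carry binomial coefficients divisible by $p$), while $w\notin\varpi R'$: modulo $\varpi$ both $\pi$ and $p$ vanish, so $R'/\varpi$ is the monoid algebra $(\cO_{K,\infty}/\varpi)$-algebra on $t$-variables and $x^{q}$, $q\in\bQ_{\geq0}^{r}$, divided by the monomial ideal $(x^{(1,\dots,1)})$, in which the class of $x^{(1/p,\dots,1/p)}$ is a nonzero basis element. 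Hence Frobenius $R'/\varpi\to R'/\varpi^{p}$ is not injective, whereas for a perfectoid ring it is bijective for any $\varpi$ whose $p$-th power divides $p$; equivalently, $R'$ is ($p$-completely étale over) the quotient of a perfectoid ring by the single regular element $x_{1}\cdots x_{r}-\pi^{e}$, so $H_{1}$ of $\widehat{L}_{R'/\bZ_{p}}$ is an extension of $R'$ by $R'$ rather than free of rank one.

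So ``the relation is preserved'' is not the relevant point; preserving \emph{only} the top-level relation is exactly what obstructs perfectoidness. The correct cover must impose $x_{1}^{1/p^{m}}\cdots x_{r}^{1/p^{m}}=\pi^{e/p^{m}}$ compatibly for all $m\geq 0$ (besides adjoining the $t_{j}^{1/p^{\infty}}$ and base changing to $\cO_{K,\infty}$), and one must then verify separately that the resulting map from $R_{\infty,\alpha}$ is still a $p$-completely faithfully flat quasi-syntomic cover; this is not automatic for a further quotient, but follows, e.g., from miracle flatness at each finite level (both finite-level rings are regular and the map is finite) or from an explicit module decomposition of the monoid algebras. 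That verification is the actual content of the lemma the paper cites, and it is precisely the step your proposal dismisses as straightforward.
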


\begin{proof}
    The assertion follows from \cite[Lemma 2.3]{ino25} and Proposition \ref{log pris dieudonne weak ver}(2). 
\end{proof}

Our goal is to prove that the functor $\cM_{\Prism}$ restricts to the equivalence
\[
\mathrm{BT}(\fX,\cM_{\fX})\isom \mathrm{DM}((\fX,\cM_{\fX})_{\Prism})
\]
when $(\fX,\cM_{\fX})$ is semi-stable over $\cO_{K}$.

\begin{prop}\label{log pris dieudonne classicality}
    Let $(\fX,\cM_{\fX})$ be a $p$-adic fs log formal scheme in Construction \ref{construction of log prismatic Dieudonne}.
    \begin{enumerate}
        \item For $G\in \mathrm{BT}_{1}(\fX,\cM_{\fX})$, the object $\widetilde{\cM}_{\Prism}(G)\in \mathrm{Vect}_{\mathrm{kfl}}((\fX,\cM_{\fX})_{\Prism},\widetilde{\cO}_{\Prism})$ belongs to $\mathrm{Vect}((\fX,\cM_{\fX})_{\Prism},\widetilde{\cO}_{\Prism})$.
        \item For $G\in \mathrm{BT}(\fX,\cM_{\fX})$, the kfl prismatic Dieudonn\'{e} crystal $\cM_{\Prism}(G)$ belongs to $\mathrm{DM}((\fX,\cM_{\fX})_{\Prism})$.
    \end{enumerate}
\end{prop}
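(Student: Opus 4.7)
My plan is to prove part $(1)$ first via a connected-\'{e}tale decomposition of the log $\mathrm{BT}_{1}$ group, and then to deduce part $(2)$ by reducing modulo $(p, I)$ at each log prism.

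For part $(1)$, I observe that classicality of a kfl prismatic crystal can be checked strict \'{e}tale locally on $\fX$: strict \'{e}tale base change preserves the hypotheses of Construction \ref{construction of log prismatic Dieudonne} by Lemma \ref{ex of pris liftable chart}(2), and classicality descends along strict \'{e}tale covers at each log prism. Applying Lemma \ref{conn et seq} to $G \in \mathrm{BT}_{1}(\fX,\cM_{\fX})$, after passing to a strict \'{e}tale cover one obtains a short exact sequence
\[
0 \to H^{0} \to G \to H^{\et} \to 0
\]
in $\mathrm{wBT}_{1}(\fX,\cM_{\fX})$ whose outer terms $H^{0}, H^{\et}$ lie in the classical subcategory $\mathrm{BT}_{1}(\fX)$. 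Applying the contravariant functor $\widetilde{\cM}_{\Prism}$ produces a short exact sequence in $\mathrm{Vect}_{\mathrm{kfl}}((\fX,\cM_{\fX})_{\Prism},\widetilde{\cO}_{\Prism})$; its exactness is inherited from the Ansch\"{u}tz--Le Bras prismatic Dieudonn\'{e} functor (Theorem \ref{prismatic Dieudonne}) via the qrsp-level Kummer quasi-syntomic descent used in Construction \ref{construction of log prismatic Dieudonne}. By Proposition \ref{log pris dieudonne weak ver}(2), $\widetilde{\cM}_{\Prism}(H^{0})$ and $\widetilde{\cM}_{\Prism}(H^{\et})$ are classical. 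Evaluating the sequence at each log prism $(A,I,\cM_{A}) \in (\fX,\cM_{\fX})_{\Prism}^{\mathrm{str}}$, the resulting short exact sequence of kfl vector bundles on $(\mathrm{Spec}(A/(p,I)),\cM_{A/(p,I)})$ has classical outer terms, so Proposition \ref{ext of classical vector bundle} forces the middle term to be classical; hence $\widetilde{\cM}_{\Prism}(G)$ itself is classical.

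For part $(2)$, apply part $(1)$ to $G[p] \in \mathrm{BT}_{1}(\fX,\cM_{\fX})$ to see that $\widetilde{\cM}_{\Prism}(G[p])$ is classical. The commutative diagram of Proposition \ref{log pris dieudonne weak ver}(1) identifies $\widetilde{\cM}_{\Prism}(G[p]) \cong \cM_{\Prism}(G) \otimes_{\cO_{\Prism}} \widetilde{\cO}_{\Prism}$. Thus at every log prism $(A,I,\cM_{A})$, the reduction modulo $(p,I)$ of the kfl vector bundle $\cM_{\Prism}(G)_{(A,I,\cM_{A})}$ on $(\mathrm{Spf}(A),\cM_{A})$ is classical. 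Since $(p,I)$ is a finitely generated ideal of definition for $\mathrm{Spf}(A)$, Corollary \ref{classicality of vect bdle can be checked after taking reduction} lifts this to classicality of $\cM_{\Prism}(G)_{(A,I,\cM_{A})}$ itself; via the full embedding of Lemma \ref{fully faithfulness for kfl pris crys} this places $\cM_{\Prism}(G)$ in $\mathrm{DM}((\fX,\cM_{\fX})_{\Prism})$.

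The delicate point is the exactness of $\widetilde{\cM}_{\Prism}$ on the connected-\'{e}tale sequence, since the middle term is genuinely a weak log object on $(\fX,\cM_{\fX})$ rather than a classical one. This has to be verified from the limit definition: after pulling back to any $(S,\cM_{S}) \in (\fX,\cM_{\fX})'_{\mathrm{qrsp}}$ the three terms all become classical $p$-divisible groups, so exactness on these \v{C}ech nerves follows from the anti-equivalence in Theorem \ref{prismatic Dieudonne}, and one then propagates exactness through the descent equivalence of Corollary \ref{kfl pris crys and kqsyn site}.
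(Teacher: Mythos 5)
Your proposal is correct and follows essentially the same route as the paper: part (1) via the connected-\'{e}tale sequence of Lemma \ref{conn et seq}, exactness of $\widetilde{\cM}_{\Prism}$, and Proposition \ref{ext of classical vector bundle} evaluated at each log prism; part (2) by applying (1) to $G[p]$, the diagram of Proposition \ref{log pris dieudonne weak ver}(1), and Corollary \ref{classicality of vect bdle can be checked after taking reduction}. The extra care you take with \'{e}tale-local checking of classicality and with the exactness of $\widetilde{\cM}_{\Prism}$ only makes explicit what the paper leaves implicit.
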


\begin{proof}
    Consider the following commutative diagram in Proposition \ref{log pris dieudonne weak ver}.
    \[
    \begin{tikzcd}
        \mathrm{wBT}(\fX,\cM_{\fX}) \ar[r,"\cM_{\Prism}"] \ar[d] & \mathrm{DM}_{\mathrm{kfl}}((\fX,\cM_{\fX})_{\Prism}) \ar[d] \\
        \mathrm{wBT}_{\mathrm{1}}(\fX,\cM_{\fX}) \ar[r," \widetilde{\cM}_{\Prism}"] & \mathrm{Vect}_{\mathrm{kfl}}((\fX,\cM_{\fX})_{\Prism},\widetilde{\cO}_{\Prism}).
    \end{tikzcd}
    \]
    Hence, (2) follows from (1) and Corollary \ref{classicality of vect bdle can be checked after taking reduction}.

    We shall prove (1). By Lemma \ref{conn et seq}, working \'{e}tale locally allows us to  assume that there exists an exact sequence
    \[
    0\to G^{0}\to G\to G^{\et}\to 0,
    \]
    where $G^{0}$ is a (classical) finite locally free group scheme over $\fX$ and $H^{\et}$ is a (classical) finite \'{e}tale group scheme over $\fX$. The exactness of $\widetilde{\cM}_{\Prism}$ implies that we have an exact sequence
    \[
    0\to \widetilde{\cM}_{\Prism}(G^{0})\to \widetilde{\cM}_{\Prism}(G)\to \widetilde{\cM}_{\Prism}(G^{\et})\to 0.
    \]
    By applying Proposition \ref{ext of classical vector bundle} to the evaluation of this exact sequence at each log prism in $(\fX,\cM_{\fX})^{\mathrm{str}}_{\Prism}$, it follows that $\widetilde{\cM}_{\Prism}(G)$ belongs to $\mathrm{Vect}^{\varphi}((\fX,\cM_{\fX})_{\Prism},\widetilde{\cO}_{\Prism})$.
\end{proof}

\begin{dfn}
    The fully faithful functor $\cM_{\Prism}$ induces a fully faithful functor
    \[
    \mathrm{BT}(\fX,\cM_{\fX})\to \mathrm{DM}((\fX,\cM_{\fX})_{\Prism})
    \]
    by Proposition \ref{log pris dieudonne classicality}. This functor is also denoted by $\cM_{\Prism}$.
\end{dfn}

The following proposition is a log version of Theorem \ref{prismatic Dieudonne and etale realization}. 

\begin{prop}[\'{E}tale comparison isomorphisms for log prismatic Dieudonn\'{e} functors]\label{et comp for log pris dieudonne}
    Let $(\fX,\cM_{\fX})$ be a $p$-adic log formal scheme in Construction \ref{construction of log prismatic Dieudonne} and $G$ be a log $p$-divisible group on $(\fX,\cM_{\fX})$. Then there exists a natural isomorphism
    \[
    (T_{\et}\cM_{\Prism}(G))^{\vee}\cong G_{\eta}.
    \]
\end{prop}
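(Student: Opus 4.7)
The plan is to reduce the statement to the non-log \'{e}tale comparison isomorphism (Theorem \ref{prismatic Dieudonne and etale realization}) by descending along a pro-kfl cover of $(\fX,\cM_{\fX})$ on which $G$ becomes a classical $p$-divisible group, using that both sides of the claimed isomorphism are $\bZ_{p}$-local systems on the same pro-Kummer log \'{e}tale site.

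Working \'{e}tale locally on $\fX$, I would first assume that $(\fX,\cM_{\fX})$ admits a prismatically liftable chart $\alpha\colon \bN^{r}\to \cM_{\fX}$. For every $n\geq 1$, the kfl cover $(\fX_{n,\alpha},\cM_{\fX_{n,\alpha}})\to (\fX,\cM_{\fX})$ in Setting \ref{setting for pro-kfl descent} is Kummer log \'{e}tale on generic fibers, and Lemma \ref{log fin grp is classical after n-power ext} shows that each $G[p^{m}]$ becomes classical after such a base change for $n$ sufficiently large depending on $m$. Passing to $n=\infty$ and then choosing a strict quasi-syntomic surjection $(\fY,\cM_{\fY})=(\mathrm{Spf}(R),\cM_{R})\to (\fX_{\infty,\alpha},\cM_{\fX_{\infty,\alpha}})$ with $R$ admitting a perfectoid quasi-syntomic cover, we obtain $f\colon (\fY,\cM_{\fY})\to (\fX,\cM_{\fX})$ such that $f^{-1}G$ is a classical $p$-divisible group $G_{\fY}$ on the quasi-syntomic formal scheme $\fY$, while $f^{\Diamond}_{\eta}$ is a pro-Kummer log \'{e}tale cover of log diamonds.

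Since both $(T_{\et}\cM_{\Prism}(G))^{\vee}$ and $G_{\eta}$ are $\bZ_{p}$-local systems on $(\fX,\cM_{\fX})^{\Diamond}_{\eta,\mathrm{qprok\et}}$, it suffices to construct a natural isomorphism of their pullbacks along $f^{\Diamond}_{\eta}$ and verify the descent cocycle condition over the saturated \v{C}ech nerve $\fY^{(\bullet)}$ of $f$. By the pullback compatibility of the log prismatic Dieudonn\'{e} functor together with the analogous property of $T_{\et}$, one has canonical identifications
\[
\left.(T_{\et}\cM_{\Prism}(G))^{\vee}\right|_{\fY_{\eta}} \cong (T_{\et}\cM_{\Prism}(G_{\fY}))^{\vee},\qquad \left.G_{\eta}\right|_{\fY_{\eta}}\cong (G_{\fY})_{\eta},
\]
the latter being immediate from the construction of the generic fiber in Construction \ref{gen fib of weak log fin}. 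Theorem \ref{prismatic Dieudonne and etale realization} applied to the classical $p$-divisible group $G_{\fY}$ on the quasi-syntomic $\fY$ then provides the desired natural isomorphism on $\fY^{\Diamond}_{\eta,\mathrm{qpro\et}}$.

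The main obstacle will be to verify that this local isomorphism descends to a genuine isomorphism on $(\fX,\cM_{\fX})^{\Diamond}_{\eta,\mathrm{qprok\et}}$. By Lemma \ref{pro-kfl cover of log prism}, each term $\fY^{(m)}$ of the saturated \v{C}ech nerve is again a quasi-syntomic formal scheme underlying a log prism, and both projections $\fY^{(1)}\rightrightarrows \fY$ are morphisms in this category. The required cocycle then follows formally from the functoriality statement in Theorem \ref{prismatic Dieudonne and etale realization} for the non-log morphisms between the $\fY^{(m)}$, combined with the pullback compatibility of $\cM_{\Prism}$ and the generic-fiber functor. Gluing along $f^{\Diamond}_{\eta}$ yields the claimed natural isomorphism.
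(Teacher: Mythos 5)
Your proposal is correct and takes essentially the same route as the paper: both sides are $\bZ_{p}$-local systems on $(\fX,\cM_{\fX})^{\Diamond}_{\eta,\mathrm{qprok\et}}$, one pulls back along covers in $(\fX,\cM_{\fX})'_{\mathrm{kqsyn}}$ (on which $G$ becomes classical and whose generic fiber is a quasi-pro-Kummer \'{e}tale cover), applies the non-log comparison of Theorem \ref{prismatic Dieudonne and etale realization} together with pullback compatibility of $\cM_{\Prism}$ and $T_{\et}$, and descends by functoriality — the paper merely phrases the descent as a limit over all of $(\fX,\cM_{\fX})'_{\mathrm{kqsyn}}$ instead of the \v{C}ech nerve of a single cover. (One small slip: for the \v{C}ech-nerve terms the relevant reference is Lemma \ref{sat prod of two pro-kfl cover} and the closure of $(\fX,\cM_{\fX})'_{\mathrm{kqsyn}}$ under finite products, not Lemma \ref{pro-kfl cover of log prism}, but this does not affect the argument.)
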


\begin{proof}
    By working \'{e}tale locally on $\fX$, we may assume that $\fX$ admits a free chart $\alpha\colon \bN^{r}\to \cM_{\fX}$. Since the natural morphism
    \[ (\fX_{\infty,\alpha},\cM_{\fX_{\infty,\alpha}})_{\eta}^{\Diamond}\to (\fX,\cM_{\fX})_{\eta}^{\Diamond}
    \]
    is a quasi-pro-Kummer \'{e}tale cover by \cite[Proposition 7.20]{ky23}, the natural functor
    \[
    \mathrm{Loc}_{\bZ_{p}}((\fX,\cM_{\fX})_{\eta,\mathrm{qprok}\et}^{\Diamond})\to \varprojlim_{(\fY,\cM_{\fY})\in (\fX,\cM_{\fX})'_{\mathrm{kqsyn}}} \mathrm{Loc}_{\bZ_{p}}((\fY,\cM_{\fY})_{\eta,\mathrm{qprok\et}}^{\Diamond})\]
    is an equivalence.

    Take an object $(\fY,\cM_{\fY})\in (\fX,\cM_{\fX})'_{\mathrm{kqsyn}}$ and let $f\colon (\fY,\cM_{\fY})\to (\fX,\cM_{\fX})$ denote the structure morphism. Under the identification $(\fY,\cM_{\fY})_{\eta,\mathrm{qprok}\et}^{\Diamond}\simeq \fY_{\eta,\mathrm{qpro}\et}^{\Diamond}$ (\cite[Lemma B.1]{ino25}), we have isomorphisms of $\bZ_{p}$-local systems on $(\fY,\cM_{\fY})_{\eta,\mathrm{qprok}\et}^{\Diamond}$
    \begin{align*}
        f_{\eta}^{*}T_{\et}\cM_{\Prism}(G)^{\vee}
        &\cong T_{\et}f^{*}\cM_{\Prism}(G)^{\vee} \\
        &\cong T_{\et}\cM_{\Prism}(G_{(\fY,\cM_{\fY})})^{\vee} \\
        &\cong (G_{(\fY,\cM_{\fY})})_{\eta} \\
        &\cong f_{\eta}^{*}(G_{\eta}).
    \end{align*}
    Here, the third isomorphism comes from \'{e}tale comparison isomorphisms for (non-log) prismatic Dieudonn\'{e} functors (Theorem \ref{prismatic Dieudonne and etale realization}). By the functoriality of the isomorphism in Theorem \ref{prismatic Dieudonne and etale realization}, the isomorphism $f_{\eta}^{*}T_{\et}\cM_{\Prism}(G)^{\vee}\cong f_{\eta}^{*}(G_{\eta})$ constructed above is natural in $(\fY,\cM_{\fY})\in (\fX,\cM_{\fX})'_{\mathrm{kqsyn}}$. Therefore, we obtain an isomorphism $T_{\et}\cM_{\Prism}(G)^{\vee}\cong G_{\eta}$ via the equivalence in the previous paragraph.
\end{proof}

\begin{thm}\label{log pris dieudonne equiv}
    Let $(\fX,\cM_{\fX})$ be a semi-stable log formal scheme over $\cO_{K}$. Then the log prismatic Dieudonn\'{e} functor 
    \[
    \cM_{\Prism}\colon \mathrm{BT}(\fX,\cM_{\fX})\to \mathrm{DM}((\fX,\cM_{\fX})_{\Prism})
    \]
    gives an exact anti-equivalence.
\end{thm}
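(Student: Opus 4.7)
The plan is to derive the theorem from the weak equivalence of Corollary \ref{log pris dieudonne special case equivalence weak ver} by controlling the classicality of Lie algebras via the Hodge exact sequence. Full faithfulness is already in hand: by Proposition \ref{log pris dieudonne classicality}, $\cM_{\Prism}$ sends $\mathrm{BT}(\fX,\cM_{\fX})$ into $\mathrm{DM}((\fX,\cM_{\fX})_{\Prism})$, and the restriction is fully faithful because both inclusions $\mathrm{BT}(\fX,\cM_{\fX}) \hookrightarrow \mathrm{wBT}(\fX,\cM_{\fX})$ and $\mathrm{DM}((\fX,\cM_{\fX})_{\Prism}) \hookrightarrow \mathrm{DM}_{\mathrm{kfl}}((\fX,\cM_{\fX})_{\Prism})$ (the latter via Lemma \ref{fully faithfulness for kfl pris crys}) are fully faithful and the weak functor already is, by Corollary \ref{log pris dieudonne special case equivalence weak ver}. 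Exactness is inherited from the weak version as well.

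For essential surjectivity, start with $\cE \in \mathrm{DM}((\fX,\cM_{\fX})_{\Prism})$ and regard it as a kfl object. Applying the weak equivalence (Corollary \ref{log pris dieudonne special case equivalence weak ver}) produces $G \in \mathrm{wBT}(\fX,\cM_{\fX})$ with $\cM_{\Prism}(G) \cong \cE$, so the task reduces to proving $G \in \mathrm{BT}(\fX,\cM_{\fX})$. By Proposition \ref{lie is classical iff weak log bt is log bt}, it is enough to show that the kfl vector bundles $\mathrm{Lie}(G)$ and $\mathrm{Lie}(G^{*})$ on $\fX$ are classical. Since $\fX$ is semi-stable over $\cO_K$, both $\fX$ and its mod-$p$ fiber are quasi-syntomic and $(\fX,\cM_{\fX})$ admits prismatically liftable charts \'{e}tale locally by Lemma \ref{ex of pris liftable chart}, so Corollary \ref{Hodge filt on log pris Dieudonne} yields the Hodge exact sequence
\[
0 \to \mathrm{Lie}(G)^{\vee} \to T_{\mathrm{crys}}(\cM_{\Prism}(G))_{(\fX,\cM_{\fX})} \to \mathrm{Lie}(G^{*}) \to 0
\]
of kfl vector bundles on $\fX$.

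The key step is that, because $\cE = \cM_{\Prism}(G)$ is an honest (not merely kfl) log prismatic $F$-crystal, its crystalline realization $T_{\mathrm{crys}}(\cE)$ is a classical crystalline crystal on $(X_{p=0},\cM_{X_{p=0}})$; this is precisely the corollary to Proposition \ref{two crys real}, and it relies on semi-stability. Consequently the middle term of the Hodge sequence, obtained by evaluating a classical crystal, is a classical vector bundle on $\fX$. Now Proposition \ref{ext of classical vector bundle}, used in the direction ``middle classical $\Rightarrow$ both ends classical'', forces $\mathrm{Lie}(G)^{\vee}$ and $\mathrm{Lie}(G^{*})$ to be classical, whence $G \in \mathrm{BT}(\fX,\cM_{\fX})$ by Proposition \ref{lie is classical iff weak log bt is log bt}.

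The main obstacle is genuinely the step hidden inside the corollary to Proposition \ref{two crys real}: one needs to know that the crystalline realization sends \emph{classical} log prismatic $F$-crystals to \emph{classical} crystalline crystals, and this is where the semi-stability hypothesis enters in an essential way (via the existence of Breuil log prisms with the right crystal property). Without this, the middle term of the Hodge sequence would only be a kfl vector bundle and the classicality of the two Lie algebras could not be deduced from Proposition \ref{ext of classical vector bundle}.
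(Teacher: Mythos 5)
Your proposal is correct and follows essentially the same route as the paper: full faithfulness is inherited from the weak statement, and essential surjectivity is obtained by producing $G\in \mathrm{wBT}(\fX,\cM_{\fX})$ via Corollary \ref{log pris dieudonne special case equivalence weak ver}, using the corollary to Proposition \ref{two crys real} (where semi-stability enters) to see that $T_{\mathrm{crys}}(\cM_{\Prism}(G))$ is classical, and then combining Corollary \ref{Hodge filt on log pris Dieudonne}, Proposition \ref{ext of classical vector bundle}, and Proposition \ref{lie is classical iff weak log bt is log bt}. This is exactly the paper's argument.
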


\begin{proof}
The fully faithfulness is already proved in Proposition \ref{log pris dieudonne weak ver}. We shall the essential surjectivity. Let $\cE$ be an object of $\mathrm{DM}((\fX,\cM_{\fX})_{\Prism})$. 
By Corollary \ref{log pris dieudonne special case equivalence weak ver}, there exists $G\in \mathrm{wBT}(\fX,\cM_{\fX})$ with $\cM_{\Prism}(G)\cong \cE$. It suffices to prove that $G$ is a log $p$-divisible group. Let $(X,\cM_{X})\coloneqq(\fX,\cM_{\fX})\times_{\bZ_{p}} \bF_{p}$. By Proposition \ref{two crys real}, the kfl crystalline crystal $T_{\mathrm{crys}}(\cM_{\Prism}(G))$ belongs to $\mathrm{Vect}^{\varphi}((X,\cM_{X})_{\mathrm{crys}})$. Proposition \ref{ext of classical vector bundle} and Corollary \ref{Hodge filt on log pris Dieudonne} imply that both of $\mathrm{Lie}(G)$ and $\mathrm{Lie}(G^{*})$ are classical. Therefore, $G$ belongs to $\mathrm{BT}(\fX,\cM_{\fX})$ by Proposition \ref{lie is classical iff weak log bt is log bt}. This completes the proof.
\end{proof}

\section{A Tannakian framework for log prismatic \texorpdfstring{$F$}--crystals}

\subsection{Preliminaries for Tannakian formalism}

Throughout this section, we consider a right action when we refer to a pseudo-torsor.

Let $\cG$ be a smooth affine group scheme over $\bZ_{p}$ and $\mathrm{Rep}_{\bZ_{p}}(\cG)$ denote the category of finite free representations of $\cG$ over $\bZ_{p}$. Fix a finite free $\bZ_{p}$-module $\Lambda_{0}$, a faithful representation $\cG\hookrightarrow \mathrm{GL}(\Lambda_{0})$, and tensors $\bT_{0}\subset \Lambda_{0}^{\otimes}$ with $\cG=\mathrm{Fix}(\bT_{0})$. (This is possible by \cite[Theorem 1.1]{bro13}.)

\begin{dfn}
    For an exact $\bZ_{p}$-category $\sA$, the groupoid of $\bZ_{p}$-linear exact tensor functors $\mathrm{Rep}_{\bZ_{p}}(\cG)\to \sA$ is denoted by $\cG\text{-}\sA$.
\end{dfn}

Let $\sC$ be a site and $\cO$ be a sheaf of $\bZ_{p}$-algebras on $\sC$. Let $\mathrm{Vect}(\sC,\cO)$ denote the category of vector bundles on the ringed site $(\sC,\cO)$, which is an exact $\bZ_{p}$-category. For objects $\omega\in \cG\text{-}\mathrm{Vect}(\sC,\cO)$ and $T\in \sC$, the object of $\cG\text{-}\mathrm{Vect}(\sC_{/T},\cO)$ obtained by restricting $\omega$ to the site $\sC_{/T}$ is denoted by $\omega_{T}$. We define a group sheaf $\cG_{\cO}$ on $\sC$ by $\cG_{\cO}(T)\coloneqq\cG(\cO(T))$ for $T\in \sC$.

\begin{dfn}(\cite[Appendix]{iky24})\noindent
\begin{enumerate}
    \item  We define an object $\omega_{\mathrm{triv}}$ of $\cG\text{-}\mathrm{Vect}(\sC,\cO)$ by $\omega_{\mathrm{triv}}(\Lambda)\coloneqq\Lambda\otimes_{\bZ_{p}} \cO$ for $\Lambda\in \mathrm{Rep}_{\bZ_{p}}(\cG)$. We say that $\cG$ is \emph{reconstructible} in $(\sC,\cO)$ if the natural map $\cG_{\cO}\to \underline{\mathrm{Aut}}(\omega_{\mathrm{triv}})$ is an isomorphism, where $\underline{\mathrm{Aut}}(\omega_{\mathrm{triv}})$ is a group sheaf on $\sC$ given by 
    \[
    \underline{\mathrm{Aut}}(\omega_{\mathrm{triv}})(T)\coloneqq \mathrm{Aut}(\omega_{\mathrm{triv},T})
    \]
    for $T\in \sC$.
    \item For an object $\omega$ of $\cG\text{-}\mathrm{Vect}(\sC,\cO)$, we define an $\underline{\mathrm{Aut}}(\omega_{\mathrm{triv}})$-pseudo-torsor $\underline{\mathrm{Isom}}(\omega_{\mathrm{triv}},\omega)$ by
    \[
    \underline{\mathrm{Isom}}(\omega_{\mathrm{triv}},\omega)(T)\coloneqq\mathrm{\mathrm{Isom}}_{\cG\text{-}\mathrm{Vect}(\sC_{/T},\cO)}(\omega_{\mathrm{triv},T},\omega_{T})
    \]
    for $T\in \sC$. The object $\omega$ is called \emph{locally trivial} if the pseudo-torsor $\underline{\mathrm{Isom}}(\omega_{\mathrm{triv}},\omega)$ is a torsor. Let $\cG\text{-}\mathrm{Vect}^{\mathrm{lt}}(\sC,\cO)$ denote the full subcategory of $\cG\text{-}\mathrm{Vect}(\sC,\cO)$ consisting of locally trivial objects.
\end{enumerate}
\end{dfn}

We let $\mathrm{Twist}_{\cO}(\Lambda_{0},\bT_{0})$ denote the groupoid of a pair $(\cE,\bT)$ consisting of $\cE\in \mathrm{Vect}(\sC,\cO)$ and $\bT\in \cE^{\otimes}$ such that $\cG_{\cO}$-pseudo-torsor $\underline{\mathrm{Isom}}((\Lambda_{0}\otimes_{\bZ_{p}} \cO,\bT_{0}\otimes 1),(\cE,\bT))$ given by 
\[
T\mapsto \{f\colon \Lambda_{0}\otimes_{\bZ_{p}} \cO_{T}\isom \cE_{T}:f(\bT_{0}\otimes 1)=\bT\}
\]
is a $\cG_{\cO}$-torsor. We have a natural functor $\mathrm{Twist}_{\cO}(\Lambda_{0},\bT_{0})\to \mathrm{Tors}_{\cG_{\cO}}(\sC)$ sending $(\cE,\bT)$ to $\underline{\mathrm{Isom}}((\Lambda_{0}\otimes_{\bZ_{p}} \cO,\bT_{0}\otimes 1),(\cE,\bT))$.

 For an object $\cP\in \mathrm{Tors}_{\cG_{\cO}}(\sC)$, we define $\omega_{\cP}\in \cG\text{-}\mathrm{Vect}^{\mathrm{lt}}(\sC,\cO)$ by $\omega_{\cP}(\Lambda)\coloneqq\cP\wedge^{\cG_{\cO}} (\Lambda\otimes_{\bZ_{p}} \cO)$ for $\Lambda\in \mathrm{Rep}_{\bZ_{p}}(\cG)$.

\begin{prop}(\cite[Proposition A.17]{iky24})\label{fund thm of tann framework}
    Assume that $\cG$ is reconstructible in $(\sC,\cO)$. Then we have the following triangle of equivalences.
    \[
    \begin{tikzcd}
        \mathrm{Twist}_{\cO}(\Lambda_{0},\bT_{0}) \ar[rr] & & \mathrm{Tors}_{\cG_{\cO}}(\sC) \ar[d,"\cP\mapsto \omega_{\cP}"] \\
        & & \cG\text{-}\mathrm{Vect}^{\mathrm{lt}}(\sC,\cO) \ar[llu,"\omega\mapsto (\omega(\Lambda_{0})\text{,}\omega(\bT_{0}))"]
    \end{tikzcd}
    \]
\end{prop}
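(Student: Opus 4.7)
The plan is to verify that each of the three functors in the triangle is an equivalence and that the triangle commutes, relying crucially on the reconstructibility hypothesis $\cG_{\cO}\cong \underline{\mathrm{Aut}}(\omega_{\mathrm{triv}})$ and on the characterization $\cG=\mathrm{Fix}(\bT_{0})$. Since the diagonal arrow is (up to natural isomorphism) the composition of the inverses of the other two, it suffices to treat the vertical and horizontal arrows and then check the compatibility of the diagram.

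For the vertical arrow $\mathrm{Tors}_{\cG_{\cO}}(\sC)\to \cG\text{-}\mathrm{Vect}^{\mathrm{lt}}(\sC,\cO)$, $\cP\mapsto \omega_{\cP}$, the candidate quasi-inverse sends $\omega$ to $\underline{\mathrm{Isom}}(\omega_{\mathrm{triv}},\omega)$. For a locally trivial $\omega$, this is an $\underline{\mathrm{Aut}}(\omega_{\mathrm{triv}})$-torsor, hence a $\cG_{\cO}$-torsor by reconstructibility; conversely, $\omega_{\cP}$ is locally trivial because $\cP$ is. The two natural isomorphisms
\[
\cP\isom \underline{\mathrm{Isom}}(\omega_{\mathrm{triv}},\omega_{\cP}),\qquad \omega\isom \omega_{\underline{\mathrm{Isom}}(\omega_{\mathrm{triv}},\omega)}
\]
are the standard ``frame bundle / associated bundle'' identities, checked locally where everything becomes the trivial torsor and both sides reduce to the obvious canonical map.

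For the horizontal arrow $\mathrm{Twist}_{\cO}(\Lambda_{0},\bT_{0})\to \mathrm{Tors}_{\cG_{\cO}}(\sC)$, the quasi-inverse sends a $\cG_{\cO}$-torsor $\cP$ to the pair
\[
\bigl(\cP\wedge^{\cG_{\cO}}(\Lambda_{0}\otimes_{\bZ_{p}}\cO),\ \cP\wedge^{\cG_{\cO}}(\bT_{0}\otimes 1)\bigr).
\]
That this pair actually lies in $\mathrm{Twist}_{\cO}(\Lambda_{0},\bT_{0})$ is where $\cG=\mathrm{Fix}(\bT_{0})$ is essential: the twisting construction produces a canonical map $\cP\to \underline{\mathrm{Isom}}((\Lambda_{0}\otimes\cO,\bT_{0}\otimes 1),(\cE_{\cP},\bT_{\cP}))$, and the fact that $\cG$ is exactly the stabilizer of $\bT_{0}$ in $\mathrm{GL}(\Lambda_{0})$ is precisely what makes this map a bijection of pseudo-torsors, promoting the right-hand side to an actual $\cG_{\cO}$-torsor. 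Conversely, starting from $(\cE,\bT)\in \mathrm{Twist}$, the torsor $\underline{\mathrm{Isom}}((\Lambda_{0}\otimes \cO,\bT_{0}\otimes 1),(\cE,\bT))$ recovers $(\cE,\bT)$ up to canonical isomorphism because associated bundles commute with the evaluation of tensors.

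Finally, the diagram commutes because the diagonal arrow $\omega\mapsto (\omega(\Lambda_{0}),\omega(\bT_{0}))$, evaluated on $\omega=\omega_{\cP}$, gives
\[
\omega_{\cP}(\Lambda_{0})=\cP\wedge^{\cG_{\cO}}(\Lambda_{0}\otimes_{\bZ_{p}}\cO),\qquad \omega_{\cP}(\bT_{0})=\cP\wedge^{\cG_{\cO}}(\bT_{0}\otimes 1),
\]
which is exactly the quasi-inverse to the horizontal arrow constructed above. The main obstacle I anticipate is the bookkeeping of verifying that the associated-bundle construction, viewed as a functor of the variable $\Lambda\in \mathrm{Rep}_{\bZ_{p}}(\cG)$, is $\bZ_{p}$-linear, exact, and compatible with tensor products, so that $\omega_{\cP}$ is genuinely an object of $\cG\text{-}\mathrm{Vect}^{\mathrm{lt}}(\sC,\cO)$, and that each of the natural isomorphisms above is functorial in the torsor and compatible with the $\cG_{\cO}$-action. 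None of this is deep, but arranging the functorial diagrams over a general site $\sC$ without representability assumptions on torsors is the principal technical burden.
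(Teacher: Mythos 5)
Your argument is correct, but note that the paper itself contains no proof of this proposition: it is quoted verbatim from \cite[Proposition A.17]{iky24}, so there is nothing internal to compare against. Your route — the standard frame-bundle/associated-sheaf dictionary, with reconstructibility identifying $\underline{\mathrm{Aut}}(\omega_{\mathrm{triv}})$ with $\cG_{\cO}$ for the vertical arrow and $\cG=\mathrm{Fix}(\bT_{0})$ identifying $\underline{\mathrm{Aut}}((\Lambda_{0}\otimes\cO,\bT_{0}\otimes 1))$ with $\cG_{\cO}$ for the horizontal one, all checks being local on $\sC$ where the torsor trivializes — is exactly how the cited reference argues, and the only point you leave implicit (that the diagonal functor lands in $\mathrm{Twist}_{\cO}(\Lambda_{0},\bT_{0})$ for an arbitrary locally trivial $\omega$) follows from your commutativity computation once $\omega\cong\omega_{\cP}$ is known from the vertical equivalence.
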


\subsection{The Tannakian theory of Kummer \'{e}tale local systems}

For convenience, we write $\bZ_{p}/p^{\infty}$ for $\bZ_{p}$.

\begin{dfn}
   Let $n\in \bN_{\geq 1}\cup \{\infty \}$. Let $(X,\cM_{X})$ be a locally noetherian fs log adic space. We let $\mathrm{Loc}_{\bZ_{p}/p^{n}}(X,\cM_{X})$ (resp. $\mathrm{Loc}_{\bZ_{p}/p^{n}}(X)$) denote the category of $\bZ_{p}/p^{n}$-local system on $(X,\cM_{X})_{\mathrm{prok\et}}$ (resp. $X_{\mathrm{pro\et}}$). When $n\in \bN_{\geq 1}$, the category $\mathrm{Loc}_{\bZ/p^{n}}(X,\cM_{X})$ (resp. $\mathrm{Loc}_{\bZ/p^{n}}(X)$) is equivalent to the category of $\bZ/p^{n}$-local systems on $(X,\cM_{X})_{\mathrm{k\et}}$ (resp. $X_{\et}$). We regard $\mathrm{Loc}_{\bZ_{p}/p^{n}}(X)$ as the full subcategory of $\mathrm{Loc}_{\bZ_{p}/p^{n}}(X,\cM_{X})$.

   An object of $\cG\text{-}\mathrm{Loc}_{\bZ_{p}/p^{n}}(X,\cM_{X})$ (resp. $\cG\text{-}\mathrm{Loc}_{\bZ_{p}/p^{n}}(X)$) is called a \emph{k\'{e}t $\bZ_{p}/p^{n}$-local system with $G$-structure} on $(X,\cM_{X})$ (resp. \emph{\'{e}tale $\bZ_{p}/p^{n}$-local system with $G$-structure} on $X$).
\end{dfn}

\begin{prop}[\cite{dllz23b}] \label{ket loc sys forms gal cat}
    Let $(X,\cM_{X})$ be a connected and locally noetherian fs log adic space and $\overline{x}$ be a log geometric point on $(X,\cM_{X})$ (for the definition, see \cite[Definition 4.4.2]{dllz23b}). The geometric point on $X$ defined by $\overline{x}$ is also denoted by $\overline{x}$. 
    \begin{enumerate}
        \item The category of finite Kummer \'{e}tale fs log adic spaces over $(X,\cM_{X})$ (resp. finite \'{e}tale log adic spaces over $X$) together with the functor taking a fiber at $\overline{x}$ forms a Galois category. The corresponding fundamental group is denoted by $\pi^{\mathrm{k\et}}_{1}((X,\cM_{X}),\overline{x})$ (resp. $\pi^{\et}_{1}(X,\overline{x})$). When there exists no possibility of confusion, $\overline{x}$ is omitted.
        \item For an integer $n\geq 1$, the functor taking a stalk at $\overline{x}$ gives bi-exact equivalences
        \[
        \mathrm{Loc}_{\bZ/p^{n}}(X,\cM_{X})\isom \pi^{\mathrm{k\et}}_{1}(X,\cM_{X})\text{-}\mathrm{FMod}_{\bZ/p^{n}}
        \]
        \[
        \mathrm{Loc}_{\bZ/p^{n}}(X)\isom \pi^{\mathrm{\et}}_{1}(X)\text{-}\mathrm{FMod}_{\bZ/p^{n}}.
        \]
        Here, $G\text{-}\mathrm{FMod}_{\bZ/p^{n}}$ denote the category of finite and free $\bZ/p^{n}$-modules (equipped with discrete topology) with continuous $G$-action.
        \item there exists a natural surjection
        \[
        \pi^{\mathrm{k\et}}_{1}(X,\cM_{X})\to \pi^{\mathrm{\et}}_{1}(X).
        \]
    \end{enumerate}
\end{prop}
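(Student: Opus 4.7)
The plan is to reduce each part to established results. For Part (1), the key input is \cite{dllz23b}: there Diao--Lan--Liu--Zhu verify the axioms of a Galois category (existence of finite limits and colimits, factorization of morphisms, behavior under connectedness, strict pro-representability of the fiber functor) for the category of finite Kummer \'{e}tale covers of a connected locally noetherian fs log adic space, equipped with the fiber functor at a log geometric point $\overline{x}$. The purely \'{e}tale case is either a direct application of Huber's formalism for adic spaces or, more conveniently here, recovered from the Kummer \'{e}tale case by restricting to covers endowed with the trivial log structure on $X$.

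For Part (2), the strategy is to first observe that $\mathrm{Loc}_{\bZ/p^{n}}((X,\cM_{X})_{\mathrm{prok\et}}) \simeq \mathrm{Loc}_{\bZ/p^{n}}((X,\cM_{X})_{\mathrm{k\et}})$, since $\bZ/p^{n}$ is finite discrete; this is where we use that the pro-Kummer \'{e}tale site computes the same finite-coefficient sheaves as the Kummer \'{e}tale site. Given the Galois category structure from Part (1), I then invoke the standard equivalence between finite locally constant $\bZ/p^{n}$-sheaves on a Galois category and finite continuous $\pi_{1}$-representations on free $\bZ/p^{n}$-modules: every such local system is trivialized after pullback along a finite (Galois) Kummer \'{e}tale cover $(Y,\cM_{Y})\to(X,\cM_{X})$, and the fiber functor at $\overline{x}$ identifies the local system with its stalk together with its continuous $\pi^{\mathrm{k\et}}_{1}$-action. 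Exactness in both directions is checked after passing to stalks, where it is manifest. The \'{e}tale version is identical.

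For Part (3), I would consider the functor $H$ from the category of finite \'{e}tale covers of $X$ to the category of finite Kummer \'{e}tale covers of $(X,\cM_{X})$ that equips a finite \'{e}tale $Y\to X$ with the pullback log structure. The functor $H$ is fully faithful (morphisms of such covers are morphisms of the underlying adic spaces, and the log structures are determined by pullback), and it is compatible with the fiber functor at $\overline{x}$. This yields the continuous homomorphism $\pi^{\mathrm{k\et}}_{1}((X,\cM_{X}),\overline{x}) \to \pi^{\et}_{1}(X,\overline{x})$. Its surjectivity follows from the standard Galois-category criterion: $H$ sends connected objects to connected objects, since connectedness of a finite \'{e}tale cover is intrinsic to its underlying topological space and is unchanged by adding the pullback log structure. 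The main (and only real) obstacle lies in Part (1), namely the verification of the Galois axioms for finite Kummer \'{e}tale covers, but this is precisely what has been carried out in \cite{dllz23b}; Parts (2) and (3) are then formal consequences of the general theory.
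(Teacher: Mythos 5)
Your proposal is correct and follows essentially the same route as the paper: parts (1) and (2) are delegated to the results of Diao--Lan--Liu--Zhu (the paper cites \cite[Lemma 4.4.16]{dllz23b} and \cite[Theorem 4.4.15]{dllz23b}), and part (3) is deduced from the natural functor from finite \'{e}tale covers of $X$ to finite Kummer \'{e}tale covers of $(X,\cM_{X})$ together with a standard Galois-category criterion for surjectivity. The only cosmetic difference is that the paper phrases this criterion via full faithfulness of that functor while you phrase it via preservation of connected objects; these are equivalent formulations of the same well-known fact.
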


\begin{proof}
    \begin{enumerate}
        \item See \cite[Lemma 4.4.16]{dllz23b}.
        \item See \cite[Theorem 4.4.15]{dllz23b}.
        \item Since the natural functor from the category of finite \'{e}tale adic spaces over $X$ to the category of finite Kummer \'{e}tale fs log adic spaces over $(X,\cM_{X})$ is fully faithful, the assertion follows from the well-knows property of Galois categories.
    \end{enumerate}
\end{proof}

\begin{lem}\label{subquot of et loc sys is et loc sys}
    Let $(X,\cM_{X})$ be a locally noetherian fs log adic space and $n\in \bN_{\geq 1}$. Let 
    \[
    0\to \bL_{1}\to \bL_{2}\to \bL_{3}\to 0
    \]
    be an exact sequence in $\mathrm{Loc}_{\bZ/p^{n}}(X,\cM_{X})$. Suppose that $\bL_{2}$ belongs to $\mathrm{Loc}_{\bZ/p^{n}}(X)$. Then $\bL_{1}$ and $\bL_{3}$ also belong to $\mathrm{Loc}_{\bZ/p^{n}}(X)$.
\end{lem}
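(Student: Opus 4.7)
The plan is to use the fundamental group description of Kummer étale and étale local systems from Proposition~\ref{ket loc sys forms gal cat} and to reduce the statement to a tautology about fixed vectors of a normal subgroup.

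First I would reduce to the case where $X$ is connected. Whether a Kummer étale $\bZ/p^{n}$-local system $\bL$ on $(X,\cM_{X})$ belongs to $\mathrm{Loc}_{\bZ/p^{n}}(X)$ can be tested after restriction to each connected component, as the inclusion $\mathrm{Loc}_{\bZ/p^n}(X)\hookrightarrow \mathrm{Loc}_{\bZ/p^n}(X,\cM_X)$ is the full subcategory of objects trivializable on an étale cover, which is a Zariski-local condition on $X$. So I may assume $X$ is connected and choose a log geometric point $\bar{x}$ of $(X,\cM_{X})$.

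Next I invoke Proposition~\ref{ket loc sys forms gal cat}: the fiber-at-$\bar{x}$ functor gives bi-exact equivalences
\[
\mathrm{Loc}_{\bZ/p^{n}}(X,\cM_{X})\isom \pi^{\mathrm{k\et}}_{1}(X,\cM_{X})\text{-}\mathrm{FMod}_{\bZ/p^{n}},\qquad \mathrm{Loc}_{\bZ/p^{n}}(X)\isom \pi^{\mathrm{\et}}_{1}(X)\text{-}\mathrm{FMod}_{\bZ/p^{n}},
\]
and there is a natural surjection $q\colon \pi^{\mathrm{k\et}}_{1}(X,\cM_{X})\twoheadrightarrow \pi^{\mathrm{\et}}_{1}(X)$. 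By the functoriality of these equivalences with respect to the inclusion of finite étale covers into finite Kummer étale covers, the embedding $\mathrm{Loc}_{\bZ/p^{n}}(X)\hookrightarrow \mathrm{Loc}_{\bZ/p^{n}}(X,\cM_{X})$ corresponds to inflation of representations along $q$. Equivalently, a k\'et local system belongs to $\mathrm{Loc}_{\bZ/p^{n}}(X)$ if and only if the kernel $N\coloneqq\ker(q)$ acts trivially on its stalk at $\bar x$.

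Finally, the given short exact sequence of k\'et local systems corresponds under the equivalence to a short exact sequence of $\pi^{\mathrm{k\et}}_{1}(X,\cM_{X})$-modules. By hypothesis $N$ acts trivially on the stalk of $\bL_{2}$. Since $\bL_{1}$ injects $\pi^{\mathrm{k\et}}_{1}$-equivariantly into $\bL_{2}$ and $\bL_{3}$ is an equivariant quotient, $N$ acts trivially on the stalks of $\bL_{1}$ and $\bL_{3}$ as well. Hence both $\bL_{1}$ and $\bL_{3}$ belong to $\mathrm{Loc}_{\bZ/p^{n}}(X)$, as desired. There is no real obstacle here beyond unpacking the Galois-category formalism; the key input is the surjectivity of $q$ supplied by Proposition~\ref{ket loc sys forms gal cat}(3).
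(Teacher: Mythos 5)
Your proof is correct and follows essentially the same route as the paper: pass to a connected component, translate the exact sequence into continuous $\pi^{\mathrm{k\et}}_{1}(X,\cM_{X})$-modules via Proposition \ref{ket loc sys forms gal cat}, and observe that triviality of the action of $\mathrm{Ker}(\pi^{\mathrm{k\et}}_{1}(X,\cM_{X})\twoheadrightarrow \pi^{\et}_{1}(X))$ on the middle term passes to the sub and the quotient. No issues.
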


\begin{proof}
    We may assume that $X$ is connected. The exact sequence in the statement corresponds to an exact sequence
    \[
    0\to M_{1}\to M_{2}\to M_{3}\to 0
    \]
    of finite free $\bZ/p^{n}$-modules $M_{i}$ with continuous $\pi^\mathrm{k\et}_{1}(X,\cM_{X})$-actions for $i=1,2,3$. By the assumption, $\mathrm{Ker}(\pi^{\mathrm{k\et}}_{1}(X,\cM_{X})\twoheadrightarrow \pi^{\et}_{1}(X))$ acts trivially on $M_{2}$. Hence $\pi^{\mathrm{k\et}}_{1}(X,\cM_{X})$-action on $M_{1}$ and $M_{3}$ factors $\pi^{\et}_{1}(X)$-action, which implies that $\bL_{1}$ and $\bL_{3}$ belong to $\mathrm{Loc}(X)$.
\end{proof}

\begin{lem}\label{g-ket loc sys is et after fket base change}
    Let $(X,\cM_{X})$ be a locally noetherian fs log adic space and $n\in \bN_{\geq 1}$. Then, for $\omega\in \cG\text{-}\mathrm{Loc}_{\bZ/p^{n}}(X,\cM_{X})$, there exists a finite Kummer \'{e}tale cover $(Y,\cM_{Y})\to (X,\cM_{X})$ such that $\omega|_{(Y,\cM_{Y})}$ belongs to $\cG\text{-}\mathrm{Loc}_{\bZ/p^{n}}(Y)$.
\end{lem}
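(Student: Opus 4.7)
We may reduce to the case where $(X,\cM_{X})$ is connected (working componentwise and taking a disjoint union), and fix a log geometric point $\overline{x}$. Write $\pi \coloneqq \pi^{\mathrm{k\et}}_{1}((X,\cM_{X}),\overline{x})$. By Proposition \ref{ket loc sys forms gal cat}(2), the category $\mathrm{Loc}_{\bZ/p^{n}}(X,\cM_{X})$ is identified with that of finite free $\bZ/p^{n}$-modules carrying a continuous $\pi$-action, and by Proposition \ref{ket loc sys forms gal cat}(3) the full subcategory $\mathrm{Loc}_{\bZ/p^{n}}(X)$ corresponds to those on which the normal subgroup $\ker(\pi \twoheadrightarrow \pi^{\et}_{1}(X))$ acts trivially.

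The strategy is to first kill the monodromy on the single local system $\omega(\Lambda_{0})$ and then bootstrap to all of $\omega$. Applying $\omega$ to the faithful representation $\Lambda_{0}$ produces a continuous representation
\[
\rho \colon \pi \longrightarrow \mathrm{GL}(\omega(\Lambda_{0})_{\overline{x}})
\]
whose target is a finite group, since $\omega(\Lambda_{0})_{\overline{x}}$ is a finite free $\bZ/p^{n}$-module. Thus $K \coloneqq \ker(\rho)$ is an open normal subgroup of $\pi$, and Proposition \ref{ket loc sys forms gal cat}(1) provides a corresponding finite Galois Kummer \'etale cover $f\colon (Y,\cM_{Y}) \to (X,\cM_{X})$ with $\pi^{\mathrm{k\et}}_{1}(Y,\cM_{Y}) = K$. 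By construction $\omega(\Lambda_{0})|_{(Y,\cM_{Y})}$ is a constant local system on $Y$ and in particular lies in $\mathrm{Loc}_{\bZ/p^{n}}(Y)$; the same is therefore true of its dual and of every tensor construction $\omega(\Lambda_{0})^{\otimes a} \otimes (\omega(\Lambda_{0})^{\vee})^{\otimes b}|_{(Y,\cM_{Y})}$.

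For a general $\Lambda \in \mathrm{Rep}_{\bZ_{p}}(\cG)$, one uses that since $\cG \hookrightarrow \mathrm{GL}(\Lambda_{0})$ is a closed immersion of smooth affine group schemes over $\bZ_{p}$ (so $\bZ_{p}[\cG]$ is a $\bZ_{p}$-flat quotient of a polynomial algebra in the matrix coefficients of $\Lambda_{0}$ together with $\det^{-1}$), an integral adaptation of the classical Tannakian argument shows that $\Lambda$ is a subquotient, in $\mathrm{Rep}_{\bZ_{p}}(\cG)$, of a finite direct sum of modules of the form $\Lambda_{0}^{\otimes a} \otimes (\Lambda_{0}^{\vee})^{\otimes b}$. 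Applying the exact tensor functor $\omega$ and restricting to $(Y,\cM_{Y})$ then exhibits $\omega(\Lambda)|_{(Y,\cM_{Y})}$ as a subquotient in $\mathrm{Loc}_{\bZ/p^{n}}(Y,\cM_{Y})$ of a direct sum of objects of $\mathrm{Loc}_{\bZ/p^{n}}(Y)$; two applications of Lemma \ref{subquot of et loc sys is et loc sys} then place $\omega(\Lambda)|_{(Y,\cM_{Y})}$ itself in $\mathrm{Loc}_{\bZ/p^{n}}(Y)$. The main subtlety is precisely this integral subquotient statement—over a field it is the standard Tannakian fact, but over $\bZ_{p}$ it relies on the flatness of $\bZ_{p}[\cG]$ guaranteed by the smoothness of $\cG$.
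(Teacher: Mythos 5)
Your proposal is correct and follows essentially the same route as the paper: trivialize $\omega(\Lambda_{0})$ on a finite Kummer \'etale cover (the paper simply asserts such a cover exists, which is exactly your finite-monodromy-image argument), then propagate to all $\Lambda\in\mathrm{Rep}_{\bZ_{p}}(\cG)$ via the integral Tannakian subquotient statement together with Lemma \ref{subquot of et loc sys is et loc sys}. The integral subquotient fact you sketch by hand is precisely what the paper outsources to \cite[Proposition 12]{ds09}, so the two proofs coincide in substance.
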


\begin{proof}
    Take a finite Kummer \'{e}tale cover $(Y,\cM_{Y})\to (X,\cM_{X})$ such that $\omega(\Lambda_{0})|_{(Y,\cM_{Y})}$ belongs to $\mathrm{Loc}_{\bZ/p^{n}}(Y)$. Then it follows from Lemma \ref{subquot of et loc sys is et loc sys} and \cite[Proposition 12]{ds09} that $\omega(\Lambda)|_{(Y,\cM_{Y})}$ belongs to $\mathrm{Loc}_{\bZ/p^{n}}(Y)$ for every $\Lambda\in \mathrm{Rep}_{\bZ_{p}}(\cG)$.
\end{proof}

\begin{prop}\label{fund thm of tann framework for ket loc sys}
    Let $(X,\cM_{X})$ be a locally noetherian fs log adic space. Then $\cG$ is reconstructible in $((X,\cM_{X})_{\mathrm{prok\et}},\widehat{\bZ}_{p})$ and every object of $\cG\text{-}\mathrm{Loc}_{\bZ_{p}}(X,\cM_{X})$ is locally trivial on $(X,\cM_{X})_{\mathrm{prok\et}}$.
\end{prop}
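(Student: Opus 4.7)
The plan is to prove reconstructibility by a formal Tannakian argument and local triviality by exhibiting an explicit pro-Kummer \'{e}tale Galois cover constructed from the monodromy of $\omega(\Lambda_{0})$.

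For reconstructibility, the assertion $\cG_{\widehat{\bZ}_{p}} \isom \underline{\mathrm{Aut}}(\omega_{\mathrm{triv}})$ is a sheaf-theoretic form of Tannakian reconstruction: for any $\bZ_{p}$-algebra $R$, a $\otimes$-natural automorphism of the functor $\Lambda \mapsto \Lambda \otimes_{\bZ_{p}} R$ on $\mathrm{Rep}_{\bZ_{p}}(\cG)$ corresponds, by evaluation on the regular representation of $\cG$ together with the Hopf-algebra structure of $\cO(\cG)$, to a unique element of $\cG(R)$. This argument is purely formal, depends only on $\cG$ being flat affine, and applies verbatim with $R = \widehat{\bZ}_{p}(T)$ for each $T \in (X,\cM_{X})_{\mathrm{prok\et}}$, mirroring the pro-\'{e}tale case in \cite[Appendix]{iky24}.

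For local triviality, fix $\omega \in \cG\text{-}\mathrm{Loc}_{\bZ_{p}}(X,\cM_{X})$. Since the condition is pro-Kummer \'{e}tale local on $(X,\cM_{X})$, I may assume that $(X,\cM_{X})$ is connected and choose a log geometric point $\overline{x}$. The $\cG$-torsor $\underline{\mathrm{Isom}}(\omega_{\mathrm{triv},\overline{x}}, \omega_{\overline{x}})$ over $\mathrm{Spec}(\bZ_{p})$ is trivialized after a pro-\'{e}tale neighborhood of $\overline{x}$, and on this neighborhood the monodromy representation of $\omega(\Lambda_{0})$ (available via Proposition \ref{ket loc sys forms gal cat}) becomes a continuous homomorphism $\rho \colon \pi_{1}^{\mathrm{k\et}}(X,\cM_{X},\overline{x}) \to \cG(\bZ_{p})$, landing in $\cG$ because the image must preserve $\bT_{0}$. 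Setting $N_{n} \coloneqq \rho^{-1}(\mathrm{Ker}(\cG(\bZ_{p}) \to \cG(\bZ/p^{n})))$ yields a cofiltered family of open normal subgroups of finite index in $\pi_{1}^{\mathrm{k\et}}(X,\cM_{X},\overline{x})$ with $\bigcap_{n} N_{n} = \mathrm{Ker}(\rho)$, hence a tower of finite Kummer \'{e}tale Galois covers $(Y_{n},\cM_{Y_{n}}) \to (X,\cM_{X})$ whose cofiltered limit $(\widetilde{X},\cM_{\widetilde{X}})$ is an object of $(X,\cM_{X})_{\mathrm{prok\et}}$ in the sense of \cite[\S 5]{dllz23b}. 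For any $\Lambda \in \mathrm{Rep}_{\bZ_{p}}(\cG)$, the monodromy of $\omega(\Lambda)$ factors through $\rho$ followed by the action of $\cG$ on $\Lambda$, so $\mathrm{Ker}(\rho)$ acts trivially on $\omega(\Lambda)$. Thus $\omega|_{\widetilde{X}}$ is a constant $\cG$-equivariant local system, producing a section of the pseudo-torsor $\underline{\mathrm{Isom}}((\Lambda_{0} \otimes \widehat{\bZ}_{p}, \bT_{0} \otimes 1), (\omega(\Lambda_{0}), \omega(\bT_{0})))$ over $(\widetilde{X},\cM_{\widetilde{X}})$, which by reconstructibility and Proposition \ref{fund thm of tann framework} yields local triviality of $\omega$.

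The main technical point is the functorial description of the monodromy of $\omega(\Lambda)$ in terms of $\rho = \rho_{\Lambda_{0}}$. This uses $\otimes$-exactness of $\omega$: the fiber torsor $\cP = \underline{\mathrm{Isom}}(\omega_{\mathrm{triv},\overline{x}}, \omega_{\overline{x}})$ recovers each $\omega(\Lambda)$ as the associated twist $\cP \wedge^{\cG(\bZ_{p})} (\Lambda \otimes \widehat{\bZ}_{p})$, whose monodromy is $\rho$ composed with the $\cG$-action on $\Lambda$; this in particular forces $\mathrm{Ker}(\rho) \subseteq \mathrm{Ker}(\rho_{\Lambda})$ uniformly in $\Lambda$. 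A secondary subtlety is that the cofiltered limit $(\widetilde{X},\cM_{\widetilde{X}})$ of finite Kummer \'{e}tale covers along surjective transition maps is a genuine object of $(X,\cM_{X})_{\mathrm{prok\et}}$, which is built into the definition of the pro-Kummer \'{e}tale site in \cite[\S 5]{dllz23b} and thus causes no real obstruction.
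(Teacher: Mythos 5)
Your reconstructibility argument is fine and is essentially the standard Tannakian reconstruction that the paper imports from \cite[Appendix]{iky24}, so no complaint there. For local triviality, however, your proof has a genuine gap at its very first step, and it sits exactly where the paper leans on the non-log input: the construction of the monodromy homomorphism $\rho\colon \pi_{1}^{\mathrm{k\et}}((X,\cM_{X}),\overline{x})\to \cG(\bZ_{p})$ presupposes an identification of the fiber functor $\Lambda\mapsto \omega(\Lambda)_{\overline{x}}$ with the trivial functor $\Lambda\mapsto \Lambda$ carrying $\omega(\bT_{0})_{\overline{x}}$ to $\bT_{0}$, i.e.\ the triviality over $\bZ_{p}$ of the pseudo-torsor $\underline{\mathrm{Isom}}(\omega_{\mathrm{triv},\overline{x}},\omega_{\overline{x}})$. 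Your justification, ``is trivialized after a pro-\'{e}tale neighborhood of $\overline{x}$'', does not make sense: replacing $X$ by a (pro-)Kummer \'{e}tale neighborhood of $\overline{x}$ does not change the stalk functor at $\overline{x}$, nor the associated object over $\mathrm{Spec}(\bZ_{p})$; and since $\widehat{\bZ}_{p}$ has sections valued in continuous $\bZ_{p}$-valued functions, any local section of $\underline{\mathrm{Isom}}(\omega_{\mathrm{triv}},\omega)$ specializes (by evaluating at a point) to an isomorphism over $\bZ_{p}$ itself, so this fiberwise triviality is not something you can cover away — it is part of what must be proved. Establishing it requires real input: first that the stalk pseudo-torsor is an fpqc torsor over $\bZ_{p}$ (a Broshi-type statement for exact tensor functors into finite free $\bZ_{p}$-modules), and then that such a torsor is trivial over $\bZ_{p}$ (smoothness of $\cG$ plus the henselian property of $\bZ_{p}$ and a Lang-type/connectedness argument on the special fiber, or whatever argument \cite[Proposition 2.3]{iky24} actually runs). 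Without this, $\rho$ is not defined and the tower $N_{n}=\rho^{-1}(\mathrm{Ker}(\cG(\bZ_{p})\to\cG(\bZ/p^{n})))$ never gets off the ground; the phrase ``landing in $\cG$ because the image must preserve $\bT_{0}$'' silently uses the same missing identification.

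For comparison, the paper avoids manipulating the fiber functor directly: it reduces to the mod-$p^{n}$ objects $\omega_{n}$, uses Lemma \ref{g-ket loc sys is et after fket base change} (purity plus the subquotient Lemma \ref{subquot of et loc sys is et loc sys} and \cite[Proposition 12]{ds09}) to pass, after a finite Kummer \'{e}tale cover, to classical \'{e}tale $\bZ/p^{n}$-local systems with $\cG$-structure, and then quotes the argument of \cite[Proposition 2.3]{iky24}, which is precisely where the torsor-triviality over $\bZ/p^{n}$ (hence over $\bZ_{p}$ in the limit) is handled. Your $\pi_{1}^{\mathrm{k\et}}$-and-congruence-subgroup skeleton is a legitimate alternative route (and would also need the finite-coefficient Proposition \ref{ket loc sys forms gal cat} upgraded to $\bZ_{p}$-coefficients by a limit argument, plus a compatible-choice argument to trivialize all mod-$p^{n}$ layers simultaneously over $\varprojlim Y_{n}$ — both minor), but as written it does not supply the one step that carries the actual content.
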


\begin{proof}
    Since $\cG\text{-}\mathrm{Loc}_{\bZ_{p}}(X)\to \cG\text{-}\mathrm{Loc}_{\bZ_{p}}(X,\cM_{X})$ is fully faithful, it follows from \cite[Proposition 2.8]{iky24} that $\cG$ is reconstructible in $((S,\cM_{X})_{\mathrm{prok\et}},\widehat{\bZ}_{p})$. 
    
    Next, let $\omega\in \cG\text{-}\mathrm{Loc}_{\bZ_{p}}(X,\cM_{X})$. To prove $\omega$ is locally trivial, it suffices to show that $\omega_{n}$ is trivial after finite Kummer \'{e}tale base change for every $n\geq 1$. Here, $\omega_{n}\in \cG\text{-}\mathrm{Loc}_{\bZ/p^{n}}(X,\cM_{X})$ is the mod-$p^{n}$ reduction of $\omega$. Fix $n\geq 1$. By Lemma \ref{g-ket loc sys is et after fket base change}, we may assume that $\omega_{n}$ belongs to $\cG\text{-}\mathrm{Loc}_{\bZ/p^{n}}(X)$. Then the claim follows from the argument in \cite[Proposition 2.3]{iky24}. 
\end{proof}

\begin{prop}\label{comm triangle for g-loc sys}
We have the following commutative triangle of equivalences.
    \[
    \begin{tikzcd}
        \mathrm{Twist}_{\widehat{\bZ}_{p}}(\Lambda_{0},\bT_{0}) \ar[r] & \mathrm{Tors}_{\cG_{\widehat{\bZ}_{p}}}((X,\cM_{X})_{\mathrm{prok\et}}) \ar[d] \\
         & \cG\text{-}\mathrm{Loc}_{\bZ_{p}}(X,\cM_{X}) \ar[lu]
    \end{tikzcd}
    \]
\end{prop}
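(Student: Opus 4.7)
The plan is to deduce this directly from the abstract Tannakian result of Proposition \ref{fund thm of tann framework}, applied to the ringed site $(\sC,\cO)=((X,\cM_{X})_{\mathrm{prok\et}},\widehat{\bZ}_{p})$. Observe that vector bundles on this ringed site are precisely $\bZ_{p}$-local systems (with $\widehat{\bZ}_{p}$-torsors corresponding to $\cG_{\widehat{\bZ}_{p}}$-torsors), so once the hypotheses of Proposition \ref{fund thm of tann framework} are verified, the category $\cG\text{-}\mathrm{Vect}^{\mathrm{lt}}(\sC,\cO)$ in the target of the abstract triangle coincides with $\cG\text{-}\mathrm{Loc}_{\bZ_{p}}(X,\cM_{X})$.

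The two hypotheses needed are: (i) $\cG$ is reconstructible in $((X,\cM_{X})_{\mathrm{prok\et}},\widehat{\bZ}_{p})$, and (ii) every object of $\cG\text{-}\mathrm{Loc}_{\bZ_{p}}(X,\cM_{X})$ is locally trivial in the sense that the underlying $\underline{\mathrm{Aut}}(\omega_{\mathrm{triv}})$-pseudo-torsor becomes trivial after a cover. Both assertions are exactly the content of Proposition \ref{fund thm of tann framework for ket loc sys}, so no further work is required on this point. With (i) and (ii) in hand, Proposition \ref{fund thm of tann framework} directly produces the triangle of equivalences among $\mathrm{Twist}_{\widehat{\bZ}_{p}}(\Lambda_{0},\bT_{0})$, $\mathrm{Tors}_{\cG_{\widehat{\bZ}_{p}}}((X,\cM_{X})_{\mathrm{prok\et}})$, and $\cG\text{-}\mathrm{Loc}_{\bZ_{p}}(X,\cM_{X})=\cG\text{-}\mathrm{Vect}^{\mathrm{lt}}(\sC,\cO)$.

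There is essentially no main obstacle here: the result is a formal consequence of the cited statements. The only minor point worth recording in the proof is the identification $\mathrm{Vect}((X,\cM_{X})_{\mathrm{prok\et}},\widehat{\bZ}_{p})\simeq \mathrm{Loc}_{\bZ_{p}}(X,\cM_{X})$, which is a standard fact in the Kummer pro-\'etale setting and is used implicitly to translate the abstract target category of Proposition \ref{fund thm of tann framework} into the desired category of Kummer \'etale $\bZ_{p}$-local systems with $\cG$-structure.
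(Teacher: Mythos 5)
Your proposal matches the paper's proof, which likewise deduces the triangle by applying Proposition \ref{fund thm of tann framework} to the ringed site $((X,\cM_{X})_{\mathrm{prok\et}},\widehat{\bZ}_{p})$ with the reconstructibility and local-triviality hypotheses supplied by Proposition \ref{fund thm of tann framework for ket loc sys}. The identification of vector bundles on this ringed site with Kummer \'etale $\bZ_{p}$-local systems is used implicitly in the paper as well, so no substantive difference remains.
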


\begin{proof}
    The assertion follows from Proposition \ref{fund thm of tann framework} and Proposition \ref{fund thm of tann framework for ket loc sys}.
\end{proof}

\subsection{The Tannakian theory of prismatic \texorpdfstring{$F$}--crystals}

Let $(\fX,\cM_{\fX})$ be a bounded $p$-adic log formal scheme.

\begin{dfn}
    An object of $\cG\text{-}\mathrm{Vect}^{\varphi}((\fX,\cM_{\fX})_{\Prism})$ is called a \emph{log prismatic $F$-crystal with $G$-structure} on $(\fX,\cM_{\fX})$.
\end{dfn}

We define a group sheaf $\cG_{\Prism}$ on $(\fX,\cM_{\fX})_{\Prism}$ by
\[
\cG_{\Prism}(A,I,\cM_{A})\coloneqq\cG_{\cO_{\Prism}}(A,I,\cM_{A})=\cG(A).
\]
The ring map $\phi_{A}\colon A\to A$ for each $(A,I,\cM_{A})\in (\fX,\cM_{\fX})_{\Prism}$ induces a group map $\phi\colon \cG_{\Prism}\to \cG_{\Prism}$. Let $\cG_{\Prism}[1/\cI_{\Prism}]$ denote the group sheaf on $(\fX,\cM_{\fX})_{\Prism}$ sending $(A,I,\cM_{A})$ to $\cG(A[1/I])$. We have a natural injection $\iota\colon \cG_{\Prism}\to \cG_{\Prism}[1/\cI_{\Prism}]$. For a $\cG_{\Prism}$-torsor $\cP$, we set $\cP[1/\cI_{\Prism}]$ to be the $\cG_{\Prism}[1/\cI_{\Prism}]$-torsor $\iota_{*}\cP$.

\begin{dfn}
    \begin{enumerate}
        \item For a bounded prism $(A,I)$, we let $\mathrm{Twist}^{\varphi}_{(A,I)}(\Lambda_{0},\bT_{0})$ denote the groupoid of a pair $((\cE,\varphi_{\cE}),\bT_{\Prism})$ consisting of an object $(\cE,\varphi_{\cE})\in \mathrm{Vect}^{\varphi}(A,I)$ and tensors $\bT_{\Prism}\subset \cE^{\otimes}$ fixed by $\varphi_{\cE}$ such that $\underline{\mathrm{Isom}}((\Lambda_{0}\otimes_{\bZ_{p}} A,\bT_{0}\otimes 1),(\cE,\bT_{\Prism}))$ is a $\cG$-torsor on $\mathrm{Spf}(A)_{\mathrm{fpqc}}$.
        \item We let $\mathrm{Twist}^{\varphi}_{\cO_{\Prism}}(\Lambda_{0},\bT_{0})$ denote the groupoid of a pair $((\cE,\varphi_{\cE}),\bT_{\Prism})$ consisting of an object $(\cE,\varphi_{\cE})\in \mathrm{Vect}^{\varphi}((\fX,\cM_{\fX})_{\Prism})$ and tensors $\bT_{\Prism}\subset \cE^{\otimes}$ fixed by $\varphi_{\cE}$ such that $\underline{\mathrm{Isom}}((\Lambda_{0}\otimes_{\bZ_{p}} \cO_{\Prism},\bT_{0}\otimes 1),(\cE,\bT_{\Prism}))$ is a $\cG_{\Prism}$-torsor on $(\fX,\cM_{\fX})_{\Prism}$.
    \end{enumerate}
\end{dfn}

\begin{lem}
Let $(\fX,\cM_{\fX})$ be a bounded $p$-adic log formal scheme. Then there exists a natural equivalence
\[
\mathrm{Twist}^{\varphi}_{\cO_{\Prism}}(\Lambda_{0},\bT_{0})\simeq \varprojlim_{(A,I,\cM_{A})\in (\fX,\cM_{\fX})_{\Prism}} \mathrm{Twist}^{\varphi}_{(A,I)}(\Lambda_{0},\bT_{0}).
\]
\end{lem}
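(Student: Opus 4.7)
The plan is to construct the equivalence from the already-known equivalence of underlying $F$-crystals (Remark \ref{pris crys as lim}) by checking that the torsor conditions match up.

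First, I would define the functor from left to right by sending a pair $((\cE,\varphi_{\cE}),\bT_{\Prism})$ to the compatible family of evaluations $((\cE_{(A,I,\cM_{A})},\varphi_{\cE,(A,I,\cM_{A})}),\bT_{\Prism,(A,I,\cM_{A})})$ indexed by $(A,I,\cM_{A})\in (\fX,\cM_{\fX})_{\Prism}$. The transition isomorphisms come from the crystal property of $\cE$ and the compatibility of the tensors $\bT_{\Prism}$. That the resulting pointwise object is an object of $\mathrm{Twist}^{\varphi}_{(A,I)}(\Lambda_{0},\bT_{0})$ follows because the restriction of the global $\cG_{\Prism}$-torsor $\underline{\mathrm{Isom}}((\Lambda_{0}\otimes_{\bZ_{p}} \cO_{\Prism},\bT_{0}\otimes 1),(\cE,\bT_{\Prism}))$ to the slice site $(\fX,\cM_{\fX})_{\Prism,/(A,I,\cM_{A})}$ is represented by its value at $(A,I,\cM_{A})$, and the topology induced on this slice site agrees (via the natural functor to $\Spf(A)_{\mathrm{fpqc}}$ given by taking underlying rings of strict flat covers of log prisms) with the fpqc topology on $\Spf(A)$ for the purpose of checking the torsor condition.

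Second, to construct the inverse functor, start with a compatible family $\{((\cE_{A},\varphi_{A}),\bT_{A})\}$ on the right. By Remark \ref{pris crys as lim}, the pairs $(\cE_{A},\varphi_{A})$ glue to a log prismatic $F$-crystal $(\cE,\varphi_{\cE})$, and the compatible system of tensors $\bT_{A}\subset \cE_{A}^{\otimes}$ fixed by $\varphi_{A}$ glues to a global tensor $\bT_{\Prism}\subset \cE^{\otimes}$ fixed by $\varphi_{\cE}$, since the tensor powers and $\Hom$-objects commute with the limit defining vector bundles on the prismatic site.

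Third, and this is the only delicate step, I would verify that the global isomorphism sheaf $\cP\coloneqq \underline{\mathrm{Isom}}((\Lambda_{0}\otimes_{\bZ_{p}} \cO_{\Prism},\bT_{0}\otimes 1),(\cE,\bT_{\Prism}))$ is a $\cG_{\Prism}$-torsor whenever each pointwise $\cP_{A}$ is a $\cG$-torsor on $\Spf(A)_{\mathrm{fpqc}}$. The sheaf property of $\cP$ is automatic from the sheaf structure of the underlying objects. For local triviality, I take any $(A,I,\cM_{A})\in (\fX,\cM_{\fX})_{\Prism}$ and use the pointwise hypothesis to find an fpqc cover $A\to B$ trivializing $\cP_{A}$; then by \cite[Lemma 3.7(3)]{bs22} (or the analogous statement used throughout Section~3 above), the faithfully flat ring map $A\to B$ lifts to a strict flat cover $(A,I,\cM_{A})\to (B,IB,\cM_{B})$ of log prisms, which is a covering in $(\fX,\cM_{\fX})_{\Prism}$ and trivializes $\cP$. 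The main (and only non-formal) obstacle is precisely this passage from fpqc-local triviality at the level of a single log prism to local triviality in the prismatic topology; once the lifting of the trivializing cover to a log prism map is in hand, the argument is routine.

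Finally, the two functors are mutually quasi-inverse by construction, and both respect the groupoid structures, yielding the desired equivalence.
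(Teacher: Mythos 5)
Your overall architecture (evaluate, glue via Remark \ref{pris crys as lim}, then check that pointwise torsor-ness of $\cP_{(A,I,\cM_{A})}$ implies that the global $\cP$ is a $\cG_{\Prism}$-torsor) matches the paper, and the first two steps are fine. The gap is exactly in the step you flag as the only non-formal one. You claim that an fpqc cover $A\to B$ trivializing $\cP_{(A,I,\cM_{A})}$ "lifts to a strict flat cover $(A,I,\cM_{A})\to (B,IB,\cM_{B})$ of log prisms" by \cite[Lemma 3.7(3)]{bs22}. That lemma does not do this: it only says that if $A\to B$ is a $(p,I)$-completely flat map \emph{of $\delta$-rings}, then $(B,IB)$ is again a prism. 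An arbitrary faithfully flat ring map $A\to B$ carries no natural $\delta$-structure compatible with the one on $A$ (nor a $\delta_{\log}$-structure on the pulled-back log structure), so in general $(B,IB,\cM_{B})$ is simply not an object of $(\fX,\cM_{\fX})_{\Prism}$ and cannot serve as a covering in the prismatic topology. So as written, the passage from fpqc-local triviality at a single log prism to prismatic-local triviality does not go through.

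The missing ingredient is the smoothness of $\cG$, which is a standing hypothesis in this section and which the paper uses at precisely this point. Since $\cG$ is smooth and affine, the fpqc torsor $\cP_{(A,I,\cM_{A})}$ is representable by a scheme smooth and surjective over $\mathrm{Spf}(A)$, hence admits sections \'{e}tale-locally; so $\cP_{(A,I,\cM_{A})}$ is already a torsor for the \'{e}tale topology on $\mathrm{Spf}(A)$. \'{E}tale coverings, unlike general flat ones, do lift: by \cite[Lemma 2.13]{kos22} every \'{e}tale covering $\mathrm{Spf}(B)\to \mathrm{Spf}(A)$ extends (with its $\delta$- and log structures, uniquely) to an \'{e}tale covering $(A,I,\cM_{A})\to (B,IB,\cM_{B})$ in $(\fX,\cM_{\fX})_{\Prism}$, and this covering trivializes $\cP$. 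Replacing your lifting claim by this two-step argument (smoothness to descend from fpqc to \'{e}tale triviality, then the \'{e}tale lifting lemma) repairs the proof and recovers the paper's argument.
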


\begin{proof}
The equivalence in Remark \ref{pris crys as lim} gives a fully faithful functor
\[
\mathrm{Twist}^{\varphi}_{\cO_{\Prism}}(\Lambda_{0},\bT_{0})\to \varprojlim_{(A,I,\cM_{A})\in (\fX,\cM_{\fX})_{\Prism}} \mathrm{Twist}^{\varphi}_{(A,I)}(\Lambda_{0},\bT_{0}).
\]
To check essential surjectivity, it is enough to show that, for a pair $((\cE,\varphi_{\cE}),\bT_{\Prism})$ consisting of $(\cE,\varphi_{\cE})\in \mathrm{Vect}^{\varphi}((\fX,\cM_{\fX})_{\Prism})$ and tensors $\bT_{\Prism}\subset \cE^{\otimes}$ fixed by $\varphi_{\cE}$ such that 
\[
\cP_{(A,I,\cM_{A})}\coloneqq \underline{\mathrm{Isom}}((\Lambda_{0}\otimes_{\bZ_{p}} A,\bT_{0}\otimes 1),(\cE_{(A,I,\cM_{A})},\bT_{\Prism,(A,I,\cM_{A})}))
\]
is a $\cG$-torsor on $\mathrm{Spf}(A)_{\mathrm{fpqc}}$ for each log prism $(A,I,\cM_{A})\in (\fX,\cM_{\fX})_{\Prism}$, the object
\[
\cP\coloneqq \underline{\mathrm{Isom}}((\Lambda_{0}\otimes_{\bZ_{p}} \cO_{\Prism},\bT_{0}\otimes 1),(\cE,\bT_{\Prism}))
\]
is a $\cG_{\Prism}$-torsor. The smoothness of $\cG$ implies that $\cP_{(A,I,\cM_{A})}$ is a $\cG$-torsor on $\mathrm{Spf}(A)_{\et}$. Since every \'{e}tale covering $\mathrm{Spf}(B)\to \mathrm{Spf}(A)$ extends to an \'{e}tale covering $(A,I,\cM_{A})\to (B,IB,\cM_{B})$ in $(\fX,\cM_{\fX})_{\Prism}$ by \cite[Lemma 2.13]{kos22}, $\cP$ is a $\cG_{\Prism}$-torsor.
\end{proof}

\begin{construction}\label{construction pris twist to g-pris crys}
For a bounded $p$-adic log formal scheme $(\fX,\cM_{\fX})$, we define a functor 
\[
\mathrm{Twist}^{\varphi}_{\cO_{\Prism}}(\Lambda_{0},\bT_{0})\to \cG\text{-}\mathrm{Vect}^{\varphi}((\fX,\cM_{\fX})_{\Prism})
\]
as follows. Let $((\cE,\varphi_{\cE}),\bT_{\Prism})\in \mathrm{Twist}^{\varphi}_{\cO_{\Prism}}(\Lambda_{0},\bT_{0})$. We set a $\cG_{\Prism}$-torsor 
\[
\cP\coloneqq \underline{\mathrm{Isom}}((\Lambda_{0}\otimes_{\bZ_{p}} \cO_{\Prism},\bT_{0}\otimes 1),(\cE,\bT_{\Prism})).
\]
Let $\omega_{\cP}$ denote the object of $\cG\text{-}\mathrm{Vect}((\fX,\cM_{\fX})_{\Prism})$ defined by $\omega_{\cP}(\Lambda)\coloneqq \cP\wedge^{\cG_{\Prism}} (\Lambda\otimes \cO_{\Prism})$.
The isomorphism $\varphi_{\cE}\colon (\phi^{*}\cE)[1/\cI_{\Prism}]\isom \cE[1/\cI_{\Prism}]$ induces isomorphisms of $\cG_{\Prism}[1/\cI_{\Prism}]$-torsors
\begin{align*}
    (\phi_{*}\cP)[1/\cI_{\Prism}]&\cong \underline{\mathrm{Isom}}((\Lambda_{0}\otimes \cO_{\Prism},\bT_{0}\otimes 1),((\phi^{*}\cE)[1/I],\phi^{*}\bT_{\Prism})) \\
    &\isom \underline{\mathrm{Isom}}((\Lambda_{0}\otimes \cO_{\Prism},\bT_{0}\otimes 1),(\cE[1/I],\bT_{\Prism})) \\
    &\cong \cP[1/\cI_{\Prism}],
\end{align*}
which gives isomorphisms of $\cO_{\Prism}[1/\cI_{\Prism}]$-modules
\begin{align*}
(\phi^{*}\omega_{\cP}(\Lambda))[1/\cI_{\Prism}]&\cong (\phi_{*}\cP)[1/\cI_{\Prism}]\wedge^{\cG_{\Prism}[1/\cI_{\Prism}]} (\Lambda\otimes \cO_{\Prism}[1/\cI_{\Prism}]) \\
&\isom \cP[1/\cI_{\Prism}]\wedge^{\cG_{\Prism}[1/\cI_{\Prism}]} (\Lambda\otimes \cO_{\Prism}[1/\cI_{\Prism}]) \\
&\cong \omega_{\cP}(\Lambda)[1/\cI_{\Prism}].
\end{align*}     
\end{construction}

\begin{dfn}
    Let $\mathrm{Loc}^{\Prism\text{-gr}}_{\bZ_{p}}((\fX,\cM_{\fX})_{\eta})$ be the essential image of the \'{e}tale realization functor
    $T_{\et}:\mathrm{Vect}^{\varphi}((\fX,\cM_{\fX})_{\Prism})\to \mathrm{Loc}_{\bZ_{p}}((\fX,\cM_{\fX})_{\eta})$ (which is fully faithful by \cite[Corollary 6.28]{ino25}). Let $\mathrm{Loc}^{\Prism^{\mathrm{an}}\text{-gr}}_{\bZ_{p}}((\fX,\cM_{\fX})_{\eta})$ be the essential image of the \'{e}tale realization functor
    $T_{\et}\colon \mathrm{Vect}^{\mathrm{an},\varphi}((\fX,\cM_{\fX})_{\Prism})\to \mathrm{Loc}_{\bZ_{p}}((\fX,\cM_{\fX})_{\eta})$.
\end{dfn}

\begin{prop}\label{g-an pris crys to g-loc sys}
    Let $(\fX,\cM_{\fX})$ be a horizontally semi-stable log formal scheme over $\cO_{K}$. Then the functor induced the \'{e}tale realization 
    \[
    \cG\text{-}\mathrm{Vect}^{\mathrm{an},\varphi}((\fX,\cM_{\fX})_{\Prism})\to \cG\text{-}\mathrm{Loc}^{\Prism^{\mathrm{an}}\text{-gr}}_{\bZ_{p}}((\fX,\cM_{\fX})_{\eta})
    \]
    gives an equivalence.
\end{prop}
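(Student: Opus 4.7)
The plan is to reduce the $\cG$-enhanced statement to the underlying equivalence between analytic log prismatic $F$-crystals and their étale realizations, and then to invoke a formal Tannakian argument. The essential input will be the fully faithfulness of the étale realization functor
\[
T_{\et}\colon \mathrm{Vect}^{\mathrm{an},\varphi}((\fX,\cM_{\fX})_{\Prism})\to \mathrm{Loc}_{\bZ_{p}}((\fX,\cM_{\fX})_{\eta})
\]
for a horizontally semi-stable log formal scheme $(\fX,\cM_{\fX})$ over $\cO_{K}$. This is an analogue of \cite[Corollary 6.28]{ino25} in the analytic setting, whose proof proceeds by local analysis via Breuil-Kisin log prisms and uses the semi-stable structure to descend Frobenius-compatible maps from the generic fiber. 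Combined with the definition of $\mathrm{Loc}_{\bZ_{p}}^{\Prism^{\mathrm{an}}\text{-gr}}((\fX,\cM_{\fX})_{\eta})$ as the essential image of $T_{\et}$, this automatically gives a bi-exact tensor equivalence at the level of the underlying, non-$\cG$, categories.

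Next, I would deduce the $\cG$-structured equivalence from this underlying equivalence purely formally. Given any $\bZ_{p}$-linear fully faithful exact tensor functor $F\colon \sA \to \sB$ and an essential-image subcategory $\sB' \subset \sB$ on which $F$ factors as an equivalence $\sA \isom \sB'$, the induced functor $\cG\text{-}\sA \to \cG\text{-}\sB'$ is automatically fully faithful, since morphisms in $\cG\text{-}\sA$ (respectively $\cG\text{-}\sB'$) are tensor natural transformations of exact tensor functors out of $\mathrm{Rep}_{\bZ_{p}}(\cG)$, and fully faithfulness of $F$ transports such natural transformations. For essential surjectivity, a given object $\omega_{\et}\in \cG\text{-}\mathrm{Loc}_{\bZ_{p}}^{\Prism^{\mathrm{an}}\text{-gr}}((\fX,\cM_{\fX})_{\eta})$ is an exact tensor functor $\mathrm{Rep}_{\bZ_{p}}(\cG)\to \mathrm{Loc}_{\bZ_{p}}^{\Prism^{\mathrm{an}}\text{-gr}}((\fX,\cM_{\fX})_{\eta})$, and post-composing with a quasi-inverse of the underlying equivalence yields an object $\omega \in \cG\text{-}\mathrm{Vect}^{\mathrm{an},\varphi}((\fX,\cM_{\fX})_{\Prism})$ together with a canonical isomorphism $T_{\et}\circ \omega \cong \omega_{\et}$.

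The main obstacle is not the Tannakian formalism, which is essentially automatic once the underlying equivalence is established, but rather the fully faithfulness of $T_{\et}$ on $\mathrm{Vect}^{\mathrm{an},\varphi}((\fX,\cM_{\fX})_{\Prism})$ for horizontally semi-stable log formal schemes. This step is the true content: one needs to verify that a morphism of analytic log prismatic $F$-crystals that becomes zero (respectively, an isomorphism) after applying $T_{\et}$ is itself zero (respectively, an isomorphism), and this requires a careful analysis of the Breuil-Kisin setting together with the log structure, exploiting the horizontal semi-stability hypothesis to ensure the requisite faithfully flat descent behaviour. Once this is in hand, the $\cG$-structured statement follows by the Tannakian reasoning outlined above.
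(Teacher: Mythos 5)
Your overall strategy is the same as the paper's: reduce to the statement about the underlying (non-$\cG$) categories and then transport $\cG$-objects formally, since objects of $\cG\text{-}\sA$ are just $\bZ_{p}$-linear exact tensor functors out of $\mathrm{Rep}_{\bZ_{p}}(\cG)$ and these can be post-composed with a tensor quasi-inverse. The paper's proof is exactly this, with the underlying input quoted as a black box from the companion paper: $T_{\et}\colon \mathrm{Vect}^{\mathrm{an},\varphi}((\fX,\cM_{\fX})_{\Prism})\to \mathrm{Loc}_{\bZ_{p}}((\fX,\cM_{\fX})_{\eta})$ is a \emph{bi-exact} equivalence onto its essential image (cited as Theorems 6.27 and 6.32 of the reference \textit{Log prismatic $F$-crystals and realization functors}), so there is nothing left to prove here beyond formal bookkeeping. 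The one point where your write-up is off is the claim that full faithfulness of $T_{\et}$ together with the definition of $\mathrm{Loc}^{\Prism^{\mathrm{an}}\text{-gr}}_{\bZ_{p}}$ as the essential image ``automatically'' yields a bi-exact equivalence: a fully faithful exact functor need not reflect exactness, so exactness of the quasi-inverse (with respect to the exact structure inherited from $\mathrm{Loc}_{\bZ_{p}}$) is genuinely extra data, and it is precisely what you need so that composing $\omega_{\et}\in \cG\text{-}\mathrm{Loc}^{\Prism^{\mathrm{an}}\text{-gr}}_{\bZ_{p}}$ with the quasi-inverse again gives an \emph{exact} tensor functor, i.e.\ an object of $\cG\text{-}\mathrm{Vect}^{\mathrm{an},\varphi}((\fX,\cM_{\fX})_{\Prism})$. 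This is harmless in the end because the cited result delivers bi-exactness directly, but as stated your reduction has a small logical gap; also note that within this paper the ``true content'' you identify (full faithfulness/bi-exactness of the analytic étale realization) is not re-proved but imported wholesale from the earlier work, so your plan to re-derive it via Breuil--Kisin log prisms, while reasonable, is more than the proposition requires.
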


\begin{proof}
    The assertion follows from the fact that the \'{e}tale realization functor
    \[
    T_{\et}\colon \mathrm{Vect}^{\varphi}((\fX,\cM_{\fX})_{\Prism})\to \mathrm{Loc}_{\bZ_{p}}((\fX,\cM_{\fX})_{\eta})
    \]
    gives a bi-exact equivalence to the essential image \cite[Theorem 6.27 and Theorem 6.32]{ino25}.
\end{proof}

\begin{thm}\label{G-pris crys eq to G-gr loc sys}
    Let $(\fX,\cM_{\fX})$ be a horizontally semi-stable log formal scheme over $\cO_{K}$. Suppose that $\cG$ is reductive. Then the functor induced the \'{e}tale realization 
    \[
    \cG\text{-}\mathrm{Vect}^{\varphi}((\fX,\cM_{\fX})_{\Prism})\to \cG\text{-}\mathrm{Loc}^{\Prism\text{-gr}}_{\bZ_{p}}((\fX,\cM_{\fX})_{\eta})
    \]
    gives an equivalence.
\end{thm}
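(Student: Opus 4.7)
The strategy is to combine the analytic $\cG$-version (Proposition \ref{g-an pris crys to g-loc sys}) with the non-equivariant equivalence $\mathrm{Vect}^{\varphi}((\fX,\cM_{\fX})_{\Prism})\isom \mathrm{Loc}_{\bZ_p}^{\Prism\text{-gr}}((\fX,\cM_\fX)_\eta)$ of \cite[Theorem 6.27, 6.32]{ino25}, and to upgrade a $\cG$-structure from the analytic site to the full log prismatic site by a torsor-extension argument. Fully faithfulness is immediate: morphisms in $\cG\text{-}\mathrm{Vect}^\varphi$ correspond to tensor-preserving isomorphisms of the pair $(\omega(\Lambda_0),\omega(\bT_0))$ in $\mathrm{Vect}^\varphi((\fX,\cM_\fX)_\Prism)$, and such isomorphisms are determined by their étale realizations by \cite[Corollary 6.28]{ino25}.

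For essential surjectivity, fix $\omega\in \cG\text{-}\mathrm{Loc}_{\bZ_p}^{\Prism\text{-gr}}((\fX,\cM_\fX)_\eta)$. Since $\mathrm{Loc}_{\bZ_p}^{\Prism\text{-gr}}\subset \mathrm{Loc}_{\bZ_p}^{\Prism^{\mathrm{an}}\text{-gr}}$, Proposition \ref{g-an pris crys to g-loc sys} produces an analytic lift $\omega^{\mathrm{an}}\in \cG\text{-}\mathrm{Vect}^{\mathrm{an},\varphi}((\fX,\cM_\fX)_\Prism)$ of $\omega$. Setting $(\cE^{\mathrm{an}},\bT^{\mathrm{an}}):=(\omega^{\mathrm{an}}(\Lambda_0),\omega^{\mathrm{an}}(\bT_0))$, the assumption $\omega(\Lambda_0)\in \mathrm{Loc}_{\bZ_p}^{\Prism\text{-gr}}$ together with the fully faithful inclusion $\mathrm{Vect}^\varphi\hookrightarrow \mathrm{Vect}^{\mathrm{an},\varphi}$ provides an extension of $\cE^{\mathrm{an}}$ to a (genuine) prismatic $F$-crystal $\cE$, and each Frobenius-fixed tensor in $(\cE^{\mathrm{an}})^{\otimes}$ extends uniquely to a Frobenius-fixed tensor in $\cE^{\otimes}$, yielding $\bT\subset \cE^\otimes$. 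I would then form
\[
\cP:=\underline{\mathrm{Isom}}\bigl((\Lambda_0\otimes \cO_\Prism,\bT_0\otimes 1),(\cE,\bT)\bigr)
\]
and, assuming $\cP$ is a $\cG_\Prism$-torsor, apply the Tannakian reconstruction of Proposition \ref{fund thm of tann framework} (in the form used in Construction \ref{construction pris twist to g-pris crys}) to recover an object of $\cG\text{-}\mathrm{Vect}^\varphi((\fX,\cM_\fX)_\Prism)$ whose étale realization is $\omega$.

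The decisive step, and the main obstacle, is the verification that $\cP$ is a $\cG_\Prism$-torsor. By construction, for each log prism $(A,I,\cM_A)$ in $(\fX,\cM_\fX)^{\mathrm{str}}_\Prism$, the pullback $\cP_{(A,I,\cM_A)}$ is a $\cG$-torsor on the open subscheme $U(A,I)=\Spec(A)\setminus V(p,I)$. What is required is a $\cG$-torsor extension across the codimension-two closed subset $V(p,I)$. Here the reductivity of $\cG$ is essential: one invokes a Colliot-Thélène–Sansuc type theorem (as used in \cite{iky24} in the non-log setting) asserting that $\cG$-torsors extend across such codimension-two subsets in Noetherian normal regular rings when $\cG$ is reductive. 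By \cite[Construction 5.10]{ino25} and the horizontal semi-stability hypothesis, the site $(\fX,\cM_\fX)^{\mathrm{str}}_\Prism$ is covered by Breuil-Kisin log prisms $(\fS_R,(E),\cM_{\fS_R})$ whose underlying rings are regular with $(p,E)$ a regular sequence, so the extension theorem applies. A descent argument along a covering family of these prisms then shows globally that $\cP$ is a $\cG_\Prism$-torsor, completing the proof. The technical core is thus (i) checking the required regularity properties of the relevant log prisms and (ii) bookkeeping the descent; reductivity enters only through the torsor extension and without it the functor would fail to be essentially surjective.
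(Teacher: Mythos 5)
Your proposal is correct and follows essentially the same route as the paper (and as the argument of \cite[Theorem 2.28]{iky24}): full faithfulness via that of $T_{\et}$, and essential surjectivity by producing the pair $(\cE,\bT_{\Prism})$ and proving that $\underline{\mathrm{Isom}}((\Lambda_{0}\otimes\cO_{\Prism},\bT_{0}\otimes 1),(\cE,\bT_{\Prism}))$ is a $\cG_{\Prism}$-torsor through extension of reductive-group torsors across the codimension-two locus over the normal Breuil--Kisin log prism and the fact that every log prism is covered by one receiving a map from it (\cite[Lemma 5.12]{ino25}). The only cosmetic difference is that you extract the Frobenius-fixed tensors from the analytic $\cG$-lift of Proposition \ref{g-an pris crys to g-loc sys}, whereas the paper lifts $\bT_{\et}$ directly along the fully faithful functor $T_{\et}$; both work.
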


\begin{proof}[proof of Theorem \ref{G-pris crys eq to G-gr loc sys}]

We follow the argument of the proof of \cite[Theorem 2.28]{iky24}. By working \'{e}tale locally on $\fX$, we may assume that $(\fX,\cM_{\fX})$ is small affine log formal scheme with a fixed framing. Since $T_{\et}\colon \mathrm{Vect}^{\varphi}((\fX,\cM_{\fX})_{\Prism})\to \mathrm{Loc}^{\Prism\text{-gr}}_{\bZ_{p}}((\fX,\cM_{\fX})_{\eta})$ is fully faithful by \cite[Corollary 6.28]{ino25}, the functor in the statement is also fully faithful. We shall prove the essential surjectivity. Let $\omega_{\et}\in \cG\text{-}\mathrm{Loc}^{\Prism\text{-gr}}_{\bZ_{p}}((\fX,\cM_{\fX})_{\eta})$. By Proposition \ref{fund thm of tann framework for ket loc sys}, we have the corresponding object $(\omega_{\et}(\Lambda_{0}),\bT_{\et})\in \mathrm{Twist}_{\widehat{\bZ}_{p}}(\Lambda_{0},\bT_{0})$ where $\omega_{\et}(\Lambda_{0})$ is a $\bZ_{p}$-local system on $(\fX,\cM_{\fX})_{\eta}$ and tensors $\bT_{\et}\subset \omega_{\et}(\Lambda_{0})^{\otimes}$ is induced from $\bT_{0}\subset \Lambda_{0}^{\otimes}$. Let $(\cE,\varphi_{\cE})$ be a prismatic $F$-crystal on $(\fX,\cM_{\fX})$ with $T_{\et}(\cE,\varphi_{\cE})\cong \omega_{\et}(\Lambda_{0})$. Since $T_{\et}$ is fully faithful, the tensors $\bT_{\et}\subset \omega_{\et}(\Lambda_{0})^{\otimes}$ come from unique tensors $\bT_{\Prism}\subset \cE^{\otimes}$ which is fixed by $\varphi_{\cE}$. 

We shall show that $((\cE,\varphi_{\cE}),\bT_{\Prism})$ belongs to $\mathrm{Twist}^{\varphi}_{\cO_{\Prism}}(\Lambda_{0},\bT_{0})$. To achieve this, it suffices to prove that 
\[
\cP\coloneqq\underline{\mathrm{Isom}}((\Lambda_{0}\otimes_{\bZ_{p}} \cO_{\Prism},\bT_{0}\otimes 1),(\cE,\bT_{\Prism}))
\]
is a $\cG_{\Prism}$-torsor. Proposition \ref{g-an pris crys to g-loc sys} implies that 
\[
\cP_{(A,I,\cM_{A})}\coloneqq\underline{\mathrm{Isom}}((\Lambda_{0}\otimes_{\bZ_{p}} A,\bT_{0}\otimes 1),(\cE_{(A,I,\cM_{A})},\bT_{\Prism}))
\]
is a $\cG$-torsor on $U(A,I)_{\et}$ for every $(A,I,\cM_{A})\in (\fX,\cM_{\fX})_{\Prism}$. Since $\fS_{R}$ is normal and $U(\fS_{R},(E))$ contains all points of codimension $1$ in $\mathrm{Spec}(\fS_{R})_{\et}$, we see that $\cP_{(\fS_{R},(E),\cM_{\fS_{R}})}$ is a $\cG$-torsor on $\mathrm{Spec}(\fS_{R})$ by \cite[Proposition A.26 or Remark A.27]{iky24}. Hence, $\cP_{(A,I,\cM_{A})}$ is a $\cG$-torsor on $\mathrm{Spf}(A)_{\mathrm{fpqc}}$ for every log prism $(A,I,\cM_{A})\in (\fX,\cM_{\fX})_{\Prism}$ by \cite[Lemma 5.12]{ino25}. Therefore, $\cP$ is a $\cG_{\Prism}$-torsor. 

Since the \'{e}tale realization functor sends $((\cE,\varphi_{\cE}),\bT_{\Prism})$ to $(\omega_{\et}(\Lambda_{0}),\bT_{\et})$, the object $\omega_{\Prism}\in \cG\text{-}\mathrm{Vect}^{\varphi}((\fX,\cM_{\fX})_{\Prism})$ which comes from $((\cE,\varphi_{\cE}),\bT_{\Prism})\in \mathrm{Twist}^{\varphi}_{\cO_{\Prism}}(\Lambda_{0},\bT_{0})$ via the functor defined in Construction \ref{construction pris twist to g-pris crys} is sent to $\omega_{\et}$ by Proposition \ref{comm triangle for g-loc sys}. This completes the proof.
\end{proof}

\section{Application to Shimura varieties}

\subsection{Log prismatic realization on the toroidal compactifications of Shimura varieties}

Throughout this section, let $p>2$ be a prime number.

\begin{setting}
    Define the following notation:
    \begin{itemize}
        \item $(G,X)$ is a Shimura datum of Hodge type with reflex field $E$,
        \item $v$ is a place of $E$ above $p$, and $E_{v}$ denotes the completion of $E$ at $v$,
        \item $\cO_{E_{v}}$ is the integer ring of $E_{v}$ with residue field $k_{v}$,
        \item $\cG$ is a reductive model of $G_{\bQ_{p}}$ over $\bZ_{p}$,
        \item $K_{p}\coloneqq\cG(\bZ_{p})\subset G(\bQ_{p})$.
    \end{itemize}
\end{setting}

Let $\mathrm{Sh}_{K}(G,X)$ denote the associated Shimura variety over $E$. By \cite[3.3.1]{kim18}, we can choose an embedding of Shimura datum $(G,X)\hookrightarrow (\mathrm{GSp}(V),S^{\pm})$ and a self-dual $\bZ_{(p)}$-lattice $\Lambda_{\bZ_{(p)}}\subset V$ such that $\Lambda\coloneqq \Lambda_{\bZ_{(p)}}\otimes_{\bZ_{(p)}} \bZ_{p}\subset V_{\bQ_{p}}$ is $K_{p}$-stable. Let $K'_{p}\coloneqq \mathrm{GSp}(\Lambda)$. Let $K'^{p}\subset \mathrm{GSp}(V)(\bA_{f}^{p})$ be a neat open compact subgroup containing $K^{p}$ and let $K'\coloneqq K'_{p}K'^{p}$. Then the Shimura variety $\mathrm{Sh}_{K'}(\mathrm{GSp}(V),S^{\pm})_{\bQ_{p}}$ has an integral model $\sS_{K'}(\mathrm{GSp}(V),S^{\pm})$ over $\bZ_{p}$ defined as the moduli space of polarized abelian varieties with $K'$-level structures. Let $K^{p}$ be a open compact subgroup of $G(\bA_{f}^{p})$ contained in $K'^{p}$. Let $\sS_{K_{p}K^{p}}(G,X)$ denote the normalization of the scheme theoretic image of 
\[
\mathrm{Sh}_{K_{p}K^{p}}(G,X)_{E_{v}}\to \mathrm{Sh}_{K'}(\mathrm{GSp}(V),S^{\pm})_{E_{v}}\hookrightarrow \sS_{K'}(\mathrm{GSp}(V),S^{\pm})_{\cO_{E_{v}}}.
\]
Then the tower $\{\sS_{K_{p}K^{p}}(G,X)\}_{K^{p}}$ is an integral canonical model of $\mathrm{Sh}_{K}(G,X)_{E_{v}}$, which is smooth over $\cO_{E_{v}}$ (\cite[Theorem 2.3.8]{kis10}). Fix such a $K^{p}$ and set $K\coloneqq K_{p}K^{p}$.

Fix a smooth projective admissible rational cone decomposition $\Sigma'$ for $((\mathrm{GSp}(V),S^{\pm}),K')$ (\cite[2.1.23]{mad19}).
Let $\Sigma''$ denote the induced admissible rational cone decomposition for $((G,X),K)$ from $\Sigma'$, and fix a smooth projective admissible rational cone decomposition $\Sigma$ for $((G,X),K)$ refining $\Sigma''$ (\cite[2.1.28]{mad19}). Let $\sS_{K'}^{\Sigma'}(\mathrm{GSp}(V),S^{\pm})$ be the toroidal compactification of the Siegel modular variety with level $K'$ constructed in \cite{fc90}. Let $\sS_{K}^{\Sigma''}(G,X)$ denote the normalization of $\sS_{K'}^{\Sigma'}(\mathrm{GSp}(V),S^{\pm})$ in $\mathrm{Sh}_{K}(G,X)$, which is the toroidal compactification of $\sS_{K}(G,X)$ constructed in \cite{mad19}. By the technique of toroidal embeddings (\cite[Chapter II]{kkmsd}), we have an integral model $\sS_{K}^{\Sigma}(G,X)$ of $\mathrm{Sh}_{K}^{\Sigma}(G,X)$ over $\cO_{E_{v}}$ and a birational morphism $\sS_{K}^{\Sigma}(G,X)\to \sS_{K}^{\Sigma''}(G,X)$
(see also \cite[4.1.4 and Remark 4.1.6]{mad19}). As a result, we obtain the following diagram:
\[
\begin{tikzcd}
    \mathrm{Sh}_{K}(G,X)_{E_{v}} \ar[r,hook] \ar[d,hook] & \mathrm{Sh}_{K'}(\mathrm{GSp}(V),S^{\pm})_{E_{v}} \ar[d,hook] \\
    \mathrm{\sS}_{K}(G,X) \ar[r] \ar[d,hook] & \mathrm{\sS}_{K'}(\mathrm{GSp}(V),S^{\pm})_{\cO_{E_{v}}} \ar[d,hook] \\
    \mathrm{\sS}_{K}^{\Sigma}(G,X) \ar[r] & \mathrm{\sS}_{K'}^{\Sigma'}(\mathrm{GSp}(V),S^{\pm})_{\cO_{E_{v}}}.
\end{tikzcd}
\]
Here, the vertical morphisms are open immersions.

We write $\sS_{K}^{\Sigma}$, $\sS_{K'}^{\Sigma'}$ for $\sS_{K}^{\Sigma}(G,X)$, $\sS_{K'}^{\Sigma'}(\mathrm{GSp}(V),S^{\pm})$ respectively. We equip $\sS_{K}^{\Sigma}$ (resp. $\sS_{K'}^{\Sigma'}$) with log structures $\cM_{\sS_{K}^{\Sigma}}$ (resp. $\cM_{\sS_{K'}^{\Sigma'}}$) defined from relatively normal crossings boundary divisors. Note that the $p$-completions of them are horizontally semi-stable log formal schemes by the smoothness and projectivity of $\Sigma$ and $\Sigma'$.

In \cite{kkn21} and \cite[Theorem 2.2.2, Proposition 4.3.4]{kkn22}, the toroidal compactification $\sS_{K'}^{\Sigma'}$ is realized as the moduli space of principally polarized log abelian varieties with $K'$-level structure. In particular, the universal abelian scheme over the interior extends to the universal log abelian varieties $\cA_{\mathrm{univ}}^{\mathrm{log}}$ over $(\sS_{K'}^{\Sigma'},\cM_{\sS_{K'}^{\Sigma'}})$. Taking $p$-power torsion points, we obtain a log $p$-divisible group $\cA_{\mathrm{univ}}^{\mathrm{log}}[p^{\infty}]$ by \cite[Proposition 18.1]{kkn15} and \cite[Proposition 4.5]{kat23}. The log $p$-divisible group $\cA_{\mathrm{univ}}^{\mathrm{log}}[p^{\infty}]$ is pulled back to $\sS_{K}^{\Sigma}$ along
\[
(\sS_{K}^{\Sigma},\cM_{\sS_{K}^{\Sigma}})\to (\sS_{K'}^{\Sigma'},\cM_{\sS_{K'}^{\Sigma'}}).
\]

\subsection{Log prismatic realization}

First, we recall the \'{e}tale realization on Shimura varieties (cf. \cite[\S 3.3]{iky23}). By varying levels at $p$, we obtain a tower of Shimura varieties
\[
\varprojlim_{K'_{p}\subset K_{p}} \mathrm{Sh}_{K'_{p}K^{p}}(G,X)\to \mathrm{Sh}_{K}(G,X),
\]
which is a pro-\'{e}tale $K_{p}$-torsor after taking the analytification. Taking the pushforward along $K_{p}\to \cG_{\widehat{\bZ}_{p}}$, we obtain a $\bZ_{p}$-local system on $\mathrm{Sh}_{K}(G,X)^{\mathrm{an}}$ with $\cG$-structure $\omega_{\et}\in \cG\text{-}\mathrm{Loc}_{\bZ_{p}}(\mathrm{Sh}_{K}(G,X)^{\mathrm{an}})$ via Proposition \ref{comm triangle for g-loc sys}. 

By purity for Kummer \'{e}tale local systems (\cite[Theorem 4.6.1]{dllz23b}), there exists a bi-exact equivalence
\[
\mathrm{Loc}_{\bZ_{p}}(\mathrm{Sh}_{K}(G,X)^{\mathrm{an}})\simeq \mathrm{Loc}_{\bZ_{p}}(\mathrm{Sh}_{K}^{\Sigma}(G,X)^{\mathrm{an}},\cM_{\mathrm{Sh}_{K}^{\Sigma}(G,X)^{\mathrm{an}}}).
\]
Hence, $\omega_{\et}$ gives a unique Kummer \'{e}tale $\bZ_{p}$-local system with $\cG$-structure
\[
\omega_{\mathrm{k\et}}\colon \mathrm{Rep}_{\bZ_{p}}(\cG)\to \mathrm{Loc}_{\bZ_{p}}(\mathrm{Sh}_{K}^{\Sigma}(G,X)^{\mathrm{an}},\cM_{\mathrm{Sh}_{K}^{\Sigma}(G,X)^{\mathrm{an}}}).
\] 

\begin{thm}\label{log pris realization}
    There exists a unique log prismatic $F$-crystal with $G$-structure 
    \[
    \omega_{\Prism,\mathrm{log}}\colon \mathrm{Rep}_{\bZ_{p}}(\cG)\to \mathrm{Vect}^{\varphi}((\widehat{\sS}_{K}^{\Sigma}(G,X),\cM_{\widehat{\sS}_{K}^{\Sigma}(G,X)})_{\Prism})
    \]
    with an isomorphism $T_{\et}\circ \omega_{\Prism,\mathrm{log}}\cong \omega_{\mathrm{k\et}}$.
\end{thm}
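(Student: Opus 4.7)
The plan is to apply Theorem~\ref{G-pris crys eq to G-gr loc sys} to the horizontally semi-stable log formal scheme $(\widehat{\sS}_{K}^{\Sigma}(G,X), \cM_{\widehat{\sS}_{K}^{\Sigma}(G,X)})$ and the reductive group $\cG$; uniqueness of $\omega_{\Prism,\mathrm{log}}$ will then follow immediately from the full faithfulness of $T_{\et}$ on $\mathrm{Vect}^{\varphi}((\widehat{\sS}_{K}^{\Sigma}, \cM_{\widehat{\sS}_{K}^{\Sigma}})_{\Prism})$ (\cite[Corollary 6.28]{ino25}). Existence then reduces to showing that $\omega_{\mathrm{k\et}}$ lies in the essential image $\cG\text{-}\mathrm{Loc}^{\Prism\text{-gr}}_{\bZ_{p}}$.

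The first substantive step is to handle the faithful representation $\Lambda_{0}$. From the Hodge embedding together with the moduli interpretation of $(\sS_{K}^{\Sigma}, \cM_{\sS_{K}^{\Sigma}})$ via principally polarized log abelian varieties (\cites{kkn21,kkn22}), we obtain the universal log abelian scheme $\cA_{\mathrm{univ}}^{\mathrm{log}}$ and hence the log $p$-divisible group $\cA_{\mathrm{univ}}^{\mathrm{log}}[p^{\infty}]$. Theorem~\ref{log pris dieudonne equiv} produces the log prismatic Dieudonn\'e crystal $\cM_{\Prism}(\cA_{\mathrm{univ}}^{\mathrm{log}}[p^{\infty}])$, and Proposition~\ref{et comp for log pris dieudonne} identifies its \'etale realization with the dual of the Tate module $\cA_{\mathrm{univ}}^{\mathrm{log}}[p^{\infty}]_{\eta}$, viewed as a Kummer \'etale $\bZ_{p}$-local system. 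On the interior $\widehat{\sS}_{K}(G,X)$ this Tate module coincides with $\omega_{\et}(\Lambda_{0})$ by the very construction of $\omega_{\et}$ through the Shimura tower, and by Kummer \'etale purity (\cite[Theorem 4.6.1]{dllz23b})---the same purity used to define $\omega_{\mathrm{k\et}}$ from $\omega_{\et}$---the two local systems agree over the whole compactification. Thus $\omega_{\mathrm{k\et}}(\Lambda_{0})$ belongs to $\mathrm{Loc}^{\Prism\text{-gr}}_{\bZ_{p}}$.

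Next, for a general $\Lambda \in \mathrm{Rep}_{\bZ_{p}}(\cG)$, the reductivity of $\cG$ guarantees that $\Lambda$ is a direct summand of some finite direct sum of tensor powers of $\Lambda_{0}$ and $\Lambda_{0}^{\vee}$. The subcategory $\mathrm{Loc}^{\Prism\text{-gr}}_{\bZ_{p}}$ is visibly stable under duals, tensor products, and direct sums, and it is stable under direct summands because the full faithfulness of $T_{\et}$ lifts idempotents on the Kummer \'etale side uniquely to idempotents of log prismatic $F$-crystals, which split by projectivity of vector bundles. Consequently $\omega_{\mathrm{k\et}}(\Lambda) \in \mathrm{Loc}^{\Prism\text{-gr}}_{\bZ_{p}}$ for every $\Lambda$, so $\omega_{\mathrm{k\et}} \in \cG\text{-}\mathrm{Loc}^{\Prism\text{-gr}}_{\bZ_{p}}$, and Theorem~\ref{G-pris crys eq to G-gr loc sys} yields the desired $\omega_{\Prism,\mathrm{log}}$ with $T_{\et} \circ \omega_{\Prism,\mathrm{log}} \cong \omega_{\mathrm{k\et}}$.

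The main technical obstacle is the identification in the second paragraph: matching the \'etale realization of $\cM_{\Prism}(\cA_{\mathrm{univ}}^{\mathrm{log}}[p^{\infty}])$ with $\omega_{\mathrm{k\et}}(\Lambda_{0})$ \emph{as Kummer \'etale local systems over the entire compactification}, not merely over the interior. The interior identification is routine from \cites{iky23,iky24}, but extending it to the boundary requires a careful comparison between two a priori different extensions across the boundary divisor: the one furnished by the log $p$-divisible group $\cA_{\mathrm{univ}}^{\mathrm{log}}[p^{\infty}]$ coming from the log abelian scheme machinery of \cites{kkn21,kkn22}, and the one produced by Kummer \'etale purity applied to $\omega_{\et}(\Lambda_{0})$. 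I expect the bulk of the work to lie here, in showing that these two canonical extensions coincide.
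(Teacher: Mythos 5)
Your proposal is correct and follows essentially the same route as the paper: reduce via Theorem \ref{G-pris crys eq to G-gr loc sys} together with \cite[Proposition 12]{ds09} to realizing the distinguished faithful representation, and then take $\cM_{\Prism}(\sA_{\mathrm{univ}}^{\mathrm{log}}[p^{\infty}])$ with Proposition \ref{et comp for log pris dieudonne} supplying the \'etale identification. The ``technical obstacle'' you flag in your last paragraph is not actually one: since Kummer \'etale purity is an equivalence of categories, $\omega_{\mathrm{k\et}}(\Lambda_{0})$ is the \emph{unique} Kummer \'etale extension of $\omega_{\et}(\Lambda_{0})$, so it coincides with the generic fibre of $\sA_{\mathrm{univ}}^{\mathrm{log}}[p^{\infty}]$ for free --- this is precisely the one-line argument the paper gives, and the one you yourself already invoked in your second paragraph.
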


\begin{proof}
    By Theorem \ref{G-pris crys eq to G-gr loc sys} and \cite[Proposition 12]{ds09}, it suffices to prove that there exists a log prismatic $F$-crystal $\cE$ on $(\widehat{\sS}_{K}^{\Sigma}(G,X),\cM_{\widehat{\sS}_{K}^{\Sigma}(G,X)})$ with  $T_{\et}(\cE)\cong \omega_{\mathrm{k\et}}(\Lambda)$. Since $\omega_{\mathrm{k\et}}(\Lambda)$ is characterized as a unique extension of $\omega_{\et}(\Lambda)\cong \sA_{\mathrm{univ}}[p^{\infty}]$, we have an isomorphism $\omega_{\mathrm{k\et}}(\Lambda)\cong \sA_{\mathrm{univ}}^{\mathrm{log}}[p^{\infty}]$. Therefore, Proposition \ref{et comp for log pris dieudonne} implies that the log prismatic Dieudonn\'{e} crystal $\cM_{\Prism}(\sA_{\mathrm{univ}}^{\mathrm{log}}[p^{\infty}])$ is the desired one.
\end{proof}

\begin{rem}
    In \cite[Theorem 3.17]{iky23}, a prismatic $F$-crystal with $\cG$-structure $\omega_{\Prism}\in \cG\text{-}\mathrm{Vect}^{\varphi}(\widehat{\sS}_{K}(G,X)_{\Prism})$ with $T_{\et}\circ \omega_{\Prism}\cong \omega_{\et}|_{\widehat{\sS}_{K}(G,X)_{\eta}}$ is constructed. We see that $\omega_{\Prism,\mathrm{log}}$ constructed in Theorem \ref{log pris realization} restricts to $\omega_{\Prism}$ by fully faithfulness of \'{e}tale realization functor.
\end{rem}

\begin{rem}
    In \cite{lov17}, a strongly divisible filtered $F$-crystal with $\cG$-structure 
    \[
    \omega_{\mathrm{crys}}\colon \mathrm{Rep}_{\bZ_{p}}(\cG)\to \mathrm{FilCrys}^{\varphi}(\widehat{\sS}_{K}(G,X))
    \]
    is constructed, and it is proved that $\omega_{\et}$ and $\omega_{\mathrm{crys}}$ are associated in the sense of Fontaine-Laffaille. As a relationship with the prismatic theory, a full subcategory $\mathrm{Vect}^{\varphi,\mathrm{lff}}(\widehat{\sS}_{K}(G,X)_{\Prism})$ of $\mathrm{Vect}^{\varphi}(\widehat{\sS}_{K}(G,X)_{\Prism})$ is defined in \cite[Definition 1.24]{iky23}, an integral crystalline realization functor
    \[
    \bD_{\mathrm{crys}}\colon \mathrm{Vect}^{\varphi}(\widehat{\sS}_{K}(G,X)_{\Prism})\to  \mathrm{FilCrys}^{\varphi}(\widehat{\sS}_{K}(G,X))
    \]
    is constructed in \cite[\S 2.2]{iky23}, and it is proved that the functor $\omega_{\Prism}$ takes values in $\mathrm{Vect}^{\varphi,\mathrm{lff}}(\widehat{\sS}_{K}(G,X)_{\Prism})$ and that there exists an isomorphism $\bD_{\mathrm{crys}}\circ \omega_{\Prism}\cong \omega_{\mathrm{crys}}$ in \cite[Theorem 3.20]{iky23}.
    
    The generalization of this picture to our setting remains as a future work. It seems that a key step is to show the boundedness of $\omega_{\Prism,\mathrm{log}}$. 
\end{rem}

\subsection{Lovering's Conjecture}

Our work on toroidal compactifications enables us to generalize the comparison isomorphism for $\bQ_{p}$-local systems coming from $\omega_{\et}$, which is proved in the case that the Shimura variety is proper in \cite[Theorem 3.6.1]{lov17} or \cite[Proposition 3.23]{iky23}.

\begin{thm}\label{comparison isom on shimura var}
    Let $(\overline{\sS}_{K}^{\Sigma}(G,X),\cM_{\overline{\sS}_{K}^{\Sigma}(G,X)})\coloneqq (\sS_{K}^{\Sigma}(G,X),\cM_{\sS_{K}^{\Sigma}(G,X)})\otimes_{\cO_{E_{v}}} k$, and consider a filtered $F$-isocrystal $\omega_{\mathrm{fisoc}}\coloneqq T_{\mathrm{fisoc}}\circ \omega_{\Prism,\mathrm{log}}$ on $(\widehat{\sS}_{K}^{\Sigma}(G,X),\cM_{\widehat{\sS}_{K}^{\Sigma}(G,X)})$. Then the $\mathrm{Gal}(\overline{E_{v}}/E_{v})$-representation $H^{i}_{\et}(\mathrm{Sh}_{K}(G,X)_{\overline{E}_{v}},\omega_{\et}(\xi)[1/p])$ is crystalline, and we have an isomorphism
    \begin{align*}
    &H^{i}_{\et}(\mathrm{Sh}_{K}(G,X)_{\overline{E}_{v}},\omega_{\et}(\xi)[1/p])\otimes_{\bQ_{p}} B_{\mathrm{crys}} \\
    &\cong H^{i}_{\mathrm{logcrys}}((\overline{\sS}_{K}^{\Sigma}(G,X),\cM_{\overline{\sS}_{K}^{\Sigma}(G,X)})_{\mathrm{crys}},\omega_{\mathrm{fisoc}}(\xi))\otimes_{W(k_{v})} B_{\mathrm{crys}}  
    \end{align*}
    that is compatible with Galois actions and Frobenius isomorphisms and a filtered isomorphism after taking the base change along $B_{\mathrm{crys}}\to B_{\mathrm{dR}}$,
    for any $\xi\in \mathrm{Rep}_{\bZ_{p}}(\cG)$.
\end{thm}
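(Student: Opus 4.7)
\medskip

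\noindent\textbf{Proof proposal.} The strategy is to reduce Theorem~\ref{comparison isom on shimura var} to a general $p$-adic comparison theorem for log prismatic $F$-crystals on proper horizontally semi-stable log formal schemes. The first step is to replace the étale cohomology of the open Shimura variety by the Kummer étale cohomology of its toroidal compactification. Since $\sS_{K}^{\Sigma}(G,X)$ is proper and smooth over $\cO_{E_{v}}$ (\cite{mad19}), its generic fiber $(\mathrm{Sh}_{K}^{\Sigma}(G,X),\cM_{\mathrm{Sh}_{K}^{\Sigma}(G,X)})$ is a proper smooth fs log adic space over $E_{v}$ whose log structure comes from a normal crossings divisor. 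Combining Theorem~\ref{log pris realization} with Kummer étale purity (\cite[Theorem~4.6.1]{dllz23b}), one obtains a natural isomorphism
\[
H^{i}_{\et}(\mathrm{Sh}_{K}(G,X)_{\overline{E}_{v}},\omega_{\et}(\xi)[1/p])
\cong
H^{i}_{\mathrm{k\et}}((\mathrm{Sh}_{K}^{\Sigma}(G,X),\cM_{\mathrm{Sh}_{K}^{\Sigma}(G,X)})_{\overline{E}_{v}}, T_{\et}(\omega_{\Prism,\mathrm{log}}(\xi))[1/p])
\]
that is $\mathrm{Gal}(\overline{E_{v}}/E_{v})$-equivariant.

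The second step is to invoke a general comparison isomorphism for log prismatic $F$-crystals on a proper horizontally semi-stable log formal scheme $(\fX,\cM_{\fX})$ over $\cO_{E_{v}}$, applied to the log prismatic $F$-crystal $\omega_{\Prism,\mathrm{log}}(\xi)$ on $(\widehat{\sS}_{K}^{\Sigma}(G,X),\cM_{\widehat{\sS}_{K}^{\Sigma}(G,X)})$. Such a result produces a functorial isomorphism
\[
H^{i}_{\mathrm{k\et}}(\fX_{\overline{E}_{v}},T_{\et}(\cE)[1/p])\otimes_{\bQ_{p}} B_{\mathrm{crys}}
\;\cong\;
H^{i}_{\mathrm{logcrys}}((\overline{\fX},\cM_{\overline{\fX}}),T_{\mathrm{fisoc}}(\cE))\otimes_{W(k_{v})} B_{\mathrm{crys}}
\]
for any $\cE\in \mathrm{Vect}^{\varphi}((\fX,\cM_{\fX})_{\Prism})$, compatibly with Galois actions, Frobenius isomorphisms, and with filtrations after base change along $B_{\mathrm{crys}}\to B_{\mathrm{dR}}$. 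Applying this to $(\fX,\cM_{\fX})=(\widehat{\sS}_{K}^{\Sigma}(G,X),\cM_{\widehat{\sS}_{K}^{\Sigma}(G,X)})$ and $\cE=\omega_{\Prism,\mathrm{log}}(\xi)$, and combining with the identification obtained in the first step, yields the comparison isomorphism in the statement together with the crystallinity of the cohomology. Here one uses that $T_{\mathrm{fisoc}}\circ \omega_{\Prism,\mathrm{log}}=\omega_{\mathrm{fisoc}}$ by definition, and that formal and algebraic cohomology of a proper scheme agree (via GAGA for the Raynaud generic fiber and formal proper base change for the crystalline side).

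The main obstacle is ensuring that the required coefficient comparison theorem is available in the precise form needed. In the cohomology-free setting it is already built into the realization functor $T_{\mathrm{fisoc}}$ of \cite[Theorem 6.18]{ino25}, whose construction encodes the pointwise comparison at each log prism. For the cohomological statement, the plan is to appeal to a proper-base-change/global comparison for log prismatic cohomology with coefficients in log prismatic $F$-crystals on horizontally semi-stable log formal schemes, which is in the spirit of Česnavičius--Koshikawa and Koshikawa--Yao and should be available through the log-prismatic machinery of \cite{kos22,ky23,ino25}. Once formulated, verifying its hypotheses is straightforward: $\widehat{\sS}_{K}^{\Sigma}(G,X)$ is proper and horizontally semi-stable over $\cO_{E_{v}}$, and $\omega_{\Prism,\mathrm{log}}(\xi)$ is a log prismatic $F$-crystal by construction. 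The compatibilities with Frobenius, Galois actions, and filtrations are functorial in the coefficient crystal and so are inherited from the pointwise realization functor $T_{\mathrm{fisoc}}$.
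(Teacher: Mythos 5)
Your first step (passing from the open Shimura variety to the compactification via Kummer \'{e}tale purity) is consistent with how $\omega_{\mathrm{k\et}}$ is built, but the heart of your argument — the second step — is a genuine gap. You invoke ``a general comparison isomorphism for log prismatic $F$-crystals on proper horizontally semi-stable log formal schemes'' relating $H^{i}_{\mathrm{k\et}}$ with coefficients $T_{\et}(\cE)$ to $H^{i}_{\mathrm{logcrys}}$ with coefficients $T_{\mathrm{fisoc}}(\cE)$, with Galois/Frobenius/filtration compatibilities, and you only say that such a theorem ``should be available'' from the log-prismatic machinery of \cite{kos22}, \cite{ky23}, \cite{ino25}. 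No result of that form is proved or cited in those references, and asserting its existence is essentially asserting the theorem you are trying to prove: the pointwise statement encoded in $T_{\mathrm{fisoc}}$ (\cite[Theorem 6.18]{ino25}) does not by itself globalize to a cohomological comparison, and your proposal gives no mechanism for that globalization.

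The paper closes exactly this gap by a different, and much older, input. By \cite[Theorem 6.18]{ino25} the pair consisting of the Kummer \'{e}tale local system $\omega_{\mathrm{k\et}}(\xi)[1/p]$ and the filtered $F$-isocrystal $\omega_{\mathrm{fisoc}}(\xi)$ is \emph{associated} in the sense of \cite[Definition 4.18(2)]{ino25}, i.e.\ in Faltings' sense; then the crystallinity of $H^{i}_{\et}(\mathrm{Sh}_{K}(G,X)_{\overline{E}_{v}},\omega_{\et}(\xi)[1/p])$ and the comparison isomorphism with $H^{i}_{\mathrm{logcrys}}$ of the compactified special fiber, with all the stated compatibilities, are precisely the content of Faltings' $p$-adic Hodge theory \cite[Theorem 6.3]{fal89} for associated pairs on a proper smooth scheme with normal crossings boundary. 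Note that Faltings' theorem directly compares the \'{e}tale cohomology of the \emph{open} generic fiber with the log crystalline cohomology of the compactification, so no separate purity-for-cohomology step is needed; purity (\cite[Theorem 4.6.1]{dllz23b}) enters only in constructing $\omega_{\mathrm{k\et}}$ from $\omega_{\et}$. If you replace your hypothetical prismatic-cohomological comparison by this appeal to the associated-pair property plus Faltings' theorem, your argument becomes the paper's proof.
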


\begin{proof}
    By \cite[Theorem 6.18]{ino25}, the $\bQ_{p}$-local system $\omega_{\mathrm{k\et}}(\xi)[1/p]$ is associated with $\omega_{\mathrm{fisoc}}(\xi)$ in the sense of \cite[Definition 4.18(2)]{ino25}. Then the assertion follows from Faltings' $p$-adic Hodge theory \cite[Theorem 6.3]{fal89}.
\end{proof}

\begin{rem}
    This comparison isomorphism is conjectured in \cite[\S 3.6.2]{lov17}. Furthermore, in \emph{loc. cit.}, it is also conjectured that there exists an integral version of $\omega_{\mathrm{fisoc}}$ on the toroidal compactification, which is still open.
\end{rem}

\begin{rem}
    The fact that the $\mathrm{Gal}(\overline{E}_{v}/E_{v})$-representation $H^{i}_{\et}(\mathrm{Sh}_{K}(G,X)_{\overline{E}_{v}},\omega_{\et}(\xi)[1/p])$ is crystalline can be deduced more easily as follows. We may assume that $\xi=\Lambda^{\otimes n}$, and so it is enough to show that the $\mathrm{Gal}(\overline{E}_{v}/E_{v})$-representation 
    \[
    H^{i}_{\et}(\mathrm{Sh}_{K}(G,X)_{\overline{E}_{v}},R^{n}f_{\et,*}\bQ_{p})
    \]
    is crystalline, where $f$ is the structure morphism of the $n$-th fold self product of the universal abelian scheme. We have a spectral sequence
    \[
    E_{2}^{p,q}=H^{p}_{\et}(\mathrm{Sh}_{K}(G,X)_{\overline{E}_{v}},R^{q}f_{\et,*}\bQ_{p})\Rightarrow H^{p+q}_{\et}(\sA^{n}_{\mathrm{univ},\overline{E}_{v}},\bQ_{p}).
    \]
    Since the multiplication by $m$ on $\sA_{\mathrm{univ},\overline{E}_{v}}$ induces the multiplication by $m^{nq}$ on $E_{2}^{p,q}$, this spectral sequence degenerates at the $E_{2}$-page. Hence, it suffices to prove that the $\mathrm{Gal}(\overline{E}_{v}/E_{v})$-representation $H^{i}_{\et}(\sA^{n}_{\mathrm{univ},\overline{E}_{v}},\bQ_{p})$ is crystalline, which follows from Faltings' $p$-adic Hodge theory and the existence of a smooth compactification of $\sA^{n}_{\mathrm{univ},\cO_{E_{v}}}$ with a relatively normal crossings boundary divisor over $\cO_{E_{v}}$ (\cite[Theorem 2.15]{lan12} ). We learned this argument from Kai-Wen Lan.
    
    Clearly, this method gives no explicit description of the filtered $F$-isocrystal
    \[
    D_{\mathrm{crys}}(H^{i}_{\et}(\mathrm{Sh}_{K}(G,X)_{\overline{E}_{v}},\omega_{\et}(\xi)[1/p])).
    \]
\end{rem}

\appendix

\section{Some lemmas on log schemes}

\begin{lem}\label{free monoid to free monoid}
    Let $P$ and $Q$ be a finitely generated free monoids (i.e. $P\simeq \bN^{r}$ and $Q\simeq \bN^{s}$ for some $r,s\geq 0$). Let $\alpha: P\to Q$ be a homomorphism inducing $P/\alpha^{-1}(0)\isom Q$. Then, there exist a basis $p_{1},\dots,p_{r}$ (resp. $q_{1},\dots,q_{s}$) of $P$ (resp. $Q$) such that 
    \[
    \alpha(p_{i})=
    \begin{cases}
        q_{i} \ (1\leq i\leq s) \\
        0 \ (s+1\leq i\leq r).
    \end{cases}
    \]
\end{lem}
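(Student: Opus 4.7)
The plan is to analyze the submonoid $F \coloneqq \alpha^{-1}(0) \subset P$ and reduce the claim to the classification of faces of a free monoid. First, I would check that $F$ is a \emph{face} of $P$: if $a+b \in F$, then $\alpha(a) + \alpha(b) = 0$ in $Q = \bN^{s}$, and since $\bN^{s}$ has no nonzero units, both $a$ and $b$ lie in $F$.

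Next, I would invoke the elementary fact that every face of $\bN^{r}$, relative to the standard basis $e_{1},\dots,e_{r}$, is of the form $\bigoplus_{i \in I}\bN\cdot e_{i}$ for some subset $I \subset \{1,\dots,r\}$. The key observation is that the indecomposable elements of $\bN^{r}$ (those $f \neq 0$ with $f=a+b$ forcing $a=0$ or $b=0$) are exactly the $e_{i}$, and by the face property a face is generated by the indecomposable elements of $\bN^{r}$ that happen to lie in it. Applying this to $F$ and permuting the standard basis of $P$ if necessary, one may assume $F = \bigoplus_{i > s'}\bN \cdot e_{i}$ for some $0 \le s' \le r$.

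Finally, the quotient $P/F$ is then freely generated by the images of $e_{1},\dots,e_{s'}$, and the hypothesis that $\alpha$ induces an isomorphism $P/F \isom Q$ forces $s' = s$ and ensures that $q_{i} \coloneqq \alpha(e_{i})$ for $1 \le i \le s$ is a basis of $Q$. Setting $p_{i} \coloneqq e_{i}$ for all $1 \le i \le r$ yields bases of the required form.

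There is no serious obstacle: the main content is the face-classification step, which is a short structural fact about free monoids, and everything else is purely formal once $F$ has been identified as a coordinate face.
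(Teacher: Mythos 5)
Your proof is correct and takes essentially the same route as the paper: both hinge on observing that $\alpha^{-1}(0)$ is a face of $P$ and that every face of $\bN^{r}$ is a coordinate face generated by a subset of the standard basis. The only difference is the final bookkeeping — the paper lifts each irreducible generator of $Q$ to a basis element of $P$ using surjectivity and then compares ranks of groupifications, while you note directly that $P/\alpha^{-1}(0)$ is free on the complementary basis vectors and that the induced isomorphism onto $Q$ must carry this basis to the basis of $Q$ — which is an equally valid, if slightly tidier, way to conclude.
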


\begin{proof}
    Choose a basis $p_{1},\dots,p_{r}$ (resp. $q_{1},\dots,q_{s}$) of $P$ (resp. $Q$). Since $\alpha^{-1}(0)$ is a face of $P$, the submonoid $\alpha^{-1}(0)$ is generated by $\{p_{i}| i\in I\}$ for some $I\in \{1,\dots, r\}$. For each $1\leq j\leq s$, there exist an integer $1\leq i_{j}\leq r$ such that $\alpha(p_{i_{j}})=q_{j}$ by the surjectivity of $\alpha$ and the irreducibility of $q_{j}$.

    By renumbering, we may assume that there exists an integer $t$ with $1\leq s\leq t\leq r$ such that $\alpha(p_{i})=q_{i}$ for $1\leq i\leq s$ and $\alpha^{-1}(0)$ is generated by $p_{s+1},\cdots, p_{t}$. By $P^{gp}/\alpha^{-1}(0)^{gp}\isom Q^{gp}$ and the computation of ranks, we conclude that $s=t$.
\end{proof}

\begin{lem}\label{sat prod of two pro-kfl cover}
    We use the notation in Setting \ref{setting for pro-kfl descent}. Let $(\fX,\cM_{\fX})$ be an fs log formal scheme. Let $n\geq 2$. Suppose that we are given fs charts $\alpha_{i}\colon P_{i}\to \cM_{\fX}$ for each $1\leq i\leq n$. For a non-empty subset $I\subset \{1,\dots,n\}$, we let $(\fX_{\infty,I},\cM_{\fX_{\infty,I}})$ denote the fiber product of $(\fX_{\infty,\alpha_{i}},\cM_{\fX_{\infty,\alpha_{i}}})$ for $i\in I$ over $(\fX,\cM_{\fX})$ in the category of saturated log formal schemes. Let $\emptyset \neq J\subset I\subset \{1,\dots,n\}$.
    \begin{enumerate}
        \item The projection map $(\fX_{\infty,I},\cM_{\fX_{\infty,I}})\to (\fX_{\infty,J},\cM_{\fX_{\infty,J}})$ is a strict affine flat cover.
        \item Suppose that $\fX$ is a bounded $p$-adic formal scheme and $P_{i}=\bN^{r_{i}}$ for each $1\leq i\leq n$. Then the projection map $(\fX_{\infty,I},\cM_{\fX_{\infty,I}})\to (\fX_{\infty,J},\cM_{\fX_{\infty,J}})$ is a strict affine quasi-syntomic cover.
        \item Suppose that $\fX$ is a scheme and $P_{i}=\bN^{r_{i}}$ for each $1\leq i\leq n$. Then the projection map $(\fX_{\infty,I},\cM_{\fX_{\infty,I}})\to (\fX_{\infty,J},\cM_{\fX_{\infty,J}})$ is a strict affine root covering.
    \end{enumerate}
\end{lem}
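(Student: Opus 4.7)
The plan is to induct on $|I \setminus J|$, reducing to the base case $|I|=2$, $J=\{1\}$. For the inductive step, given $j \in I \setminus J$ and any fixed $i_0 \in I \setminus \{j\}$, associativity and commutativity of the saturated fiber product give an isomorphism
\[
(\fX_{\infty,I},\cM_{\fX_{\infty,I}}) \cong (\fX_{\infty,I\setminus\{j\}},\cM_{\fX_{\infty,I\setminus\{j\}}}) \times^{\mathrm{sat}}_{(\fX_{\infty,\alpha_{i_0}},\cM_{\fX_{\infty,\alpha_{i_0}}})} (\fX_{\infty,\{i_0,j\}},\cM_{\fX_{\infty,\{i_0,j\}}}),
\]
realizing the projection $\fX_{\infty,I} \to \fX_{\infty,I\setminus\{j\}}$ as the saturated base change of the $|I|=2$ instance $\fX_{\infty,\{i_0,j\}} \to \fX_{\infty,\alpha_{i_0}}$. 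Since the latter is strict in the base case, this saturated base change agrees with the usual one, and strictness, affineness, flatness, quasi-syntomicity, and the root-covering property are all preserved; composition with the inductive hypothesis applied to $\fX_{\infty,I\setminus\{j\}} \to \fX_{\infty,J}$ completes the induction.

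For the base case I would work strict étale-locally on $\fX$, which is permissible since all claimed properties are local on the target in the strict étale topology. After étale refinement we may assume $\fX$ admits a common fs chart through which both $\alpha_1$ and $\alpha_2$ factor. In parts~(2) and~(3), where $P_1,P_2$ are free, applying Lemma~\ref{free monoid to free monoid} as in the proof of Lemma~\ref{pro-kfl cover of log prism}(2) normalizes the two charts so that basis elements of each $P_i$ map to products of basis elements of the common chart with units of $\cO_\fX^\times$. A direct monoid-algebra computation then identifies $\fX_{\infty,\alpha_1} \times^{\mathrm{sat}}_\fX \fX_{\infty,\alpha_2}$ as being obtained from $\fX_{\infty,\alpha_1}$ by adjoining, for every $n \geq 1$, $n$-th roots of finitely many units pulled back from $\fX$. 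This is visibly a strict affine root covering, giving part~(3); in the bounded $p$-adic setting of part~(2) each relation $x^n - u$ is regular modulo any power of the ideal of definition, so the extension is strict quasi-syntomic, with quasi-syntomicity preserved under the ensuing filtered colimit. For part~(1), with arbitrary fs $P_i$, one reduces further to the free case via a Kummer surjection onto $P_i$ from a free fs monoid, using that $(\bZ[P_i^{1/n}],P_i^{1/n})^a \to (\bZ[P_i],P_i)^a$ is finite flat and that filtered colimits of flat maps are flat.

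The main obstacle I foresee is the explicit identification of the saturated fiber product after normalization: one must carefully verify that saturation yields exactly the claimed tower of root adjunctions—without introducing extraneous torsion that could spoil flatness or quasi-syntomicity—and that the log structures on both sides agree. This parallels the computation in the proof of Lemma~\ref{pro-kfl cover of log prism}(2), but must be carried out with greater care to extract the finer root-covering and quasi-syntomicity conclusions rather than merely strict flatness.
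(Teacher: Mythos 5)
Your reduction to the case $|I|=2$ and your treatment of parts (2) and (3) are essentially the paper's argument: both reduce along base changes and compositions of the two-chart case, normalize the two free charts at a geometric point via Lemma \ref{free monoid to free monoid} so that \'{e}tale-locally they agree up to units of $\cO_{\fX}$, and then identify the saturated fiber product with the tower obtained by adjoining all $n$-th roots of those finitely many units (the paper checks the universal property of the saturated fiber product directly, which is the verification you flag as the main obstacle). One wording caveat: before this normalization the two charts do \emph{not} in general factor through a common fs chart by monoid maps — they only agree up to units — but since the unit-twisted normalization you then perform is exactly what is needed, this is harmless.

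Part (1), however, has a genuine gap. A ``Kummer surjection onto $P_i$ from a free fs monoid'' does not exist: Kummer maps are injective by definition, and there is in general no Kummer comparison between an arbitrary fs monoid and a free one in either direction, since a Kummer map identifies rational cones and the cone of a non-simplicial fs monoid (e.g.\ the monoid with $\bZ[P]\cong\bZ[x,y,z,w]/(xw-yz)$) is not simplicial. Worse, the flatness input you invoke is false for non-free $P_i$: the underlying ring map of $(\bZ[P_i],P_i)^{a}\to(\bZ[P_i^{1/n}],P_i^{1/n})^{a}$ is finite but in general not flat — for the quadric-cone monoid above, $\bZ[P^{1/2}]$ decomposes over $\bZ[P]$ into divisorial modules that are not locally free — and this is exactly the phenomenon recorded in Remark \ref{gap in wz}, that underlying maps of kfl morphisms need not be flat. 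The whole point of part (1) is that flatness is only achieved after base change along the \emph{other} tower: the paper applies Lemma \ref{kummer map is strict after n-power ext} to the Kummer morphism $\fX_{n,\alpha_1}\to\fX$ with respect to the chart $\alpha_2$, so that after pullback to a sufficiently deep finite level $\fX_{m,\alpha_2}$ the map becomes strict, hence strict flat (being log flat) and affine after the (affine) saturation, and then passes to the limit in both towers. Your argument for (1) needs to be replaced by this mechanism (or an equivalent comparison of the two chart towers); it cannot be reduced to the free-chart computation used for (2) and (3).
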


\begin{proof}
    We may assume $n=2$ because every projection map is written as the composition of maps which are the base change of projection maps in the $n=2$ case. (For example, in the $n=3$ case, a projection map
    $(\fX_{\infty,\{1,2,3\}},\cM_{\fX_{\infty,\{1,2,3\}}})\to (\fX_{\infty,\{3\}},\cM_{\fX_{\infty,\{3\}}})$ factors as
    \[
    (\fX_{\infty,\{1,2,3\}},\cM_{\fX_{\infty,\{1,2,3\}}})\to (\fX_{\infty,\{2,3\}},\cM_{\fX_{\infty,\{2,3\}}}) \to  (\fX_{\infty,\{3\}},\cM_{\fX_{\infty,\{3\}}}),
    \]
    where the projection map $(\fX_{\infty,\{1,2,3\}},\cM_{\fX_{\infty,\{1,2,3\}}})\to (\fX_{\infty,\{2,3\}},\cM_{\fX_{\infty,\{2,3\}}})$ is written as the base change of a projection map $(\fX_{\infty,\{1,2\}},\cM_{\fX_{\infty,\{1,2\}}}) \to (\fX_{\infty,\{2\}},\cM_{\fX_{\infty,\{2\}}})$.) By the symmetry, it suffice to prove the assertion for the projection map
    \[
    (\fX_{\infty,\{1,2\}},\cM_{\fX_{\infty,\{1,2\}}})\to (\fX_{\infty,\{2\}},\cM_{\fX_{\infty,\{2\}}}).
    \]

    First, we shall prove $(1)$. For any $n\geq 1$, there exists an integer $m\geq 1$ such that the projection map from the saturated fiber product
    \[ (\fX_{n,\alpha_{1}},\cM_{\fX_{n,\alpha_{1}}})\times_{(\fX,\cM_{\fX})} (\fX_{m,\alpha_{2}},\cM_{\fX_{m,\alpha_{2}}})\to (\fX_{m,\alpha_{2}},\cM_{\fX_{m,\alpha_{2}}})
    \]
    is a strict affine flat cover by Lemma \ref{kummer map is strict after n-power ext} and the fact that a saturation map for a quasi-coherent log formal scheme is affine. Therefore, the projection from the saturated fiber product
    \[ (\fX_{n,\alpha_{1}},\cM_{\fX_{n,\alpha_{1}}})\times_{(\fX,\cM_{\fX})} (\fX_{\infty,\alpha_{2}},\cM_{\fX_{\infty,\alpha_{2}}})\to (\fX_{\infty,\alpha_{2}},\cM_{\fX_{\infty,\alpha_{2}}})
    \]
    is a strict affine flat cover for any $n\geq 1$. Taking a limit with respect to $n\geq 1$, we obtain the claim.

    Next, we shall prove $(2)$ and $(3)$ simultaneously. Take $x\in \fX$. Applying Lemma \ref{free monoid to free monoid} to maps $\alpha_{1,x}:\bN^{r_{1}}\to \overline{\cM_{\fX,\overline{x}}}$ and $\alpha_{2,x}:\bN^{r_{2}}\to \overline{\cM_{\fX,\overline{x}}}$ induced by $\alpha_{1}$ and $\alpha_{2}$, we see that, after changing orders of bases of $\bN^{r_{1}}$ and $\bN^{r_{2}}$ appropriately, there exists an integer $s$ with $1\leq s\leq \mathrm{min}\{r_{1},r_{2}\}$ such that $\alpha_{1,x}(e_{i})=\alpha_{2,x}(e_{i})$ for every $1\leq i\leq s$ and $\alpha_{1,x}(e_{j})=\alpha_{2,x}(e_{k})=1$ for every $s< j\leq r_{1}$ and $s<k\leq r_{2}$. By shrinking $\fX$ to an \'{e}tale neighborhood of $x$, we may assume that we can write
    \begin{align*}
    \alpha_{1}(e_{i})&=u_{i}\alpha_{2}(e_{i})\ \ (1\leq i\leq s) \\
    \alpha_{1}(e_{j})&=v_{j}\ \ (s<j\leq r_{1}) \\
    \alpha_{2}(e_{k})&=w_{k}\ \ (s<k\leq r_{2})
    \end{align*}
    for $u_{i},v_{j},w_{k}\in \cO_{\fX}(\fX)^{\times}$. Then $\fX_{\infty,\{1,2\}}$ is obtained by adding $n$-th power roots of $u_{i}$ and $v_{j}$ to $\fX_{\infty,\alpha_{2}}$. We shall give a more precise explanation. Consider a map $\fX_{\infty,\alpha_{2}}\to \mathrm{Spf}(\bZ_{p}\langle x_{1},\dots,x_{r_{1}}\rangle)$ corresponding to a ring map $\bZ_{p}\langle x_{1},\dots,x_{r_{1}}\rangle\to \cO_{\fX}(\fX)$ mapping $x_{i}$ to $u_{i}$ or $v_{i}$ depending on whether $i\leq s$ or not. Let
    \[ \fX_{\infty,\alpha_{1},\alpha_{2}}\coloneqq\fX_{\alpha_{2}}\times_{\mathrm{Spf}(\bZ_{p}\langle x_{1},\dots,x_{r_{1}}\rangle)} \mathrm{Spf}(\bZ_{p}\langle x_{1}^{\bQ_{\geq 0}},\dots,x_{r_{1}}^{\bQ_{\geq 0}}\rangle)
    \]
    and the log structure $\cM_{\fX_{\infty,\alpha_{1},\alpha_{2}}}$ on $\fX_{\infty,\alpha_{1},\alpha_{2}}$ be the inverse image log structure of $\cM_{\fX_{\infty,\alpha_{2}}}$. We have a strict morphism $(\fX_{\infty,\alpha_{1},\alpha_{2}},\cM_{\fX_{\infty,\alpha_{1},\alpha_{2}}})\to (\fX_{\infty,\alpha_{1}},\cM_{\fX_{\infty,\alpha_{1}}})$ whose composition with $(\fX_{\infty,\alpha_{1}},\cM_{\fX_{\infty,\alpha_{1}}})\to (\mathrm{Spf}(\bZ_{p}\langle \bQ^{r_{1}}_{\geq 0}\rangle),\bQ^{r_{1}}_{\geq 0})$ corresponds to a monoid map $\bQ^{r_{1}}_{\geq 0}\to \Gamma(\fX_{\infty,\alpha_{1},\alpha_{2}},\cM_{\fX_{\infty,\alpha_{1},\alpha_{2}}})$ mapping $(1/n)e_{i}$ to $x_{i}^{1/n}\alpha_{2}((1/n) e_{i})$ or $x_{i}^{1/n}$ for every $n\geq 1$ and $i$ depending on whether $i\leq s$ or not. Here, $\alpha_{2}((1/n) e_{i})$ is the image of $(1/n)e_{i}$ by the map $\bQ_{\geq 0}^{r_{2}}\to \Gamma(\fX_{\infty,\alpha_{1},\alpha_{2}},\cM_{\fX_{\infty,\alpha_{1},\alpha_{2}}})$ corresponding to the morphism
    \[ (\fX_{\infty,\alpha_{1},\alpha_{2}},\cM_{\fX_{\infty,\alpha_{1},\alpha_{2}}})\to (\fX_{\infty,\alpha_{2}},\cM_{\fX_{\infty,\alpha_{2}}})\to (\mathrm{Spf}(\bZ_{p}\langle \bQ^{r_{2}}_{\geq 0}\rangle),\bQ^{r_{2}}_{\geq 0})^{a}.
    \]
    Then we can check directly that $(\fX_{\infty,\alpha_{1},\alpha_{2}},\cM_{\fX_{\infty,\alpha_{1},\alpha_{2}}})$ is the fiber product of $(\fX_{\infty,\alpha_{1}},\cM_{\fX_{\infty,\alpha_{1}}})$ and $(\fX_{\infty,\alpha_{2}},\cM_{\fX_{\infty,\alpha_{2}}})$ over $(\fX,\cM_{\fX})$, and this description of $(\fX_{\infty,\{1,2\}},\cM_{\fX_{\infty,\{1,2\}}})$ implies the assertion $(2)$ and $(3)$ for the projection map
    \[ 
    (\fX_{\infty,\{1,2\}},\cM_{\fX_{\infty,\{1,2\}}})\to (\fX_{\infty,\alpha_{2}},\cM_{\fX_{\infty,\alpha_{2}}}).
    \]
\end{proof}

\begin{lem}\label{modify chart of sch to chart of mor}
    Let $f\colon (\fX,\cM_{\fX})\to (\fY,\cM_{\fY})$ be a morphism of fs log formal schemes. Suppose that we are given a chart $\alpha\colon \bN^{r}\to \cM_{\fX}$ and a monoid map $\beta\colon \bN^{s}\to \cM_{\fY}$. Let $x\in \fX$. Then, after shrinking $\fX$ to an \'{e}tale neighborhood of $x$, there exist a chart $\alpha'\colon \bN^{r}\oplus \bZ^{s}\to \cM_{\fX}$ which restricts to $\alpha$ and a monoid map $\gamma\colon \bN^{s}\to \bN^{r}\oplus \bZ^{s}$ such that $g$ gives a chart of $f$.
\end{lem}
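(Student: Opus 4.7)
The plan is to construct $\alpha'$ by adding to $\alpha$ the units needed to express the images $f(\beta(e_i))$ in terms of elements of $\alpha(\bN^r)$, and then to define $\gamma$ in the obvious way. The key input is that $\alpha$ being a chart implies that the induced map $\bN^r \to \overline{\cM_{\fX,\overline{x}}}$ is surjective.

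First I would proceed as follows. Let $e_1,\dots,e_s$ denote the standard basis of $\bN^s$. Since $\alpha$ is a chart, the composed map $\bN^r \to \cM_{\fX,\overline{x}} \to \overline{\cM_{\fX,\overline{x}}}$ is surjective, so for each $1\le i\le s$ we may choose $n_i\in \bN^r$ such that $\alpha(n_i)$ and $f(\beta(e_i))$ have the same image in $\overline{\cM_{\fX,\overline{x}}}$. Equivalently, there exists $u_i\in \cO_{\fX,\overline{x}}^{\times}$ (viewed inside $\cM_{\fX,\overline{x}}$ via the log structure) with $f(\beta(e_i)) = u_i\cdot \alpha(n_i)$. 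After shrinking $\fX$ to an \'etale neighborhood of $x$, the elements $u_i$ are defined as sections in $\cO_{\fX}(\fX)^{\times}$ and the equalities $f(\beta(e_i))=u_i\cdot\alpha(n_i)$ hold globally in $\Gamma(\fX,\cM_{\fX})$.

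Next I would define the extended chart and the chart of the morphism. Let $\alpha'\colon \bN^r\oplus\bZ^s\to \cM_{\fX}$ be the unique monoid map whose restriction to $\bN^r$ is $\alpha$ and which sends the $i$-th standard basis vector of $\bZ^s$ to $u_i\in \cO_{\fX}^{\times}\subset \cM_{\fX}$ (well-defined because $u_i$ is a unit). Since $\alpha'$ differs from $\alpha$ only by adjoining monoid generators that land in $\cO_{\fX}^{\times}$, the associated log structures of $\alpha$ and $\alpha'$ coincide, so $\alpha'$ is again a chart of $\cM_{\fX}$. Define $\gamma\colon \bN^s\to \bN^r\oplus\bZ^s$ by $\gamma(e_i)=(n_i,e_i^{(s)})$, where $e_i^{(s)}$ denotes the $i$-th basis vector of $\bZ^s$; this is clearly a monoid map. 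By construction, for each $i$ we have
\[
\alpha'(\gamma(e_i)) \;=\; \alpha(n_i)\cdot u_i \;=\; f(\beta(e_i)),
\]
which exactly says that $\gamma$, together with $\beta\colon \bN^s\to \cM_{\fY}$ and $\alpha'\colon \bN^r\oplus\bZ^s\to \cM_{\fX}$, forms a chart of $f$.

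Most steps are essentially formal; the only substantive point is the verification that $\alpha'$ is still a chart after adjoining the unit coordinates, which reduces to the standard fact that an extra prelog generator mapping into $\cO_{\fX}^{\times}$ does not change the associated log structure. No surprises are expected.
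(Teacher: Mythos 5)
Your proposal is correct and follows essentially the same route as the paper's proof: use surjectivity of $\bN^{r}\to \overline{\cM_{\fX,\overline{x}}}$ to write $f^{\flat}(\beta(e_{i}))=u_{i}\cdot\alpha(n_{i})$ with $u_{i}$ a unit, shrink \'{e}tale-locally so the $u_{i}$ and these identities are global, then extend $\alpha$ by the units to get $\alpha'$ and set $\gamma(e_{i})=(n_{i},e_{i}^{(s)})$. Your extra remark that adjoining unit-valued generators does not change the associated log structure (so $\alpha'$ is still a chart) is exactly the point the paper leaves implicit.
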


\begin{proof}
    For each $1\leq i\leq s$, we can write the image of $\beta(e_{i})$ in $\cM_{\fX,\overline{x}}$ as $f(m_{i})u_{i}$ for some $m_{i}\in \bN^{r}$ and $u_{i}\in \cO_{\fX,\overline{x}}^{\times}$ because $\alpha$ is a chart. By shrinking $\fX$ to an \'{e}tale neighborhood of $x$, we may assume that $u_{i}\in \Gamma(\fX,\cO_{\fX}^{\times})$ for every $1\leq i\leq s$. Let $\{e'_{i}\}_{1\leq i\leq s}$ denote the standard basis of $\bZ^{s}$. We set $\alpha'\colon \bN^{r}\oplus \bZ^{s}\to \cM_{\fX}$ to be the monoid map induced by $\alpha\colon \bN^{r}\to \cM_{\fX}$ and $\bZ^{s}\to \cM_{\fX}$ mapping $e'_{i}$ to $u_{i}$, and we define a monoid map $\gamma\colon \bN^{s}\to \bN^{r}\oplus \bZ^{s}$ by $\gamma(e_{i})\coloneqq m_{i}+e'_{i}$. Then the desired condition is satisfied.
\end{proof}


\begin{thebibliography}{99}
\bibitem[ALB23]{alb23} J.\ Ansch\"{u}tz, A.-C.\ Le Bras: \textit{Prismatic Dieudonné theory}, Forum Math.\ Pi 11 (2023), Paper No.\ e2, 92 pp.
\bibitem[BBM82]{bbm82} P.\ Berthelot, L.\ Breen, W.\ Messing: \textit{Th\'{e}orie de Dieudonn\'{e} cristalline. II} Lecture Notes in Math, 930
Springer-Verlag, Berlin, 1982. 
\bibitem[Bro13]{bro13} M.\ Broshi: \textit{G-torsors over a Dedekind scheme}, J.\ Pure Appl.\ Algebra 217 (2013), no.\ 1, 11-19.
\bibitem[BS22]{bs22} B.\ Bhatt, P.\ Scholze: \textit{Prisms and prismatic cohomology}, Ann.\ of Math.\ (2) 196 (2022), no.\ 3, 1135-1275.
\bibitem[DLLZ23]{dllz23b} H.\ Diao, K.-W.\ Lan, R.\ Liu, X.\ Zhu: \textit{Logarithmic adic spaces: some foundational results}, Simons Symp.\ p-adic Hodge theory, singular varieties, and non-abelian aspects (2023), 65-182.
\bibitem[DLMS24]{dlms24} H.\ Du, T.\ Liu, Y.\ S.\ Moon, K.\ Shimizu: \textit{Log prismatic $F$-crystals and purity}, 2024, arXiv:2404.19603
\bibitem[DS09]{ds09} J.\ P.\ dos Santos: \textit{The behaviour of the differential Galois group on the generic and special fibres: a
Tannakian approach}, J.\ Reine Angew.\ Math.\ 637 (2009), 63-98.
\bibitem[Fal89]{fal89} G.\ Faltings: \textit{Crystalline cohomology and $p$-adic Galois representations}, Johns Hopkins University Press, Baltimore, MD, 1989, 25-80
\bibitem[FC90]{fc90} G.\ Faltings, C.-L.\ Chai: \textit{Degeneration of abelian varieties},
Ergeb.\ Math.\ Grenzgeb.\ (3), 22, Springer-Verlag, Berlin, 1990.
\bibitem[GK19]{gk19} W.\ Goldring, J.-S.\ Koskivirta: \textit{Strata Hasse invariants, Hecke algebras and Galois representations}, Invent.\ Math.\ 217 (2019), no.\ 3, 887-984
\bibitem[GR24]{gr24} H.\ Guo, E.\ Reinecke: \textit{A prismatic approach to crystalline local systems}, Invent.\ Math.\ 236 (2024), no.\ 1, 17-164.
\bibitem[IKY23]{iky23} N.\ Imai, H.\ Kato, A.\ Youcis: \textit{The prismatic realization functor for Shimura varieties of abelian type}, 2023, arXiv:2310.08472
\bibitem[IKY24]{iky24} N.\ Imai, H.\ Kato, A.\ Youcis: \textit{A Tannakian framework for prismatic F-crystals}, 2024, arXiv:2406.08259
\bibitem[Ill85]{ill85} L.\ Illusie: \textit{Deformations de groupes de Barsotti-Tate (d’apr\`{e}s
A.\ Grothendieck)}, Ast\'{e}risque.\ 127 (1985), 151-198
\bibitem[Ino23]{ino23} K.\ Inoue: \textit{Slope filtrations of log $p$-divisible groups}, 2023, arXiv:2307.04629
\bibitem[Ino25]{ino25} K.\ Inoue: \textit{Log prismatic $F$-crystals and realization functors}, 2025, arXiv:2505.01084
\bibitem[Kat88]{kat88} K.\ Kato: \textit{Logarithmic structures of Fontaine-Illusie}, Algebraic analysis, geometry, and number theory (Baltimore, MD, 1988), 191-224, Johns Hopkins Univ.\ Press, Baltimore, MD, 1989
\bibitem[Kat21]{kat21} K.\ Kato: \textit{Logarithmic structures of Fontaine-Illusie. II.}, Tokyo J.\ Math.\ 44 (2021), no.\ 1, 125-155.
\bibitem[Kat23]{kat23} K.\ Kato: \textit{Logarithmic Dieudonn\'{e} theory}, 2023, arXiv:2306.13943
\bibitem[Kim18]{kim18} W.\ Kim, \textit{Rapoport-Zink uniformization of Hodge-type Shimura varieties}, Forum Math.\ Sigma 6
(2018), Paper No.\ e16,
\bibitem[Kis10]{kis10} M.\ Kisin: \textit{Integral models for Shimura varieties of abelian type}, J.\ Amer.\ Math.\ Soc.\ 23 (2010), no.\ 4,
967-1012.
\bibitem[KKMSD]{kkmsd} G.\ Kempf, F.\ Knudsen, D.\ Mumford, B.\ Saint-Donat, \textit{Toroidal embeddings. I}, Lecture Notes in Mathematics, vol.\ 339, Springer-Verlag, Berlin-New York, 1973
\bibitem[KKN08]{kkn08} T.\ Kajiwara, K.\ Kato, C.\ Nakayama: \textit{Logarithmic abelian varieties}, Nagoya Math.\ J.\ 189 (2008), 63-138.
\bibitem[KKN15]{kkn15} T.\ Kajiwara, K.\ Kato, C.\ Nakayama: \textit{Logarithmic abelian varieties, Part IV: Proper models}, Nagoya Math.\ J.\ 219 (2015), 9-63.
\bibitem[KKN18]{kkn18} T.\ Kajiwara, K.\ Kato, C.\ Nakayama: \textit{Logarithmic abelian varieties, Part V: Projective models}, Yokohama Math.\ J.\ 64 (2018), 21-82.
\bibitem[KKN19]{kkn19} T.\ Kajiwara, K.\ Kato, C.\ Nakayama: \textit{Logarithmic abelian varieties, Part VI: Local moduli and GAGF}, Yokohama Math.\ J.\ 65 (2019), 53-75.
\bibitem[KKN21]{kkn21} T.\ Kajiwara, K.\ Kato, C.\ Nakayama: \textit{Logarithmic abelian varieties, part VII: moduli}, Yokohama Math.\ J.\ 67 (2021), 9-48.
\bibitem[KKN22]{kkn22} T.\ Kajiwara, K.\ Kato, C.\ Nakayama: \textit{Moduli of logarithmic abelian varieties with PEL structure}, 2022, arXiv:2205.10985
\bibitem[Kos22]{kos22} T.\ Koshikawa: \textit{Logarithmic prismatic cohomology I}, 2022, arXiv:2007.14037
\bibitem[KU09]{ku09} K.\ Kato, S.\ Usui: \textit{Classifying spaces of degenerating polarized Hodge structures}, Ann.\ of Math.\ Stud.\ 169 Princeton University Press, Princeton, NJ, 2009
\bibitem[KY23]{ky23} T.\ Koshikawa, Z.\ Yao: \textit{Logarithmic prismatic cohomology II}, 2023, arXiv:2006.00364
\bibitem[Lan12]{lan12} K.-W.\ Lan: \textit{Toroidal compactifications of PEL-type Kuga families}, Algebra Number Theory 6 (2012), no.\ 5, 885-966.
\bibitem[Lan13]{lan13} K.-W.\ Lan: \textit{Arithmetic compactifications of PEL-type Shimura varieties}, London Mathematical Society monographs 36, Princeton Univ.\ Press, 2013.
\bibitem[Lau18]{lau18} E.\ Lau: \textit{Divided Dieudonn\'{e} crystals}, 2018, arXiv:1811.09439
\bibitem[Lov17]{lov17} T.\ Lovering: \textit{Filtered F-crystals on Shimura varieties of abelian type}, 2017, arXiv:1702.06611
\bibitem[Mad19]{mad19} K.\ Madapusi: \textit{Toroidal compactifications of integral models of Shimura varieties of Hodge type}, Ann.\ Sci.\ \'{E}c.\ Norm.\ Sup\'{e}r.\ 52 (2019), no.\ 2, 393-514.
\bibitem[Mes72]{mes72} W.\ Messing: \textit{The crystals associated to Barsotti-Tate groups: with applications to abelian schemes}, 
Lecture Notes in Math, Vol.\ 264
Springer-Verlag, Berlin-New York, 1972.
\bibitem[Niz08]{niz08} W.\ Nizio{\l}: \textit{K-theory of log-schemes I}, Doc.\ Math.\ 13 (2008), 505-551.
\bibitem[Ogu18]{ogu18} A.\ Ogus: \textit{Lectures on logarithmic algebraic geometry}, Cambridge Studies in Advanced Mathematics, 178.\ Cambridge University Press, Cambridge, 2018
\bibitem[Pin90]{pin90} R.\ Pink: \textit{Arithmetic compactification of mixed Shimura varieties}, Bonner Mathematische Schriften, Universit\"{a}t Bonn Mathematisches Institut, 1990.\ Dissertion, Rheinische Friedrich-Wilhelms-Universit\"{a}t Bonn, 1989.
\bibitem[SP24]{sp24} The Stacks Project Authors, \textit{Stacks Project}, 2024
\bibitem[WZ23]{wz23} M.\ W$\mathrm{\ddot{u}}$rthen, H.\ Zhao: \textit{Log prismatic Dieudonné theory for log p-divisible groups over $\cO_{K}$}, 2023, arXiv:2310.15732

\end{thebibliography}
\end{document}